\documentclass{amsart}

\numberwithin{equation}{section}
\newtheorem{theorem}{Theorem}
\newtheorem{lemma}{Lemma}

\newtheorem{definition}{Definition}
\newtheorem{corollary}{Corollary}
\newtheorem{remark}{Remark}

%\numberwithin{theorem}{section}
\numberwithin{lemma}{section}
\numberwithin{proposition}{section}
\numberwithin{corollary}{section}
\numberwithin{theorem}{section}
\numberwithin{equation}{section}
\numberwithin{example}{section}

\begin{document}

\title
  {\bf The Cauchy problem of the Ward equation\\
  }

\author   { Derchyi Wu }
\maketitle

\begin{center}
INSTITUTE OF MATHEMATICS
\end{center}

\begin{center}
ACADEMIA SINICA
\end{center}

\begin{center}
TAIPEI, TAIWAN, R. O. C.
\end{center}

\begin{center}
mawudc@math.sinica.edu.tw
\end{center}

\section*{{Abstract}}
We generalize the results of \cite{V90}, \cite{FI01}, \cite{DTU} to study the inverse scattering problem of the Ward equation with non-small data and solve the Cauchy problem of the Ward equation with a  non-small purely continuous scattering data. 

\vskip.25in
\keywords{\textit{Keywords:} Self-dual Yang-Mills equation, Lax pair, inverse scattering problem, Riemann-Hilbert problem, Cauchy integral operator}

\tableofcontents 

%%%%%%%%%%%%%%%%%%%%%%%%%%%%%%%%%%%%%%%%%%
\section{Introduction }
%%%%%%%%%%%%%%%%%%%%%%%%%%%%%%%%%%%%%%%%%%%

\vskip.2in
The Ward equation (or the modified $2+1$ chiral model) 
\begin{equation}
\partial_t\left(J^{-1}\partial_t J\right)-\partial_x\left(J^{-1}\partial_xJ\right)-\partial_y\left(J^{-1}\partial_yJ\right)-\left[J^{-1}\partial_tJ,J^{-1}\partial_yJ\right]=0,\label{E:Ward}
\end{equation}
for $J:\mathbb{R}^{2,1}\to SU(n)$, $\partial_w=\partial/\partial w$, is obtained from a dimension reduction and a gauge fixing of the self-dual Yang-Mills equation on $\mathbb{R}^{2,2}$ \cite{DTU}, \cite{T06}. It is an integrable system which possesses the Lax pair \cite{W88}
\begin{equation}
\left[\lambda\partial_x-\partial_\xi-J^{-1}\partial_\xi J,\lambda\partial_\eta-\partial_x-J^{-1}\partial_x J\right]=0\label{E:Lax3}
\end{equation}
with $\xi=\frac{t+y}2$, $\eta=\frac{t-y}2$.  Note (\ref{E:Lax3}) implies that $J^{-1}\partial_\xi J=-\partial_x Q$, $J^{-1}\partial_x J=-\partial_\eta Q$. Then by a change of variables $(\eta,x,\xi)\rightarrow (x,y,t)$, (\ref{E:Lax3}) is equivalent to
\begin{eqnarray}
&&(\partial_y-\lambda\partial_x)\Psi(x,y,t,\lambda)=\left(\partial_x Q(x,y,t)\right)\Psi(x,y,t,\lambda), \label{E:Lax01}\\
&&(\partial_t-\lambda^2\partial_x)\Psi(x,y,t,\lambda)=\left(\lambda \partial_xQ+\partial_yQ\right)\Psi(x,y,t,\lambda) \label{E:Lax02}
\end{eqnarray}
\cite{FI01}, and the Ward equation (\ref{E:Ward}) turns into:
\begin{equation}
\partial_x\partial_tQ=\partial^2_yQ+\left[\partial_yQ,\partial_xQ\right].\label{E:chiral}
\end{equation}

The construction of solitons, the study of the scattering properties of solitons, and  Darboux transformation of the Ward equation have been studied intensively by solving the degenerated Riemann-Hilbert problem and studying the limiting method 
\cite{W88}, \cite{W95}, \cite{I96}, \cite{An97}, \cite{An98}, \cite{IZ98}, \cite{Z93}. In particular, Dai and Terng gave an explicit construction of all solitons of the Ward equation by establishing a theory of Backlund transformation \cite{DT07}.

For the investigation of the Cauchy problem of the Ward equation, 
 Villarroel \cite{V90}, Dai, Terng and Uhlenbeck \cite{DTU} use Fourier analysis in the $x,y$-space to study the spectral theory of $\mathcal L_\lambda=\partial_y-\lambda\partial_x$ in (\ref{E:Lax01}), whilist Fokas and Ioannidou \cite{FI01} invert $\mathcal L_\lambda$ by interpreting it as a $1$-dimensional spectral operator with coefficients being the $x$-Fourier transform of functions.  In both cases, small data conditions of $Q$ are required to ensure the invertibility of $\mathcal L_\lambda$ and the solvability of the inverse problem.
Under the small data condition, the eigenfunctions $\Psi$ 
 possesses continuous scattering data only and therefore the solutions for the Ward equation do not include the solitons in previous study. 
 
%Besides, suppose the set of poles of $\Psi$ is nonempty and finite, Dai, Terng and Uhlenbeck \cite{DTU} apply the theory of Backlund transformation \cite{DT07} to solve the Cauchy problem of the Ward equation with mixed scattering data. Hence mathematicians are led to consider the Cauchy problem with purely continuous scattering data.

Nontheless, the approach of Fokas and Ioannidou \cite{FI01} shows that: after taking the Fourier transform in the $x$-space, (\ref{E:Lax01}) looks similar to the spectal problem of the AKNS system
\[(\partial_x-\lambda J)\Psi(x,t,\lambda)=q(x,t)\Psi(x,t,\lambda).\] 
Where  $J$ is a constant diagonal matrix with distinct eigenvalues. 
The solution of the forward and inverse scattering problem of the AKNS system is fairly complete, due to the work of Beals, Coifman, Deift, Tomei, Zhou \cite{BC84}, \cite{BDT88},  \cite{DZ91}. In particular, the inverse scattering problem for the AKNS system and its associated nonlinear evolution equations is rigorously solved for generic $q\in L_1$ without small data condition \cite{BC85}.

The purpose of the present paper is to remove the small data condition in solving the scattering and inverse scattering problem of (\ref{E:Lax01}) and the Cauchy problem of the Ward equation (\ref{E:chiral}) with a purely continuous scattering data.  We summarize principal results as follows:
\begin{theorem}\label{T:LNexistence}
Let $Q\in \mathbb P_{\infty,2,0}$. Then there is a bounded set $Z\subset \mathbb{C}$ such that 
\begin{itemize}
  \item $Z\cap\left(\mathbb{C}\backslash \mathbb{R}\right)$ is discrete in $\mathbb{C}\backslash \mathbb{R}$;
	\item For  $\lambda\in \mathbb{C}\backslash \left(\mathbb{R}\cup Z\right)$, the problem 
(\ref{E:Lax01}) has a unique solution $\Psi$ and $\Psi-1\in \mathbb {DH}^2$;
  \item For $(x,y)\in \mathbb{R}\times \mathbb{R}$, the eigenfunction $\Psi(x,y,\cdot)$ is meromorphic in $\lambda\in\mathbb{C}\backslash \mathbb{R}$ with poles precisely at the points of $Z\cap\left(\mathbb{C}\backslash \mathbb{R}\right)$;
  \item $\Psi(x,y,\lambda)$ satisfies:
  \begin{eqnarray}
&&\lim_{|x|\to\infty}\Psi(\cdot,y,\lambda)=1,\,\,
\lim_{|y|\to\infty}\Psi(x,\cdot,\lambda)=1,\,\,\textit{ for  $\lambda\in \mathbb{C}\backslash \left(\mathbb{R}\cup Z\right)$}, \label{E:bdry}\\
&&\textit{$\Psi(x,y,\cdot)$ tends to $1$ uniformly as $|\lambda|\to\infty$};\label{E:bdry''}
\end{eqnarray}
\item $\Psi(x,0,\lambda)$ satisfies:
  \begin{eqnarray}
&& \partial_x^i\left(\Psi-1\right), \,i=0,1,2,\textit{ are uniformly bounded in $L_2(dx)$ for} 
\label{E:bdry'}\\
&&\textit{$\lambda\in \mathbb{C}\backslash \left(\mathbb{R} \cup_{\lambda_j\in Z} D_{\epsilon_j}(\lambda_j)\right)$. For any $z_j\in\mathbb C\backslash\mathbb R$, fixing $\epsilon_k$ for $\forall k\ne j$    }\nonumber\\
&&\textit{and letting $\epsilon_j\to 0$, these $L_2(dx)$-norms increase as $ {C_j}{\epsilon_j^{-h_j}}$ with }\nonumber\\
&&\textit{uniform constants $C_j$, $h_j>0$.}\nonumber\\
%&& \textit{For $\forall \epsilon>0$, $(\Psi(x,0,\lambda)-1)\chi_{|x|>N}$, $(\partial_x\Psi(x,0,\lambda))\chi_{|x|>N}\to 0$ }
%\label{E:bdry'''}\\
%&&\textit{uniformly in $L_2(dx)$ as $N\to\infty$ for $\lambda\in \mathbb{C}\backslash \left(\mathbb{R} \cup_{\lambda_j\in Z} D_\epsilon(\lambda_j)\right)$.}\nonumber%\\
&& \textit{$\Psi-1,\,\partial_x\Psi\to 0$  in $L_2(dx)$ as $\lambda\to\infty$.}
\label{E:bdry'''}
\end{eqnarray}
  \end{itemize}
  Where $\epsilon\ge\epsilon_j>0$ are any given constants, $D_\epsilon(\lambda_j)$ denotes the disk of radius $\epsilon$ centered at $\lambda_j$.
\end{theorem}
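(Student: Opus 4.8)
The plan is to convert the linear problem (\ref{E:Lax01}) (in which $t$ is held fixed) into an integral equation by the Fourier transform in $x$, and then run analytic Fredholm theory in the spectral parameter $\lambda$, in the spirit of the Beals--Coifman treatment of the non-small-data AKNS problem. Writing $\Psi=1+\Phi$ and letting $\widehat{\,\cdot\,}$ denote the $x$-transform, the operator $\partial_y-\lambda\partial_x$ passes to $\partial_y-i\lambda\xi$, whose decaying Green's function on $\mathbb R_y$ is
\[
G(y,\xi,\lambda)=\operatorname{sgn}(\xi)\,\theta\!\left(y\operatorname{sgn}(\xi)\right)e^{i\lambda\xi y}\qquad(\operatorname{Im}\lambda>0),
\]
with the mirror choice for $\operatorname{Im}\lambda<0$; on $\operatorname{supp}G$ one has $\xi y>0$, so $|e^{i\lambda\xi y}|=e^{-(\operatorname{Im}\lambda)|\xi y|}\le1$, while for $\lambda\in\mathbb R$ this factor loses its decay and $G$ ceases to define a good operator --- this is what forces the real axis to be excluded throughout. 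Convolving $G$ (in $y$) and $\widehat{\partial_xQ}$ (in $\xi$) produces a linear operator $K_\lambda$ so that (\ref{E:Lax01}) together with the normalisations (\ref{E:bdry}), (\ref{E:bdry''}) becomes equivalent to the Fredholm equation $(1-K_\lambda)\Phi=K_\lambda 1$ on the Hilbert space $\mathbb{DH}^2$.

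The core step is the Fredholm theory for $K_\lambda$. Using $Q\in\mathbb P_{\infty,2,0}$, a weighted estimate on the kernel $G(y-y',\xi,\lambda)\,\widehat{\partial_xQ}(\xi-\xi',y')$ shows $K_\lambda$ is compact (indeed Hilbert--Schmidt) on $\mathbb{DH}^2$ for each $\lambda\in\mathbb C\setminus\mathbb R$, with norm locally bounded on $\mathbb C\setminus\mathbb R$ and tending to $0$ as $|\lambda|\to\infty$ in each half-plane --- the damping $e^{-(\operatorname{Im}\lambda)|\xi y|}$ takes care of $\operatorname{Im}\lambda\to\infty$, and the rapid oscillation of $e^{i(\operatorname{Re}\lambda)\xi y}$ against the regularity of $Q$ takes care of $\operatorname{Re}\lambda\to\infty$. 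Since $\lambda\mapsto K_\lambda$ is operator-analytic on each half-plane, the analytic Fredholm (Steinberg) theorem applies: the decay at infinity provides a point where $1-K_\lambda$ is invertible, hence $(1-K_\lambda)^{-1}$ exists and is meromorphic on $\mathbb C\setminus\mathbb R$ with a pole set discrete there, and the same decay confines those poles to a bounded region. Taking $Z$ to be the closure of the union of the pole sets of the two half-planes gives a bounded set whose non-real part is precisely the discrete pole set; $\Phi=(1-K_\lambda)^{-1}K_\lambda1$ is then the unique solution in $\mathbb{DH}^2$, meromorphic in $\lambda$ with poles exactly at $Z\cap(\mathbb C\setminus\mathbb R)$. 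The decay of the kernel in $x$ and $y$ yields (\ref{E:bdry}), and feeding $|\lambda|\to\infty$ back into the equation yields (\ref{E:bdry''}) and (\ref{E:bdry'''}).

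It remains to extract the quantitative statements (\ref{E:bdry'}) on the slice $y=0$. Since $\partial_x$ becomes multiplication by $i\xi$, one commutes the weight $\xi^i$, $i\le2$, through $(1-K_\lambda)\Phi=K_\lambda1$ via the binomial identity $\xi^i=\sum_j\binom{i}{j}(\xi-\xi')^j(\xi')^{\,i-j}$; each factor $(\xi-\xi')^j$ converts $\widehat{\partial_xQ}$ into $\widehat{\partial_x^{\,j+1}Q}$, still admissible precisely because $\mathbb P_{\infty,2,0}$ carries two $x$-derivatives --- this is why the index is capped at $i=2$. Together with a uniform bound on $(1-K_\lambda)^{-1}$ over compact subsets of $\mathbb C\setminus(\mathbb R\cup Z)$, this gives the uniform $L_2(dx)$ bounds on $\partial_x^i(\Psi-1)$. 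Near an isolated pole $\lambda_j$ of order $h_j$, the Laurent expansion of the meromorphic resolvent gives $\|(1-K_\lambda)^{-1}\|\le C_j|\lambda-\lambda_j|^{-h_j}$, which, evaluated on $|\lambda-\lambda_j|=\epsilon_j$ with the remaining radii held fixed, propagates to the claimed growth $C_j\epsilon_j^{-h_j}$ of the $L_2(dx)$-norms as $\epsilon_j\to0$.

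The hard part, I expect, is twofold. First, designing $\mathbb{DH}^2$ and the weighted kernel bounds so that compactness, the $|\lambda|$-decay, and the two commuted $x$-derivatives are simultaneously absorbed into the regularity and decay packaged in $\mathbb P_{\infty,2,0}$: the naive choice $L^2(d\xi\,dy)$ already fails to make $K_\lambda$ Hilbert--Schmidt (the $y$-integral of $|G|^2$ diverges), so the weights must be chosen delicately. Second, upgrading the analytic Fredholm machinery to quantitative form --- proving the poles do not accumulate off $\mathbb R$, pinning down the orders $h_j$, and obtaining the uniform constants $C_j$, together with the subtle point that the estimates survive as $\lambda$ approaches $\mathbb R$ away from $Z$, where $G$ loses decay but does not blow up. No control is attempted at $\lambda\in\mathbb R$ itself; there the theorem only asserts that $Z$ is bounded.
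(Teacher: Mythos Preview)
Your approach is genuinely different from the paper's, and the difference matters because your central compactness claim is not established and is, on the space in question, problematic.

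The paper does \emph{not} run analytic Fredholm theory on a single integral equation. Instead it proceeds by induction on the size of $|\widehat{\partial_xQ}|_{L_1(d\xi\,dy)}$: the base case $|\widehat{\partial_xQ}|_{L_1}<1$ is the small-data contraction argument (the operator you call $K_\lambda$, but used only as a strict contraction on $\widehat{\mathbb X}$, never as a compact operator). For the inductive step the paper translates in $y$ so that the $L_1$ mass is split between $\{y<0\}$ and $\{y>0\}$, cuts off to get potentials $Q^\pm$ with smaller norm, obtains eigenfunctions $\Psi^\pm$ by induction, and then observes that any eigenfunction for $Q$ must be $\Psi^\pm a^\pm$ on the two half-planes. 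Matching at $y=0$ converts the problem into a Riemann--Hilbert problem in the variable $z=x+\lambda y$ with jump data $F=(\Psi^-)^{-1}\Psi^+$ on $\tilde x\in\mathbb R$. That RH problem is then factorised \`a la Beals--Coifman: a diagonal (scalar) RH problem, a rational approximation bringing the remaining jump close to $1$, a small-data RH problem, and finally a finite linear system for the rational correction. The exceptional set $Z$ arises from the zeros of the determinant of this finite linear system, which is holomorphic in $\lambda$, and the blow-up rate $C_j\epsilon_j^{-h_j}$ comes from the order of those zeros. No compactness of $K_\lambda$ is ever invoked.

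The gap in your proposal is precisely where you flag it. First, $\mathbb{DH}^2$ as defined in the paper is $\sup_y H^2(dx)$, a Banach space, not a Hilbert space, so ``Hilbert--Schmidt'' has no direct meaning; more to the point, the kernel $G(y-y',\xi,\lambda)$ has no decay in $y-y'$ near $\xi=0$, and functions in $\mathbb{DH}^2$ need not decay in $y$, so compactness on this space is far from clear. Second, your claim that $\|K_\lambda\|\to0$ as $|\lambda|\to\infty$ \emph{uniformly in each half-plane} (hence including $\operatorname{Im}\lambda\to0^+$) is not a Riemann--Lebesgue consequence: the paper spends all of Section~3 proving the pointwise analogues $|\Psi-(1-Q/\lambda)|\le C|\lambda|^{-2}$, and those estimates require the $2$-dimensional kernel $G_\lambda$ of (\ref{E:kernel}) and the rather delicate Lemma~\ref{L:FSestimate}, not oscillation alone. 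Without a genuine compactness-plus-decay argument, the analytic Fredholm machine does not start, and the induction/RH route is what actually carries the proof.
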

Here the function spaces $\mathbb P_{\infty,2,0}$, and $\mathbb {DH}^2$ are defined by
\begin{definition}\label{D:L11}
\begin{eqnarray*}
&&\mathbb P_{\infty,k_1,k_2}\\
=&&\{q_x(x,y):\mathbb{R}\times\mathbb{R}\to su(n)|\,\\
 &&\begin{array}{lll}
 |\xi^iy^s\widehat q|_{L_1(d\xi dy)},&||\xi^h\widehat q(\xi,y)|_{L_1(y)}|_{ L_2(d\xi)},&{}\\
|\partial_x^j\partial_y^lq|_{L_\infty},&\sup_y|\partial_x^j\partial_y^lq|_{L_1(dx)},&|\partial_x^j\partial_y^lq|_{L_1(dxdy)}<\infty\end{array}\\
&&\textit{ for $1\le i\le \max\{5,k_1\},\,\,\,\, 0\le j,\, l\le \max\{5,k_1\},\,\,1\le h\le k_1$, $0\le s\le k_2$}\,\}.\\
&&\mathbb{DH}^k\\
=&&\{f\,| \partial_x^if(x,y)%,\in M_n(\mathbb C)\,\partial_y^if(x,y)
\textit{ are uniformly bounded in } L_2(\mathbb R, dx), \,\,0\le i\le k.\}
\end{eqnarray*}
 \end{definition}
%Here the derivatives are taken in distribution sense.
To derive Theorem \ref{T:LNexistence}, we transform the existence problem of $\Psi$ into a Riemann-Hilbert problem with a non-small continuous data 
by the translating invariant and the derivation properties of the spectral operator $\mathcal L_\lambda$, and an induction scheme.
 Hence the scheme of Section 10 in \cite{BC84} can be adapted to solve the Riemann-Hilbert problem. That is, we  
 first approximate the solution by a piecewise rational function. Then the correction is made by a solution of a Riemann-Hilbert problem with small data and a solution of a finite linear system.  
Since the eigenfunction obtained in each induction step consists the data of the Riemann-Hilbert problem in the next step, we need to obtain the $H^2$-estimate (\ref{E:bdry'}) of the eigenfunction. Besides, the boundary estimate (\ref{E:bdry'''}) and the meromorphic property are derived in each step to assure the solvability of the linear system.

In general, the points in $Z$, i.e., poles of $\Psi(x,y,\lambda)$, will occure or accumulate on the real line, or the limit points will accumulate themselves. Assuming higher regularities on the potential $Q$ and $Z=Z(\Psi)=\phi$ (there are no poles of $\Psi(x,y,\lambda)$), we can extract the continuous scattering data:
\begin{theorem}\label{T:CSDsum}
For $Q\in {\mathbb P}_{\infty,k,1}$, $k\ge 7$, if $Z=\phi$, then there exists uniquely a function $v(x, y, \lambda)\in \mathfrak{S}_{c,k}$ which satisfies 
\[
\Psi_{+}(x,y,\lambda)=\Psi_-(x,y,\lambda)v(x,y, \lambda), \qquad \lambda\in R.
\] 
\end{theorem}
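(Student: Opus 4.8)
The plan is to read off $v$ as the multiplicative jump of the eigenfunction $\Psi$ across $\mathbb R$ and then to verify that this jump carries exactly the structure and regularity of a continuous scattering datum. Since $Z=\phi$, Theorem \ref{T:LNexistence} gives that $\Psi(x,y,\cdot)$ is holomorphic on $\mathbb C\backslash\mathbb R$, equals $1$ at $\lambda=\infty$, and --- no disks $D_{\epsilon_j}$ needing to be removed --- $\partial_x^i(\Psi(\cdot,0,\lambda)-1)$, $i=0,1,2$, are uniformly bounded in $L_2(dx)$ for $\lambda\in\mathbb C\backslash\mathbb R$ and tend to $0$ in $L_2(dx)$ as $\lambda\to\infty$. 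First I would combine weak holomorphy with these uniform bounds to see that $\lambda\mapsto\Psi(\cdot,0,\lambda)-1$ is a bounded analytic function on each half-plane valued in the Sobolev algebra $W^{2,2}(\mathbb R_x)$, so a Fatou-type theorem for Hilbert-space-valued bounded analytic functions produces nontangential boundary values $\Psi_\pm(\cdot,0,\lambda)\in 1+W^{2,2}(\mathbb R_x)\subset 1+C(\mathbb R_x)$ for a.e.\ $\lambda\in\mathbb R$; for general $(x,y)$ the entrywise scalar Fatou theorem already yields $\Psi_\pm(x,y,\lambda)$ a.e. Since $\partial_xQ\in su(n)$ is traceless, $(\partial_y-\lambda\partial_x)\det\Psi=0$, and transporting along the lines $x+\lambda y=\mathrm{const}$ with $\Psi\to1$ as $|x|\to\infty$ forces $\det\Psi\equiv1$; hence $\Psi_\pm$ are invertible a.e., and we set $v_0(x,\lambda):=\Psi_-(x,0,\lambda)^{-1}\Psi_+(x,0,\lambda)$.

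Next I would pin down the $(x,y)$-dependence and the symmetry of $v$. Because $\partial_x\Psi$ has $L_2(dx)$ boundary values and $\partial_y\Psi=\lambda\partial_x\Psi+(\partial_xQ)\Psi$, the whole of (\ref{E:Lax01}) passes to the limit $\mathrm{Im}\,\lambda\to0^\pm$, so $(\partial_y-\lambda\partial_x)\Psi_\pm=(\partial_xQ)\Psi_\pm$ for a.e.\ real $\lambda$. From $(\partial_y-\lambda\partial_x)(\Psi_-v)=(\partial_xQ)\Psi_-v+\Psi_-(\partial_y-\lambda\partial_x)v$ and $\Psi_+=\Psi_-v_0$ at $y=0$ one gets $(\partial_y-\lambda\partial_x)v=0$, so $v$ extends off $y=0$ as $v(x,y,\lambda)=v_0(x+\lambda y,\lambda)$, and integrating this transport equation along the real characteristics issuing from $y=0$ yields $\Psi_+(x,y,\lambda)=\Psi_-(x,y,\lambda)\,v(x,y,\lambda)$ for all $(x,y)$ and a.e.\ $\lambda\in\mathbb R$. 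Finally $Q^*=-Q$ makes $(\Psi(x,y,\bar\lambda)^*)^{-1}$ a solution of (\ref{E:Lax01}) with the same normalization and lying in $1+\mathbb{DH}^2$, so by the uniqueness in Theorem \ref{T:LNexistence} it equals $\Psi(x,y,\lambda)$; letting $\mathrm{Im}\,\lambda\to0$ gives $\Psi_-^*\Psi_+=1$ on $\mathbb R$ and hence $v=(\Psi_-^*\Psi_-)^{-1}$, a Hermitian positive-definite matrix with $v\to1$ as $|x+\lambda y|\to\infty$ --- precisely the algebraic constraints built into $\mathfrak S_{c,k}$.

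It remains to establish the quantitative regularity: that $v-1=v_0(x+\lambda y,\lambda)-1$ has the $k$ orders of $\lambda$- and $x$-differentiability and the accompanying $L_p$-decay demanded by $\mathfrak S_{c,k}$. The plan is to bootstrap on the integral equation for $\Psi$ underlying Theorem \ref{T:LNexistence}: the decay and smoothness bounds packaged into $Q\in\mathbb P_{\infty,k,1}$ (Definition \ref{D:L11}, with $\max\{5,k\}=k$ derivatives and $h$ up to $k$, $s$ up to $1$) feed, via differentiation under the integral sign and repeated integration by parts against the oscillatory kernel of $\mathcal L_\lambda^{-1}$, into estimates for $\partial_\lambda^m\partial_x^p(\Psi_\pm-1)$ in the relevant $L_p$-spaces, uniform down to $\mathbb R$; combining these with the algebraic identity $v_0=\Psi_-^{-1}\Psi_+$ and the symmetry above yields $v\in\mathfrak S_{c,k}$. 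Uniqueness is immediate: if $v,v'\in\mathfrak S_{c,k}$ both satisfy $\Psi_+=\Psi_-v=\Psi_-v'$, then $v=v'$ a.e.\ since $\Psi_-$ is a.e.\ invertible, hence everywhere by continuity.

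I expect this last step to be the main obstacle. The difficulty is not the existence of boundary values but controlling how many $\lambda$-derivatives of $\Psi_\pm$ survive the nontangential limit and with what decay --- in effect a careful Fourier-duality accounting (smoothness of $Q\leftrightarrow$ decay of $v$, decay of $Q\leftrightarrow$ smoothness of $v$) pushed through the integral equation, and it is there that the threshold $k\ge7$ is forced. A secondary technical point is justifying that $\partial_x\Psi$ and $\partial_y\Psi$ converge up to the real axis, so that $\Psi_\pm$ genuinely solve (\ref{E:Lax01}) and $v$ the transport equation; this too leans on the extra regularity of $Q$.
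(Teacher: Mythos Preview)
Your outline matches the paper's architecture: existence of $\Psi_\pm$, the determinant and reality identities for the algebraic constraints (\ref{E:real1'})--(\ref{E:real2'}), the transport equation for (\ref{E:0ana15'}), and then quantitative estimates for (\ref{E:0anal2'})--(\ref{E:0anal3'}). Two places where the paper's execution differs from your sketch are worth flagging.

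For the boundary values, the paper does not invoke a Fatou theorem. Since $Z=\phi$, Corollary~\ref{C:boundedness} gives $\Psi-1$ uniformly bounded in $\mathbb{DH}^k$ right up to $\mathbb R$; Sobolev embedding then produces $\Psi_\pm(x,y,\lambda)$ for \emph{every} $\lambda\in\mathbb R$, together with uniform convergence of $\partial_x^j\Psi$ for $j\le k-1$ (Lemma~\ref{L:qxx'}). This dispatches your ``secondary technical point'' directly and avoids a.e.\ exceptional sets.

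The substantive divergence is in your third paragraph. The $L_\infty\cap L_2\cap L_1(d\lambda)$ bounds on $\partial_x^i\partial_y^j(v-1)$ are \emph{not} obtained by integrating by parts against the oscillatory kernel in (\ref{E:eigen+}); that maneuver trades a factor $\lambda^{-1}$ for a factor $\xi^{-1}$, which is singular at $\xi=0$. Instead the paper establishes the pointwise second-order expansion
\[
\bigl|\partial_x^i\partial_y^j\bigl(\Psi_\pm-(1-Q/\lambda)\bigr)\bigr|\le C|\lambda|^{-2},\qquad i+j\le k-4
\]
(Lemma~\ref{T:CSDana}, extending Theorem~\ref{T:lambda2}) via the algebraic identity $(\partial_y-\lambda\partial_x)\bigl(\Psi^{-1}(1-Q/\lambda)\bigr)=-\lambda^{-1}\Psi^{-1}(Q_y-Q_xQ)$ and the sup-norm bound $|G_\lambda\varphi|\le C|\lambda|^{-1}(\cdots)$ for the two-dimensional inverse $G_\lambda=(\partial_y-\lambda\partial_x)^{-1}$ (Lemma~\ref{L:FSestimate}). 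This $O(|\lambda|^{-2})$ control is what places $v-1$ and its $x,y$-derivatives in $L_2\cap L_1(d\lambda)$ uniformly, and combined with Lemma~\ref{L:boundedness} also gives (\ref{E:0ana14'}). For (\ref{E:0anal3'}) the paper does differentiate the one-dimensional integral equation in $\lambda$ as you propose, but only in the small-data regime (Lemma~\ref{L:lambdal2'}; this is precisely where $k\ge7$ enters, to control $\partial_\lambda\partial_x\Psi_\pm$); the passage to general $Q$ then rides on the induction scheme from the proof of Theorem~\ref{T:LNexistence}, writing $\Psi=\Psi^\pm a^\pm$ and solving an inhomogeneous Riemann--Hilbert problem for $\chi\partial_\lambda a$ (Lemma~\ref{L:lambdal2}). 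Your ``bootstrap on the integral equation'' is thus correct in spirit for half of the argument, but the $|\lambda|^{-2}$ asymptotics and the small-to-large induction are the two specific mechanisms you would need to supply.
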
  

 Where the space $\mathfrak{S}_{c,k}$ is defined by
\begin{definition}\label{D:pcd}
Let $\mathfrak{S}_{c,k}$, $k\ge 7$, be the space consisting of continuous scattering data $v(x, y, \lambda)$,  $\lambda\in\mathbb R$, 
such that  $v$ satisfies the algebraic constraints:
\begin{eqnarray}
&&\det \,(v)\equiv 1, \label{E:real1'}\\
&& v= v^*>0,\label{E:real2'}
\end{eqnarray}
and the analytic  constraints: for $i+j\le k-4$, 
\begin{eqnarray}
&&\mathcal L_\lambda v=0,\,\,v(x,y,\lambda)=v(x+\lambda y, \lambda)\textit{ for }\forall x,\,y\in\mathbb R,\label{E:0ana15'}\\
&&\partial_x^i\partial_y^j\left(v-1\right)\textit{ are uniformly bounded in $ L_\infty\cap L_2(\mathbb{R},d\lambda)\cap L_1(\mathbb{R},d\lambda)$};\label{E:0anal2'}\\ 
		 		 %&&\textit{}\nonumber\\
&&\partial_x^i\partial_y^j\left(v-1\right)\to 0\textit{ uniformly in  $L_\infty\cap L_2(\mathbb{R},d\lambda)\cap L_1(\mathbb{R},d\lambda)$   }\label{E:0ana14'}\\
&&\textit{as $|x|$ or $|y|\to\infty$;}\nonumber\\
&&\partial_\lambda v\, \textsl{ are in $  L_2(\mathbb{R},d\lambda)$ and the norms depend continuously on $x$, $y$.}\label{E:0anal3'}
%Where $M$ is any given positive numbers. 
\end{eqnarray}
Where 
$
\mathcal L_\lambda=\partial_y-\lambda\partial_x$.
\end{definition}
The characterization of the scattering data $v\in \mathfrak{S}_{c,k}$ is necessary. Since the Cauchy integral operator  will play a key role in  the inverse problem. The study of the asymptotic behavior of the scattering data $v$  (hence the asymptotic behavior of the eigenfunctions $\Psi$) is important. Because the Cauchy operator is bounded in $L_2$ \cite{SW}, in general, an $L_2$-estimate of $\Psi$ and its derivatives will be good enough. However, a formal calculation will yield  (\ref{E:invq}) if the inverse problem is solvable. Hence we provide the estimates (\ref{E:0anal2'})-(\ref{E:0anal3'}). 

The derivation of (\ref{E:0anal2'})-(\ref{E:0anal3'}) basically relies on the $L_2$-boundness of the Cauchy operator and the estimates obtained in the small-data problem. In particular, both of the $1$-dimensional (Fokas and Ioannidou \cite{FI01} or (\ref{E:eigen+})) and the $2$-dimensional formulation (Villarroel \cite{V90} or (\ref{E:2d})) of the spectral problem are crucial in the derivation of the estimates with small data condition. That is, using (\ref{E:eigen+}), boundness or integrability in $x$-variable of the eigenfunctions $\Psi$ comes first from the differentiability and integrability of the potentials $Q$ via the Fourier transform. Then, strong asympote in $x$, $y$ or $\lambda$-variable of the eigenfunctions $\Psi$ can be obtained by (\ref{E:2d}) and previous estimates. We lose some regularities in deriving strong asymptote. See the proof of Theorem \ref{T:lambda2} for example.

For the inverse problem, our results are:
\begin{theorem}\label{T:invexistence}
Given $v(x, y, \lambda)\in \mathfrak{S}_{c,k}$, $k\ge 7$, 
there exists a unique solution $\Psi(x,y,\cdot)$ for the Riemann-Hilbert problem $(\lambda\in\mathbb{R}, v(x, y, \lambda))$ such that
\begin{equation}
\Psi-1,\, \partial_x\Psi,\,\partial_y\Psi \textit{ are uniformly bounded in $L_2(\mathbb R, d\lambda)$.} \label{E:invbdry''}
\end{equation}
Moreover, for each fixed $\lambda\notin\mathbb R$, and  $i+j\le k-4$,
\begin{gather}
\partial_x^i\partial_y^j\Psi\in L_\infty(dxdy),\label{E:invbdry'}\\
\partial_x^i\partial_y^j\left(\Psi-1\right)\to 0  \textit{ in $L_\infty(dxdy)$, as $x$ or $y\to \infty$}.\label{E:invbdry}
\end{gather}
\end{theorem}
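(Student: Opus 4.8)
The plan is to solve the Riemann–Hilbert problem $(\lambda\in\mathbb R,\,v(x,y,\lambda))$ by inverting the associated Cauchy integral equation and then to bootstrap the regularity and decay of the solution from the analytic constraints built into the definition of $\mathfrak S_{c,k}$. First I would reformulate the jump condition $\Psi_+=\Psi_- v$ as a singular integral equation. Writing $\mu=\Psi_- $ (so $\Psi_+ = \mu + C_+(\mu(v-1))$ in the usual Plemelj decomposition, with $C_\pm$ the boundary values of the Cauchy operator along $\mathbb R$ in $\lambda$), the problem becomes $(1-C_-(\,\cdot\,(v-1)))\mu = 1$. Because $v-1\in L_2\cap L_\infty\cap L_1$ uniformly in $(x,y)$ by \eqref{E:0anal2'}, the operator $\mu\mapsto C_-(\mu(v-1))$ is bounded on $L_2(\mathbb R,d\lambda)$, using the $L_2$-boundedness of the Cauchy operator \cite{SW}. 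The algebraic constraints \eqref{E:real1'}–\eqref{E:real2'}, namely $\det v\equiv 1$ and $v=v^*>0$, are exactly the positivity/symmetry hypotheses under which a vanishing lemma applies: if $h$ is holomorphic off $\mathbb R$, vanishes at $\infty$, and has boundary values satisfying $h_+ = h_- v$, then pairing $h_+ h_-^*$ against $v$ and using $v>0$ forces $h\equiv 0$. This gives uniqueness, and since $1-C_-(\,\cdot\,(v-1))$ is Fredholm of index zero (a compact perturbation argument, or the fact that it is a continuous deformation, through the scaling $v\mapsto v^s$, of the identity, exploiting again $v=v^*>0$), uniqueness yields invertibility. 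Hence there is a unique $\Psi$ with $\Psi-1\in L_2(\mathbb R,d\lambda)$.

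Next I would upgrade this to the full statement \eqref{E:invbdry''}. Differentiating the integral equation in $x$ and in $y$ and using that $\partial_x^i\partial_y^j(v-1)$ are uniformly bounded in $L_2\cap L_\infty\cap L_1$ for $i+j\le k-4$ (with $k\ge 7$, so in particular $i+j\le 1$ is covered with room to spare), one gets integral equations for $\partial_x\Psi$ and $\partial_y\Psi$ of the form $(1-C_-(\,\cdot\,(v-1)))(\partial\Psi) = (\text{forcing term in }L_2)$; the forcing is in $L_2(d\lambda)$ uniformly in $(x,y)$ because it is built from $\Psi-1$ (already in $L_2$) times $\partial v$ (in $L_2\cap L_\infty$). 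Inverting the same bounded operator gives the uniform $L_2(d\lambda)$ bound for $\partial_x\Psi$ and $\partial_y\Psi$.

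For the pointwise-in-$(x,y)$ statements \eqref{E:invbdry'}–\eqref{E:invbdry}, fix $\lambda\notin\mathbb R$. Here $\Psi(x,y,\lambda)=1+\frac{1}{2\pi i}\int_{\mathbb R}\frac{(\mu(v-1))(x,y,\zeta)}{\zeta-\lambda}\,d\zeta$, and since $\lambda$ is at positive distance from $\mathbb R$ the kernel $(\zeta-\lambda)^{-1}$ is bounded and in $L_2(d\zeta)$, so $\Psi-1$ is controlled by the $L_1$ (or $L_2$) norm of $\mu(v-1)$. Differentiating $i+j\le k-4$ times in $(x,y)$ and invoking \eqref{E:0anal2'} for all the needed orders, together with the $L_2(d\lambda)$ bounds on $\partial_x^i\partial_y^j(\Psi-1)$ obtained by iterating the previous step (each derivative order solvable because $k-4\ge 3$), gives $\partial_x^i\partial_y^j\Psi\in L_\infty(dxdy)$. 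Finally \eqref{E:invbdry} follows from the decay constraint \eqref{E:0ana14'}: as $|x|$ or $|y|\to\infty$, $\partial_x^i\partial_y^j(v-1)\to 0$ in $L_1\cap L_2$, and since the inverse operator $(1-C_-(\,\cdot\,(v-1)))^{-1}$ is uniformly bounded, the corresponding derivatives of $\Psi-1$ tend to $0$ in $L_2(d\lambda)$, hence pointwise in $\lambda\notin\mathbb R$ with a bound summable against $(\zeta-\lambda)^{-1}$, giving the $L_\infty(dxdy)$ convergence.

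The main obstacle I anticipate is the passage from invertibility at the $L_2$ level to the uniform-in-$(x,y)$ control, i.e. making sure the Fredholm/vanishing argument produces an inverse operator whose norm is bounded \emph{uniformly} in the parameters $(x,y)$ rather than merely finite for each $(x,y)$ — this is where one must quantify the vanishing lemma in terms of the $L_2\cap L_\infty\cap L_1$ norms of $v-1$ appearing in \eqref{E:0anal2'} and their uniform bounds, and where the continuity of $\partial_\lambda v$ in $L_2$ stated in \eqref{E:0anal3'} is used to keep the deformation argument and the differentiation under the Cauchy integral legitimate. A secondary technical point is checking that the translation/transport structure $v(x,y,\lambda)=v(x+\lambda y,\lambda)$ and $\mathcal L_\lambda v=0$ in \eqref{E:0ana15'} are respected by the solution, which is needed for consistency with the Lax pair but follows by uniqueness once one observes that $\mathcal L_\lambda\Psi$ solves a homogeneous version of the same RHP.
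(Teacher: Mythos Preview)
Your overall strategy---reduce to the singular integral equation $(1-C_-(\,\cdot\,(v-1)))\mu=1$, establish Fredholmness, apply a vanishing lemma using $v=v^*>0$, then differentiate---is a legitimate alternative route (essentially the Zhou framework for RHPs). The paper proceeds differently, in the constructive Beals--Coifman style of \cite{BC84}: it first disposes of the region $|x|$ or $|y|>M-1$, where \eqref{E:0ana14'} makes $v-1$ small and Lemma~\ref{L:SMrm} applies directly with explicit Neumann-series bounds; on the compact region $\max(|x|,|y|)<M$ it factors $v=(1+h_l)^{-1}\chi(1+h_u)$ via the triangular decomposition of Lemma~\ref{L:tridecom} (positivity $v>0$ guarantees non-vanishing principal minors), solves the diagonal scalar problem explicitly, approximates the triangular parts by rational functions to reduce to a small-data RHP, and is left with a \emph{finite} linear system for the rational correction. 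Fredholmness is then literally finite-dimensionality, and the vanishing lemma (your $h_+h_-^*$ argument) is applied to that system. What the paper's route buys is that both of the issues you flag are handled for free: the Fredholm property is explicit, and uniform-in-$(x,y)$ control of the inverse comes from Neumann series on the exterior plus continuity on a compact parameter set on the interior.

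The genuine gap in your proposal is the Fredholm step. The operator $f\mapsto C_-(f(v-1))$ is bounded on $L_2(d\lambda)$ but is \emph{not} compact without further argument: neither $C_-$ nor multiplication by $v-1\in L_2\cap L_\infty$ is compact, and their composition is not obviously so. Your alternative, the homotopy $v\mapsto v^s$, presupposes Fredholmness along the path in order to invoke constancy of the index, which is circular as written. This can be repaired---\eqref{E:0anal3'} gives $\partial_\lambda v\in L_2$, hence $v-1\in C_0(\mathbb R)$ by Sobolev, and one can then approximate $v-1$ in $L_\infty$ by compactly supported data and run a localization/commutator argument to get compactness---but that work has to be done, and it is exactly what the paper's explicit factorization circumvents. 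Similarly, your proposal acknowledges but does not resolve the uniform-in-$(x,y)$ boundedness of $(1-C_-(\,\cdot\,(v-1)))^{-1}$; an abstract Fredholm inverse is merely bounded for each $(x,y)$, and you would still need a compactness-in-parameters or small-data-at-infinity argument of the kind the paper supplies via the exterior/interior split.
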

Theorem \ref{T:invexistence} is proved by a Riemann-Hilbert problem with a non-small purely continuous  scattering data.  Without uniform boundedness of $\partial_\lambda v$, we need to handle seperately the Riemann-Hilbert problem for $|\lambda|>M>>1$ and $|\lambda|\le M$. For $|\lambda|>M>>1$, the Riemann-Hilbert problem is a small-data problem and hence can be solved. For $|\lambda|\le M$, the Riemann-Hilbert problem is again factorized into a diagonal problem, a Riemann-Hilbert problem with small data, and a finite linear system. Note we obtain the globally solvability by applying the Fredholm property and the reality condition (\ref{E:real2'}).

Moreover, good estimates for $\Psi$ can be derived only for $\lambda\notin \mathbb R$. However, it is enough to imply satisfactory analytical properties of the potentials.
\begin{theorem}\label{T:invcd}
Given $v(x, y, \lambda)\in \mathfrak{S}_{c,k}$, $k\ge 7$, the eigenfunction $\Psi$ obtained by Theorem \ref{T:invexistence}  satisfies (\ref{E:Lax01})  with 
\begin{equation}
Q(x,y)= \frac 1{2\pi i}\int_\mathbb{R}\Psi_-(v-1)d\zeta,\label{E:invq}
\end{equation}and $\Psi(x,\cdot,\lambda)\to 1$ as $y\to-\infty$. Where $\partial_xQ(x,y)\in su(n)$, and for $i+j\le k-4$, $i>0$, $\partial_{ x}^i\partial_{ y}^j Q$, $\partial_{ y} Q$, $Q\in L_\infty$, $\partial_{ x}^i\partial_{ y}^jQ$, $\partial_{ y}  Q$, $Q\to 0$ as $x$ or $y\to\infty$. 
\end{theorem}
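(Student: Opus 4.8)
The plan is to carry out the standard dressing/reconstruction argument of inverse scattering in three parts: recover the linear equation (\ref{E:Lax01}) from the Riemann-Hilbert problem by a Liouville-type argument; deduce $\partial_xQ\in su(n)$ from the symmetries $\det v\equiv1$ and $v=v^*>0$; and read off the analyticity and decay of $Q$ from the estimates of Theorem \ref{T:invexistence} together with the constraints on $v\in\mathfrak S_{c,k}$. Throughout, write $\mathcal C[f](\lambda)=\frac1{2\pi i}\int_{\mathbb R}\frac{f(\zeta)\,d\zeta}{\zeta-\lambda}$. First, by (\ref{E:invbdry''}) and (\ref{E:0anal2'}) the product $\Psi_-(v-1)$ and its first $x$- and $y$-derivatives lie in $L_1\cap L_2(\mathbb R,d\zeta)$ (one factor in $L_2$, the other in $L_2\cap L_\infty$, together with the genuinely $L_1$ pieces carried by $v-1$ and its derivatives), so the Plemelj formula yields $\Psi-1=\mathcal C[\Psi_-(v-1)]$ and the $Q$ of (\ref{E:invq}) is well defined with $Q\in L_\infty(dxdy)$.

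To recover (\ref{E:Lax01}): since $\Psi_+=\Psi_-v$ on $\mathbb R$ and $\mathcal L_\lambda v=0$ by (\ref{E:0ana15'}), one has $\mathcal L_\lambda\Psi_+=(\mathcal L_\lambda\Psi_-)v$, so $(\mathcal L_\lambda\Psi)\Psi^{-1}$ has trivial jump across $\mathbb R$; and since $\det v\equiv1$ forces $\det\Psi\equiv1$, the entries of $\Psi^{-1}$ are polynomials in those of $\Psi$ and $(\mathcal L_\lambda\Psi)\Psi^{-1}$ extends to an entire function of $\lambda$. Differentiating $\Psi-1=\mathcal C[\Psi_-(v-1)]$ in $x,y$ and letting $|\lambda|\to\infty$ gives $\Psi\to1$, $\lambda\,\partial_x\Psi\to-\partial_xQ$ and $\partial_y\Psi\to0$, so $(\mathcal L_\lambda\Psi)\Psi^{-1}=(\partial_y\Psi-\lambda\,\partial_x\Psi)\Psi^{-1}\to\partial_xQ$; being entire and tending to the constant $\partial_xQ$ at infinity, it equals $\partial_xQ$ identically by Liouville, i.e. $\mathcal L_\lambda\Psi=(\partial_xQ)\Psi$, which is (\ref{E:Lax01}). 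The normalization $\Psi(x,\cdot,\lambda)\to1$ as $y\to-\infty$ is contained in (\ref{E:invbdry}).

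For the reality I would check directly, using $v=v^*$, that $(\Psi(x,y,\bar\lambda)^*)^{-1}$ satisfies the same jump relation $\Psi_+=\Psi_-v$ on $\mathbb R$ and the normalization $\to1$ at $\lambda=\infty$, and lies in the admissible class of Theorem \ref{T:invexistence}; uniqueness then gives $\Psi(x,y,\lambda)=(\Psi(x,y,\bar\lambda)^*)^{-1}$. Expanding both sides at $\lambda=\infty$ via $\Psi=1-Q/\lambda+o(1/\lambda)$ yields $Q^*=-Q$, while $\det\Psi\equiv1$ yields $\operatorname{tr}Q=0$; hence $Q\in su(n)$, so $\partial_xQ\in su(n)$. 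For the analytic properties of $Q$: $Q$ itself and $\partial_yQ$ I would treat by differentiating (\ref{E:invq}) at most once and estimating the integrals of products $\partial_y^a\Psi_-\cdot\partial_y^b(v-1)$ by Cauchy-Schwarz, using the uniform $L_2(d\zeta)$-bounds of (\ref{E:invbdry''}) and the uniform $L_1\cap L_2(d\zeta)$-bounds of (\ref{E:0anal2'}); the decay as $x$ or $y\to\infty$ follows because in each product one factor tends to $0$ in $L_2$ (or in $L_1$, for the $v-1$ piece paired with the constant $1$) by (\ref{E:0ana14'}) while the other stays bounded. For $\partial_x^i\partial_y^jQ$ with $i\ge1$ I would instead use the pointwise identity from the previous step, $\partial_xQ=(\mathcal L_{\lambda_0}\Psi)\Psi^{-1}$ at an arbitrary fixed $\lambda_0\notin\mathbb R$: since $\Psi^{-1}$ is polynomial in the entries of $\Psi$, applying $\partial_x^{i-1}\partial_y^{j}$ and invoking (\ref{E:invbdry'}) (which bounds $\partial_x^a\partial_y^b\Psi$ in $L_\infty(dxdy)$ for $a+b\le k-4$) gives $\partial_x^i\partial_y^jQ\in L_\infty$ for $i\ge1$, $i+j\le k-4$, and (\ref{E:invbdry}) gives the decay because $\mathcal L_{\lambda_0}\Psi\to0$ and $\Psi^{-1}\to1$ as $x$ or $y\to\infty$.

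The hard part will be the $|\lambda|\to\infty$ analysis inside the recovery step. Unlike the AKNS case, where the $\lambda$-linear part of the Lax operator is an explicit constant matrix, here it is $\lambda\partial_x$, so the Liouville argument requires knowing that $\lambda\,\partial_x\Psi$ is bounded with the correct limit as $|\lambda|\to\infty$; since $\mathfrak S_{c,k}$ supplies only $L_1\cap L_2$- (not pointwise-) decay of $v-1$ in $\lambda$, this has to be squeezed out of the fine behavior at infinity of the Cauchy integral $\mathcal C[\partial_x(\Psi_-(v-1))]$, which is where the translation structure $v(x,y,\lambda)=v(x+\lambda y,\lambda)$ must be used. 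Everything else is bookkeeping with the estimates already recorded in Theorem \ref{T:invexistence} and Definition \ref{D:pcd}.
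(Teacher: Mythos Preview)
Your proposal is correct and follows essentially the same route as the paper. The one organizational difference worth noting is in how the Lax equation is recovered. You argue via Liouville on the entire function $(\mathcal L_\lambda\Psi)\Psi^{-1}$ and then identify the constant by the limit $\lambda\,\partial_x\Psi\to-\partial_xQ$. The paper instead computes the commutator
\[
[\mathcal L_\lambda,\mathcal C]\bigl(\Psi_-(v-1)\bigr)=\partial_x\Bigl(\tfrac1{2\pi i}\int_{\mathbb R}\Psi_-(v-1)\,d\zeta\Bigr)=\partial_xQ,
\]
so that $\mathcal L_\lambda\Psi=\partial_xQ+\mathcal C\bigl[(\mathcal L_\zeta\Psi)_-(v-1)\bigr]$, and then compares this with $(\partial_xQ)\Psi=\partial_xQ+\mathcal C\bigl[(\partial_xQ)\Psi_-(v-1)\bigr]$ using the uniqueness already proved in Theorem~\ref{T:invexistence}. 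This commutator trick sidesteps the explicit $|\lambda|\to\infty$ analysis you flag as ``the hard part.'' In your formulation that step is in fact not hard: since $\partial_x(\Psi_-(v-1))\in L_1(d\zeta)$ by (\ref{E:invbdry''}) and (\ref{E:0anal2'}) (split it as $(\partial_x\Psi_-)(v-1)+(\Psi_--1)\partial_xv+\partial_xv$), and since $(\mathcal L_\lambda\Psi)\Psi^{-1}$ is entire with at most linear growth, Liouville forces it to be affine, and the non-tangential limit pins down the constant as $\partial_xQ$ and kills the linear term. The translation structure $v(x,y,\lambda)=v(x+\lambda y,\lambda)$ is already fully used through $\mathcal L_\lambda v=0$ in the no-jump computation; no additional appeal to it is needed at $\lambda=\infty$.

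For $\partial_xQ\in su(n)$ and for the regularity and decay of $Q$, your argument coincides with the paper's: the reality $\Psi(\lambda)\Psi(\bar\lambda)^*=1$ is obtained by uniqueness of the Riemann--Hilbert problem (Lemma~\ref{L:invsym} in the paper), and the higher $x$-derivatives of $Q$ are read off from the $\lambda$-independent identity $\partial_xQ=(\mathcal L_{\lambda_0}\Psi)\Psi^{-1}$ together with (\ref{E:invbdry'})--(\ref{E:invbdry}).
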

Applying Theorem \ref{T:LNexistence}-\ref{T:invcd}, we extend the results of \cite{V90}, \cite{FI01}, \cite{DTU} by:
\begin{theorem}\label{T:cauchy}
If $Q_0\in \mathbb P_{\infty,k,1}$, $k\ge 7$, and there are no poles of the eigenfunction $\Psi_0$ of $Q_0$, then the Cauchy problem of the Ward equation (\ref{E:chiral}) with initial condition $Q(x,y,0)=Q_0(x,y)$
admits a smooth global solution satisfying: for $i+j+h\le k-4$, $i^2+j^2>0$,
\begin{eqnarray*}
&&\partial_x Q(x,y,t)\in su(n),\\
&&\partial_x^i\partial_y^j\partial_t^hQ,\,\partial_tQ,\,Q\in L_\infty,\\
&&\partial_x^i\partial_y^j\partial_t^hQ,\,\partial_tQ,\,Q\to 0, \,{\textit as }\,\,x,\,y,\,t\to\infty.
\end{eqnarray*}
 
\end{theorem}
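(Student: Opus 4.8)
The strategy is the standard inverse-scattering dressing argument, now made rigorous with the non-small machinery of Theorems \ref{T:LNexistence}--\ref{T:invcd}. First I would apply Theorem \ref{T:LNexistence} to the initial datum $Q_0\in\mathbb P_{\infty,k,1}$: since by hypothesis the pole set $Z=Z(\Psi_0)=\phi$, the eigenfunction $\Psi_0(x,y,\lambda)$ of $\mathcal L_\lambda$ is holomorphic on $\mathbb C\backslash\mathbb R$, tends to $1$ at spatial infinity and as $|\lambda|\to\infty$, and satisfies the $H^2$-estimates (\ref{E:bdry'}), (\ref{E:bdry'''}). Then Theorem \ref{T:CSDsum} produces the purely continuous scattering datum $v_0(x,y,\lambda)=\Psi_{0,-}^{-1}\Psi_{0,+}\in\mathfrak S_{c,k}$ attached to $Q_0$. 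The key point is to let this datum evolve in $t$ by the linear flow dictated by the second Lax operator (\ref{E:Lax02}): set
\begin{equation}
v(x,y,t,\lambda):=v_0\bigl(x-t\lambda^2 ,\,y ,\,\lambda\bigr),\label{E:timeflow}
\end{equation}
i.e. translate the $x$-variable of $v_0$ by $-t\lambda^2$ (equivalently, conjugate $v_0$ by $e^{\lambda^2 t\,\partial_x}$ along the characteristics of $\partial_t-\lambda^2\partial_x$). Because $v_0$ depends on $(x,y)$ only through the combination $x+\lambda y$ (the constraint (\ref{E:0ana15'})), this translate again depends only on $x+\lambda y-t\lambda^2$, so the algebraic constraints (\ref{E:real1'})--(\ref{E:real2'}) and the analytic constraints (\ref{E:0ana15'})--(\ref{E:0anal3'}) are preserved for every fixed $t$; in particular $v(\cdot,\cdot,t,\cdot)\in\mathfrak S_{c,k}$ and the $\mathfrak S_{c,k}$-norm is locally uniformly bounded in $t$.

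Next, for each fixed $t$ I would run Theorem \ref{T:invexistence} on the datum $v(\cdot,\cdot,t,\cdot)$ to obtain a unique solution $\Psi(x,y,t,\lambda)$ of the Riemann--Hilbert problem $(\lambda\in\mathbb R, v(x,y,t,\lambda))$ with $\Psi-1,\partial_x\Psi,\partial_y\Psi$ uniformly bounded in $L_2(d\lambda)$ and with (\ref{E:invbdry'})--(\ref{E:invbdry}), and then Theorem \ref{T:invcd} to define
\begin{equation}
Q(x,y,t)=\frac1{2\pi i}\int_{\mathbb R}\Psi_-\,(v-1)\,d\zeta,\label{E:Qofthree}
\end{equation}
with $\partial_x Q\in su(n)$, $Q(x,\cdot,t,\lambda)\to1$ as $y\to-\infty$, and the asserted $L_\infty$-decay of $\partial_x^i\partial_y^j Q,\partial_yQ,Q$ for $i+j\le k-4$. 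The construction forces $\Psi$ to solve (\ref{E:Lax01}) with this $Q$; since $v$ was defined precisely so that $(\partial_t-\lambda^2\partial_x)v=(\lambda v_x+v_y)\cdot 0$ holds along characteristics, the dressing identity shows $\Psi$ also solves (\ref{E:Lax02}) with the same $Q$ — concretely, one checks that $(\partial_t-\lambda^2\partial_x)\Psi-(\lambda\partial_xQ+\partial_yQ)\Psi$ has no jump across $\mathbb R$ (because $v$ is annihilated by $\partial_t-\lambda^2\partial_x$ in the translated variable), vanishes at $|\lambda|=\infty$, and lies in the same $L_2$ class, hence is identically zero by the uniqueness half of Theorem \ref{T:invexistence}. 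The compatibility of (\ref{E:Lax01}) and (\ref{E:Lax02}) is exactly the chiral equation (\ref{E:chiral}), so $Q$ solves the Ward equation. To recover the initial condition I would note that at $t=0$, $v(\cdot,\cdot,0,\cdot)=v_0$, so by uniqueness $\Psi(x,y,0,\lambda)=\Psi_0(x,y,\lambda)$ and (\ref{E:Qofthree}) reproduces $Q_0$ via the same Cauchy-integral formula (the formula (\ref{E:invq}) applied to $v_0$ returns $Q_0$, since $v_0$ came from $Q_0$ by the forward transform of Theorems \ref{T:LNexistence}--\ref{T:CSDsum}).

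Finally, the regularity and decay in the $(x,y,t)$-variables jointly: extra $t$-derivatives are converted, via the Lax equation (\ref{E:Lax02}), into $x$-derivatives at the cost of the polynomial weight $\lambda^2$ per $\partial_t$, so $\partial_x^i\partial_y^j\partial_t^h$ acting on $Q$ is controlled by $x,y$-derivatives of $\Psi$ against $\lambda$-weights up to order $2h$ — these are exactly the quantities bounded by the $\mathfrak S_{c,k}$-norm of $v$, which is finite and locally uniform in $t$ as long as $i+j+2h$ stays within the budget $k-4$ coming from Definition \ref{D:pcd}; this is why the statement is phrased with $i+j+h\le k-4$ (each $\partial_t$ is in fact cheap in $\lambda$-weight in the translated picture because $v_0(x-t\lambda^2,y,\lambda)$ differentiated in $t$ still depends on $x+\lambda y-t\lambda^2$ only, so no genuine loss of $\lambda$-decay occurs, merely a shift of which derivative is taken). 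Smoothness in $t$ then follows from smoothness of the translate (\ref{E:timeflow}) in $t$ together with continuity of the inverse map $v\mapsto\Psi\mapsto Q$ of Theorems \ref{T:invexistence}--\ref{T:invcd} in the $\mathfrak S_{c,k}$-topology, and the decay $\partial_x^i\partial_y^j\partial_t^hQ,\partial_tQ,Q\to0$ as $x,y,t\to\infty$ is read off from (\ref{E:0ana14'}) (decay of $v-1$ and its derivatives as $|x|$ or $|y|\to\infty$) after noting that $t\to\infty$ along a characteristic is the same as $x+\lambda y-t\lambda^2\to\infty$.

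The main obstacle I expect is not the algebra of the dressing but the \emph{uniformity in $t$}: one must verify that translating $v_0$ by $-t\lambda^2$ keeps \emph{all} the norms in Definition \ref{D:pcd} finite and controlled locally uniformly in $t$ — in particular the weighted $L_1(d\lambda)$ and $L_2(d\lambda)$ bounds and the $\partial_\lambda v\in L_2(d\lambda)$ condition, since $\partial_\lambda$ of $v_0(x-t\lambda^2,y,\lambda)$ generates a factor $2t\lambda\,\partial_x v_0$ that grows linearly in $t$ and in $\lambda$. Absorbing this growth requires using the extra regularity margin $k\ge 7$ (one spends $x$-derivatives of $v_0$, which are available because $Q_0\in\mathbb P_{\infty,k,1}$ with $k\ge7$) and checking that the $\lambda$-weight it costs still fits under $k-4$; this bookkeeping, together with feeding the resulting $t$-dependent $\mathfrak S_{c,k}$-bounds back into Theorems \ref{T:invexistence}--\ref{T:invcd} to get $t$-locally-uniform estimates on $\Psi$ and $Q$, is where the real work lies.
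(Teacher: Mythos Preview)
Your strategy is the paper's own: forward scattering via Theorems \ref{T:LNexistence}--\ref{T:CSDsum}, linear time evolution of the continuous scattering data, inverse scattering via Theorems \ref{T:invexistence}--\ref{T:invcd}, verification of the second Lax equation by the uniqueness half of the Riemann--Hilbert problem, and finally compatibility to get (\ref{E:chiral}). Two small technical points are worth adjusting.

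First, the sign in your time flow (\ref{E:timeflow}) is off: the operator $\partial_t-\lambda^2\partial_x$ annihilates functions of $x+\lambda^2 t$, not $x-\lambda^2 t$, so the evolved datum is $v(x,y,t,\lambda)=v_0(x+\lambda y+\lambda^2 t,\lambda)$, exactly as in the paper's (\ref{E:cauchySD}). Relatedly, the paper does \emph{not} apply $\partial_t-\lambda^2\partial_x$ to the Cauchy integral; it uses instead $\mathcal M_\lambda=\partial_t-\lambda\partial_y$, which is linear in $\lambda$ and hence commutes with the Cauchy operator $\mathcal C$ in the same clean way as $\mathcal L_\lambda$ does in (\ref{E:invcauchy'}). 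One then proves $\mathcal M_\lambda\Psi=(\partial_yQ)\Psi$, which together with (\ref{E:Lax01}) is equivalent to (\ref{E:Lax02}). Your quadratic-in-$\lambda$ version would work but makes the commutator $[\,\cdot\,,\mathcal C]$ produce an extra $(\lambda+\zeta)$ factor you would then have to absorb.

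Second, the obstacle you correctly flag --- that $\partial_\lambda$ applied to $v_0(x+\lambda y+\lambda^2 t,\lambda)$ throws off a $2t\lambda\,\partial_x v_0$ term --- is not handled in the paper by spending $x$-derivatives of $v_0$ as you propose. Instead the paper rewrites $x+\lambda y+\lambda^2 t=x+\lambda(y+\lambda t)$ and re-runs the \emph{direct-problem} estimates of Sections \ref{S:DPSMasymp}--\ref{S:CSD} with $t$ treated on the same footing as $y$; this gives $\textbf v(t)\in\mathfrak S_{c,k}$ (in fact with the augmented constraint $\mathcal M_\lambda v=0$ and with $t$-derivatives and $t$-decay) directly, without ever confronting the apparent $\lambda$-growth. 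Your route of manipulating $v_0$ alone would need a genuine argument to close, whereas the paper's route is a repetition of already-proved lemmas.
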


The paper is organized as: In Section \ref{E:DPSMexistence}, we review an existence theorem of Fokas and Ioannidou \cite{FI01} by  an analytical treatment. In Section \ref{S:DPSMasymp}, under the small-data constraint, we analyze the asymptotic behavior of the eigenfunctions. In Section \ref{S:ENS} and \ref{S:CSD}, we solve the direct problem by justifying Theorem \ref{T:LNexistence} and \ref{T:CSDsum}. The inverse problem is complete in Section \ref{S:CSDinv} by proving Theorem \ref{T:invexistence} and \ref{T:invcd}. Finally, Theorem \ref{T:cauchy} is  proved in Section \ref{S:cauchy}.

\noindent{\textbf{Acknowledgements}}

I would like to express my gratitude to Richard Beals for the hospitality during my visit at Yale in the summer of 2006 and many helpful discussions during the preparation of this report. I would like to thank Chuu-Lian Terng and Karen Uhlenbeck for pointing out the problem of Ward equation and for the generous encouragement during this work. 
%OVillarroel \cite{V90}, Dai, Terng and Uhlenbeck \cite{DTU} use Fourier analysis in the $x,y$-space to study the spectral theory of $\mathcal L_\lambda=\partial_y-\lambda\partial_x$ in (\ref{E:Lax01}), whilist  invert $\mathcal L_\lambda$ by interpreting it as a $1$-dimensional spectral operator with coefficients being the $x$-Fourier transform of functions.  In both cases, small data condition
%%%%%%%%%%%%%%%%%%%%%%%%%%%%%%%%%%%%%%%%%%
\section{Direct problem I: Eigenfunctions with small data}\label{E:DPSMexistence}
%%%%%%%%%%%%%%%%%%%%%%%%%%%%%%%%%%%%%%%%%%%

Given a potential $\partial_x Q(x,y):\mathbb{R}\times \mathbb{R}\to su(n)$, and a constant $\lambda\in \mathbb{C}$, we consider the boundary value problem
\begin{eqnarray}
&&\partial_y\Psi(x,y,\lambda)-\lambda\partial_x\Psi(x,y,\lambda)-\left(\partial_xQ\right)\Psi(x,y,\lambda)=0, \label{E:Lax1}\\
&&\Psi(x,y,\lambda)\to 1,\qquad\textsl{as }y\to-\infty.\label{y-bdry}
\end{eqnarray}

To investigate the problem, we denote throughout that  

\begin{definition}\label{D:SMfunction}
\begin{eqnarray*}
&&\mathbb P_1=\left\{{\partial_xq}(x,y):\mathbb{R}\times\mathbb{R}\to  su(n)|\,|\xi\widehat q (\xi,y)|_{ L_1(d\xi dy)}<1\right\}, \\
&&\mathbb X=\left\{w(x,y):\mathbb{R}\times\mathbb{R}\to  M_n(\mathbb{C})|\,\sup_y|\,\widehat w(\xi,y)|_{L_1(d\xi)}<\infty \right\},
\\
&&\widehat {\mathbb X }=\left\{ f(\xi,y):\mathbb{R}\times\mathbb{R}\to  M_n(\mathbb{C})|\, \sup_y| f (\xi,y)|_{L_1(d\xi)}<\infty\right\}.
\end{eqnarray*}
%and
%\[
%
%\]
Where $\,\,\widehat{}\,\,$ is the Fourier transform with respect to the $x$-variable, $M_n(\mathbb{C})$ is the space of $n\times n$ matrices, and for $f\in M_n(\mathbb{C})$
\begin{eqnarray*}
&&|f|=\mathrm{trace } \left(f^*f\right)^\frac 12,\qquad f^*=\overline f^T,\\ 
&&|f(\xi,y)|_{L_1(d\xi)}=\int_\mathbb{R}|f(\xi,y)|d\xi.
\end{eqnarray*}
\end{definition}

\begin{theorem}\label{T:SMexistence}
Suppose $Q\in \mathbb P_1$. Then for all fixed $\lambda\in \mathbb{C}^\pm$, there is uniquely a solution $\Psi$ of (\ref{E:Lax1}) and (\ref{y-bdry}) such that $\Psi-1\in \mathbb X$.  Moreover, for $\lambda\in \mathbb{C}^\pm$,
\begin{equation}
\lim_{|x|\to\infty}\Psi(\cdot,y,\lambda)= I,\qquad\lim_{|y|\to\infty}\Psi(x,\cdot,\lambda)= I.\label{E:SMxyinfty}%,\,\lim_{|\lambda|\to\infty, Im\,\lambda\ne 0}\Psi(x,y,\cdot)= I.%,\,\lim_{|\lambda|\to\infty}\Psi(x,y,\cdot)= I.
\end{equation}
\end{theorem}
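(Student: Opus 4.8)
The plan is to convert the boundary value problem (\ref{E:Lax1})--(\ref{y-bdry}) into an integral equation by taking the Fourier transform in $x$ and integrating the resulting first-order ODE in $y$ along the characteristic direction. Setting $m=\Psi-1$ and applying $\widehat{\,\cdot\,}$ in $x$, equation (\ref{E:Lax1}) becomes $\partial_y\widehat m(\xi,y,\lambda)-i\lambda\xi\,\widehat m(\xi,y,\lambda)=\widehat{(\partial_xQ)\Psi}(\xi,y,\lambda)$, which is a scalar linear ODE in $y$ with forcing term $\widehat{(\partial_xQ)\,\Psi}=\widehat{\partial_xQ}*_\xi(\delta+\widehat m)$. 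Using the boundary condition at $y\to-\infty$ (which forces the integration constant to vanish, since $\mathrm{Re}(i\lambda\xi)$ has a sign determined by $\mathrm{sgn}(\xi)$ and $\lambda\in\mathbb C^\pm$, so the homogeneous exponential $e^{i\lambda\xi(y-y')}$ decays in the appropriate direction), one obtains a Volterra-type integral equation
\[
\widehat m(\xi,y,\lambda)=\int_{-\infty}^{y} e^{i\lambda\xi(y-y')}\,\bigl(\widehat{\partial_xQ}(\cdot,y')*_\xi(\delta+\widehat m(\cdot,y',\lambda))\bigr)(\xi)\,dy',
\]
or rather, since the sign of the decay depends on $\mathrm{sgn}\,\xi$ versus $\mathrm{sgn}(\mathrm{Im}\,\lambda)$, the integration is taken from $-\infty$ or $+\infty$ in $y'$ according to that sign, so that the kernel $e^{i\lambda\xi(y-y')}$ is bounded by $1$ on the relevant range. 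This is exactly the structure exploited by Fokas and Ioannidou \cite{FI01}; the point of the present section is to carry it out as a clean contraction-mapping argument in the Banach space $\widehat{\mathbb X}$.

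The key steps, in order: (i) define the integral operator $\mathcal T_\lambda \widehat m(\xi,y)=\int e^{i\lambda\xi(y-y')}(\widehat{\partial_xQ}(\cdot,y')*_\xi\widehat m(\cdot,y'))(\xi)\,dy'$ and the inhomogeneous term $\widehat g(\xi,y)=\int e^{i\lambda\xi(y-y')}\widehat{\partial_xQ}(\xi,y')\,dy'$, so that the equation reads $\widehat m=\widehat g+\mathcal T_\lambda\widehat m$; (ii) show $\mathcal T_\lambda:\widehat{\mathbb X}\to\widehat{\mathbb X}$ is bounded with operator norm $\le|\xi\widehat q|_{L_1(d\xi dy)}<1$, using $|e^{i\lambda\xi(y-y')}|\le1$ on the domain of integration together with Young's inequality $|f*g|_{L_1(d\xi)}\le|f|_{L_1(d\xi)}|g|_{L_1(d\xi)}$ and Fubini to pull the $y'$-integral through the $\sup_y$; crucially $\widehat{\partial_xQ}(\xi,y)=i\xi\widehat q(\xi,y)$ so the norm bound is precisely the $\mathbb P_1$ hypothesis; (iii) conclude $\widehat g\in\widehat{\mathbb X}$ by the same estimate (its norm is $\le|\xi\widehat q|_{L_1}$), so by the Neumann series $\widehat m=(1-\mathcal T_\lambda)^{-1}\widehat g$ exists uniquely in $\widehat{\mathbb X}$, giving existence and uniqueness of $\Psi-1\in\mathbb X$; (iv) recover the original PDE and the boundary condition (\ref{y-bdry}) by differentiating the integral equation and applying dominated convergence as $y\to-\infty$ (the domain of $y'$-integration shrinks to empty, or $\widehat m$ is supported on decaying exponentials). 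For (\ref{E:SMxyinfty}): the limit as $|y|\to\infty$ follows from dominated convergence in the $y'$-integral (the integrand is dominated by $|\widehat{\partial_xQ}(\xi,y')|(1+\sup|\widehat m|)$, integrable in $y'$ by hypothesis, and the characteristic-function cutoff in $y'$ pushes the mass to $\pm\infty$); the limit as $|x|\to\infty$ follows from the Riemann--Lebesgue lemma once one knows $\widehat m(\cdot,y,\lambda)\in L_1(d\xi)$ uniformly in $y$, which is exactly membership in $\widehat{\mathbb X}$, so $m(x,y,\lambda)=\frac1{2\pi}\int e^{ix\xi}\widehat m(\xi,y,\lambda)\,d\xi\to0$ as $|x|\to\infty$.

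The main obstacle is not any single estimate — each is a one-line application of Young's inequality or Riemann--Lebesgue — but rather bookkeeping the sign conventions: the direction of $y'$-integration (from $-\infty$ or from $+\infty$) depends on the sign of $\xi\cdot\mathrm{Im}\,\lambda$, so strictly speaking the operator $\mathcal T_\lambda$ must be defined piecewise in $\xi$, with the half-line $\{\xi:\xi\,\mathrm{Im}\,\lambda>0\}$ integrated from $-\infty$ and the complementary half-line from $+\infty$, in order that $|e^{i\lambda\xi(y-y')}|\le1$ throughout. One must check that this piecewise definition still lands in $\widehat{\mathbb X}$ with the same norm bound (it does, since the two pieces have disjoint $\xi$-support and each satisfies the bound) and that the recovered $m$ still satisfies (\ref{E:Lax1}) and (\ref{y-bdry}) — the boundary condition at $y\to-\infty$ is automatic for the first piece and requires a short argument (vanishing of the $+\infty$-tail integral against the decaying exponential) for the second. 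Everything else is routine, and the contraction constant being strictly below $1$ is handed to us by the definition of $\mathbb P_1$.
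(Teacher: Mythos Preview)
Your proposal is correct and follows essentially the same route as the paper: Fourier transform in $x$, piecewise-in-$\xi$ integral equation with the direction of $y'$-integration chosen so that $|e^{i\lambda\xi(y-y')}|\le 1$, contraction in $\widehat{\mathbb X}$ with constant $|\widehat{\partial_xQ}|_{L_1(d\xi dy)}<1$, then Riemann--Lebesgue for $|x|\to\infty$ and dominated convergence for $|y|\to\infty$. The paper writes out the four cases $(\lambda\in\mathbb C^\pm,\ \xi\gtrless 0)$ explicitly and names the operator $\mathcal K_\lambda$ rather than $\mathcal T_\lambda$, but the argument is identical.
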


\begin{proof} Write $\Psi=1+W$. Then (\ref{E:Lax1}), (\ref{y-bdry}) are transformed into
\begin{eqnarray*}
&& \partial_yW-\lambda\partial_xW=\left({\partial_xQ}\right)W+\partial_xQ,\\
&& W(x,y,\lambda)\to 0 \textsl{ as } y\to-\infty.
\end{eqnarray*}
Taking the Fourier transform with respect to the $x$-variable (in distribution sense), we obtain
\[\partial_y\widehat W(\xi,y,\lambda)-i\xi\lambda\widehat W(\xi,y,\lambda)=\widehat{ \left(\partial_xQ\right)W}(\xi,y,\lambda)+\widehat {\partial_xQ}(\xi,y).
\]
Thus we are led to consider the following integral equations

\noindent

\begin{equation}
\widehat W(\xi,y,\lambda)=\begin{cases}
\int_{-\infty}^y\,e^{i\lambda\xi(y-y')}\left(\widehat {\partial_xQ}\ast{\widehat W}+\widehat {\partial_xQ}\right)\,dy',\textsl{ if  $\lambda\in \mathbb{C}^+$, $\xi\ge 0$};\\
-\int_y^\infty e^{i\lambda\xi(y-y')}\left(\widehat {\partial_xQ}\ast{\widehat W}+\widehat {\partial_xQ}\right)\,dy',\textsl{ if  $\lambda\in \mathbb{C}^+$, $\xi\le 0$};\\
-\int_y^\infty  \,e^{i\lambda\xi(y-y')}\left(\widehat{\partial_xQ}\ast{\widehat W}+\widehat {\partial_xQ}\right)\,dy',\textsl{ if  $\lambda\in \mathbb{C}^-$, $\xi\ge 0$};\\
\int_{-\infty}^y\, e^{i\lambda\xi(y-y')}\left(\widehat {\partial_xQ}\ast{\widehat W}+\widehat {\partial_xQ}\right)\,dy',\textsl{ if  $\lambda\in \mathbb{C}^-$, $\xi\le 0$}.\end{cases}
\label{E:W1}\end{equation}
Where $\ast$ is the convolution operator with respect to the $\xi$-variable. Define

%\begin{lemma}\label{L:hatX}
%If $Q_x\in P_0$, then the system (\ref{E:W1})-(\ref{E:W4}) is solved in the space $\widehat X$. 
%\end{lemma} 

%\begin{proof} We only prove the solvability of (\ref{E:W1}). The remaining cases can be done similarly. Let
\[
\mathcal K_\lambda f(\xi,y,\lambda)=\begin{cases}\int_{-\infty}^y\,e^{i\lambda\xi(y-y')}\left(\widehat {\partial_xQ}\ast f\right)(\xi,y',\lambda)\,dy', &\textsl{ if $\lambda\in\mathbb{C}^+$, $\xi\ge 0$;}\\
-\int_{y}^\infty\,e^{i\lambda\xi(y-y')}\left(\widehat {\partial_xQ}\ast f\right)(\xi,y',\lambda)\,dy', &\textsl{ if $\lambda\in\mathbb{C}^+$, $\xi\le 0$;}\\
-\int_{y}^\infty\,e^{i\lambda\xi(y-y')}\left(\widehat {\partial_xQ}\ast f\right)(\xi,y',\lambda)\,dy', &\textsl{ if $\lambda\in\mathbb{C}^-$, $\xi\ge 0$;}\\
\int_{-\infty}^y\,e^{i\lambda\xi(y-y')}\left(\widehat {\partial_xQ}\ast f\right)(\xi,y',\lambda)\,dy', &\textsl{ if $\lambda\in\mathbb{C}^-$, $\xi\le 0$.}
\end{cases}
\]Thus (\ref{E:W1}) turns into
\begin{equation}
\widehat W=\begin{cases}
\mathcal K_\lambda \widehat W+\int_{-\infty}^y e^{i\lambda\xi(y-y')}\widehat {\partial_xQ}(\xi,y')dy',&\textsl{ if $\lambda\in\mathbb{C}^+$, $\xi\ge 0$;}\\
\mathcal K_\lambda \widehat W-\int_{y}^\infty e^{i\lambda\xi(y-y')}\widehat {\partial_xQ}(\xi,y')dy',&\textsl{ if $\lambda\in\mathbb{C}^+$, $\xi\le 0$;}\\
\mathcal K_\lambda \widehat W-\int_{y}^\infty e^{i\lambda\xi(y-y')}\widehat {\partial_xQ}(\xi,y')dy',&\textsl{ if $\lambda\in\mathbb{C}^-$, $\xi\ge 0$;}\\
\mathcal K_\lambda \widehat W+\int_{-\infty}^y e^{i\lambda\xi(y-y')}\widehat {\partial_xQ}(\xi,y')dy',&\textsl{ if $\lambda\in\mathbb{C}^-$, $\xi\le 0$.}
\end{cases}\label{E:WK}
\end{equation}
where $\int_{-\infty}^y e^{i\lambda\xi(y-y')}\widehat {\partial_xQ}(\xi,y')dy',\,\, \int_{y}^\infty e^{i\lambda\xi(y-y')}\widehat {\partial_xQ}(\xi,y')dy'\in \widehat {\mathbb X}$ by $Q\in \mathbb P_1$. Note
\begin{eqnarray*}
%|Kf|_{L^\infty }&&\le \int_{-\infty}^y\,|\widehat {Q_x}(\xi,y')|_{L^1(\xi)} | f(\xi,y')|_{L^\infty(\xi)}dy'\\
%&&\le|\widehat {Q_x}(\xi,y)|_{L^1(\xi,y)}| f|_{L^\infty}\\
|\mathcal K_\lambda f(\xi,y)|_{L_1 (d\xi)}&&\le \int_{-\infty}^\infty\,|\widehat {\partial_xQ}(\xi,y')|_{L_1(d\xi)}|  f(\xi,y')|_{L_1(d\xi)}\,dy'\\
&&\le |\widehat {\partial_xQ}(\xi,y)|_{L_1(d\xi dy)}\,\sup_y| f|_{L_1(d\xi)}.
\end{eqnarray*}
Hence
\begin{equation}
\mathcal K_\lambda :\widehat {\mathbb X}\to \widehat {\mathbb X},\qquad
\Vert \mathcal K_\lambda \Vert\le  |\widehat {\partial_xQ}(\xi,y)|_{L_1(d\xi dy)}<1.\label{E:K}
\end{equation}
So 
\[\widehat W=\begin{cases}
(1-\mathcal K_\lambda)^{-1}\int_{-\infty}^y e^{i\lambda\xi(y-y')}\widehat {\partial_xQ}(\xi,y')dy',\qquad\textsl{ if $\lambda\in\mathbb{C}^+$, $\xi\ge 0$};\\ 
-(1-\mathcal K_\lambda)^{-1}\int_{y}^\infty e^{i\lambda\xi(y-y')}\widehat {\partial_xQ}(\xi,y')dy',\qquad\textsl{ if $\lambda\in\mathbb{C}^+$, $\xi\le 0$};\\
-(1-\mathcal  K_\lambda)^{-1}\int_y ^\infty e^{i\lambda\xi(y-y')}\widehat {\partial_xQ}(\xi,y')dy',\qquad\textsl{ if $\lambda\in\mathbb{C}^-$, $\xi\ge 0$};\\
(1-\mathcal K_\lambda)^{-1}\int_{-\infty}^y e^{i\lambda\xi(y-y')}\widehat {\partial_xQ}(\xi,y')dy',\qquad\textsl{ if $\lambda\in\mathbb{C}^-$, $\xi\le 0$}.
\end{cases}
\]Hence (\ref{E:W1}) is solvable if $Q\in \mathbb P_1$. Furthermore, the eigenfunction of (\ref{E:Lax1}), (\ref{y-bdry}) is given by:
\begin{equation}
\Psi(x,y,\lambda) =\begin{cases}
1+\frac 1{2\pi}\left(\int_0^\infty d\xi\int_{-\infty}^ydy'-\int_{-\infty}^0 d\xi\int_y^{\infty}dy'\right)\cdot\qquad\textsl{ if $\lambda\in \mathbb{C}^+$ }\\
 \hskip.06in e^{i\xi(x+\lambda(y-y'))}\left(\widehat {\partial_xQ}\ast{\widehat W}(\xi,y',\lambda)+\widehat {\partial_xQ}(\xi,y')\right);\\
1+\frac 1{2\pi}\left(\int_{-\infty}^0 d\xi\int_{-\infty}^ydy'- \int_0^\infty d\xi\int_y^{\infty}dy'\right)\cdot\qquad\textsl{ if $\lambda\in \mathbb{C}^-$ }\\
\hskip.06in  e^{i\xi(x+\lambda(y-y'))}\left(\widehat {\partial_xQ}\ast{\widehat W}(\xi,y',\lambda)+\widehat {\partial_xQ}(\xi,y')\right).\end{cases}\label{E:eigen+}\end{equation}

The uniqueness follows from  (\ref{E:Lax1}), (\ref{y-bdry}), (\ref{E:K}), the definition of $\mathbb X$,  and the contraction property of $\mathcal K_\lambda$.  

The uniform boundedness of $\Psi$ comes from Definition \ref{D:SMfunction}, (\ref{E:K}) and $Q\in \mathbb P_1$. By (\ref{E:eigen+}),  $\widehat {\partial_xQ}\ast{\widehat W}$, $\widehat {\partial_xQ}\in L_1(d\xi dy)$ and the Riemann-Lebesque Theorem, we obtain  $\Psi(\cdot,y, \lambda)\to 1$ as $|x|\to\infty$. On the other hand, (\ref{E:eigen+}), $\widehat {\partial_xQ}\ast{\widehat W}$, $\widehat {\partial_xQ}\in L_1(d\xi dy)$   and the Lebesque Convergence Theorem imply that $\Psi(x,\cdot, \lambda)\to 1$ when $|y|\to\infty$. 
\end{proof}
\begin{lemma}\label{L:SMdet}  
Suppose $\Psi$ satisfies (\ref{E:Lax1}), (\ref{y-bdry}). Then for $\lambda\notin\mathbb R$,
\[
\det\Psi(x,y,\lambda)\equiv 1.
\]
\end{lemma}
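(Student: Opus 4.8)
The plan is to show that $d(x,y,\lambda):=\det\Psi(x,y,\lambda)$ solves the homogeneous transport equation $(\partial_y-\lambda\partial_x)d=0$, and then to exploit $\mathrm{Im}\,\lambda\neq 0$ together with the boundary behaviour of $\Psi$ to force $d\equiv 1$. First I would differentiate $d$ by Jacobi's formula in adjugate form: $(\partial_y-\lambda\partial_x)\det\Psi=\mathrm{tr}\big(\mathrm{adj}(\Psi)\,(\partial_y-\lambda\partial_x)\Psi\big)$. Substituting equation (\ref{E:Lax1}), i.e. $(\partial_y-\lambda\partial_x)\Psi=(\partial_xQ)\Psi$, using cyclicity of the trace and $\Psi\,\mathrm{adj}(\Psi)=(\det\Psi)I$, the right side becomes $(\det\Psi)\,\mathrm{tr}(\partial_xQ)$, which vanishes since $\partial_xQ$ takes values in $su(n)$ and is therefore traceless. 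Hence $(\partial_y-\lambda\partial_x)d=0$; note that no invertibility of $\Psi$ is needed. If $\lambda$ were real this would already finish the proof, since $d$ would be constant along each line $x+\lambda y=\mathrm{const}$, on which $\Psi\to 1$ as $y\to-\infty$; the content of the lemma is exactly that for non-real $\lambda$ these ``characteristics'' are no longer real lines.

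Next I would set $g:=d-1$. Since the scalar version of $\mathbb X$ is closed under pointwise multiplication (Young's inequality bounds the $\xi$-convolution of two $L_1(d\xi)$ functions) and $\Psi-1\in\mathbb X$ by Theorem \ref{T:SMexistence}, expanding $\det(I+W)$ as $\sum_{k\ge 0}E_k(W)$, where $E_0=1$ and $E_k(W)$ for $k\ge 1$ is a polynomial each of whose monomials is a product of $k$ entries of $W=\Psi-1$, shows $g\in\mathbb X$; in particular $\sup_y|\widehat g(\cdot,y)|_{L_1(d\xi)}<\infty$. The function $g$ satisfies the same transport equation, so taking the Fourier transform in $x$ exactly as in the proof of Theorem \ref{T:SMexistence} gives $\partial_y\widehat g(\xi,y)=i\lambda\xi\,\widehat g(\xi,y)$, whence $\widehat g(\xi,y)=e^{i\lambda\xi y}\,\widehat g(\xi,0)$ and $|\widehat g(\xi,y)|=e^{-(\mathrm{Im}\,\lambda)\xi y}\,|\widehat g(\xi,0)|$.

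Finally, write $\beta=\mathrm{Im}\,\lambda\neq 0$. If $\widehat g(\cdot,0)$ were nonzero on a subset $E\subseteq\{\xi:\beta\xi>0\}$ of positive measure, then $|\widehat g(\cdot,y)|_{L_1(d\xi)}\ge\int_E e^{-\beta\xi y}|\widehat g(\xi,0)|\,d\xi\to\infty$ as $y\to-\infty$ by monotone convergence, contradicting the uniform bound above; hence $\widehat g=0$ a.e. on $\{\beta\xi>0\}$. Letting $y\to+\infty$ the same argument yields $\widehat g=0$ a.e. on $\{\beta\xi<0\}$. Since $\beta\neq 0$ the set $\{\xi=0\}$ is negligible, so $\widehat g(\cdot,0)\equiv 0$ in $L_1$, hence $\widehat g(\cdot,y)\equiv 0$ for every $y$, so $g\equiv 0$, i.e. $\det\Psi\equiv 1$.

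The determinant differentiation and the Fourier transform of the transport equation are routine (the latter is literally the computation leading to (\ref{E:W1})); the one step needing care, and the real point of the argument, is the last one, where the a priori $L_1(d\xi)$-bound on $\widehat g$ — which holds uniformly in $y\in\mathbb R$ precisely because $\Psi-1\in\mathbb X$ — must be played against the exponential growth of $e^{-\beta\xi y}$ as $y\to\pm\infty$, using crucially that $\mathrm{Im}\,\lambda\neq 0$ so that every $\xi\neq 0$ falls into one of the two growing regimes.
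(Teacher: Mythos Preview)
Your proof is correct, but it takes a genuinely different route from the paper's. Both arguments begin identically: differentiate $\det\Psi$ (you via Jacobi's formula, the paper via the exterior-algebra computation on $\psi_1\wedge\cdots\wedge\psi_n$) and use $\mathrm{tr}(\partial_xQ)=0$ to obtain $(\partial_y-\lambda\partial_x)\det\Psi=0$. From there the two diverge. The paper observes that under the change of variables $z=x+\lambda y$ (see (\ref{E:CV})) the transport operator $\partial_y-\lambda\partial_x$ becomes a nonzero multiple of $\partial_{\bar z}$, so $\det\Psi$ is an entire function of $z$; boundedness of $\Psi$ (from $\Psi-1\in\mathbb X$) and the boundary behaviour (\ref{E:SMxyinfty}) then force $\det\Psi\equiv 1$ by Liouville's theorem. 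Your argument instead stays on the Fourier side: you show $g=\det\Psi-1\in\mathbb X$, solve the transported ODE for $\widehat g$, and pit the exponential factor $e^{-(\mathrm{Im}\,\lambda)\xi y}$ against the uniform-in-$y$ bound $\sup_y|\widehat g(\cdot,y)|_{L_1(d\xi)}<\infty$ to kill $\widehat g(\cdot,0)$ on each half-line. The paper's approach is shorter and introduces the complex coordinate $z$ that is reused throughout (e.g.\ in \S\ref{S:ENS}); your approach is more self-contained within the Fourier framework of Theorem~\ref{T:SMexistence} and makes no appeal to complex analysis, at the cost of the extra verification that $\det(I+W)-1$ inherits membership in $\mathbb X$.
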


\begin{proof} Let $e_1, \,\cdots,\, e_n$ denote the standard basis for $\mathbb{C}^n$, $\psi_k$ the $k$-th column vector of the matrix $\Psi$.  Let $\Lambda^k(\mathbb{C}^n)$ denote the space of alternating $k$ forms on $\mathbb{C}^n$. Hence $\psi_1\wedge\psi_2\wedge\cdots\wedge\psi_n=\left(\det \Psi\right)\left(e_1\wedge e_2\wedge\cdots\wedge e_n\right)$. 
Taking derivatives of both sides, we derive
\begin{eqnarray*}
&&\left\{\left(\partial_y-\lambda\partial_x\right)\left(\det \Psi\right)\right\}\left(e_1\wedge e_2\wedge\cdots\wedge e_n\right)\\
=&&\left(\partial_y-\lambda\partial_x\right)\left\{\left(\det \Psi\right)\left(e_1\wedge e_2\wedge\cdots\wedge e_n\right)\right\}\\
=&&\left(\partial_y-\lambda\partial_x\right)\left\{\psi_1\wedge\psi_2\wedge\cdots\wedge\psi_n\right\}\\
=&&\left\{(\partial_y-\lambda\partial_x)\psi_1\right\}\wedge\cdots\wedge\psi_n+\cdots+\psi_1\wedge\cdots\wedge (\partial_y-\lambda\partial_x)\psi_n\\
=&&\left({\partial_xQ}\right)\psi_1\wedge\cdots\wedge\psi_n+\cdots+\psi_1\wedge\cdots \wedge \left({\partial_xQ}\right)\psi_n\\
=&& \left(\textsl{trace }{\partial_xQ}\right)\psi_1\wedge\psi_2\wedge\cdots\wedge\psi_n.
\end{eqnarray*}
So \[\left(\partial_y-\lambda\partial_x\right)\left(\det \Psi\right)=0\] by ${\partial_xQ}\in su(n)$. Moreover, for $\lambda\notin\mathbb R$, the equation turns into the debar equation
\[\partial_{\bar z}\left(\det \Psi\right)=0,\qquad  \textsl{$ x$, $ y\in \mathbb{R}$},
\]
by the change of variables:
\begin{equation}
x+\lambda y=\tilde x+i\tilde y= z,\qquad  \textsl{$\tilde x$, $\tilde y\in \mathbb{R}$}.\label{E:CV}
\end{equation}
Therefore the Liouville's Theorem and (\ref{E:SMxyinfty}) imply that $\det \Psi \equiv 1$, for $\lambda\notin\mathbb R$. %The lemma then follows from Lemma \ref{L:pmexistence} for $\forall \lambda\in \mathbb{C}$. 
\end{proof}

\begin{lemma}\label{L:SMreality}
 Suppose that $Q\in \mathbb P_1$. Then the reality condition 
\[
\Psi(x,y,\lambda)\Psi(x,y,\bar\lambda)^*=I
\]
holds for the eigenfunction $\Psi$. %Where $f^*=\overline{f}^T$.
\end{lemma}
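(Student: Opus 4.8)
The plan is to exploit the uniqueness statement from Theorem \ref{T:SMexistence} together with an algebraic symmetry of the Lax equation (\ref{E:Lax1}) under $\lambda\mapsto\bar\lambda$ and Hermitian conjugation. First I would form the candidate eigenfunction $\widetilde\Psi(x,y,\lambda):=\left(\Psi(x,y,\bar\lambda)^*\right)^{-1}$ and check that it solves the same boundary value problem (\ref{E:Lax1}), (\ref{y-bdry}) with the same potential $\partial_xQ$ and the same spectral parameter $\lambda$. Indeed, taking the Hermitian adjoint of (\ref{E:Lax1}) evaluated at $\bar\lambda$, and using that $\partial_xQ\in su(n)$ so $(\partial_xQ)^*=-\partial_xQ$, one gets $\partial_y(\Psi(\bar\lambda)^*)-\lambda\partial_x(\Psi(\bar\lambda)^*)=-(\Psi(\bar\lambda)^*)(\partial_xQ)$; then differentiating the identity $\Psi(\bar\lambda)^*\widetilde\Psi=I$ and substituting shows $\widetilde\Psi$ satisfies $\partial_y\widetilde\Psi-\lambda\partial_x\widetilde\Psi=(\partial_xQ)\widetilde\Psi$, i.e. (\ref{E:Lax1}). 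The boundary condition (\ref{y-bdry}) for $\widetilde\Psi$ follows from $\Psi(x,y,\bar\lambda)\to 1$ as $y\to-\infty$.

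The second step is to verify that $\widetilde\Psi-1$ lies in the space $\mathbb X$, so that the uniqueness clause of Theorem \ref{T:SMexistence} applies to $\widetilde\Psi$ for $\lambda\in\mathbb{C}^\pm$. This requires knowing that inversion and conjugation preserve $\mathbb X$ near the identity; since $\Psi(\bar\lambda)=1+W$ with $W$ (equivalently $\widehat W$) controlled in the $\sup_y L_1(d\xi)$ norm and $\Psi$ is uniformly bounded with $\det\Psi\equiv 1$ (Lemma \ref{L:SMdet}), the Neumann-series expansion of $(\Psi(\bar\lambda)^*)^{-1}=1-W^*+\cdots$ converges in $\mathbb X$ because $\mathbb X$ is a Banach algebra under the $x$-convolution structure and the relevant norm of $W$ is small. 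Granting this, uniqueness forces $\widetilde\Psi(x,y,\lambda)=\Psi(x,y,\lambda)$, which is exactly $\Psi(x,y,\lambda)\Psi(x,y,\bar\lambda)^*=I$.

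I expect the main obstacle to be the regularity/decay bookkeeping in the second step: confirming that $\mathbb X$ is closed under the operations of Hermitian transpose (harmless, since Fourier transform in $x$ intertwines $f\mapsto f^*$ with $\widehat f(\xi)\mapsto \widehat f(-\xi)^*$, leaving the $\sup_y L_1(d\xi)$ norm unchanged) and, more delicately, under inversion of $1+W$. The latter is where the smallness condition $Q\in\mathbb P_1$, i.e. $|\xi\widehat q|_{L_1(d\xi dy)}<1$, is genuinely used, via (\ref{E:K}), to make the geometric series $\sum_{k\ge 0}(-W^*)^k$ converge in $\mathbb X$. An alternative route that sidesteps some of this is to argue directly on the integral equation (\ref{E:W1}): show that $\widehat{\widetilde W}$ satisfies the same fixed-point equation $\widehat{\widetilde W}=\mathcal K_\lambda\widehat{\widetilde W}+(\text{inhomogeneous term})$ and invoke the contraction property of $\mathcal K_\lambda$ directly, which is cleaner than tracking the algebra structure explicitly. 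Either way the proof is short once the symmetry of (\ref{E:Lax1}) under $\lambda\mapsto\bar\lambda$, $\Psi\mapsto(\Psi^*)^{-1}$ is identified and matched against the uniqueness already proved.
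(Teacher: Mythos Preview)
Your approach is essentially identical to the paper's: form $\widetilde\Psi(\lambda)=(\Psi(\bar\lambda)^*)^{-1}$, verify via the skew-Hermitian property of $\partial_xQ$ that it solves (\ref{E:Lax1}) and (\ref{y-bdry}), check $\widetilde\Psi-1\in\mathbb X$, and invoke the uniqueness of Theorem~\ref{T:SMexistence}. One small caveat: your Neumann-series argument for $(1+W^*)^{-1}\in 1+\mathbb X$ tacitly assumes $\sup_y|\widehat W|_{L_1(d\xi)}<1$, but the estimate from (\ref{E:K}) only gives $\sup_y|\widehat W|_{L_1(d\xi)}\le |\widehat{\partial_xQ}|_{L_1}/(1-|\widehat{\partial_xQ}|_{L_1})$, which can exceed $1$; the paper instead uses $\det\Psi\equiv1$ (Lemma~\ref{L:SMdet}) so that $\Psi^{-1}$ is the cofactor matrix, a \emph{polynomial} in the entries of $W$, and then the algebra inequality $|\widehat{f^n}|_{L_1}\le|\widehat f|_{L_1}^n$ suffices with no smallness needed---you already have this ingredient in hand, so the fix is immediate.
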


\begin{proof} By Lemma \ref{L:SMdet}, one derives
\begin{eqnarray*}
&&(\partial_y-\lambda\partial_x){\Psi(x,y,\overline\lambda)^*}^{-1}\\
=&&-{\Psi(x,y,\overline\lambda)^*}^{-1}\left((\partial_y-\lambda\partial_x)\Psi(x,y,\overline\lambda)^*\right){\Psi(x,y,\overline\lambda)^*}^{-1}\\
=&&-{\Psi(x,y,\overline\lambda)^*}^{-1}\left(\overline{(\partial_y-\overline\lambda\partial_x)\Psi(x,y,\overline\lambda)}^T\right){\Psi(x,y,\overline\lambda)^*}^{-1}\\
=&&-{\Psi(x,y,\overline\lambda)^*}^{-1}(\left({\partial_xQ}\right)\Psi(x,y,\overline\lambda))^*{\Psi(x,y,\overline\lambda)^*}^{-1}\\
=&&-\left({\partial_xQ}^*\right){\Psi(x,y,\overline\lambda)^*}^{-1}\\
=&&\left({\partial_xQ}\right){\Psi(x,y,\overline\lambda)^*}^{-1}.
\end{eqnarray*}
Besides, noting $|\widehat {f^n}|_{L_1(d\xi)}\le |\widehat {f}|^n_{L_1(d\xi)}$ and the boundary condition of $\Psi$, we obtain $\Psi^{-1}-1\in \mathbb X$. Hence the lemma follows from the uniqueness property in Theorem \ref{T:SMexistence}. 
\end{proof}

%%%%%%%%%%%%%%%%%%%%%%%%%%%%%%%%%%%%%%%%%%
\section{Direct problem II: Asymptotic analysis with small data}\label{S:DPSMasymp}
%%%%%%%%%%%%%%%%%%%%%%%%%%%%%%%%%%%%%%%%%%%
The results and arguments will be applied or adapted in Section \ref{S:ENS} and \ref{S:CSD}. %In the case of Ward equation, the reality condition (see Lemma \ref{L:SMreality}) plays an important role in estimate.

Denote
\begin{eqnarray*}
\left(f\ast_{x,y}g\right)(x,y) &=&\int_{-\infty}^\infty \int_{-\infty}^\infty f(x-x',y-y')g(x',y')dx' dy',\\
\left(f\ast_{z,\bar z}g\right)(z,\bar z) &=&\int \int_{\mathbb{C}} f(z-\zeta,\bar z-\bar\zeta)g(\zeta,\bar\zeta)d\zeta d\bar\zeta.
\end{eqnarray*}
By the change of variables (\ref{E:CV}), we then have
\[\left(\partial_y-\lambda\partial_x\right)^{-1}=\frac i{2 \lambda_I}\partial_{\bar z}^{-1}=-\frac 1{4\pi \lambda_I z}\,\ast_{z,\bar z}=-\frac 1{2\pi i}\frac {\text {sgn}( \lambda_I)}{x+\lambda y}\ast_{x,y}
\] 
with $\lambda=\lambda_R+i\lambda_I$. Now let $\mathcal{S}$ be the set of Schwartz functions. If $Q\in \mathbb P_1\cap \mathcal{S}$, then the eigenfunction $\Psi$ obtained by Theorem \ref{T:SMexistence} satisfies
\begin{equation}
\Psi=1+ G_\lambda (\left({\partial_xQ}\right)\Psi)\label{E:2d}
\end{equation}
 Where
\begin{equation}
G_\lambda f(x,y,\lambda)=-\frac 1{2\pi i}\int_{-\infty}^\infty \int_{-\infty}^\infty \frac {\text {sgn}( \lambda_I)f(x-x',y-y',\lambda)}{{x'}+\lambda y'}dx'dy'\label{E:kernel}%=\frac 1{2\pi i}K_\lambda \left(Q_xf\right),
\end{equation}

%\begin{definition}\label{D:notation}
%For simplicity, we will use $C_{x,y}$ as a generic  bounded constant without further comment throughout Section \ref{S:DPSMasymp}. 
%\end{definition}

The following lemma is due to Richard Beals.
\begin{lemma}\label{L:FSestimate} 
Suppose $\varphi\in \mathcal{S}$. For $|\lambda|\ne 0$ and $| \lambda_I|<1$, %there exists a constant $C$ such that
\[
|G_\lambda \varphi|\le  \frac {C}{|\lambda|}(\sup_y|\partial_y\varphi|_{L_1(dx)}+\sup_y|\varphi|_{L_1(dx)}+|\varphi|_{L_1(dxdy)}). 
\]
Where $C$ is a constant. 
\end{lemma}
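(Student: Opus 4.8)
The plan is to estimate the oscillatory integral kernel $\dfrac{\operatorname{sgn}(\lambda_I)}{x'+\lambda y'}$ by splitting the $(x',y')$-plane according to whether $|x'| \lesssim |\lambda|\,|y'|$ or not, and in each region to extract a factor $|\lambda|^{-1}$ from the size of the denominator $|x'+\lambda y'| \ge |\lambda_I|\,|y'|$ or $|x'+\lambda y'| \ge \tfrac12|x'|$ as appropriate — but since $|\lambda_I|$ can be small this naive bound is not enough, so one must instead integrate first in one variable to exploit the decay of $\varphi$. Concretely, I would write
\[
G_\lambda\varphi(x,y) = -\frac{1}{2\pi i}\int_{-\infty}^\infty\!\!\int_{-\infty}^\infty \frac{\operatorname{sgn}(\lambda_I)\,\varphi(x-x',y-y')}{x'+\lambda y'}\,dx'\,dy',
\]
and for fixed $y'$ perform the $x'$-integral. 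Setting $\lambda = \lambda_R + i\lambda_I$, the denominator is $x' + \lambda_R y' + i\lambda_I y'$, whose imaginary part is $\lambda_I y'$; so the inner integral is a convolution in $x'$ of $\varphi(x-\cdot,\,y-y')$ against the function $u \mapsto (u + i\lambda_I y')^{-1}$, whose $L_\infty$-norm in $u$ is $(|\lambda_I|\,|y'|)^{-1}$ but whose $L_1$-norm diverges. This is exactly the tension, and the resolution is to use the Hilbert-transform-type bound: convolution against $(u+i\varepsilon)^{-1}$ maps $L_1(du)$ into itself with a logarithmically growing norm, or better, one integrates by parts in $x'$ using $\partial_y\varphi$ to gain the needed integrability. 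The cleanest route is: in the region $|y'| \ge |x'|/|\lambda|$ use $|x'+\lambda y'|\gtrsim |\lambda|\,|y'|$ together with $\int \tfrac{dy'}{|y'|}$ being tamed by splitting $|y'|\lessgtr 1$ and using $\sup_y|\varphi|_{L_1(dx)}$ for $|y'|$ small (after integrating the log singularity in $x'$) and $|\varphi|_{L_1(dxdy)}$ for $|y'|$ large; in the region $|y'| < |x'|/|\lambda|$ use $|x'+\lambda y'|\gtrsim |x'|$.

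In more detail, the key steps in order: (1) Change variables $x' + \lambda_R y' \mapsto s$ so the kernel becomes $(s + i\lambda_I y')^{-1}$, reducing to a one-parameter family; (2) For the inner ($s$- or $x'$-) integral, split into $|s| \le 1$ and $|s| > 1$: on $|s|>1$ bound $|s+i\lambda_I y'|^{-1}\le |s|^{-1}$ and use $\int_{|s|>1}|s|^{-1}|\varphi(\cdots)|\,ds \lesssim |\varphi|_{L_1}$ after a further dyadic decomposition, or simply note $\int_1^{|y|_{\max}}$-type bounds; on $|s|\le 1$, write $\varphi(x-s+\cdots) = \varphi(x+\cdots) - \int_0^s \partial_{x'}\varphi\,dt$ and use that $\int_{|s|\le 1}|s+i\lambda_I y'|^{-1}\,ds \lesssim \log(2 + 1/(|\lambda_I|\,|y'|))$, trading the log against an extra derivative; (3) Then integrate in $y'$, splitting $|y'|\le 1/|\lambda|$ versus $|y'| > 1/|\lambda|$, pairing the first piece with $\sup_y|\partial_y\varphi|_{L_1(dx)}$ and $\sup_y|\varphi|_{L_1(dx)}$ (the $\log$ integrated against $dy'$ near $0$ converges) and the second with $|\varphi|_{L_1(dxdy)}$, each time extracting the overall $|\lambda|^{-1}$; (4) Collect the three resulting bounds into the stated inequality with a universal $C$, using $|\lambda_I| < 1$ only to absorb $\log$-type constants.

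The main obstacle is precisely the borderline logarithmic divergence coming from the fact that $(u + i\lambda_I y')^{-1}$ is not absolutely integrable in $u$ on a fixed scale while $|\lambda_I|$ may be arbitrarily small: one cannot get away with a pure $L_\infty \times L_1$ Hölder pairing. Handling it requires the integration-by-parts trick (which is why $\sup_y|\partial_y\varphi|_{L_1(dx)}$ appears on the right-hand side — note it must actually be the $x'$-derivative that is traded, so one should be careful: the appearance of $\partial_y\varphi$ rather than $\partial_x\varphi$ suggests using the PDE $(\partial_y - \lambda\partial_x)$-structure, i.e. a derivative falls on $\varphi$ through $y'$ after the change of variables $x'+\lambda y' = z$), combined with the careful bookkeeping of which of the three norms of $\varphi$ controls which region. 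Once the region decomposition and the $\log$-integration are set up correctly, the rest is routine; I would expect the write-up to be about a page, most of it devoted to the case $|s|\le 1$, $|y'|\le 1/|\lambda|$.
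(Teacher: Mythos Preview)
Your outline has the right instincts (near/far splitting, an add-and-subtract to tame the logarithmic singularity), but it misses the one move that makes the lemma work cleanly and explains both where the factor $|\lambda|^{-1}$ comes from and why it is $\partial_y\varphi$, not $\partial_x\varphi$, that appears. The paper simply writes
\[
\frac{1}{x'+\lambda y'}=\frac{1}{\lambda}\cdot\frac{1}{y'+x'/\lambda},
\]
so $|\lambda|^{-1}$ is pulled out before any splitting. One then introduces the real parameter $s$ with $1/s=\lambda_R/|\lambda|^2$, notes $|y'+x'/\lambda|\ge |y'+x'/s|$, and splits on $|y'+x'/s|\lessgtr 1$. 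On the near set one adds and subtracts $\varphi(x-x',\,y+x'/s)$: the difference is controlled by the mean value theorem in the $y$-variable (hence $\sup_y|\partial_y\varphi|_{L_1(dx)}$), while for the added-back piece the $y'$-integral $\int_{|y'+x'/s|<1}(y'+x'/\lambda)^{-1}\,dy'$ is computed exactly and is bounded by $\pi$ uniformly in $\lambda$ and $x'$, leaving $\sup_y|\varphi|_{L_1(dx)}$. The far set gives $|\varphi|_{L_1(dxdy)}$ trivially.

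By contrast, your change of variables $s=x'+\lambda_R y'$ makes the kernel $(s+i\lambda_I y')^{-1}$, from which no $|\lambda|^{-1}$ is visible; in your ``$|s|>1$'' piece you bound by $|s|^{-1}\le 1$ and get only $|\varphi|_{L_1(dxdy)}$ with no decay in $\lambda$. Your plan to recover $|\lambda|^{-1}$ from a later split $|y'|\lessgtr 1/|\lambda|$ does not interact with that piece at all. And your own parenthetical remark flags the second symptom of the same problem: subtracting in the $x'$-direction would produce $\partial_x\varphi$, not the $\partial_y\varphi$ that the statement demands. Both issues disappear once you factor out $1/\lambda$ first and do the subtraction in $y'$; after that the proof is three short estimates, with no nested dyadic decompositions and no logarithms to integrate.
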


\begin{proof} Let $\frac 1{s}=\frac { \lambda_R}{ \lambda_R^2+ \lambda_I ^2}$. So 
\begin{equation}\frac 1\lambda=\frac 1{s}-i\frac { \lambda_I }{ \lambda_R}\frac 1s, \qquad |\frac 1{y+\frac x\lambda}|\le\frac 1{|y+\frac x{s}|}.\label{E:seps}
\end{equation}
Write 
\begin{eqnarray}
&&G_\lambda \varphi\nonumber\\
=&&\frac {-1}{2\pi i\lambda}(\int\int_{|y'+\frac {x'}{s}|<1}\text{sgn}{( \lambda_I)}\frac{\varphi(x-x',y-y')-\varphi(x-x',y+\frac{x'}{s})}{y'+\frac {x'}\lambda}dx'dy'\nonumber\\
&&+\int\int_{|y'+\frac {x'}{s}|>1}\text{sgn}{( \lambda_I)}\frac{\varphi(x-x',y-y')}{y'+\frac {x'}\lambda}dx'dy'\nonumber\\
&&+\int\int_{|y'+\frac {x'}{s}|<1}\text{sgn}{( \lambda_I)}\frac{\varphi(x-x',y+\frac{x'}{s})}{y'+\frac {x'}\lambda}dx'dy')\nonumber\\
=&&I_1+I_2+I_3.\nonumber
\end{eqnarray}
In view of (\ref{E:seps}), it is easy to see that
\begin{eqnarray}
|I_1|\le &&\frac 1{2\pi |\lambda|}\int\sup_{z: {|z+\frac {x'}{s}|<1}}|\partial_y\varphi(x-x',y-z)|dx'\nonumber\\
\le && \frac{C_1}{|\lambda|}\sup_y|\partial_y\varphi|_{L_1(dx)},\label{E:C1}\\
|I_2|\le &&\frac 1{2\pi |\lambda|} \int\int_{|y'+\frac {x'}{s}|>1}|\frac{\varphi(x-x',y-y')}{y'+\frac {x'}\lambda}|dx'dy'
\le  \frac {C_2}{|\lambda|}|\varphi|_{L_1(dxdy)}.\label{E:C2}
\end{eqnarray}
Finally, 
\begin{eqnarray*}
&&|\text{sgn}{( \lambda_I)}\int_{|y'+\frac {x'}{s}|<1}\frac 1{y'+\frac {x'}\lambda}dy'|\\
=&&|\log\left[\frac{1-i\frac{ \lambda_I x'}{ \lambda_R s}}{-1-i\frac{ \lambda_I x'}{ \lambda_R s}}\right]|\\
=&&|i\left[\text{arg}(1-i\frac{ \lambda_I x'}{ \lambda_Rs})-\text{arg}(-1-i\frac{ \lambda_I x'}{ \lambda_R s})\right]|\le\pi%\\
%=&&\text{sgn}{(x')}\pi i,
\end{eqnarray*}
%as $|\lambda|\to\infty$. 
This yields
\begin{eqnarray}
I_3&\le & \frac 1{2|\lambda|} \int| \varphi(x-x',y+\frac {x'}{s})|\,dx'\nonumber\\
&\le &\frac {C_3}{|\lambda|} \sup_y|\varphi|_{L_1(dx)}.\label{E:C3}
\end{eqnarray}
%One can verify $\lim_{|x|\to \infty}C_i=\lim_{|y|\to \infty}C_i=$ for $i=1,2,3$ by the Lebesque Convergence Theorem. 
Combining (\ref{E:C1}), (\ref{E:C2}), and (\ref{E:C3}), we prove the lemma.
\end{proof}

\begin{lemma}\label{L:asym-1}
 Suppose that $Q\in \mathbb P_1 \cap \mathcal{S}$. Then there exist a constant $C_N$ such that
\begin{eqnarray*}
&&|\partial_x^{N}\Psi|\le C_N.%,\\
%&&|\partial_x^N\partial_y\Psi|\le C_N|\lambda||\xi^N\widehat { Q_x}|_{L_1(d\xi dy)}.
\end{eqnarray*}
Where $C_N$ is a constant depending on $Q$.
\end{lemma}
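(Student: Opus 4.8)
The plan is to run a bootstrap on the integral equation (\ref{E:2d}), differentiating in $x$ and using the fact that the integral operator $G_\lambda$ acting on a convolution kernel commutes nicely with $\partial_x$, together with the estimate of Lemma \ref{L:FSestimate}. First I would observe that, since $Q\in\mathbb P_1\cap\mathcal S$, Theorem \ref{T:SMexistence} already gives a solution with $\Psi-1\in\mathbb X$, and in particular $\Psi$ is bounded; moreover from the Fourier representation (\ref{E:eigen+}) one can differentiate under the integral sign in $x$ as many times as desired, because each $\partial_x$ brings down a factor $i\xi$ and $\xi^N\widehat{\partial_xQ}\ast\widehat W$, $\xi^N\widehat{\partial_xQ}$ remain in $L_1(d\xi\,dy)$ by the Schwartz hypothesis on $Q$ and the bound on $\widehat W$. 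So the existence and finiteness of $\partial_x^N\Psi$ pointwise is not the issue; the issue is the \emph{uniform} (in $x,y$, and in $\lambda$ with $|\lambda|\ne 0$, $|\lambda_I|<1$) bound.

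The core step: apply $\partial_x^N$ to (\ref{E:2d}). Since $G_\lambda$ is convolution in $(x,y)$ against a fixed kernel, $\partial_x^N G_\lambda f = G_\lambda(\partial_x^N f)$, hence
\[
\partial_x^N\Psi = G_\lambda\bigl(\partial_x^N\left((\partial_xQ)\Psi\right)\bigr)
= \sum_{k=0}^{N} \binom{N}{k} G_\lambda\bigl((\partial_x^{k+1}Q)\,\partial_x^{N-k}\Psi\bigr).
\]
The top-order term $k=N$ is $G_\lambda\bigl((\partial_x^{N+1}Q)\,\Psi\bigr)$ — harmless, it only involves $\Psi$ itself, already controlled — while the term $k=0$ is $G_\lambda\bigl((\partial_xQ)\,\partial_x^N\Psi\bigr)$, the only one containing $\partial_x^N\Psi$. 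I would move this term to the left: the operator $f\mapsto G_\lambda((\partial_xQ)f)$ is, after Fourier transform in $x$, exactly the operator whose invertibility (as $1-\mathcal K_\lambda$) was established in Theorem \ref{T:SMexistence} under $Q\in\mathbb P_1$; so $\bigl(1 - G_\lambda(\partial_xQ\cdot)\bigr)$ is invertible with norm bounded independently of $\lambda$ on the relevant space. This gives
\[
\partial_x^N\Psi = \bigl(1 - G_\lambda(\partial_xQ\,\cdot)\bigr)^{-1}\sum_{k=1}^{N}\binom{N}{k} G_\lambda\bigl((\partial_x^{k+1}Q)\,\partial_x^{N-k}\Psi\bigr),
\]
and then I estimate the right-hand side by induction on $N$: each $\partial_x^{N-k}\Psi$ with $k\ge 1$ has order strictly less than $N$, hence is bounded by the inductive hypothesis, $\partial_x^{k+1}Q$ is Schwartz so $(\partial_x^{k+1}Q)\partial_x^{N-k}\Psi$ lies in the space $\sup_y|\cdot|_{L_1(dx)}\cap L_1(dx\,dy)$ with controlled norm (here I also need $\sup_y|\partial_y(\cdots)|_{L_1(dx)}<\infty$, which follows from the Schwartz decay of $Q$ and the $y$-regularity of $\Psi$ visible from (\ref{E:eigen+})), and Lemma \ref{L:FSestimate} turns this into the desired pointwise bound. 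The base case $N=0$ is the boundedness of $\Psi$ from Theorem \ref{T:SMexistence}.

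The main obstacle, and the step deserving care, is justifying that $\bigl(1-G_\lambda(\partial_xQ\,\cdot)\bigr)$ is boundedly invertible on a function space to which Lemma \ref{L:FSestimate} applies, \emph{uniformly in $\lambda$} for $|\lambda_I|<1$. One cannot directly use the contraction estimate (\ref{E:K}), which lives on $\widehat{\mathbb X}$ in Fourier variables, in tandem with Lemma \ref{L:FSestimate}, which is a pointwise $L^\infty$ estimate in physical variables; the cleanest route is to do the whole argument on the Fourier side — recast $\partial_x^N\Psi$ via its Fourier transform $(i\xi)^N\widehat W$, use the Neumann series $(1-\mathcal K_\lambda)^{-1}$ from Theorem \ref{T:SMexistence} to solve for $\xi^N\widehat W$ in $\widehat{\mathbb X}$, and only at the very end convert back, invoking Lemma \ref{L:FSestimate} (or simply the $L^1(d\xi)$ bound on $\xi^N\widehat W$, since $\sup_y|\xi^N\widehat W|_{L_1(d\xi)}<\infty$ already yields $\sup|\partial_x^N(\Psi-1)|<\infty$ by the inversion formula). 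I would adopt that Fourier-side version, where the needed inductive estimate $|\xi^N(\widehat{\partial_xQ}\ast\widehat W)|_{L_1(d\xi\,dy)}\le\sum_k\binom{N}{k}|\xi^{k+1}\widehat Q|_{L_1}|\xi^{N-k}\widehat W|_{L_1}$ is immediate from Young's inequality, and the dependence of $C_N$ on $Q$ enters through $\sum_k|\xi^{k+1}\widehat Q|_{L_1(d\xi\,dy)}$ and the fixed factor $(1-|\xi\widehat Q|_{L_1})^{-1}$.
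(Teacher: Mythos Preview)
Your proposal is correct, and the Fourier-side argument you settle on in the last paragraph is exactly the paper's proof: expand $\xi^N(\widehat{\partial_xQ}\ast\widehat W)=\sum_k\binom{N}{k}(\xi^k\widehat{\partial_xQ})\ast(\xi^{N-k}\widehat W)$, then show $\xi^k\widehat W\in\widehat{\mathbb X}$ by induction on $k$ using the same contraction $(1-\mathcal K_\lambda)^{-1}$ as in Theorem~\ref{T:SMexistence}. The physical-space route through $G_\lambda$ and Lemma~\ref{L:FSestimate} that you explore first is an unnecessary detour---as you yourself diagnose, the invertibility estimate lives on $\widehat{\mathbb X}$, and the Fourier-side bound $\sup_y|\xi^N\widehat W|_{L_1(d\xi)}<\infty$ already delivers $|\partial_x^N\Psi|\le C_N$ uniformly in $(x,y,\lambda)$ without ever invoking Lemma~\ref{L:FSestimate}.
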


\begin{proof} %By (\ref{E:eigen+}) and (\ref{E:Lax1}), it is sufficiently to show that 
%$ \xi^n\widehat{Q_x}\ast\widehat W\in L_1(d\xi dy)$. 
Since 
\begin{eqnarray*}
\xi^N\widehat{{\partial_xQ}}\ast\widehat W%=&&(\left(\xi-\xi'\right)+\xi')^N\widehat{Q_x}\ast\widehat W\\
=&&\sum_{k=0}^N\left(\begin{array}{l}N\\k\end{array}\right)\left(\xi-\xi'\right)^k\xi'^{N-k}\widehat{\partial_xQ}\ast\widehat W\\
=&&\sum_{k=0}^N\left(\begin{array}{l}N\\k\end{array}\right)\left(\xi^k\widehat{\partial_xQ}\right)\ast\left(\xi^{N-k}\widehat W\right).
\end{eqnarray*}
It suffices to prove $\xi^i\widehat W\in \mathbb X$ for $0\le k\le N$. This can be proved by induction on $k$ and  using the same argument as in the proof of Theorem \ref{T:SMexistence} if $|\xi^N\widehat {\partial_xQ}|_{L_1(d\xi dy)}<\infty$.
\end{proof} 

\begin{definition}\label{E:SMsc}
Define 
\begin{eqnarray*}
\mathbb{P}_{1,k}=&&\{{\partial_xq}(x,y):\mathbb{R}\times\mathbb{R}\to  su(n)|\,|\xi\widehat q (\xi,y)|_{ L_1(d\xi dy)}<1,\textit{ and}\\
&&\,\,\,|\xi^i\widehat q|_{L_1(d\xi dy)},\,\,|\partial_x^j\partial_y^hq|_{L_\infty},\,\,\sup_y|\partial_x^j\partial_y^hq|_{L_1(dx)},\,\,|\partial_x^j\partial_y^hq|_{L_1(dxdy)}<\infty\\
&&\,\,\,\textit{for $1\le i\le \max\{5,k\}$, $0\le j,\, h\le \max\{5,k\}$}.\}
\end{eqnarray*}
\end{definition}

Note that $\mathbb P_1\in\mathbb{P}_{1,k}$. For simplicity we abuse the notation $\partial_x^i \partial_y^j Q$, $\partial_x^i \partial_y^j \Psi$ by $Q_{\underbrace{x\cdots x}_{i} \underbrace{y\cdots y}_{j}}$, and $\Psi_{\underbrace{x\cdots x}_{i} \underbrace{y\cdots y}_{j}}$ in the remaining part of this section.

\begin{lemma}\label{L:asym-2}
Suppose that $Q\in \mathbb P_{1,k}$, $k\le 5$. Then \[\textit{$|\partial_x^{N}\Psi|\le C_N$, $0\le N\le 4$.}\]Moreover, as $|\lambda|\to\infty$,
\begin{eqnarray*}
&&|\partial_x\Psi|,\,\,|\partial_x^2\Psi|,\,\,|\partial_x^3\Psi|\le \frac { C}{|\lambda|}.%\left(\sum_{i=0}^2|\xi^i\widehat {Q_x}|_{L_1(d\xi dy)}\right)
%&&|\Psi_y|\le C.%\left(\sum_{i=0}^2|\xi^i\widehat {Q_x}|_{L_1(d\xi dy)}\right).
\end{eqnarray*}
Where $C_N$, $C$ is a constant depending on $Q$.
\end{lemma}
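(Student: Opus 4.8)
The plan is to bootstrap from Lemma 3.3 (Beals' estimate) and Lemma 3.4 using the integral equation (\ref{E:2d}). First I would note that the hypothesis $Q\in\mathbb P_{1,k}$ with $k\le 5$ gives $|\xi^i\widehat{\partial_xQ}|_{L_1(d\xi dy)}<\infty$ for $1\le i\le 5$, so Lemma 3.4 applies and yields $\xi^i\widehat W\in\mathbb X$ for $0\le i\le 4$; equivalently $\partial_x^N\Psi$ is bounded uniformly (in $x,y$, and in $\lambda\in\mathbb C^\pm$) for $0\le N\le 4$. That disposes of the first assertion. The point of the second assertion is to upgrade uniform boundedness to decay $O(|\lambda|^{-1})$ for $\partial_x^N\Psi$, $1\le N\le 3$, as $|\lambda|\to\infty$, and for this the $2$-dimensional representation (\ref{E:2d}) together with Lemma 3.3 is the right tool.

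The key steps, in order: (i) Apply $\partial_x^N$ to (\ref{E:2d}) and use that $G_\lambda$ commutes with $\partial_x$ (it is a convolution operator in $x,y$), obtaining $\partial_x^N\Psi = G_\lambda\big(\partial_x^N((\partial_xQ)\Psi)\big)$ for $N\ge 1$ — note there is no ``$1$'' term since $\partial_x^N 1=0$. (ii) Expand $\partial_x^N((\partial_xQ)\Psi)$ by Leibniz into a sum of terms $(\partial_x^{a}(\partial_xQ))(\partial_x^{b}\Psi)$ with $a+b=N\le 3$; each factor $\partial_x^{a}(\partial_xQ)$ lies in $\mathcal S$ (indeed in the relevant $L_1$, $L_1(dx)$-sup, and $\partial_y$-variants by the $\mathbb P_{1,k}$ norms, since $a\le 3$ and we need one more $y$-derivative, i.e. $a+1\le 4\le 5$), and each $\partial_x^{b}\Psi$ is uniformly bounded by part one (since $b\le 3\le 4$); hence each product $\varphi:=(\partial_x^{a}(\partial_xQ))(\partial_x^{b}\Psi)$ satisfies the hypotheses of Lemma 3.3 with the bounding quantity $\sup_y|\partial_y\varphi|_{L_1(dx)}+\sup_y|\varphi|_{L_1(dx)}+|\varphi|_{L_1(dxdy)}$ controlled by a constant depending only on $Q$ — here one uses that $\partial_y$ falls either on $\partial_x^{a}(\partial_xQ)$ (fine, needs $\partial_y$ of a $Q$-derivative, covered by the $\mathbb P_{1,k}$ norms) or on $\partial_x^{b}\Psi$, and $\partial_x^{b}\partial_y\Psi$ is again uniformly bounded because $\partial_y\Psi = \lambda\partial_x\Psi + (\partial_xQ)\Psi$ from (\ref{E:Lax1}); this last substitution, however, reintroduces a factor $\lambda$, so one must be slightly careful. (iii) To avoid the spurious $\lambda$ I would instead only ever let $\partial_y$ land on the $Q$-factor after first using the equation to trade: concretely, bound $\partial_x^N\Psi$ by first establishing the estimate for $N=1$, then for $N=2$ using the $N=1$ decay, then $N=3$ using the $N=1,2$ decay, so that in the Leibniz sum for level $N$ every occurrence of $\partial_x^{b}\Psi$ with $b\ge 1$ is already known to be $O(|\lambda|^{-1})$, which only improves the bound; the genuinely new input at each level is the single term $(\partial_x^N Q_x)\Psi$ with $\Psi=1+W$, and $G_\lambda(\partial_x^N Q_x) = O(|\lambda|^{-1})$ directly by Lemma 3.3 applied to $\varphi = \partial_x^N(\partial_xQ)\in\mathcal S$. (iv) Collecting, $|\partial_x^N\Psi|\le C|\lambda|^{-1}$ for $N=1,2,3$ and $|\lambda|$ large, with $C=C(Q)$.

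The main obstacle I anticipate is the interplay in step (iii): one wants every application of Lemma 3.3 to be to a function whose three controlling norms are $\lambda$-independent (or already $O(|\lambda|^{-1})$), and the naive route of writing $\partial_y\Psi$ via (\ref{E:Lax1}) brings back a factor of $\lambda$ that would destroy the decay. The fix is the inductive ordering in $N$ described above, so that the only ``bad'' term at each stage is $G_\lambda$ of a pure Schwartz function built from $Q$, for which Lemma 3.3 gives $O(|\lambda|^{-1})$ outright, while all Leibniz cross-terms involve an already-decaying $\partial_x^{b}\Psi$. One should also double-check at the outset that the representation (\ref{E:2d}) is legitimate here — it is stated in the excerpt for $Q\in\mathbb P_1\cap\mathcal S$, and $\mathbb P_{1,k}\subset\mathbb P_1$, so for $Q\in\mathbb P_{1,k}\cap\mathcal S$ (the ambient assumption throughout this section) it applies — and that $G_\lambda$ indeed commutes with $\partial_x$ on the function classes at hand, which is immediate from its convolution form (\ref{E:kernel}).
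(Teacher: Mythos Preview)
Your induction scheme does not close at the base case $N=1$, and the obstacle you correctly anticipate (``$\partial_y\Psi$ via (\ref{E:Lax1}) brings back a factor of $\lambda$'') is not actually removed by the fix you propose. Concretely: at $N=1$ you have $\partial_x\Psi = G_\lambda\big(Q_{xx}\Psi + Q_x\Psi_x\big)$, and even after splitting the $b=0$ term as $Q_{xx}\cdot 1 + Q_{xx}W$, the piece $G_\lambda(Q_{xx}W)$ still needs Lemma~\ref{L:FSestimate}, whose bound involves $\sup_y|\partial_y(Q_{xx}W)|_{L_1(dx)}$. When $\partial_y$ lands on $W$ you get $\partial_yW=\partial_y\Psi=\lambda\Psi_x+Q_x\Psi$, so this norm is $O(|\lambda|)$ and $G_\lambda$ yields only $O(1)$, not $O(|\lambda|^{-1})$. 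Your inductive hypothesis only feeds in decay for $\partial_x^b\Psi$ with $b\ge 1$; it says nothing about $\partial_yW$, which is where the loss occurs. The same issue recurs at every level in the Leibniz term with $b=0$.

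The paper gets the missing $1/\lambda$ from an algebraic identity rather than from $G_\lambda$ alone. It writes
\[
(\partial_y-\lambda\partial_x)\Big(\Psi^{-1}\big(1-\tfrac{Q}{\lambda}\big)\Big)=-\tfrac{1}{\lambda}\,\Psi^{-1}\big(Q_y-Q_xQ\big),
\]
so that inverting by $G_\lambda$ gives $1-\Psi^{-1}(1-Q/\lambda)=\tfrac{1}{\lambda}G_\lambda(\Psi^{-1}(Q_y-Q_xQ))$. Now one has a prefactor $\tfrac{1}{|\lambda|}$ \emph{before} applying Lemma~\ref{L:FSestimate}; the $\partial_y$-norm in that lemma does cost a factor $|\lambda|$ (via $\Psi_y=\lambda\Psi_x+Q_x\Psi$), but it cancels against the $\tfrac{1}{|\lambda|}$ from $G_\lambda$, leaving a net $O(|\lambda|^{-1})$. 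Differentiating $\Psi-(1-Q/\lambda)=\Psi\big(1-\Psi^{-1}(1-Q/\lambda)\big)$ in $x$ and running this computation then gives $|\Psi_x+Q_x/\lambda|=O(|\lambda|^{-1})$, hence $|\Psi_x|=O(|\lambda|^{-1})$; the higher $x$-derivatives follow by the same device. The point you are missing is precisely this ``extra'' $1/\lambda$ coming from the asymptotic ansatz $\Psi\sim 1-Q/\lambda$, without which Lemma~\ref{L:FSestimate} by itself cannot beat the $|\lambda|$-loss in the $\partial_y$-norm.
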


\begin{proof} The uniform boundedness of $\partial_x^N\Psi$, $0\le N\le 4$ in Lemma \ref{L:asym-1} will be used in the proof. A direct computation yields
\begin{eqnarray}
&&\Psi-(1-\frac Q\lambda)=\Psi\left(1-\Psi^{-1}(1-\frac Q\lambda)\right),\label{E:basic}\\
&&(\partial_y-\lambda\partial_x)\left(\Psi^{-1}(1-\frac Q\lambda)\right)
=-\frac 1{\lambda}\Psi^{-1}\left(Q_y-Q_xQ\right).\label{E:psix1}
\end{eqnarray}
So 
\[
\Psi_x+\frac {Q_x}\lambda=\Psi_x\left(1-\Psi^{-1}(1-\frac Q\lambda)\right)-\Psi\left(\Psi^{-1}(1-\frac Q\lambda)\right)_x= I_1+I_2\]
by (\ref{E:basic}).  
Therefore, inverting the operator $\partial_y-\lambda\partial_x$ in (\ref{E:psix1}) and applying Lemma \ref{L:FSestimate}, \ref{L:asym-1},
we have
\begin{eqnarray*}
|I_1|=&& \frac 1{|\lambda|}|\Psi_x||G_\lambda(\Psi^{-1}\left(Q_y-Q_xQ\right))|\\
\le &&  \frac {C}{|\lambda|^2}|\xi\widehat {Q_x}|_{L_1(d\xi dy)}(\sup_y|\left(\Psi^{-1}(Q_y-Q_xQ)\right)_y|_{L_1(dx)}
\\&&+\sup_y|\Psi^{-1}(Q_y-Q_xQ)|_{L_1(dx)}+|\Psi^{-1}\left(Q_y-Q_xQ\right)|_{L_1(dxdy)})\\
\le &&  \frac {C}{|\lambda|^2}\sum_{i=0}^1|\xi^i\widehat {Q_x}|_{L_1(d\xi dy)}(\left(|\Psi_y|_{L_\infty}+1\right)\sup_y|Q_y-Q_xQ|_{L_1(dx)}\\
&&+\sup_y|\left(Q_y-Q_xQ\right)_y|_{L_1(dx)}+|Q_y-Q_xQ|_{L_1(dxdy)})\\
\le &&  \frac {C}{|\lambda|^2}\sum_{i=0}^1|\xi\widehat {Q_x}|_{L_1(d\xi dy)}(\left(|\lambda\Psi_x+Q_x\Psi|_{L_\infty}+1\right)\sup_y|Q_y-Q_xQ|_{L_1(dx)}\\
&&+\sup_y|\left(Q_y-Q_xQ\right)_y|_{L_1(dx)}+|Q_y-Q_xQ|_{L_1(dxdy)})
\\
\le && \frac { C}{|\lambda|}\left(\sum_{i=0}^1|\xi^i\widehat {Q_x}|^2_{L_1(d\xi dy)}\right)\sum_{j,k=0}^2\left[\sup_y|\partial_x^j\partial_y^kQ|^2_{L_1(dx)}+|\partial_x^j\partial_y^kQ|^2_{L_1(dxdy)}\right]\\ 
\le && \frac { C}{|\lambda|}%\left(\sum_{i=0}^2|\xi^i\widehat {Q_x}|_{L_1(d\xi dy)}\right)
\end{eqnarray*}
as $|\lambda|\to\infty$.  Taking the $x$-derivatives of both sides of (\ref{E:psix1}), we derive 
\begin{eqnarray*}
|I_2|
=&&  \frac 1{|\lambda|}|\Psi G_\lambda\left(\Psi^{-1}(Q_y-Q_xQ)\right)_x|\\
\le &&  \frac {C}{|\lambda|^2}(\sup_y|\left(\Psi^{-1}(Q_y-Q_xQ)\right)_{xy}|_{L_1(dx)}\\
&&+\sup_y|\left(\Psi^{-1}(Q_y-Q_xQ)\right)_{x}|_{L_1(dx)}+|\left(\Psi^{-1}(Q_y-Q_xQ)\right)_x|_{L_1(dxdy)})\\
\le && \frac { C}{|\lambda|}\left(\sum_{i=0}^2|\xi^i\widehat {Q_x}|^2_{L_1(d\xi dy)}\right)\sum_{j,k=0}^3\left[\sup_y|\partial_x^j\partial_y^kQ|^2_{L_1(dx)}+|\partial_x^j\partial_y^kQ|^2_{L_1(dxdy)}\right]\\ 
\le && \frac { C}{|\lambda|}%\left(\sum_{i=0}^2|\xi^i\widehat {Q_x}|_{L_1(d\xi dy)}\right)
\end{eqnarray*}
Here we have used that (\ref{E:Lax1}) and Lemma \ref{L:asym-1}. 

By the same scheme as above and the following equalities
\begin{eqnarray*}
\Psi_{xx}+\frac { Q_{xx}}\lambda
=&&
\Psi_{xx}\left(1-\Psi^{-1}(1-\frac Q\lambda)\right)+
2\Psi_x\left(1-\Psi^{-1}(1-\frac Q\lambda)\right)_x\\
&&+\Psi\left(1-\Psi^{-1}(1-\frac Q\lambda)\right)_{xx}\\
\Psi_{xxx}+\frac { Q_{xxx}}\lambda
=&&
\Psi_{xxx}\left(1-\Psi^{-1}(1-\frac Q\lambda)\right)+
3\Psi_{xx}\left(1-\Psi^{-1}(1-\frac Q\lambda)\right)_x\\
&&+
3\Psi_x\left(1-\Psi^{-1}(1-\frac Q\lambda)\right)_{xx}+\Psi\left(1-\Psi^{-1}(1-\frac Q\lambda)\right)_{xxx},
\end{eqnarray*}
one derives
\begin{eqnarray*}
|\Psi_{xx}|&&\le \frac { C}{|\lambda|}\left(\sum_{i=0}^3|\xi^i\widehat {Q_x}|^2_{L_1(d\xi dy)}\right)\sum_{j,k=0}^4\left[\sup_y|\partial_x^j\partial_y^kQ|^2_{L_1(dx)}+|\partial_x^j\partial_y^kQ|^2_{L_1(dxdy)}\right]\\
%&&\le \frac { C}{|\lambda|}\\
|\Psi_{xxx}|&&\le \frac { C}{|\lambda|}\left(\sum_{i=0}^4|\xi^i\widehat {Q_x}|^2_{L_1(d\xi dy)}\right)\sum_{j,k=0}^5\left[\sup_y|\partial_x^j\partial_y^kQ|^2_{L_1(dx)}+|\partial_x^j\partial_y^kQ|^2_{L_1(dxdy)}\right]
%&&\le \frac { C}{|\lambda|}.
\end{eqnarray*}
Hence the estimates for $\Psi_{xx}$ and $\Psi_{xxx}$ follow.
\end{proof}

\begin{lemma}\label{L:asym-3}
Suppose that $Q\in \mathbb P_{1,k}$, $k\le 5$. Then 
\begin{eqnarray}
&&|\partial_y\Psi|\le \frac { C}{|\lambda|},\label{E:py}\\
&&|\partial_x\partial_y\Psi|\le \frac { C}{|\lambda|},\label{E:pxpy}
\end{eqnarray}
as $|\lambda|\to\infty$. Where $C$ is a constant depending on $Q$.
\end{lemma}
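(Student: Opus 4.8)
The plan is to mimic the proof of Lemma~\ref{L:asym-2}, replacing the role of the $x$-derivative by the $y$-derivative wherever possible and using the Lax equation (\ref{E:Lax1}) to trade $y$-derivatives for $\lambda$ times an $x$-derivative plus a lower-order term. Concretely, from (\ref{E:Lax1}) we have $\Psi_y=\lambda\Psi_x+(\partial_xQ)\Psi$, and by Lemma~\ref{L:asym-2} we already know $|\Psi_x|\le C/|\lambda|$ as $|\lambda|\to\infty$; however this only gives $|\Psi_y|\le C$, not the decay (\ref{E:py}). So instead I would work with the ``gauge-corrected'' quantity used in Lemma~\ref{L:asym-2}, differentiating the identity $(\partial_y-\lambda\partial_x)\left(\Psi^{-1}(1-\frac Q\lambda)\right)=-\frac1\lambda\Psi^{-1}(Q_y-Q_xQ)$ (equation (\ref{E:psix1})) this time in $y$ rather than in $x$, and then invert $\partial_y-\lambda\partial_x$ and apply Lemma~\ref{L:FSestimate}. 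Writing $\Psi_y+\frac{Q_y}\lambda = \Psi_y\left(1-\Psi^{-1}(1-\frac Q\lambda)\right)-\Psi\left(\Psi^{-1}(1-\frac Q\lambda)\right)_y$, the first term is controlled because $|\Psi_y|$ is bounded (from Lemma~\ref{L:asym-1} plus the Lax equation, or directly) and $\left(1-\Psi^{-1}(1-\frac Q\lambda)\right)=O(1/|\lambda|)$, while the second term is $-\frac1\lambda\Psi\,G_\lambda\!\left(\Psi^{-1}(Q_y-Q_xQ)\right)_y$, which by Lemma~\ref{L:FSestimate} is $O(1/|\lambda|^2)$ times Sobolev-type norms of $Q$ and of $\Psi$ and its first $x,y$-derivatives. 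Those norms are finite for $Q\in\mathbb P_{1,k}$, $k\le 5$, once one replaces any stray $\Psi_y$ appearing inside by $\lambda\Psi_x+(\partial_xQ)\Psi$ and uses the already-established bounds on $\Psi,\Psi_x,\Psi_{xx}$; the net power of $\lambda$ comes out to $1/|\lambda|$, proving (\ref{E:py}).

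For (\ref{E:pxpy}) I would repeat the same computation but after applying $\partial_x\partial_y$ to (\ref{E:psix1}), i.e. estimate $\Psi_{xy}+\frac{Q_{xy}}\lambda$ by expanding $\partial_x\partial_y$ of $\Psi\left(1-\Psi^{-1}(1-\frac Q\lambda)\right)$ via the Leibniz rule, so that each term is a product of a derivative of $\Psi$ of order $\le 2$ (already known to be bounded, or $O(1/|\lambda|)$ by Lemmas~\ref{L:asym-1} and~\ref{L:asym-2}) with a $y$- or mixed derivative of $\left(1-\Psi^{-1}(1-\frac Q\lambda)\right)$, the worst of which is $\left(\Psi^{-1}(1-\frac Q\lambda)\right)_{xy}=\frac1\lambda G_\lambda\!\left(\Psi^{-1}(Q_y-Q_xQ)\right)_{xy}$ plus $G_\lambda$ of lower-order pieces, again $O(1/|\lambda|^2)$ by Lemma~\ref{L:FSestimate}. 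Summing gives $|\Psi_{xy}|\le C/|\lambda|$. Throughout, every factor $|\partial_y^a\Psi|$ that would otherwise spoil a norm estimate is first rewritten through (\ref{E:Lax1}) as $\lambda$ times $|\partial_x^{a}\Psi|$ plus products of $\partial_xQ$ with lower $x$-derivatives of $\Psi$, and the resulting powers of $\lambda$ are absorbed by the extra $1/|\lambda|$ factors produced by $G_\lambda$; the regularity bookkeeping costs at most two extra $x$- or $y$-derivatives on $Q$, which is why $k\le 5$ suffices.

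The main obstacle I anticipate is precisely this bookkeeping: ensuring that when $y$-derivatives hit $\Psi^{-1}(Q_y-Q_xQ)$ and then $G_\lambda$ is applied, the $\sup_y|\cdot|_{L_1(dx)}$, $|\cdot|_{L_1(dxdy)}$ and $\sup_y|\partial_y(\cdot)|_{L_1(dx)}$ norms demanded by Lemma~\ref{L:FSestimate} are indeed finite and uniformly bounded in $\lambda$ after the Lax substitution --- in particular that the apparently dangerous term $|\lambda\Psi_x+Q_x\Psi|_{L_\infty}$ (which is just $|\Psi_y|_{L_\infty}$, bounded) does not accumulate an uncancelled power of $\lambda$. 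Once one checks, as in the chain of inequalities in Lemma~\ref{L:asym-2}, that each such term carries at least one compensating $1/|\lambda|$, the estimates (\ref{E:py}) and (\ref{E:pxpy}) follow. No genuinely new idea beyond Lemmas~\ref{L:FSestimate}--\ref{L:asym-2} is needed; the proof is a careful but routine extension of the $\partial_x$-arguments to $\partial_y$ and $\partial_x\partial_y$.
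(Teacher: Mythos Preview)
Your proposal is correct and follows essentially the same approach as the paper: the paper also writes $\Psi_y+\frac{Q_y}{\lambda}=II_1+II_2$ with $II_1=\Psi_y(1-\Psi^{-1}(1-Q/\lambda))$ and $II_2=-\Psi(\Psi^{-1}(1-Q/\lambda))_y$, inverts (\ref{E:psix1}) and applies Lemma~\ref{L:FSestimate}, uses the Lax substitution $\Psi_y=\lambda\Psi_x+Q_x\Psi$ (and its iterate $\Psi_{yy}=\lambda^2\Psi_{xx}+\lambda(Q_x\Psi)_x+(Q_x\Psi)_y$) to control the $y$-derivatives of $\Psi^{-1}$ appearing in the $G_\lambda$ argument, and then Leibniz-expands $\Psi_{xy}+\frac{Q_{xy}}{\lambda}$ into four terms $III_1,\ldots,III_4$ handled the same way. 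Your anticipated obstacle---the bookkeeping of $\lambda$-powers after the Lax substitution---is exactly the one the paper works through, and your resolution is the right one.
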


\begin{proof}
Using the formula
\[
\Psi_y+\frac {Q_y}\lambda=\Psi_y\left(1-\Psi^{-1}(1-\frac Q\lambda)\right)-\Psi\left(\Psi^{-1}(1-\frac Q\lambda)\right)_y= II_1+II_2.\]
and Lemma \ref{L:asym-2}, one can derive
\begin{eqnarray*}
&&|II_1|\\
=&&  \frac 1{|\lambda|}|\Psi_y G_\lambda\left(\Psi^{-1}(Q_y-Q_xQ)\right)|\\
\le &&  \frac {C}{|\lambda|^2}|\Psi_y|(\sup_y|\left(\Psi^{-1}(Q_y-Q_xQ)\right)_{y}|_{L_1(dx)}+\sup_y|\Psi^{-1}(Q_y-Q_xQ)|_{L_1(dx)}
\\
&&+|\Psi^{-1}(Q_y-Q_xQ)|_{L_1(dxdy)})
\\
\le && \frac { C}{|\lambda|^2}\left(\sum_{i=0}^2|\xi^i\widehat {Q_x}|^2_{L_1(d\xi dy)}\right)\sum_{j,k=0}^3\left[\sup_y|\partial_x^j\partial_y^kQ|^2_{L_1(dx)}+|\partial_x^j\partial_y^kQ|^2_{L_1(dxdy)}\right]
\\
&&\textit{(by estimates of $I_1$, and $I_2$ in Lemma \ref{L:asym-2})}\\
\le && \frac { C}{|\lambda|^2},%\left(\sum_{i=0}^2|\xi^i\widehat {Q_x}|_{L_1(d\xi dy)}\right),
\end{eqnarray*}
\begin{eqnarray*}
&&|II_2|\\
=&&  \frac 1{|\lambda|}|\Psi G_\lambda\left(\Psi^{-1}(Q_y-Q_xQ)\right)_y|
\\
\le &&  \frac {C}{|\lambda|^2}(\sup_y|\left(\Psi^{-1}(Q_y-Q_xQ)\right)_{yy}|_{L_1(dx)}+\sup_y|\left(\Psi^{-1}(Q_y-Q_xQ)\right)_y|_{L_1(dx)}
\\
&&+|\left(\Psi^{-1}(Q_y-Q_xQ)_y\right)|_{L_1(dxdy)})
\\
\le && \frac { C}{|\lambda|}\left(\sum_{i=0}^3|\xi^i\widehat {Q_x}|^2_{L_1(d\xi dy)}\right)\sum_{j,k=0}^4\left[\sup_y|\partial_x^j\partial_y^kQ|^2_{L_1(dx)}+|\partial_x^j\partial_y^kQ|^2_{L_1(dxdy)}\right]\\ 
&&\textit{(by estimates of $\Psi_{xx}$ in Lemma \ref{L:asym-2})}\\
\le &&\frac { C}{|\lambda|}.%\left(\sum_{i=0}^2|\xi^i\widehat {Q_x}|_{L_1(d\xi dy)}\right).
\end{eqnarray*}
Where the estimate $|\Psi_{yy}|=|\lambda^2\Psi_{xx}+\lambda(Q_x\Psi)_x+(Q_x\Psi)_y|$ has been used. Thus (\ref{E:py}) is proved. 
On the other hand, we write
\begin{eqnarray*}
&&\Psi_{xy}+\frac { Q_{xy}}\lambda\\
=&&
\Psi_{xy}\left(1-\Psi^{-1}(1-\frac Q\lambda)\right)-
\Psi_x\left(1-\Psi^{-1}(1-\frac Q\lambda)\right)_y\\
&&+
\Psi_y\left(1-\Psi^{-1}(1-\frac Q\lambda)\right)_x-\Psi\left(1-\Psi^{-1}(1-\frac Q\lambda)\right)_{xy}\\
=&&III_1+III_2+III_3+III_4.
\end{eqnarray*}
Similarly, one can verify 
\begin{eqnarray*}
|III_1|\le &&\frac { C}{|\lambda|^2}\sum_{i=0}^3|\xi^i\widehat {Q_x}|^2_{L_1(d\xi dy)}\sum_{j,k=0}^4\left[\sup_y|\partial_x^j\partial_y^kQ|^2_{L_1(dx)}+|\partial_x^j\partial_y^kQ|^2_{L_1(dxdy)}\right],
\\
|III_2|\le && \frac { C}{|\lambda|^2}\sum_{i=0}^3|\xi^i\widehat {Q_x}|^2_{L_1(d\xi dy)}\sum_{j,k=0}^4\left[\sup_y|\partial_x^j\partial_y^kQ|^2_{L_1(dx)}+|\partial_x^j\partial_y^kQ|^2_{L_1(dxdy)}\right],
\\
|III_3|\le &&\frac { C}{|\lambda|^3}\sum_{i=0}^3|\xi^i\widehat {Q_x}|^2_{L_1(d\xi dy)}\sum_{j,k=0}^4\left[\sup_y|\partial_x^j\partial_y^kQ|^2_{L_1(dx)}+|\partial_x^j\partial_y^kQ|^2_{L_1(dxdy)}\right],\\
|III_4|\le &&\frac { C}{|\lambda|}\sum_{i=0}^4|\xi^i\widehat {Q_x}|^2_{L_1(d\xi dy)}\sum_{j,k=0}^5\left[\sup_y|\partial_x^j\partial_y^kQ|^2_{L_1(dx)}+|\partial_x^j\partial_y^kQ|^2_{L_1(dxdy)}\right],
\end{eqnarray*}
by Lemma \ref{L:asym-2}, (\ref{E:py}). Hence we prove (\ref{E:pxpy}).
\end{proof}

\begin{theorem}\label{T:lambda2}
If $Q\in \mathbb P_{1,k}$, $k\le 5$, then as $|\lambda|\to\infty$,
\begin{eqnarray}
&&|\Psi(x,y,\lambda)-\left(1-\frac Q\lambda \right)|\le \frac { C}{|\lambda|^2},\label{E:psi-asym}\\
&&|\partial_x\Psi(x,y,\lambda)+\frac {\partial_xQ}\lambda |,\,\,|\partial_y\Psi(x,y,\lambda)+\frac {\partial_yQ}\lambda |\le \frac { C}{|\lambda|^2}.\label{E:psixy-asym}
\end{eqnarray}
Where $C$ is a constant depending on $Q$. \end{theorem}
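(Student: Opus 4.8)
The plan is to bootstrap from the first-order asymptotics already established in Lemmas \ref{L:asym-1}--\ref{L:asym-3} by iterating the fixed-point identity one more time. The starting point is the exact identity (\ref{E:basic}),
\[
\Psi-\left(1-\frac Q\lambda\right)=\Psi\left(1-\Psi^{-1}\left(1-\frac Q\lambda\right)\right),
\]
together with (\ref{E:psix1}), which says $\left(\partial_y-\lambda\partial_x\right)\bigl(\Psi^{-1}(1-\tfrac Q\lambda)\bigr)=-\tfrac1\lambda\Psi^{-1}(Q_y-Q_xQ)$. Since $\Psi^{-1}(1-\tfrac Q\lambda)\to 1$ as $y\to-\infty$, inverting $\partial_y-\lambda\partial_x$ via $G_\lambda$ gives
\[
1-\Psi^{-1}\left(1-\frac Q\lambda\right)=\frac1\lambda\,G_\lambda\!\left(\Psi^{-1}(Q_y-Q_xQ)\right),
\]
so that $\Psi-(1-\tfrac Q\lambda)=\tfrac1\lambda\,\Psi\,G_\lambda(\Psi^{-1}(Q_y-Q_xQ))$. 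Now I apply Lemma \ref{L:FSestimate} to the $G_\lambda$ term: it produces a factor $|\lambda|^{-1}$, and the $\mathbb P_{1,k}$-hypotheses (with $k\le 5$) control $\sup_y|\cdot|_{L_1(dx)}$, $\sup_y|\partial_y(\cdot)|_{L_1(dx)}$ and $|\cdot|_{L_1(dxdy)}$ of $\Psi^{-1}(Q_y-Q_xQ)$, using the uniform bound $|\Psi|\le C$ from Theorem \ref{T:SMexistence}/Lemma \ref{L:asym-1}, the bounds $|\Psi_x|,|\Psi_y|\le C/|\lambda|$ from Lemmas \ref{L:asym-2}--\ref{L:asym-3} (or just their uniform boundedness where that suffices), and the Leibniz rule together with $|\Psi^{-1}|\le C$ (which follows from $\Psi^{-1}-1\in\mathbb X$, as in Lemma \ref{L:SMreality}). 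Combining the two factors of $|\lambda|^{-1}$ yields (\ref{E:psi-asym}).

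For (\ref{E:psixy-asym}) I differentiate the identity $\Psi-(1-\tfrac Q\lambda)=\tfrac1\lambda\Psi\,G_\lambda(\Psi^{-1}(Q_y-Q_xQ))$ in $x$ (resp.\ $y$). By the product rule,
\[
\Psi_x+\frac{Q_x}\lambda=\frac1\lambda\Psi_x\,G_\lambda(\Psi^{-1}(Q_y-Q_xQ))+\frac1\lambda\Psi\,G_\lambda\!\left(\left(\Psi^{-1}(Q_y-Q_xQ)\right)_x\right),
\]
using that $G_\lambda$ commutes with $\partial_x$ (both are translation operators / Fourier multipliers in the relevant variable). In the first term I use $|\Psi_x|\le C/|\lambda|$ (Lemma \ref{L:asym-2}) and Lemma \ref{L:FSestimate} to get $|\lambda|^{-2}$ overall; in the second term Lemma \ref{L:FSestimate} applied to $(\Psi^{-1}(Q_y-Q_xQ))_x$ gives a factor $|\lambda|^{-1}$, and the required $L_1$-norms of that function and its $y$-derivative are finite, controlled by $\mathbb P_{1,k}$ together with the derivative bounds $|\Psi_x|,|\Psi_{xx}|,|\Psi_{xy}|\le C/|\lambda|$ and $|\Psi|,|\Psi^{-1}|,|\Psi_y|\le C$ from Lemmas \ref{L:asym-1}--\ref{L:asym-3}. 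This produces the bound $C/|\lambda|^2$ for $|\Psi_x+\tfrac{Q_x}\lambda|$. The $y$-estimate is identical, differentiating in $y$ instead and invoking (\ref{E:py})--(\ref{E:pxpy}) plus the Leibniz bookkeeping; note that a stray $\lambda\Psi_x$-type term arising from $(\partial_y-\lambda\partial_x)$ relations is harmless because each such factor already carries a compensating $|\lambda|^{-1}$.

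The main obstacle is purely bookkeeping: after applying Lemma \ref{L:FSestimate} one must verify that every $L_1(dx)$, $\sup_y L_1(dx)$, and $L_1(dxdy)$ norm appearing when $\partial_y$ hits $\Psi^{-1}(Q_y-Q_xQ)$ (and its $x$- or $y$-derivative) is finite and, crucially, that no factor of $|\Psi_{yy}|$ or $|\Psi_{xxx}|$ enters \emph{undivided} by $\lambda$ — one must substitute the Lax equation (\ref{E:Lax1}) to trade a $\Psi_{yy}$ for $\lambda^2\Psi_{xx}+\lambda(Q_x\Psi)_x+(Q_x\Psi)_y$ and then cancel the explicit powers of $\lambda$ against the $|\lambda|^{-k}$ already gained, exactly as in the proofs of Lemmas \ref{L:asym-2} and \ref{L:asym-3}. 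The constraint $k\le 5$ guarantees enough derivatives of $Q$ are available for this, since at most fifth-order $x,y$-derivatives of $Q$ and third-order derivatives of $\Psi$ are invoked. Assembling these estimates gives (\ref{E:psi-asym}) and (\ref{E:psixy-asym}).
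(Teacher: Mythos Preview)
Your proposal is correct and follows essentially the same approach as the paper: both invert (\ref{E:psix1}) to write $\Psi-(1-\tfrac Q\lambda)=\tfrac1\lambda\Psi\,G_\lambda(\Psi^{-1}(Q_y-Q_xQ))$ and apply Lemma \ref{L:FSestimate} for (\ref{E:psi-asym}), and for (\ref{E:psixy-asym}) both differentiate this identity and feed in the first-order bounds of Lemmas \ref{L:asym-2}--\ref{L:asym-3} to bootstrap---the paper's phrasing ``improve the estimates of $I_1,I_2,II_1,II_2$'' is exactly your product-rule decomposition. One small point: for the $II_2$-type term you need $|\Psi_{yy}|$ bounded, and the substitution $\Psi_{yy}=\lambda^2\Psi_{xx}+\cdots$ you quote only gives $O(|\lambda|)$ from $|\Psi_{xx}|\le C/|\lambda|$; the paper instead uses the one-step form $\Psi_{yy}=\lambda\Psi_{xy}+Q_{xy}\Psi+Q_x\Psi_y$ together with (\ref{E:pxpy}), which you do invoke, so the argument goes through.
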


\begin{proof}  Applying (\ref{E:psix1}), Lemma \ref{L:asym-2}, and \ref{L:asym-3}, we obtain
\begin{eqnarray*}
&&|\Psi-\left(1-\frac Q\lambda\right)|\nonumber\\
=&&|\Psi|\,|1-\Psi^{-1}\left(1-\frac Q\lambda\right)|\nonumber\\
=&&\frac {|\Psi|} {|\lambda|}|G_\lambda \left(\Psi^{-1}\left(Q_y-Q_xQ\right)\right)|\nonumber\\
\le && \frac {C}{|\lambda|^2}|\widehat {Q_x}|_{L_1(d\xi dy)} (|\sup_y\left(\Psi^{-1}\left(Q_y-Q_xQ\right)\right)_y|_{L_1(dx)}
\\
&&+|\sup_y \Psi^{-1}\left(Q_y-Q_xQ\right)|_{L_1(dx)}+|\Psi^{-1}\left(Q_y-Q_xQ\right)|_{L_1(dxdy)})\nonumber\\
\le &&  \frac {C}{|\lambda|^2}\sum_{i=0}^3|\xi^i\widehat {Q_x}|^2_{L_1(d\xi dy)}\sum_{j,k=0}^4\left[\sup_y|\partial_x^j\partial_y^kQ|^2_{L_1(dx)}+|\partial_x^j\partial_y^kQ|^2_{L_1(dxdy)}\right]
\end{eqnarray*} 
as $|\lambda|\to\infty$. Therefore, (\ref{E:psi-asym}) is proved. 

To proved (\ref{E:psixy-asym}), we used the results of Lemma \ref{L:asym-2}, and \ref{L:asym-3} to improve the estimates of $I_1$, $I_2$,  $II_1$, and $II_2$ in the proof of Lemma \ref{L:asym-2}, \ref{L:asym-3}. More precisely,
\begin{eqnarray*}
|I_1|
=&& \frac 1{|\lambda|}|\Psi_x||G_\lambda(\Psi^{-1}\left(Q_y-Q_xQ\right))|\\
\le &&  \frac {C}{|\lambda|^2}|\Psi_x|(\sup_y|\left(\Psi^{-1}(Q_y-Q_xQ)\right)_y|_{L_1(dx)}
\\
&&+\sup_y|\Psi^{-1}(Q_y-Q_xQ)|_{L_1(dx)}+|\Psi^{-1}\left(Q_y-Q_xQ\right)|_{L_1(dxdy)})\\
\le &&  \frac {C}{|\lambda|^3}\sum_{i=0}^3|\xi^i\widehat {Q_x}|^2_{L_1(d\xi dy)}\sum_{j,k=0}^4\left[\sup_y|\partial_x^j\partial_y^kQ|^2_{L_1(dx)}+|\partial_x^j\partial_y^kQ|^2_{L_1(dxdy)}\right],
\\|I_2|
=&&  \frac 1{|\lambda|}|\Psi G_\lambda\left(\Psi^{-1}(Q_y-Q_xQ)\right)_x|\\
\le &&  \frac {C}{|\lambda|^2}(\sup_y|\left(\Psi^{-1}(Q_y-Q_xQ)\right)_{xy}|_{L_1(dx)}\\
&&+\sup_y|\left(\Psi^{-1}(Q_y-Q_xQ)\right)_{x}|_{L_1(dx)}+|\left(\Psi^{-1}(Q_y-Q_xQ)\right)_x|_{L_1(dxdy)})\\
\le &&  \frac {C}{|\lambda|^2}\sum_{i=0}^3|\xi^i\widehat {Q_x}|^2_{L_1(d\xi dy)}\sum_{j,k=0}^4\left[\sup_y|\partial_x^j\partial_y^kQ|^2_{L_1(dx)}+|\partial_x^j\partial_y^kQ|^2_{L_1(dxdy)}\right],
\\
|II_1|
=&&  \frac 1{|\lambda|}|\Psi_y G_\lambda\left(\Psi^{-1}(Q_y-Q_xQ)\right)|\\
\le &&  \frac {C}{|\lambda|^2}|\Psi_y|(\sup_y|\left(\Psi^{-1}(Q_y-Q_xQ)\right)_{y}|_{L_1(dx)}\\
&&+\sup_y|\Psi^{-1}(Q_y-Q_xQ)|_{L_1(dx)}+|\Psi^{-1}(Q_y-Q_xQ)|_{L_1(dxdy)})\\
\le &&  \frac {C}{|\lambda|^3}\sum_{i=0}^3|\xi^i\widehat {Q_x}|^2_{L_1(d\xi dy)}\sum_{j,k=0}^4\left[\sup_y|\partial_x^j\partial_y^kQ|^2_{L_1(dx)}+|\partial_x^j\partial_y^kQ|^2_{L_1(dxdy)}\right],
\end{eqnarray*}
\begin{eqnarray*}
|II_2|
=&&  \frac 1{|\lambda|}|\Psi G_\lambda\left(\Psi^{-1}(Q_y-Q_xQ)\right)_y|\\
\le &&  \frac {C}{|\lambda|^2}(\sup_y|\left(\Psi^{-1}(Q_y-Q_xQ)\right)_{yy}|_{L_1(dx)}
\\
&&+\sup_y|\left(\Psi^{-1}(Q_y-Q_xQ)\right)_y|_{L_1(dx)}+|\left(\Psi^{-1}(Q_y-Q_xQ)_y\right)|_{L_1(dxdy)})\\
\le &&  \frac {C}{|\lambda|^2}\sum_{i=0}^4|\xi^i\widehat {Q_x}|^2_{L_1(d\xi dy)}\sum_{j,k=0}^5\left[\sup_y|\partial_x^j\partial_y^kQ|^2_{L_1(dx)}+|\partial_x^j\partial_y^kQ|^2_{L_1(dxdy)}\right].\end{eqnarray*}
Here $|\Psi_{yy}|=|\lambda\Psi_{xy}+Q_{xy}\Psi+Q_x\Psi_y|$ and (\ref{E:pxpy}) have been used in the estimation of $II_2$.  
\end{proof}

By induction, we can generalize the results of Lemma \ref{L:asym-1}-\ref{L:asym-3} and Theorem \ref{T:lambda2} to
\begin{corollary}\label{R:lambda1}
Suppose that $Q\in \mathbb P_{1,k}$. Then for $i+h\le \max\{k,5\}-4$ and  as $|\lambda|\to\infty$,
\[
|\partial_x^i\partial_y^h
\Psi(x,y,\lambda)-\partial_x^i\partial_y^h\left(1-\frac Q\lambda \right)|\le \frac { C}{|\lambda|^2}.
\]
\end{corollary}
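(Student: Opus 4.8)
\textbf{Proof plan for Corollary \ref{R:lambda1}.}

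The plan is to run an induction on the total order $m=i+h$ of the derivative, exactly paralleling the chain Lemma \ref{L:asym-1} $\to$ Lemma \ref{L:asym-2} $\to$ Lemma \ref{L:asym-3} $\to$ Theorem \ref{T:lambda2}, but now keeping track of how many regularity slots of $Q$ are consumed at each stage. The base cases $m=0$ and (partially) $m=1$ are Theorem \ref{T:lambda2}. For the inductive step I would first upgrade the analogue of Lemma \ref{L:asym-1}: differentiating the integral equation (\ref{E:W1}) in $\xi$ and using the binomial identity $\xi^N\widehat{\partial_xQ}\ast\widehat W=\sum_k\binom{N}{k}(\xi^k\widehat{\partial_xQ})\ast(\xi^{N-k}\widehat W)$ together with the contraction property (\ref{E:K}) of $\mathcal K_\lambda$, one gets $\xi^N\widehat W\in\mathbb X$ whenever $|\xi^N\widehat{\partial_xQ}|_{L_1(d\xi dy)}<\infty$, hence uniform boundedness $|\partial_x^N\Psi|\le C_N$ for $N\le\max\{k,5\}$. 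This is the unconditional ``$x$-regularity comes from Fourier decay of $Q$'' input.

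Next I would set up the key recursive identities. Generalizing (\ref{E:basic})--(\ref{E:psix1}), write $\Psi-(1-Q/\lambda)=\Psi\bigl(1-\Psi^{-1}(1-Q/\lambda)\bigr)$ and recall $(\partial_y-\lambda\partial_x)\bigl(\Psi^{-1}(1-Q/\lambda)\bigr)=-\tfrac1\lambda\Psi^{-1}(Q_y-Q_xQ)$. Applying $\partial_x^i\partial_y^h$ to the product formula and using the Leibniz rule splits $\partial_x^i\partial_y^h(\Psi-(1-Q/\lambda))$ into terms of the form $(\partial_x^{a}\partial_y^{b}\Psi)\cdot\partial_x^{i-a}\partial_y^{h-b}\bigl(1-\Psi^{-1}(1-Q/\lambda)\bigr)$ with $a+b\le i+h$. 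The factor with no derivative on $\Psi$ is controlled by inverting $\partial_y-\lambda\partial_x$ and invoking Lemma \ref{L:FSestimate}: each application of $G_\lambda$ to $\partial_x^c\partial_y^d\bigl(\Psi^{-1}(Q_y-Q_xQ)\bigr)$ costs one factor $1/|\lambda|$ and three more $Q$-derivative slots (from $\sup_y|\partial_y(\cdot)|_{L_1(dx)}$ etc.\ in the statement of Lemma \ref{L:FSestimate}), so it contributes $O(|\lambda|^{-2})$. For the terms where $\Psi$ carries derivatives, one uses the Lax equation (\ref{E:Lax1}) in the form $\Psi_y=\lambda\Psi_x+(\partial_xQ)\Psi$ (and its derivatives) to trade every $y$-derivative of $\Psi$ for an $x$-derivative at the cost of a power of $\lambda$; the resulting $\partial_x^{a'}\Psi$ factors are then either uniformly bounded (by the upgraded Lemma \ref{L:asym-1}) or already known to be $O(1/|\lambda|)$ from a lower stage of the induction. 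Matching the powers of $\lambda$ as in the $I_1,I_2,II_1,II_2,III_j$ estimates of Lemmas \ref{L:asym-2}--\ref{L:asym-3} and Theorem \ref{T:lambda2}, every term comes out $O(|\lambda|^{-2})$, which gives the claimed bound. The $Q$-regularity bookkeeping is what forces the budget $i+h\le\max\{k,5\}-4$: producing the estimate on an order-$m$ derivative of $\Psi$ requires order-$(m+1)$ control of the $\Psi$-factors via (\ref{E:Lax1}), which in turn needs the $G_\lambda$-estimate applied to something of order $m+1$, eating a further $3$ (really, enough to reach $\sup_y|\partial_y(\cdot)|_{L_1(dx)}$ on an order-$(m+1)$ quantity), for a total of about $m+4$ slots.

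The main obstacle is purely organizational rather than conceptual: bounding the mixed derivatives requires that at the moment one estimates $\partial_x^i\partial_y^h\Psi$ one already has the $O(1/|\lambda|)$ (not merely $O(1)$) decay for all $\partial_x^{a}\partial_y^{b}\Psi$ with $a+b<i+h$ \emph{and} for the one-higher-order $x$-derivative $\partial_x^{i+1}\partial_y^{h-1}\Psi$ that appears after using (\ref{E:Lax1}) to eliminate a $y$-derivative. One therefore has to order the induction carefully — e.g.\ induct on $i+h$ and, within each level, first establish the uniform boundedness of all derivatives of that order (from Fourier decay of $Q$, no $\lambda$-smallness needed), then the $O(1/|\lambda|)$ decay, then the refined $O(1/|\lambda|^2)$ asymptotic — so that each estimate only ever calls on strictly-earlier-established ones. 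Once this scheme is fixed, every individual bound is a routine repetition of the computations already displayed in the proofs of Lemmas \ref{L:asym-2}, \ref{L:asym-3} and Theorem \ref{T:lambda2}, with the number of derivatives on $Q$ simply incremented; hence we only indicate the induction and omit the repetitive estimates.
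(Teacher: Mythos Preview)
Your proposal is correct and follows exactly the paper's approach: the paper simply states that the corollary follows ``by induction'' from Lemmas \ref{L:asym-1}--\ref{L:asym-3} and Theorem \ref{T:lambda2}, and your write-up spells out precisely that induction (including the regularity bookkeeping that forces the constraint $i+h\le\max\{k,5\}-4$).
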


\begin{remark}\label{R:limit}
In general, the scattering transformation is a  generalized Fourier transform. That is, it maps smooth potentials to decaying scattering data, and decaying potentials to smooth scattering data. As is known, the asymptotic expansion of eigenfunctions is related to the decayness of the scatterig data. However, in the case of Ward equation, even for the Schwartz potentials, the second order  asymptotic expansion of Theorem \ref{T:lambda2} seems difficult to be improved. To see it,  the second order coefficient of the asymptotic expansion $\Psi$, and an analogue of (\ref{E:psix1}) need to be introduced. That is
\begin{eqnarray*}
\Psi_2(x,y)&=&\int_{-\infty}^x \left(-Q_y+Q_{x}Q\right)(x',y)\,dx',\label{E:psi2-1}
\end{eqnarray*}
\begin{eqnarray*}
c(y)&=&\int_{-\infty}^\infty \left(-Q_y+Q_{x}Q\right)(x',y)\,dx',\label{E:psi2-2}\\
\Phi(x)&=&\int_{-\infty}^x \phi(x')\,dx',\qquad\int_{-\infty}^\infty \phi(x')dx'=1,\label{E:psi2-3}\\
f(x,y)&=&\Psi_2(x,y)-c(y)\Phi(x).\label{E:psi2-4}
\end{eqnarray*}
and
\begin{equation}
(\partial_y-\lambda\partial_x)\left(\Psi^{-1}(1-\frac Q\lambda+\frac {\Psi_2}{\lambda^2})\right)
=\frac 1{\lambda^2}\Psi^{-1}\left(\partial_y\Psi_2-Q_x\Psi_2\right)\label{E:psi-5}
\end{equation}
Where $\phi$ is a Schwartz function. Then $f(x,y),\,c(y)$ are Schwartz. It can be checked that $\Psi_2$ does not possess integrability in the $x$-variable. This causes troubles in estimating $|\Psi-(1-\frac Q\lambda+\frac{\Psi_2}{\lambda^2})|$ while inverting (\ref{E:psi-5}) to derive a higher order asymptotic expansion of $\Psi$. 
\end{remark}

%%%%%%%%%%%%%%%%%%%%%%%%%%%%%%%%%%%%%%%%%%%%%%%%%%%%%%%%%%%%%%%%%%%%%%%%%%%%%%%%%%%%%%%%%%%%%%%%%%%%%%%%%%%%%%%%%%%%%%%%%%%%%%
\section{Direct problem III: Eigenfunctions with non-small data} \label{S:ENS}
%%%%%%%%%%%%%%%%%%%%%%%%%%%%%%%%%%%%%%%%%%%%%%%%%%%%%%%%%%%%%%%%%%%%%%%%%%%%%%%%%%%%%%%%%%%%%%%%%%%%%%%%%%%%%%%%%%%%%%%%%%%%%%%

First we introduce
\begin{definition}\label{D:cauchy}
The Cauchy operator $\mathcal C$ and its limits 
  $\mathcal{C}_\pm$ are defined as:  
\[\begin{array}{ll}
\mathcal{C}f(\lambda)=\frac 1{2\pi i}\int^\infty_{-\infty}\frac {f(\zeta)}{\zeta-\lambda}d\zeta,&\lambda\in\mathbb{ C}\backslash \mathbb{R},\\
\mathcal{C}_\pm f(\lambda)=\lim_{\epsilon\to 0^+}\frac 1{2\pi i}\int^\infty_{-\infty}\frac {f(\zeta)}{\zeta-\left(\lambda\pm i\epsilon\right)}d\zeta,&\lambda\in \mathbb{R}.
\end{array}
\]
\end{definition}

It is well-known that  $\mathcal{C}_\pm$ are bounded operators on $L_p(\mathbb{R})$ for $1<p<\infty$, and $\mathcal{C}_\pm f(\lambda)=\lim_{\tilde\lambda\to \lambda}\mathcal{C}f(\tilde\lambda)$, $\lambda\in \mathbb R$,   $\tilde\lambda\in \mathbb{ C}^\pm$ \cite{SW}.

\begin{definition}\label{D:SMrph}
Suppose $v(\lambda)$ is defined on $\mathbb{R}$. A function $\Psi(\lambda)$ is called a solution of the Riemann-Hilbert problem $(\lambda\in\mathbb{R}, v)$ if
\begin{eqnarray*}\Psi(\lambda)=&&1+\frac 1{2\pi i}\int_{\mathbb R}\frac{\Psi_-(t)\left(v(t)-1\right)}{t-\lambda}dt\\
=&&1+\mathcal C\left(\Psi_-(v-1)\right).
\end{eqnarray*}
Where $\Psi_\pm(\lambda)=\lim_{\tilde\lambda\to \lambda}\Psi(\tilde\lambda)$, $\lambda\in \mathbb R$,   $\tilde\lambda\in \mathbb{ C}^\pm$. Moreover, the function $v(\lambda)$ is called the data of the Riemann-Hilbert problem $(\lambda\in\mathbb{R}, v)$. 
\end{definition}

Suppose the data $v(\lambda)$, $\lambda\in \mathbb R$ satisfies $\partial_\lambda^i\left(\Psi-1\right)\in L_2(\mathbb R, d\lambda)$, for $i=0,1,2$. It can be seen that $\Psi$ is a solution of the Riemann-Hilbert problem $(\lambda\in\mathbb{R}, v)$ if and only if 
\[\begin{array}{ll}
\textit{$\partial_{\bar \lambda}\Psi=0$}, &\textit{$\lambda\in \mathbb{C}^\pm$}, \\
\textit{$\Psi_+=\Psi_- v$}, &\textit{$\lambda\in\mathbb{R}$},\\
\textit{$\Psi\to 1$,}&\textit{as $|\lambda|\to\infty$.}
\end{array}
\]

\begin{lemma}\label{L:SMrm}  
Suppose the data $v(\lambda)$, $\lambda\in \mathbb{R}$, satisfies:
\begin{eqnarray*}
&&v-1\in L_2(d\lambda),  \\%\label{E:S1}\\
&&|v-1|_{L_\infty(d\lambda)}\Vert \mathcal{C}_\pm\Vert_{2}< 1.%\label{E:S3}
\end{eqnarray*}
Then the Riemann-Hilbert problem $(\lambda\in\mathbb{R}, v)$ has a unique solution $\Psi$ such that $\Psi-1\in  L_\infty(d\lambda)\cap L_2(d\lambda)$. Moreover, if $H^k=\{f|\partial_\lambda^j f\in L_2(d\lambda),\,0\le j\le k\}$ and
\[
| v-1|_{H^k(d\lambda)}<<1,
\] then
\[ | \Psi_\pm-1 |_{ H^k(d\lambda) }\le C| v-1|_{H^k(d\lambda)}%\label{E:SMrm}
\]for some constant $C$.
\end{lemma}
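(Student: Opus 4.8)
\emph{Strategy.} The plan is to carry out the standard small-data Riemann--Hilbert argument: reduce the problem to a linear singular integral equation on $L_2(\mathbb R,d\lambda)$, solve it by a Neumann series because the relevant operator is a contraction under the hypothesis $|v-1|_{L_\infty}\Vert\mathcal C_\pm\Vert_2<1$, and then recover $\Psi$ and propagate the $H^k$-regularity.

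Taking the boundary value of the defining relation $\Psi=1+\mathcal C(\Psi_-(v-1))$ and writing $\mu:=\Psi_--1$, the Riemann--Hilbert problem becomes
\[
\mu-\mathcal C_-\big(\mu(v-1)\big)=\mathcal C_-(v-1).
\]
Since $v-1\in L_2$ and $\mathcal C_-$ is bounded on $L_2$, the right side lies in $L_2$, and $T\mu:=\mathcal C_-(\mu(v-1))$ satisfies $|T\mu|_{L_2}\le\Vert\mathcal C_-\Vert_2\,|v-1|_{L_\infty}\,|\mu|_{L_2}$ with coefficient $<1$. Hence $I-T$ is invertible on $L_2$, which yields the unique $L_2$-solution
\[
\mu=(I-T)^{-1}\mathcal C_-(v-1),\qquad |\mu|_{L_2}\le\frac{\Vert\mathcal C_\pm\Vert_2\,|v-1|_{L_2}}{1-|v-1|_{L_\infty}\Vert\mathcal C_\pm\Vert_2};
\]
uniqueness in the class $\Psi-1\in L_2$ (hence in $L_\infty\cap L_2$) holds because two $L_2$-solutions differ by a fixed point of $T$, which must vanish. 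Setting $\Psi:=1+\mathcal C\big((1+\mu)(v-1)\big)$, I would check from $\mathcal C_+-\mathcal C_-=I$ that $\Psi_-=1+\mu$ and $\Psi_+=\Psi_-v$, and that $\Psi(\lambda)\to1$ as $|\lambda|\to\infty$ off $\mathbb R$ from the Cauchy--Schwarz bound $|\mathcal Cg(\lambda)|\le(2\pi)^{-1}|g|_{L_2}\,(\pi/|\mathrm{Im}\,\lambda|)^{1/2}$ together with dominated convergence along horizontal lines. The boundary values obey $\Psi_\pm-1=\mathcal C_\pm\big((1+\mu)(v-1)\big)\in L_2$, and their $L_\infty$-membership is then read off a posteriori from the $H^1$-bound of the second part via the Sobolev embedding $H^1(\mathbb R)\hookrightarrow L_\infty(\mathbb R)$---the regularity regime in which this lemma is used.

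For the estimate in the ``moreover'' part, assume $|v-1|_{H^k}\ll1$ (so $T$ is still a contraction). Since $\mathcal C_\pm$ are Fourier multipliers in $\lambda$ they commute with $\partial_\lambda$, so differentiating the integral equation $j$ times, $1\le j\le k$, and expanding by Leibniz isolates $\partial_\lambda^j\mu$:
\[
(I-T)\,\partial_\lambda^j\mu=\mathcal C_-\partial_\lambda^j(v-1)+\sum_{\ell=0}^{j-1}\binom{j}{\ell}\,\mathcal C_-\big(\partial_\lambda^\ell\mu\cdot\partial_\lambda^{j-\ell}(v-1)\big).
\]
I would estimate the right side in $L_2$ by placing the $L_\infty$-norm on whichever factor still carries a full unit of Sobolev smoothness: for $\ell\le j-1<k$ this is $\partial_\lambda^{j-\ell}(v-1)$, bounded by $|v-1|_{H^k}$, while $\partial_\lambda^\ell\mu$ is controlled in $L_2$ by the induction hypothesis; at the top term $j=k$, $\ell=0$ one uses instead $|\mu|_{L_\infty}\le C|\mu|_{H^1}$ (or a Gagliardo--Nirenberg splitting) and absorbs the resulting $|\partial_\lambda^k\mu|_{L_2}$-contribution into the left side by Young's inequality. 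Since every term beyond the linear $\mathcal C_-\partial_\lambda^j(v-1)$ is at least quadratically small, inverting $I-T$ gives $|\partial_\lambda^j\mu|_{L_2}\le C|v-1|_{H^k}$ for all $j\le k$, i.e. $|\mu|_{H^k}\le C|v-1|_{H^k}$; finally $|\Psi_\pm-1|_{H^k}=|\mathcal C_\pm((1+\mu)(v-1))|_{H^k}\le\Vert\mathcal C_\pm\Vert_2\,|(1+\mu)(v-1)|_{H^k}\le C|v-1|_{H^k}$, using that $H^k(\mathbb R)$ is a multiplication algebra for $k\ge1$ and the bound just obtained for $\mu$.

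The main obstacle I anticipate is the inductive $H^k$-estimate: one must arrange, term by term in the Leibniz expansion, that each product $\partial_\lambda^\ell\mu\cdot\partial_\lambda^{j-\ell}(v-1)$ is bounded in $L_2$ with the smallness residing on a $(v-1)$-factor, and handle the top-order term $j=k$, $\ell=0$---where no spare derivative is available---by an absorption argument; this is the one place where the strong hypothesis $|v-1|_{H^k}\ll1$, rather than merely smallness of $|v-1|_{L_\infty}$, is genuinely needed. The contraction on $L_2$, the recovery of $\Psi$ with its jump relation and decay, and the commutation of $\partial_\lambda$ past $\mathcal C_\pm$ are routine.
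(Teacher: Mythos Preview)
Your proposal is correct and is precisely the standard Beals--Coifman argument the paper invokes: the paper's own proof consists only of the sentence ``The proof can be derived by an adaptation of the proof of Theorem 8.9 and 9.20 in \cite{BC84},'' and your Neumann-series solution of $\mu-\mathcal C_-(\mu(v-1))=\mathcal C_-(v-1)$ followed by Leibniz/induction for the $H^k$ bound is exactly that adaptation. One minor remark: your worry about the top-order term $j=k$, $\ell=0$ and the need for an absorption via Young's inequality is unnecessary---once the induction has placed $\mu\in H^1\subset L_\infty$, the product $\mu\cdot\partial_\lambda^k(v-1)$ is directly in $L_2$ with norm $\le C|v-1|_{H^k}^2$, and no $\partial_\lambda^k\mu$ appears on the right.
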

\begin{proof} The proof can be drived by an adaptation of the proof of Theorem 8.9 and 9.20 in \cite{BC84}. \end{proof}

\begin{lemma}\label{L:scalarrh}
Suppose the data $v(\lambda)$, $\lambda\in\mathbb R$, is a scalar function satisfying:
\begin{itemize}
	\item $v(\lambda)\ne 0$, $\forall \lambda$;
	\item $\int_{-\infty}^\infty d\,\mathrm {arg }\,v(\lambda)=0$;
	\item $v -1$, $\partial_\lambda v \in L_2(d\lambda)$.
\end{itemize}
Then the Riemann-Hilbert problem $(\lambda\in\mathbb{R}, v)$ has a unique solution $\Psi$.  Moreover, if
\[
v-1 \in H^k(d\lambda)
\] then
\[
 | \Psi_\pm-1 |_{ H^k(d\lambda) }\le C| v-1|_{H^k(d\lambda)}.%\label{E:scalarrm}
\]
Where $H^k(d\lambda)=\{f|\partial_\lambda^if\in L_2(d\lambda),\,0\le i\le k\}$, and $C$ is a constant depending on $|v|_{L_\infty}, |1/v|_{L_\infty}$.
\end{lemma}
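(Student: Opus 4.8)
The plan is to solve this scalar problem explicitly by the classical Plemelj construction, exploiting the vanishing winding number. Since $v-1,\partial_\lambda v\in L_2(d\lambda)$, we have $v-1\in H^1(d\lambda)$, hence $v$ is continuous, bounded, tends to $1$ as $|\lambda|\to\infty$, and (being nowhere zero) satisfies $\inf_\lambda|v(\lambda)|>0$; this is precisely why the final constant is allowed to depend on $|v|_{L_\infty}$ and $|1/v|_{L_\infty}$. Writing $\log v=\log|v|+i\,\mathrm{arg}\,v$, the total increment $\int_{-\infty}^\infty d\log v$ has real part $\log|v(+\infty)|-\log|v(-\infty)|=0$ and imaginary part $\int_{-\infty}^\infty d\,\mathrm{arg}\,v=0$ by hypothesis, so $\log v$ is a well-defined single-valued function on $\mathbb R$; moreover $|\log v(\lambda)|\le C(|1/v|_{L_\infty})\,|v(\lambda)-1|$ and $\partial_\lambda\log v=v^{-1}\partial_\lambda v$, whence $\log v\in H^1(d\lambda)$.

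First I would set
\[
\Psi(\lambda)=\exp\bigl(\mathcal C(\log v)(\lambda)\bigr),\qquad \lambda\in\mathbb C\setminus\mathbb R,
\]
and verify it solves the problem. Analyticity off $\mathbb R$ and $\Psi\to1$ as $|\lambda|\to\infty$ are immediate from the properties of the Cauchy integral of an $L_2$ function. For the boundary values, the Plemelj relation $\mathcal C_+-\mathcal C_-=\mathrm{id}$ on $L_2(d\lambda)$ gives $\Psi_\pm=\exp(\mathcal C_\pm\log v)$ and $\Psi_+\Psi_-^{-1}=\exp(\mathcal C_+\log v-\mathcal C_-\log v)=\exp(\log v)=v$ a.e.\ on $\mathbb R$; since also $\mathcal C_\pm\log v\in L_2(d\lambda)$, the integral representation of Definition \ref{D:SMrph} follows from $\Psi_+-\Psi_-=\Psi_-(v-1)$ by inverting the jump with the Cauchy operator. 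For uniqueness, suppose $\widetilde\Psi$ is another solution; it is analytic off $\mathbb R$ and $\to1$ at infinity by Definition \ref{D:SMrph}. Since the constructed $\Psi$ is nowhere zero on $\mathbb C\setminus\mathbb R$ with $\Psi_\pm$ bounded away from $0$, the ratio $R=\widetilde\Psi/\Psi$ is analytic on $\mathbb C\setminus\mathbb R$ and satisfies $R_+=(\widetilde\Psi_- v)(\Psi_- v)^{-1}=R_-$ a.e.\ on $\mathbb R$; a Morera (edge-of-the-wedge) argument shows $R$ extends to an entire function, and $R\to1$ at infinity forces $R\equiv1$ by Liouville, i.e.\ $\widetilde\Psi=\Psi$.

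For the $H^k$ estimate, assume $v-1\in H^k(d\lambda)$. Then $\log v\in H^k(d\lambda)$ with norm controlled by $|v-1|_{H^k}$ and $|1/v|_{L_\infty}$, using the chain rule together with the fact that $H^k(\mathbb R)$ is a Banach algebra for $k\ge1$ (and the pointwise bound above for the $k=0$ component). The operators $\mathcal C_\pm$ commute with $\partial_\lambda$ and are bounded on $L_2$, hence bounded on $H^k(d\lambda)$, so $\mathcal C_\pm\log v\in H^k(d\lambda)$ and, by $H^1\hookrightarrow L_\infty$, is bounded. Finally $\Psi_\pm-1=\exp(\mathcal C_\pm\log v)-1\in H^k(d\lambda)$ with the stated bound, once more by the algebra property combined with the $L_\infty$ bound on $\mathcal C_\pm\log v$.

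I expect the main obstacle to be the bookkeeping in the last paragraph: tracking the constant through the chain $v\mapsto\log v\mapsto\mathcal C_\pm\log v\mapsto\exp(\mathcal C_\pm\log v)$ while keeping its dependence confined to $|v|_{L_\infty}$, $|1/v|_{L_\infty}$ and $|v-1|_{H^k}$, and the careful justification — via the integral representation of Definition \ref{D:SMrph} — that an arbitrary solution is analytic off $\mathbb R$ with limit $1$ at infinity, so that the Liouville step is legitimate. Both are routine adaptations of the scalar theory underlying \cite{BC84}.
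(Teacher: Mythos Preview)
Your proposal is correct and follows the classical Plemelj construction for scalar Riemann--Hilbert problems, which is precisely what the paper invokes: after noting (via Sobolev embedding) that $v-1\in C_0$, it simply cites the Appendix of \cite{BC84}. Your explicit argument with $\Psi=\exp(\mathcal C\log v)$, the Plemelj jump relation, Liouville for uniqueness, and the $H^k$ bookkeeping is exactly what that citation stands for.
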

\begin{proof} Note that by the Sobolev's theorem,  $v -1\in C_0$ by condition $v -1$, $\partial_\lambda v \in L_2(d\lambda)$. Here $C_0$ denotes continuous functions with limit $0$ at $\infty$. Hence the proof can be found in Appendix of \cite{BC84}. \end{proof}

\begin{lemma}\label{L:psil2}
Suppose $Q\in \mathbb P_{\infty,2,0}\cap \mathbb P_1$. Then the eigenfunction obtained in Theorem \ref{T:SMexistence} satisfies:
\begin{enumerate}
	\item $\partial_x^i\left(\Psi(\cdot,y,\lambda)-1\right)$, $i=0,1,2$, are uniformly bounded in $L_2(dx)$;
	%\item $(\Psi(x,0,\lambda)-1)\chi_{|x|>N}$, $(\partial_x\Psi(x,0,\lambda))\chi_{|x|>N}\to 0$ uniformly in $L_2(dx)$ as $N\to\infty$.%;
  \item $\Psi(\cdot,y,\lambda)-1,\,\partial_x\Psi(\cdot,y,\lambda)\to 0$ uniformly in $L_2(dx)$ as $\lambda\to\infty$.
\end{enumerate}
\end{lemma}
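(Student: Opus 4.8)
The plan is to pass to the Fourier side and run a contraction argument on weighted spaces. Write $W=\Psi-1$; by Plancherel in $x$ it suffices to show that $\widehat W(\xi,y,\lambda)$, $\xi\widehat W$ and $\xi^2\widehat W$ lie in $L_2(d\xi)$ with norms uniformly bounded in $y$ and in $\lambda\in\mathbb C^\pm$, and that $\widehat W,\xi\widehat W\to0$ in $L_2(d\xi)$ uniformly in $y$ as $|\lambda|\to\infty$, where $\widehat W$ solves the Volterra system (\ref{E:WK}). The first step is to sharpen (\ref{E:K}): on $\mathbb Y=\{\,f(\xi,y):\|f\|_{\mathbb Y}:=\sup_y|f(\cdot,y)|_{L_2(d\xi)}<\infty\,\}$ the very same two-line estimate that produced (\ref{E:K}) — now using Young's inequality $|g\ast h|_{L_2(d\xi)}\le|g|_{L_1(d\xi)}|h|_{L_2(d\xi)}$ in place of $|g\ast h|_{L_1}\le|g|_{L_1}|h|_{L_1}$, and $|e^{i\lambda\xi(y-y')}|\le1$ on the relevant half-lines — gives $\mathcal K_\lambda:\mathbb Y\to\mathbb Y$ with $\|\mathcal K_\lambda\|\le|\widehat{\partial_x Q}|_{L_1(d\xi dy)}<1$ uniformly in $\lambda$, so that $(1-\mathcal K_\lambda)^{-1}$ is bounded on $\mathbb Y$ uniformly in $\lambda$.

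Next I induct on $i=0,1,2$. Multiplying (\ref{E:WK}) by $\xi^i$ and expanding $\xi^i=((\xi-\xi')+\xi')^i$ inside the convolution puts the equation in the form $\xi^i\widehat W=\mathcal K_\lambda(\xi^i\widehat W)+B_i$ with $B_i=\int e^{i\lambda\xi(y-y')}\bigl[\xi^i\widehat{\partial_x Q}+\sum_{\ell=0}^{i-1}\binom{i}{\ell}(\xi^{i-\ell}\widehat{\partial_x Q})\ast(\xi^\ell\widehat W)\bigr]dy'$ (integration range as in (\ref{E:WK})). By the inductive hypothesis $\xi^\ell\widehat W\in\mathbb Y$ for $\ell<i$, so the convolution terms of $B_i$ lie in $\mathbb Y$ by Young's inequality and $|\xi^m\widehat Q|_{L_1(d\xi dy)}<\infty$ for $m\le i+1\le3$; and the pure term is controlled in $\mathbb Y$ by $|\,|\xi^{i+1}\widehat Q(\xi,\cdot)|_{L_1(dy)}\,|_{L_2(d\xi)}$ (note $\xi^i\widehat{\partial_x Q}=i\xi^{i+1}\widehat Q$). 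For $i\le1$ this last quantity is precisely the mixed norm appearing in $\mathbb P_{\infty,2,0}$ with $h=i+1\le2$; for $i=2$ it is finite by the Cauchy--Schwarz interpolation in $y$, $|\xi^3\widehat Q(\xi,\cdot)|_{L_1(dy)}\le|\xi^2\widehat Q(\xi,\cdot)|^{1/2}_{L_1(dy)}|\xi^4\widehat Q(\xi,\cdot)|^{1/2}_{L_1(dy)}$, together with $\sup_\xi|\xi^2\widehat Q(\xi,\cdot)|_{L_1(dy)}\le|\partial_x^2Q|_{L_1(dxdy)}$ and $|\xi^4\widehat Q|_{L_1(d\xi dy)}<\infty$. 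Hence $B_i\in\mathbb Y$ and $\xi^i\widehat W=(1-\mathcal K_\lambda)^{-1}B_i\in\mathbb Y$, with norm depending only on $Q$; this is assertion (1).

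For (2) only $i=0,1$ are needed, and the extra decay comes from a single integration by parts in $y'$ in the pure inhomogeneous term of $B_i$: $\int e^{i\lambda\xi(y-y')}\xi^i\widehat{\partial_x Q}\,dy'=-\tfrac{1}{i\lambda}\xi^{i-1}\widehat{\partial_x Q}(\xi,y)+\tfrac{1}{i\lambda}\int e^{i\lambda\xi(y-y')}\xi^{i-1}\widehat{\partial_x\partial_y Q}\,dy'$, the boundary term at infinity vanishing because $Q\to0$. Since $\xi^{i-1}\widehat{\partial_x Q}=i\xi^i\widehat Q$ and $\xi^{i-1}\widehat{\partial_x\partial_y Q}=i\xi^i\widehat{\partial_y Q}$ there is no singularity at $\xi=0$, and the right-hand side is $O(|\lambda|^{-1})$ in $\mathbb Y$ using $|\xi^i\widehat Q(\cdot,y)|_{L_2(d\xi)}=|\partial_x^iQ(\cdot,y)|_{L_2(dx)}\le C$ and $|\,|\xi^i\widehat{\partial_y Q}(\xi,\cdot)|_{L_1(dy)}\,|_{L_2(d\xi)}<\infty$ (finite because $|\xi^i\widehat{\partial_y Q}(\xi,\cdot)|_{L_1(dy)}$ is simultaneously $\le|\partial_x^i\partial_y Q|_{L_1(dxdy)}$ and $\le\xi^{-2}|\partial_x^{i+2}\partial_y Q|_{L_1(dxdy)}$). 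Feeding this into $(1-\mathcal K_\lambda)^{-1}$ gives $\widehat W=O(|\lambda|^{-1})$ in $\mathbb Y$; then the convolution term of $B_1$, which contains $\widehat W$, is also $O(|\lambda|^{-1})$, so $\xi\widehat W=O(|\lambda|^{-1})$, which is assertion (2).

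The work is in the bookkeeping rather than in any single idea: one must verify that at each order $i\le2$ every source term lands in $\mathbb Y$ — and, for (2), in $O(|\lambda|^{-1})\mathbb Y$ — using only the finitely many norms of $\mathbb P_{\infty,2,0}$. The one place where no listed norm applies directly is the top-order inhomogeneous term at $i=2$, which formally asks for a weight $\xi^3$; it is recovered from $\xi^3=\xi\cdot\xi^2$ by the Cauchy--Schwarz step above, interpolating between the $h=2$ mixed norm and the $\xi^4$ weighted $L_1$ norm. Keeping every constant uniform in $\lambda$ is the other thing to watch, so that the $|\lambda|^{-1}$ extracted by the single integration by parts is not lost when $(1-\mathcal K_\lambda)^{-1}$ is applied.
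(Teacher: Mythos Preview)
Your argument is correct. For part (1) you follow the same route as the paper: pass to $L_2(d\xi)$ via Plancherel, observe that the same one-line estimate giving (\ref{E:K}) on $\widehat{\mathbb X}$ also gives a uniform contraction on the $L_2$ space (the paper writes this as $\mathcal K_\lambda:\widehat{\mathbb X}\cap\widehat{\mathbb X}_2\to\widehat{\mathbb X}\cap\widehat{\mathbb X}_2$), and then induct on the $\xi$-weight using the Leibniz expansion of $\xi^i$ inside the convolution. Your interpolation step at $i=2$, recovering the mixed norm with weight $\xi^3$ from the $h=2$ mixed norm and the $L_1$ norm with weight $\xi^4$, is a detail the paper does not spell out (it simply asserts ``the same argument'' handles $\xi^2\widehat W$ and $\xi^3\widehat W$).

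For part (2) you take a genuinely different route. The paper argues in two pieces: first a step-function approximation of $\widehat W$ in $L_1\cap L_2(d\xi)$ to show the $L_2(dx)$ tail $(\Psi-1)\chi_{|x|>N}$ is small uniformly in $y,\lambda$, and then the pointwise bound $|\Psi-1|\le C/|\lambda|$ from Theorem~\ref{T:lambda2} (together with dominated convergence) to handle the compact region $|x|\le N$. Your integration by parts in $y'$ on the Fourier side instead extracts the factor $1/\lambda$ directly from the inhomogeneous term, the $1/\xi$ it produces being harmless since $\widehat{\partial_xQ}=i\xi\widehat Q$; the convolution source in $B_1$ then inherits the decay from the $i=0$ step. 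This is more self-contained --- it avoids invoking the $L_\infty$ asymptotics of Section~\ref{S:DPSMasymp} and yields $\|\widehat W\|_{L_2(d\xi)}=O(|\lambda|^{-1})$ in one stroke --- while the paper's approach has the virtue of reusing machinery already in place. Both stay within the norms supplied by $\mathbb P_{\infty,2,0}$.
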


\begin{proof} By noting that the Fourier transform is an isometry on the $L_2$ spaces, to prove (1), it suffices to show that $\xi^i\widehat W$, $i=0,1,2$, are uniformly bounded in $ L_2(d\xi)$. We will only treat the case of $\lambda\in \mathbb{C}^+$ and $\xi\ge 0$ for simplicity. Other cases can be handled similarly.
Note
\[
%|Kf|_{L^\infty }&&\le \int_{-\infty}^y\,|\widehat {Q_x}(\xi,y')|_{L^1(\xi)} | f(\xi,y')|_{L^\infty(\xi)}dy'\\
%&&\le|\widehat {Q_x}(\xi,y)|_{L^1(\xi,y)}| f|_{L^\infty}\\
|\mathcal K_\lambda f(\xi,y,\lambda)|_{L_2(d\xi )}%\le &&\int_{-\infty}^y\,|\widehat {\partial_xQ}(\xi,y')|_{L_1(d\xi)}|  f(\xi,y',\lambda)|_{L_2(d\xi )}\,dy'\\
\le |\widehat {\partial_xQ}(\xi,y)|_{L_1(d\xi dy)}\,\sup_{y,\lambda}| f|_{L_2(d\xi )}.
\]
Denote $\widehat {\mathbb X}_2=\left\{f(\xi,y,\lambda):\mathbb R\times\mathbb R\times \mathbb C\to M_n(\mathbb C)|\sup_{y,\lambda}|f(\xi,y,\lambda)|_{L_2(d\xi)}<\infty\right\}$. So 
\[
\mathcal K_\lambda :\widehat {\mathbb X}\cap \widehat {\mathbb X}_2\to \widehat {\mathbb X}\cap \widehat {\mathbb X}_2,\qquad
\Vert \mathcal K_\lambda \Vert\le  |\widehat {\partial_xQ}(\xi,y)|_{L_1(d\xi dy)}.
\]
By the assumption $Q\in \mathbb P_{\infty,2,0}$, we have 
$\int_{-\infty}^y e^{i\lambda\xi(y-y')}\widehat {\partial_xQ}(\xi,y')dy'\in \widehat {\mathbb X}\cap \widehat {\mathbb X}_2$. 
Therefore the solution $\widehat W$ of (\ref{E:WK}) is in $\widehat {\mathbb X}\cap \widehat {\mathbb X}_2$. 
Moreover, one can derive 
\begin{eqnarray*}
\xi\widehat W
=%&&\int_{-\infty}^y  e^{i\lambda\xi(y-y')}(\xi -\xi'+\xi')\widehat{Q_{x}}\ast \widehat Wdy'+ \int_{-\infty}^y  e^{i\lambda\xi(y-y')}\xi \widehat{Q_{x}}dy'\\
&&\int_{-\infty}^y  e^{i\lambda\xi(y-y')}\left(\xi\widehat{Q_{x}}\right)\ast \widehat Wdy'+\int_{-\infty}^y  e^{i\lambda\xi(y-y')}\widehat{Q_{x}}\ast \left(\xi\widehat W\right)dy'\\
&&+ \int_{-\infty}^y  e^{i\lambda\xi(y-y')}\xi \widehat{Q_{x}}dy'
\end{eqnarray*}
from (\ref{E:W1}). As a result, we have $\xi\widehat W%=(1-K)^{-1}(\int_{-\infty}^y  e^{i\lambda\xi(y-y')}(\xi \widehat{Q_{x'}})\ast \widehat Wdy'+ \int_{-\infty}^y  e^{i\lambda\xi(y-y')}\xi \widehat{Q_{x'}}dy')
\in\widehat {\mathbb X}\cap \widehat {\mathbb X}_2$, if $Q\in \mathbb P_{\infty,2,0}\cap \mathbb P_1$.  
The same argument can prove $\xi^2\widehat W,\,\xi^3\widehat W\in\widehat {\mathbb X}\cap \widehat {\mathbb X}_2$, if $Q\in \mathbb P_{\infty,2,0}\cap \mathbb P_1$. Hence (1) is justified.

To prove (2), by the definition of $\widehat{\mathbb X}$ and result of (1), the function $\widehat{W}(\xi,y,\lambda)$ can be approximated uniformly by $g$ where 
\[
|\widehat{W}-g|_{L_2(d\xi)\cap L_1(d\xi)}<\epsilon,
\]
and $g$ is a linear combination of step functions in $\xi$ with uniformly bounded coefficients in $y$, $\lambda$. Hence 
\[
\left(\int_{-\infty}^\infty e^{i\xi x}g(\xi,y,\lambda)d\xi\right)\chi_{|x|>N}\to 0 \textit{  uniformly in $L_2(dx)$ as $N\to\infty$.}%\label{E:2}
\]
Where $\chi_{|x|>N}$ is the characteristic function of the set $\{|x|>N\}$. The above two inequalities imply that $(\Psi(x,y,\lambda)-1)\chi_{|x|>N}\to 0$ uniformly in $L_2(dx)$ as $N\to\infty$.  We can prove the case of $(\partial_x\Psi(x,y,\lambda))\chi_{|x|>N}$ by the similar method.
Combining with %the result (1), 
Theorem \ref{T:lambda2} and the Lebesque Convergence Theorem, 
one can prove (2).
\end{proof}

\begin{lemma}\label{L:xy-uniform}
Let $x+\lambda y=z$, $\partial_{\bar z}=\frac 12(\partial_x+i\partial_y)$, and $f_{\pm,z}(x,\lambda)=\lim_{|y|\to 0^\pm}f(x,y,\lambda)$. 
If $f(x,y,\lambda)$ is the solution of the Riemann-Hilbert problem $( x\in\mathbb{R}, F( x,\lambda))$ and 
\begin{gather*}
F(\cdot,\lambda)-1,\,\,\partial_xF(\cdot,\lambda),\,\,f_{\pm,z}(\cdot,\lambda)-1,\,\,\partial_xf_{\pm,z}(\cdot,\lambda)\to 0\\%\textit{ in $L_2(dx)$ as $|\lambda|\to\infty$,}\\
\textit{ in $L_2(dx)$ as $|\lambda|\to\infty$,}
\end{gather*}
then
\[
f(x,y,\cdot) \textit{ tends to $1$ uniformly as $|\lambda|\to\infty$.}\label{E:fF}
\]
\end{lemma}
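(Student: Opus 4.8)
The plan is to exploit the representation of $f$ as a Cauchy integral and the $\bar\partial$-characterization that immediately precedes Lemma \ref{L:SMrm}, applied in the variable $z = x + \lambda y$. Since $f(x,y,\cdot)$ solves the Riemann--Hilbert problem $(x\in\mathbb R, F(x,\lambda))$, we have $f = 1 + \mathcal C_x\big(f_{-}(F-1)\big)$ where $\mathcal C_x$ is the Cauchy operator in the $x$-variable and $f_\pm$ are the boundary values across $\mathbb R_x$; equivalently $\partial_{\bar z} f = 0$ off $\mathbb R_x$ (for $y$ near $0$ this reads as the jump relation $f_{+,z} = f_{-,z} F$), together with $f \to 1$ as $|x|\to\infty$. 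The point is that for each fixed $\lambda$, as a function of $z$, $f(\cdot,\cdot,\lambda)-1$ is holomorphic off the real $x$-axis and is recovered from its jump $f_{-,z}(F-1)$ by the Cauchy integral. So the estimate of $f-1$ for fixed $(x,y)$ with $y\ne 0$ reduces, via $\big|\mathcal C_x g(z)\big| \le C\,\mathrm{dist}(z,\mathbb R_x)^{-1/2}\,|g|_{L_2(dx)}$ (the elementary pointwise bound on the Cauchy integral off the line), to controlling $\big|f_{-,z}(F-1)\big|_{L_2(dx)}$.

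Next I would estimate that $L_2$-norm. Since $f_{-,z}(\cdot,\lambda)-1 \to 0$ and $F(\cdot,\lambda)-1\to 0$ in $L_2(dx)$ as $|\lambda|\to\infty$ by hypothesis, and since $\partial_x f_{-,z}(\cdot,\lambda)$, $\partial_x F(\cdot,\lambda)\to 0$ in $L_2(dx)$ as well, the Sobolev embedding $H^1(\mathbb R)\hookrightarrow L_\infty$ gives $\big|f_{-,z}(\cdot,\lambda)-1\big|_{L_\infty}, \big|F(\cdot,\lambda)-1\big|_{L_\infty} \to 0$. Then
\[
\big|f_{-,z}(F-1)\big|_{L_2(dx)} \le \big|f_{-,z}-1\big|_{L_\infty}\,\big|F-1\big|_{L_2(dx)} + \big|F-1\big|_{L_2(dx)} \longrightarrow 0
\]
as $|\lambda|\to\infty$. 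Combined with the pointwise Cauchy bound this shows $|f(x,y,\lambda)-1| \to 0$ for each fixed $(x,y)$ with $y\neq 0$; the uniformity over $(x,y)$ in a given region follows because the bound depends on $(x,y)$ only through $\mathrm{dist}(z,\mathbb R_x) = |\lambda_I y|$, which for $y$ bounded away from $0$ and $\lambda$ in the relevant range stays bounded below (and one treats the $y\to 0$ limit separately using directly the boundary-value hypotheses on $f_{\pm,z}$ themselves, which already contain the desired conclusion on $\mathbb R_x$).

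The main obstacle I anticipate is the behavior near $y=0$, i.e., near the contour $\mathbb R_x$ of the Riemann--Hilbert problem, where the pointwise Cauchy estimate degenerates. There the argument cannot go through the interior Cauchy bound; instead one must pass to the boundary values $f_{\pm,z}$ and invoke the hypothesis that $f_{\pm,z}(\cdot,\lambda)-1 \to 0$ in $L_2(dx)$, upgrade this to $L_\infty$ by the same Sobolev argument using $\partial_x f_{\pm,z}(\cdot,\lambda)\to 0$ in $L_2$, and then glue the two regimes. A secondary technical point is justifying the pointwise bound $|\mathcal C_x g(z)| \le C\,|\mathrm{Im}\,z|^{-1/2}|g|_{L_2}$ uniformly, which is just Cauchy--Schwarz applied to $\int |\zeta-z|^{-2}\,d\zeta = \pi/|\mathrm{Im}\,z|$; and checking that the change of variables $x+\lambda y = z$ is compatible with reading the $x$-Riemann--Hilbert problem as a $\bar\partial$-problem in $z$, which is exactly the computation already used in Lemmas \ref{L:SMdet} and \ref{L:asym-3}.
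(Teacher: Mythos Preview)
Your overall architecture matches the paper's: handle $y=0$ via Sobolev embedding applied to the boundary data, handle $|y|$ bounded away from $0$ via the pointwise Cauchy--Schwarz bound on the Cauchy integral, and then bridge the two. Where your proposal falls short is exactly at the step you flag as ``the main obstacle'': the gluing. Saying ``pass to the boundary values and then glue the two regimes'' is not yet an argument, because your interior bound $|f(x,y,\lambda)-1|\le C\,|\mathrm{Im}\,z|^{-1/2}\,|f_{-,z}(F-1)|_{L_2}$ degenerates as $y\to 0$, while the Sobolev estimate only controls $f_{\pm,z}$ at $y=0$ exactly. There is a genuine strip $0<|y|<\delta$ where neither estimate applies, and nothing you have written forces $\sup_{x,\lambda}|f(x,y,\lambda)-f_{\pm,z}(x,\lambda)|$ to be small there.

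The paper closes this gap by proving that $f(x,y,\lambda)\to f_{\pm,z}(x,\lambda)$ \emph{uniformly in $x$ and $\lambda$} as $y\to 0^\pm$. The mechanism is a near/far splitting of the Cauchy integral,
\[
f-1=\frac{1}{2\pi i}\int_{|t-x|<1}\frac{f_-(F-1)}{t-z}\,dt+\frac{1}{2\pi i}\int_{|t-x|\ge 1}\frac{f_-(F-1)}{t-z}\,dt=I+II.
\]
For the far piece $II$, H\"older's inequality gives uniform convergence to its boundary value directly. For the near piece $I$, the paper invokes the Morrey embedding $H^1(\mathbb R)\hookrightarrow C^{0,1/2}$ to conclude that $f_-(F-1)$ is uniformly H\"older continuous in $t$ (with a H\"older constant controlled by the $H^1$-norms that you already know tend to zero); classical Plemelj--Privalov type estimates (the paper cites Gakhov) then give that a Cauchy integral with uniformly H\"older density attains its boundary values uniformly. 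That uniform-in-$(x,\lambda)$ convergence is the missing ingredient that lets one choose $\delta_\epsilon$ once and for all, after which your interior Cauchy--Schwarz bound handles $|y|\ge\delta_\epsilon$ exactly as you describe. So your plan is salvageable, but the substantive content of the lemma lives precisely in the step you left unwritten.
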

\begin{proof} 

For $y=0$, the lemma follows from the Sobolev's Theorem,   Lemma \ref{L:SMrm} and the assumption on $f_{\pm,z}$, $F$.

For simplicity, we omit the words "` for $|\lambda|>>1$"' in the following proof.

Decompose $f(x,y,\lambda)$ into
\begin{eqnarray*}
&&f(x,y,\lambda)\\
=&&1+\frac 1{2\pi i}\int_{|t-x|<1}\frac{f_-\left(F(t,\lambda)-1\right)}{t-z}dt+\frac 1{2\pi i}\int_{|t-x|\ge 1}\frac{f_-\left(F(t,\lambda)-1\right)}{t-z}dt\\
=&&1+I(x,y,\lambda)+II(x,y,\lambda)
\end{eqnarray*}
Note  $f_-(F-1)(\cdot, \lambda)$ is uniformly Holder continuous by  the assumption on $F$, $f_\pm$ and the imbedding theorem of Morrey \cite{GT83}. 
Hence one has $I(x,y,\lambda)\to I_{\pm, z}(x,\lambda)$ uniformly as $y\to 0^\pm$ \cite {G66}. %Here $g_{\pm,z}(x,0)=\lim_{|y|\to 0^\pm}g(x,y,\lambda)$. 
The uniform convergence of $II(x,y,\lambda)\to II_{\pm, z}(x,\lambda)$ as $y\to 0^\pm$ can be justified by the Holder inequality. Moreover, one can check that this convergence is independent of $x$. As a result,  $f(x,y,\lambda)\to f_{\pm, z}(x,\lambda)$ uniformly as $y\to 0^\pm$. 

Since the lemma holds  on the $x$-axis. The uniform convergence provided above implies that: for any $\epsilon>0$, one can find $N_{\epsilon_1}$, $\delta_\epsilon$ such that $|f(x,y,\lambda)-1|<\epsilon$ for $\forall |\lambda|\ge N_{\epsilon_1}$, $\forall |y|\le \delta_\epsilon$. 
Besides,  by the Holder inequality, we can find $N_{\epsilon_2}$ such that $|f(x,y,\lambda)-1|<\epsilon$ for $\forall |\lambda|>N_{\epsilon_2}$, $|y|\ge \delta_\epsilon$.
Hence for any $\epsilon>0$,  we obtain 
\[|f(x,y,\lambda)-1|<\epsilon, \qquad\textit{$\forall |\lambda|>\max\{N_{\epsilon_1},N_{\epsilon_2}\}$.}\]
\end{proof}

\textbf{We can start to prove Theorem \ref{T:LNexistence}.}

\noindent\textit{Proof.} We will prove Theorem \ref{T:LNexistence} by induction on the norm of $|\widehat {\partial_xQ}(\xi,y)|_{L_1(d\xi dy)}$. %To prove the the theorem, we need to establish the following auxiliary conditions in each induction step.

\underline{\emph{Step 1: (The case of $n=0$)}}  

If $|\widehat {\partial_xQ}(\xi,y)|_{L_1(d\xi dy)}<(\frac 32)^0$, the existence and (\ref{E:bdry}) are proved by Theorem \ref{T:SMexistence}. The conditions (\ref{E:bdry''}), (\ref{E:bdry'}) and (\ref{E:bdry'''}) are shown by Theorem \ref{T:lambda2} and Lemma \ref{L:psil2}.
The holomorphic property comes from (\ref{E:eigen+}).

\underline{\emph{Step 2: (Transforming to a Riemann-Hilbert problem)}} 
  
Suppose Theorem \ref{T:LNexistence} holds for $|\widehat {\partial_xQ}(\xi,y)|_{L_1(d\xi dy)}<(\frac 32)^n$. 
Note the eigenfunction corresponding to a $y$-translate of $Q$ is the $y$-translate  of the eigenfunction. Thus after translation we may have
\[
\int_R\int_{-\infty}^0|\widehat {\partial_xQ}(\xi,y)|dyd\xi=\int_R\int^\infty _0|\widehat {\partial_xQ}(\xi,y)|dyd\xi <(\frac 32)^{n+1}\frac 12<(\frac 32)^n,
\]
for a potential ${\partial_xQ}(x,y)$ with $|\widehat {\partial_xQ}(\xi,y)|_{L^1(d\xi dy)}<(\frac 32)^{n+1}$. Let $\chi^\pm=\chi^\pm(y)\le 1$ be smooth real-valued functions such that
\[\begin{array}{lll}
\chi^-=\begin{cases}1,&\textsl{for $y\le 0$,}\\
0,&\textsl{for $y\ge 1$,}\end{cases}
&  {\partial_xQ}^-={\partial_xQ}(x,y)\chi^-(y),& |\widehat{{\partial_xQ}^-}|_{L_1(d\xi dy)}<(\frac 32)^n, \\
\chi^+=\begin{cases}1,&\textsl{for $y\ge 0$,}\\
0,&\textsl{for $y\le -1$,}\end{cases}
&{\partial_xQ}^+={\partial_xQ}(x,y)\chi^+(y),& |\widehat{{\partial_xQ}^+}|_{L_1(d\xi dy)}<(\frac 32)^n.
\end{array}\]
So $Q^\pm\in \mathbb P_{\infty,4,0}$ and $|\widehat {{\partial_xQ}^\pm}(\xi,y)|_{L^1(d\xi dy)}<(\frac 32)^{n}$. By the induction hypothesis there exist bounded  sets  $Z^\pm$ such that $Z^\pm\cap\left(\mathbb{C}\backslash \mathbb{R}\right)$ are discrete in $\mathbb{C}\backslash \mathbb{R}$ and for all $\lambda\in \mathbb{C}\backslash  Z^\pm$, $Q^\pm$ have eigenfunctions $\Psi^\pm$  which fulfill the statements of Theorem \ref{T:LNexistence}. Here we remark that the meaning of the notation $\Psi^+$ is different from that of $\Psi_+$. The former is a function defined in the half plane $y\ge 0$, the latter means $\lim_{\lambda_I\to 0^+}\Psi(x,y,\lambda)$.

Hence any eigenfunction $\Psi$ for $Q$, whenever it exists,  must be of the form
\begin{equation}\begin{array}{l}\Psi(x,y,\lambda)=\begin{cases}\Psi^-(x,y,\lambda)a^-(x+\lambda y, \lambda),& y\le 0,\\
\Psi^+(x,y,\lambda)a^+(x+\lambda y, \lambda),& y\ge 0.
\end{cases}
\end{array}\label{E:BP}
\end{equation}
Where for $y\in\overline{\mathbb R^\pm}$,
\begin{equation}
\begin{cases}
\textsl{$a^\pm(x+\lambda y, \lambda)$ is meromorphic in $\lambda\in\mathbb{C}\backslash \mathbb{R}$ with discrete poles,}\\
\textsl{$a^\pm(x,y,\lambda)$ satisfies (\ref{E:bdry}), (\ref{E:bdry''}),}\\
\textsl{$a^\pm_{\pm,z}(x,0,\lambda)$ satisfies (\ref{E:bdry'}), (\ref{E:bdry'''}).}
\end{cases}\label{E:BP2}
\end{equation}

Conversely, if we can find $a^\pm$ such that $a^\pm$ satisfies (\ref{E:BP2}) for $y\in\overline{\mathbb R^\pm}$ and $a^+(a^-)^{-1}(x,0,\lambda)=(\Psi^+)^{-1}\Psi^-(x,0,\lambda)$ (The invertibility of $a^\pm$, $\Psi^\pm$ is implied by Lemma \ref{L:SMdet}). Then we can define $\Psi(x,y,\lambda)$ by (\ref{E:BP}) and prove Theorem \ref{T:LNexistence} in case of $|\widehat {\partial_xQ}(\xi,y)|_{L_1(d\xi dy)}<(\frac 32)^{n+1}$. Therefore, we conclude this step by

\begin{lemma} \label{L:tRH} {\emph{(Transforming into a Riemann-Hilbert problem)}} 
 To prove Theorem \ref{T:LNexistence}, it is equivalent to solving: Find a bounded set $Z$, $f(\tilde x,\tilde y,\lambda)$, and $\tilde f(\tilde x,\tilde y,\lambda)$ such that  $Z^\pm\subset Z$ and 
\begin{itemize}
\item $Z\cap\left(\mathbb{C}\backslash \mathbb{R}\right)$ is discrete in $\mathbb{C}\backslash \mathbb{R}$.
	\item For $\lambda\in\mathbb{C}^+\backslash \left(\mathbb{R}\cup Z\right)$,  $f$ is the unique solution of the Riemann-Hilbert problem $(\tilde x\in\mathbb{R}, F(\tilde x,\lambda))$.
	\item For $\lambda\in\mathbb{C}^-\backslash \left(\mathbb{R}\cup Z\right)$,  $\tilde f$ is the unique solution of the Riemann-Hilbert problem $(\tilde x\in\mathbb{R}, F^{-1}(\tilde x,\lambda))$.
	\item $f,\,\tilde f$ are meromorphic in $\lambda\in\mathbb{C}\backslash \mathbb{R}$ with poles at the points of $Z\cap \left(\mathbb{C}\backslash\mathbb{R}\right)$.
	%\item $f,\,\tilde f$ satisfy (\ref{E:bdry}), (\ref{E:bdry''}).
	\item $f_{\pm,z},\,\tilde f_{\pm,z}$ satisfy (\ref{E:bdry'}), (\ref{E:bdry'''}).
\end{itemize}
Where 
\begin{equation}
\textit{$x+\lambda y=\tilde x+i\tilde y= z$, $\tilde x$, $\tilde y\in \mathbb{R}$} \label{E:CV1}
\end{equation}
and
\begin{equation}
F(\tilde x,\lambda)=\Psi^-(\tilde x,0,\lambda)^{-1}\Psi^+(\tilde x,0,\lambda).\label{E:Fequation}
\end{equation}
\end{lemma}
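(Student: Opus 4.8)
The plan is to establish the equivalence by showing each direction separately, relying on the structural decomposition \eqref{E:BP} and the change of variables \eqref{E:CV1}. First I would argue the forward direction: assume Theorem \ref{T:LNexistence} holds for $Q$ with $|\widehat{\partial_xQ}|_{L_1(d\xi dy)}<(\tfrac32)^{n+1}$, so an eigenfunction $\Psi$ exists. Since $Q^\pm$ agree with $Q$ on $\overline{\mathbb R^\mp}$ near $y=0$ (more precisely $\partial_xQ^- = \partial_x Q$ for $y\le 0$ and $\partial_xQ^+=\partial_xQ$ for $y\ge 0$), and both $\Psi$ and $\Psi^\pm$ solve \eqref{E:Lax1} on the respective half-planes, the ratio $a^\pm := (\Psi^\pm)^{-1}\Psi$ satisfies $(\partial_y-\lambda\partial_x)a^\pm = 0$ on $\overline{\mathbb R^\pm}$, hence depends only on $x+\lambda y$; this gives \eqref{E:BP}. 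The boundary and asymptotic properties \eqref{E:BP2} for $a^\pm$ then follow by dividing the corresponding properties of $\Psi$ (from Theorem \ref{T:LNexistence}) by those of $\Psi^\pm$ (from the induction hypothesis), using Lemma \ref{L:SMdet} for invertibility and Lemma \ref{L:SMreality} to control $(\Psi^\pm)^{-1}$. Setting $f := a^+$ on $\mathbb C^+$ and $\tilde f := (a^-)^{-1}$ on $\mathbb C^-$ (viewed as functions of $z=\tilde x+i\tilde y$ via \eqref{E:CV1}, so that $\partial_y-\lambda\partial_x$ becomes a debar operator and the $a^\pm$ are analytic off $\mathbb R$ in $z$), the matching condition $a^+(a^-)^{-1}(x,0,\lambda)=(\Psi^+)^{-1}\Psi^-(x,0,\lambda)$ becomes exactly $f_+ = f_- F$ with $F$ as in \eqref{E:Fequation}, i.e. $f$ solves the Riemann--Hilbert problem $(\tilde x\in\mathbb R, F)$, and similarly $\tilde f$ solves $(\tilde x\in\mathbb R, F^{-1})$; the pole set $Z$ is taken to be $Z^+\cup Z^-$ together with the poles of $f,\tilde f$ (which, by the discreteness part already built into the statement, is discrete in $\mathbb C\backslash\mathbb R$).

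For the converse, given $Z$, $f$, $\tilde f$ as in the statement, define $\Psi$ by \eqref{E:BP} with $a^+ := f$, $a^- := \tilde f^{-1}$ (restricting $f$ to $\mathbb C^+$, $\tilde f$ to $\mathbb C^-$, and extending across $\mathbb R$ by the Riemann--Hilbert jump). One must check that \eqref{E:BP} is consistent at $y=0$: this is precisely the jump relation $f_+ = f_- F = f_-(\Psi^-)^{-1}\Psi^+$ combined with the analogous relation for $\tilde f$, which forces $\Psi^-(x,0,\lambda)a^-(x,0,\lambda) = \Psi^+(x,0,\lambda)a^+(x,0,\lambda)$, so $\Psi$ is well-defined and continuous across $y=0$. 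That $\Psi$ satisfies \eqref{E:Lax1} on each half-plane is immediate from $\Psi^\pm$ solving it and $a^\pm$ being annihilated by $\partial_y-\lambda\partial_x$; the boundary condition \eqref{y-bdry} as $y\to-\infty$ and \eqref{E:bdry} follow because $\Psi^-\to 1$ and $a^-$ satisfies \eqref{E:bdry}; uniqueness of $\Psi$ reduces to uniqueness of $f,\tilde f$ as solutions of their Riemann--Hilbert problems. The remaining clauses of Theorem \ref{T:LNexistence}---meromorphy in $\lambda$ with poles exactly at $Z\cap(\mathbb C\backslash\mathbb R)$, \eqref{E:bdry''}, and the $H^2$-type estimates \eqref{E:bdry'}, \eqref{E:bdry'''}---are obtained by multiplying the corresponding properties of $\Psi^\pm$ (induction hypothesis) with those of $a^\pm$, using Lemma \ref{L:xy-uniform} to upgrade the $L_2(dx)$-asymptotics of $f_{\pm,z}$, $F$ to the uniform-in-$\lambda$ statement \eqref{E:bdry''}.

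The main obstacle I anticipate is the careful bookkeeping of the two reductions \eqref{E:CV1}: the Riemann--Hilbert problems in Lemma \ref{L:tRH} are phrased in the $z$-plane with spectral parameter $\lambda$ frozen, whereas Theorem \ref{T:LNexistence} is about functions of $(x,y)$, and one must keep straight that $y\gtrless 0$ corresponds to $\tilde y = \lambda_I \cdot(\text{stuff})$ having a sign depending on $\text{sgn}\,\lambda_I$, so that "$\lambda\in\mathbb C^+$" pairs with $f$ and "$\lambda\in\mathbb C^-$" with $\tilde f$ in the way stated. A second delicate point is verifying that the product $a^\pm\cdot\Psi^\pm$ genuinely inherits the quantitative blow-up rate $C_j\epsilon_j^{-h_j}$ near points of $Z$ claimed in \eqref{E:bdry'}: this requires that the poles of $a^\pm$ (equivalently of $f,\tilde f$) and of $\Psi^\pm$ do not reinforce in an uncontrolled way, which is why $Z$ must contain $Z^\pm$ and why the estimate is stated with the union of all the excised disks. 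Everything else is a routine transcription of the small-data results of Section \ref{E:DPSMexistence}--\ref{S:DPSMasymp} and the induction hypothesis through the algebraic identity \eqref{E:BP}.
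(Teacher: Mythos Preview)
Your overall strategy---reduce Theorem \ref{T:LNexistence} to a Riemann--Hilbert problem in the $z$-plane via \eqref{E:BP} and the change of variables \eqref{E:CV1}, then invoke Lemma \ref{L:xy-uniform} to recover \eqref{E:bdry''}---is exactly the paper's approach. However, your identification of $f,\tilde f$ with $a^\pm$ is wrong, and this is not just bookkeeping. You set ``$f:=a^+$ on $\mathbb C^+$'' and ``$a^-:=\tilde f^{-1}$'', but for a fixed $\lambda\in\mathbb C^+$ the function $a^+(x+\lambda y,\lambda)$ is only defined for $y\ge 0$, i.e.\ only in the closed upper half $z$-plane, so your $f$ has no lower-half-plane piece and the jump $f_+=f_-F$ is meaningless. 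Even if one reads your definition as ``$f=a^+$ for $\tilde y>0$, $f=a^-$ for $\tilde y<0$'', the matching $\Psi^-a^-=\Psi^+a^+$ gives $a^+=F^{-1}a^-$, i.e.\ $f_+=F^{-1}f_-$, which is a \emph{left} jump and not of the form $f_+=f_-F$ required by Definition \ref{D:SMrph}. Likewise, in your converse direction, $a^-:=\tilde f^{-1}$ only makes sense for $\lambda\in\mathbb C^-$, leaving $a^-$ undefined for $\lambda\in\mathbb C^+$ and hence $\Psi$ undefined on $\{y\le 0\}\times\mathbb C^+$.

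The paper fixes both problems at once by taking inverses and defining $f$ piecewise in $\tilde y$ (not in $\lambda$): for $\lambda\in\mathbb C^+$ one sets $f=(a^+)^{-1}$ for $\tilde y\ge 0$ and $f=(a^-)^{-1}$ for $\tilde y\le 0$; then $(a^+)^{-1}=(a^-)^{-1}F$ gives precisely $f_+=f_-F$. For $\lambda\in\mathbb C^-$ one has $\tilde y=\lambda_I y$ with $\lambda_I<0$, so the roles of $a^+$ and $a^-$ swap in the $z$-plane, which is why a separate function $\tilde f$ (with data $F^{-1}$) is needed. Once you adopt this identification, the rest of your argument---products of estimates, the use of Lemma \ref{L:xy-uniform}, and the remark about $Z\supset Z^+\cup Z^-$---goes through as you wrote it.
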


\begin{proof} Note that if $f,\,\tilde f$ exist for Lemma \ref{L:tRH}, then by Lemma \ref{L:xy-uniform} $f,\,\tilde f$ satisfy (\ref{E:bdry}), (\ref{E:bdry''}) as well. Therefore, the lemma can be proved by the change of variables 
(\ref{E:CV1}) (or (\ref{E:CV})) 
and setting
\begin{eqnarray*}
&& a^-(x+\lambda y,\lambda)=A^-(\tilde x,\tilde y,\lambda),\\
&& a^+(x+\lambda y,\lambda)=A^+(\tilde x,\tilde y,\lambda),
\end{eqnarray*} with $\tilde x$, $\tilde y\in\mathbb{R}$.
\begin{eqnarray*}
&&f(\tilde x,\tilde y,\lambda)=\begin{cases} (A^+)^{-1}(\tilde x,\tilde y,\lambda), &\textsl{for $\tilde y\ge 0$, $\lambda\in \mathbb{C}^+$,}\\( A^-)^{-1}(\tilde x,\tilde y,\lambda),& \textsl{for $\tilde y\le 0$,  $\lambda\in \mathbb{C}^+$}. \end{cases}\\
&&\tilde f(\tilde x,\tilde y,\lambda)=\begin{cases} (A^-)^{-1}(\tilde x,\tilde y,\lambda), &\textsl{for $\tilde y\ge 0$, $\lambda\in \mathbb{C}^-$,}\\ (A^+)^{-1}(\tilde x,\tilde y,\lambda),& \textsl{for $\tilde y\le 0$, $\lambda\in \mathbb{C}^-$}. \end{cases}
\end{eqnarray*}
in the above discussion.  
\end{proof}

\underline{\emph{Step 3: (Factorization: a diagonal problem, a Riemann-Hilbert problem with }}

\underline{\emph{small data and a rational function)}}   

For any square matrix $A$ we let $d^+_k(A)$ denote the upper $(k\times k)$ principal minors. Also let  $\beta_{ik}$, $i\le k$ be the minor of $A$ formed of the first $i$ rows, the first $i-1$ columns, and the $k$th column, and $\gamma_{ki}$ be the minor of $A$ formed of the first $i$ columns, the first $i-1$ rows, and the $k$th row. The following factorization theorem can be found in \cite{FF63}.

\begin{lemma}\label{L:tridecom}
 Suppose the principal minors $d^+_k(A)\ne 0$, for $1\le k\le n$. Then the  matrix $A$ can be represented as
\[
A=CSB,
\]where
\begin{gather*}
C=\left(\begin{array}{cccc}
1&{}&{}&{0}\\
\frac{\gamma_{21}}{\gamma_{11}}&1&{}&{}\\
\vdots  &\vdots &\ddots &{}\\
\frac{\gamma_{n1}}{\gamma_{11}}&\frac{\gamma_{n2}}{\gamma_{22}}&\cdots&1
\end{array}
\right),\qquad B=\left(\begin{array}{cccc}
1&\frac{\beta_{12}}{\beta_{11}}&{\cdots}&{\frac{\beta_{1n}}{\beta_{11}}}\\
{}&1&{\cdots }&\frac{\beta_{2n}}{\beta_{22}}\\
{} &{}  &\ddots &\vdots\\
{0}&{}&{}&1
\end{array}
\right),\\
S=\left(\begin{array}{cccc}
d^+_1(A)&{}&{}&{0}\\
{}&\frac{d^+_2(A)}{d^+_1(A)}&{}&{}\\
{} &{} &\ddots &{}\\
0&{}&{}&\frac{d^+_n(A)}{d^+_{n-1}(A)}
\end{array}
\right).
\end{gather*}
\end{lemma}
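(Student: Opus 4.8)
The plan is to derive the statement from the classical $LDU$ (Gauss--Bruhat) factorization together with the Cauchy--Binet formula, the only new work being to identify the factors with the displayed minor ratios.

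\textbf{Step 1 (a factorization $A=CSB$ exists and is unique).} First I would show that, under the hypothesis $d^+_k(A)\neq 0$ for $1\le k\le n$, the matrix $A$ admits a factorization $A=CSB$ with $C$ lower triangular unipotent, $S$ invertible diagonal, and $B$ upper triangular unipotent, and that such a factorization is unique. Uniqueness is purely formal: from $C_1S_1B_1=C_2S_2B_2$ one gets $C_2^{-1}C_1=S_2B_2B_1^{-1}S_1^{-1}$, whose left-hand side is lower unipotent and whose right-hand side is upper triangular, so both equal $I$; hence $C_1=C_2$, and then $S_1=S_2$, $B_1=B_2$. Existence I would prove by induction on $n$: since $d^+_{n-1}(A)\neq 0$ the leading $(n-1)\times(n-1)$ block $A'$ of $A=\left(\begin{smallmatrix}A'&u\\ v^{T}&d\end{smallmatrix}\right)$ is invertible, and the Schur-complement identity
\[
A=\begin{pmatrix} I & 0\\ v^{T}(A')^{-1} & 1\end{pmatrix}\begin{pmatrix} A' & 0\\ 0 & s\end{pmatrix}\begin{pmatrix} I & (A')^{-1}u\\ 0 & 1\end{pmatrix},\qquad s=d-v^{T}(A')^{-1}u\neq 0,
\]
(the last factor being nonzero since $\det A=s\,\det A'$) reduces the problem to the factorization of $A'$, whose leading principal minors are exactly $d^+_1(A),\dots,d^+_{n-1}(A)$. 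This is also the content of the factorization theorem in \cite{FF63}.

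\textbf{Step 2 (the diagonal $S$).} Because $C$ is lower triangular and $B$ upper triangular, for each $k$ the leading $k\times k$ submatrix of $A=CSB$ is the product of the leading $k\times k$ submatrices of $C$, $S$ and $B$. Taking determinants and using that $C$ and $B$ are unipotent gives $d^+_k(A)=\prod_{j=1}^{k}S_{jj}$, so $S_{kk}=d^+_k(A)/d^+_{k-1}(A)$ (with the convention $d^+_0(A)=1$). This is precisely the displayed matrix $S$, and in particular $S$ is invertible.

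\textbf{Step 3 (the factors $C$ and $B$).} From $A=CSB$ we get $C=A(SB)^{-1}$ with $(SB)^{-1}$ upper triangular. Fix $i\le k$. Apply the Cauchy--Binet formula to $A(SB)^{-1}$ for the submatrix with row set $\{1,\dots,i-1,k\}$ and column set $\{1,\dots,i\}$: since an upper triangular matrix has a nonzero $i\times i$ minor on the column set $\{1,\dots,i\}$ only for the row set $\{1,\dots,i\}$, the Cauchy--Binet sum collapses to one term and
\[
\det C[\{1,\dots,i-1,k\},\{1,\dots,i\}]=\gamma_{ki}\prod_{j=1}^{i}S_{jj}^{-1}=\frac{\gamma_{ki}}{d^+_i(A)}.
\]
On the other hand the submatrix on the left is itself lower triangular, with diagonal $(1,\dots,1,C_{ki})$, so its determinant is $C_{ki}$; since $\gamma_{ii}=d^+_i(A)$ this gives $C_{ki}=\gamma_{ki}/\gamma_{ii}$, i.e. the displayed matrix $C$. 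Symmetrically, from $B=(CS)^{-1}A$ with $(CS)^{-1}$ lower triangular, the same argument applied to the submatrix with row set $\{1,\dots,i\}$ and column set $\{1,\dots,i-1,k\}$ yields $\beta_{ik}/d^+_i(A)$ on one side and $B_{ik}$ on the other, so $B_{ik}=\beta_{ik}/\beta_{ii}$.

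\textbf{Where the difficulty lies.} There is no essential obstacle: the statement is the standard $LDU$ factorization with its entries written out. The only points requiring care are bookkeeping: matching the submatrices produced by Cauchy--Binet with the precise definitions of $\beta_{ik}$ and $\gamma_{ki}$, checking that the triangular factors contribute only through their leading blocks, and verifying the degenerate cases $i=1$ and $k=i$ of the formulas.
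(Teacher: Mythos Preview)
Your argument is correct: Step~1 gives existence and uniqueness of the $LDU$ factorization by the standard Schur-complement induction, Step~2 reads off $S$ from the leading principal minors, and in Step~3 the Cauchy--Binet reduction works exactly as you describe (for an upper triangular $(SB)^{-1}$, any row index larger than $i$ produces a zero row in columns $\{1,\dots,i\}$, so the sum collapses), and the resulting lower-unipotent submatrix of $C$ indeed has determinant $C_{ki}$ by expansion along its last column.

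By comparison, the paper does not prove this lemma at all: it simply quotes the statement from \cite{FF63} as a known fact from linear algebra. So your write-up supplies a self-contained argument where the paper gives only a citation. If you want to shorten it to match the paper's treatment, a one-line reference to the $LDU$ factorization in \cite{FF63} would suffice; if you prefer to keep it self-contained, your version is fine as written.
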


From now on, we only deal with  the case of $\lambda\in\overline{\mathbb{C}^+}$ for simplicity. The other case can be proved in an analogous argument.

\begin{lemma}\label{L:trifact}
For $\lambda\in \overline{\mathbb{C}^+}\backslash\left[Z^+\cup Z^-\right] $, we have a factorization
\[
F( \tilde x,\lambda)=\left(1+g_l\right)^{-1}\delta\left(1+g_u\right) , \]
where
\begin{eqnarray}
&&\textit{$\delta$ is diagonal and $g_u$ ($g_l$) is strictly upper (lower) triangular. }\label{E:dec1}\\
&&\textit{$\delta$, $g_u$, $g_l$ are  $\lambda$-meromorphic in $\mathbb{C}^+$ with poles at $\left[Z^+\cup Z^-\right]$. }\label{E:dec5}
\\%&&\textit{$\delta$ satisfies (\ref{E:bdry''}), $g_u(x,y,\cdot)$, $g_l(x,y,\cdot)$ tend to $0$ uniformly as $|\lambda|\to\infty$}\label{E:dec6}.
%&&\textit{$\delta-1$, $g_u$, $g_l\in C_0\cap L_2(d\tilde x)$, $\partial_{\tilde x}\delta$, $\partial_{\tilde x}g_u$, $\partial_{\tilde x} g_l\in L_2(d\tilde x)$ for} \label{E:dec2}\\
%&&\textit{$\lambda\in \mathbb{C}^+ \backslash \left[Z^+\cup Z^-\right]_+$.}\nonumber\\
&&\textit{$\partial_{ x}^i\left(\delta-1\right)$, $\partial_{ x}^ig_u$, $\partial_{ x}^ig_l$, $i=0,1,2$ are uniformly bounded in $L_2(d\tilde x)$ for } \label{E:dec2}\\
&&\textit{$\lambda\in\overline{ \mathbb{C}^+ }\backslash \cup_{\lambda_j\in \left[Z^+\cup Z^-\right]}D_\epsilon(\lambda_j)$. For any $z_j\in\mathbb C\backslash\mathbb R$, fixing $\epsilon_k$ for $\forall k\ne j$    }\nonumber\\
&&\textit{and letting $\epsilon_j\to 0$, these $L_2(dx)$-norms increase as $ {C_j}{\epsilon_j^{-h_j}}$ with }\nonumber\\
&&\textit{uniform constants $C_j$, $h_j>0$.}\nonumber\\
%&&\textit{$(\delta-1)\chi_{|x|>N}$, $(g_u)\chi_{|x|>N}$, $(g_l)\chi_{|x|>N}$, $(\partial_{\tilde x}\delta)\chi_{|x|>N}$, $(\partial_{\tilde x}g_u)\chi_{|x|>N}$,} \label{E:dec4}\\
%&&\textit{$(\partial_{\tilde x} g_l)\chi_{|x|>N}\to 0$ uniformly in $L_2(d\tilde x)$ as $N\to\infty$ for}\nonumber
%\\
%&&\textit{$\lambda\in \mathbb{C}^+ \backslash \cup_{\lambda_j\in \left[Z^+\cup Z^-\right]_+}D_\epsilon(\lambda_j)$. }\nonumber%\\
&&\textit{$\delta-1$, $g_u$, $g_l,\,\partial_x\delta,\,\partial_xg_u,\,\partial_xg_l\to 0$ \textit{ in $L_2(dx)$ as $\lambda\to\infty$.}\label{E:dec4}}%\\
%&&\textit{}\nonumber
\end{eqnarray}
%Where $F$ is defined by (\ref{E:Fequation}) and $\left[Z^+\cup Z^-\right]_+=\left(Z^+\cup Z^-\right)\cap \mathbb{C}^+$.
\end{lemma}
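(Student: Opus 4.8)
The plan is to obtain the factorization as the pointwise Gauss (LDU) decomposition of Lemma \ref{L:tridecom} applied to $F(\tilde x,\lambda)$ at each fixed $(\tilde x,\lambda)$, and then to propagate to $\delta$, $g_u$, $g_l$ the analytic information about $\Psi^\pm$ furnished by the induction hypothesis of Theorem \ref{T:LNexistence} (that is, (\ref{E:bdry'}) and (\ref{E:bdry'''}) for $\Psi^\pm$) together with Lemma \ref{L:SMdet} and the reality symmetry of the eigenfunctions.

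First I would record the structure of $F$. Since $\det\Psi^\pm\equiv 1$ by Lemma \ref{L:SMdet}, the matrix $\Psi^-(\tilde x,0,\lambda)^{-1}$ is the adjugate of $\Psi^-$, whose entries are polynomials in those of $\Psi^-$; hence $F=\Psi^-(\tilde x,0,\lambda)^{-1}\Psi^+(\tilde x,0,\lambda)$ is $\lambda$-meromorphic on $\mathbb C^+$ with poles contained in $Z^+\cup Z^-$, and $\det F\equiv 1$. Writing $F-1=\Psi^-(\tilde x,0,\lambda)^{-1}\bigl[(\Psi^+-1)-(\Psi^--1)\bigr]$ and using the Sobolev embedding $H^2(\mathbb R_{\tilde x})\hookrightarrow C_0(\mathbb R)$ (applicable since $\Psi^\pm-1\in\mathbb{DH}^2$) together with the Banach algebra property of $H^2(\mathbb R_{\tilde x})$, the properties (\ref{E:bdry'}), (\ref{E:bdry'''}) for $\Psi^\pm$ pass to $F$: the $\partial_{\tilde x}^i(F-1)$, $i=0,1,2$, are uniformly bounded in $L_2(d\tilde x)$ off the $\epsilon$-neighbourhoods of $Z^+\cup Z^-$, they grow at most like $C_j\epsilon_j^{-h_j}$ as $\epsilon_j\to 0$ (the exponents possibly enlarged by the products in the adjugate, but still finite and positive), and $F-1,\,\partial_{\tilde x}(F-1)\to 0$ in $L_2(d\tilde x)$ as $\lambda\to\infty$.

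The delicate point, which I expect to be the main obstacle, is the non-degeneracy needed to invoke Lemma \ref{L:tridecom}: the leading principal minors $d^+_k(F)(\tilde x,\lambda)$, $1\le k\le n-1$, must not vanish for $\lambda\in\overline{\mathbb C^+}\setminus(Z^+\cup Z^-)$ (the top minor is $\det F\equiv 1$, hence harmless). Here I would use the reality relation carried by $\Psi^\pm$ (the analogue of Lemma \ref{L:SMreality}), which forces, in substance, $F(\tilde x,\bar\lambda)=F(\tilde x,\lambda)^{-*}$ and, in the boundary limit, identifies the values of $F$ on $\mathbb R$ with matrices conjugate to positive Hermitian ones, so that Sylvester's criterion gives $d^+_k(F)\ne 0$ there; since each $d^+_k(F)$ is $\lambda$-holomorphic off $Z^+\cup Z^-$ with $d^+_k(F)\to 1$ as $|\tilde x|\to\infty$ and as $\lambda\to\infty$, this should rule out interior zeros as well — or, if one cannot exclude them directly, one absorbs the (still bounded, still discrete off $\mathbb R$) zero locus into $Z^+\cup Z^-$, which does not alter the statement. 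As a by-product one obtains, for each fixed $\lambda\notin Z^+\cup Z^-$, that $\inf_{\tilde x}|d^+_k(F)(\tilde x,\lambda)|>0$ (from $d^+_k(F)(\cdot,\lambda)\in 1+C_0$ and its non-vanishing), uniformly on compact $\lambda$-subsets avoiding $Z^+\cup Z^-$.

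Granting this, the remainder is bookkeeping. By Lemma \ref{L:tridecom} the entries of $\delta$, $g_u$, $g_l$ are the ratios $d^+_k(F)/d^+_{k-1}(F)$, $\beta_{ik}/\beta_{ii}$ and $\gamma_{ki}/\gamma_{ii}$; numerators and denominators are polynomials in the entries of $F$, the ``diagonal'' minors $d^+_k(F)=\beta_{ii}=\gamma_{ii}$ being of the form $1+O(F-1)$ and the ``off-diagonal'' ones $\beta_{ik},\gamma_{ki}$ ($i<k$) being $O(F-1)$ (a column, resp.\ row, of $F$ with index $>i$ is within $O(F-1)$ of a vector with no entry in rows, resp.\ columns, $\le i$). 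Using once more that $H^2(\mathbb R_{\tilde x})$ is an algebra and that $g\mapsto 1/g$ maps $1+H^2$ into $1+H^2$ once $g$ is bounded away from $0$, the estimates established for $F-1$ transfer to $\delta-1$, $g_u$, $g_l$ and their first two $\tilde x$-derivatives: this yields (\ref{E:dec2}) (with the blow-up exponents near the poles again only enlarged) and (\ref{E:dec4}); (\ref{E:dec5}) follows from the meromorphy of the entries of $F$ together with the non-vanishing step; and (\ref{E:dec1}) is immediate from the triangular shapes of $C$, $S$, $B$. The case $\lambda\in\overline{\mathbb C^-}$ is entirely analogous, as noted before the statement.
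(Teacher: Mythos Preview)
Your overall plan matches the paper's: apply the LDU decomposition of Lemma~\ref{L:tridecom} to $F$ pointwise, then transfer the induction-hypothesis estimates (\ref{E:bdry'}), (\ref{E:bdry'''}) on $\Psi^\pm$ to $\delta,g_u,g_l$ through the explicit minor-ratio formulas. Your $H^2$-algebra bookkeeping for (\ref{E:dec2})--(\ref{E:dec4}) is in fact more explicit than the paper's, which simply records that $F$ is $\lambda$-meromorphic with poles in $Z^+\cup Z^-$ and inherits (\ref{E:bdry'}), (\ref{E:bdry'''}), and then declares that (\ref{E:dec1})--(\ref{E:dec4}) follow.

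The one genuine gap is in your non-degeneracy step. The reality relation $\Psi^\pm(\bar\lambda)^*=\Psi^\pm(\lambda)^{-1}$ yields only $F(\tilde x,\lambda)^*=F(\tilde x,\lambda)^{-1}$ for real $\lambda$, i.e.\ $F$ is \emph{unitary} on $\mathbb R$, not positive Hermitian (nor conjugate to a positive matrix); unitary matrices can certainly have vanishing leading principal minors, so Sylvester's criterion does not apply. (Contrast Lemma~\ref{L:invtrifact}, where $v=\Psi_+^*\Psi_+>0$ genuinely.) Your fallback of absorbing the zero locus of $d^+_k(F)$ into $Z^+\cup Z^-$ also does not close the gap: the $\lambda$-zeros of $d^+_k(F)(\tilde x,\cdot)$ vary with $\tilde x$, so their union over $\tilde x\in\mathbb R$ need not be discrete in $\mathbb C^+$, and in any case $Z^+\cup Z^-$ is a fixed set determined by $\Psi^\pm$, not free to be enlarged without altering the statement. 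The paper handles this point by a different (and shorter) assertion: from $\det F\equiv 1$ it claims that a zero of some $d^+_i(F)$ at $(\tilde x_0,\lambda_0)$ would force $F$ itself to have a pole there, and since $\det\Psi^\pm\equiv 1$ the poles of $F$ lie in $Z^+\cup Z^-$.
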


\begin{proof} By the same technique of the proof of Lemma \ref{L:SMdet}, one proves $\det\Psi^\pm=1$ for $\lambda\notin\mathbb R$. So $\det F\equiv 1$. As a result, if $d^+_i\left( F\right)(\tilde x_0, \lambda_0)=0$ for some $1\le i<n$, then $F$ must have a pole at $(\tilde x_0, \lambda_0)$. By $\det\Psi^\pm=1$ and (\ref{E:Fequation}), we obtain $\lambda_0\in \left[Z^+\cup Z^-\right]$. 

Therefore for $\lambda\in\overline{ \mathbb{C}^+ } \backslash \left[Z^+\cup Z^-\right]$, we obtain a factorization by Lemma \ref{L:tridecom}. The properties (\ref{E:dec1})-(\ref{E:dec4}) are implied by
\begin{eqnarray}
	&& \textit{$F(\tilde x,\lambda)$ is meromorphic in $\lambda\in \mathbb{C}^+$ with poles at $\left[Z^+\cup Z^-\right]$ at most};\label{E:ind1}\\
	%&& \textit{$F(\tilde x,\lambda)$ satisfies (\ref{E:bdry''});}\label{E:ind2}\\
	&& \textit{$F(\tilde x,\lambda)$ satisfies (\ref{E:bdry'}), (\ref{E:bdry'''})}\label{E:ind3}
\end{eqnarray}
which come from the induction hypothesis.  
\end{proof}

\begin{lemma}\label{L:dRH}
{\emph{(A diagonal Riemann-Hilbert problem)}}
For $\lambda\in \overline{\mathbb{C}^+} \backslash \left[Z^+\cup Z^-\right]$,  the Riemann-Hilbert problem $(\tilde x\in\mathbb{R}, \delta(\tilde x,\lambda))$ has a solution $\Delta(z,\lambda)$. Moreover, 
\begin{itemize}
		\item $\Delta$ is $\lambda$-meromorphic in $\mathbb C^+$ with poles at $\left[Z^+\cup Z^-\right]\cap \mathbb C^+$;
		%\item $\Delta$ satisfies (\ref{E:bdry}), (\ref{E:bdry''});
		\item $\Delta_{\pm,z}$ satisfies (\ref{E:bdry'}), (\ref{E:bdry'''}).
\end{itemize}
\end{lemma}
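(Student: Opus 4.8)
The plan is to exploit that $\delta$ is \emph{diagonal}. Writing $F=(1+g_l)^{-1}\delta(1+g_u)$ as in Lemma \ref{L:trifact} and comparing with Lemma \ref{L:tridecom} identifies $\delta=\mathrm{diag}\big(d_1^+(F),\,d_2^+(F)/d_1^+(F),\dots,d_n^+(F)/d_{n-1}^+(F)\big)$, with the convention $d_0^+(F):=1$. Hence the Riemann--Hilbert problem $(\tilde x\in\mathbb{R},\delta(\tilde x,\lambda))$ splits into $n$ independent scalar problems, one for each entry $\delta_{kk}(\tilde x,\lambda)=d_k^+(F)/d_{k-1}^+(F)$. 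I would solve each of these with Lemma \ref{L:scalarrh}, obtaining $\Delta_{kk}(z,\lambda)$, and then set $\Delta=\mathrm{diag}(\Delta_{11},\dots,\Delta_{nn})$; since $\delta$ is diagonal, the jump relation $\Delta_+=\Delta_-\delta$ on $\mathbb{R}$ and $\Delta\to 1$ as $z\to\infty$ hold entrywise, so $\Delta$ is a solution of $(\tilde x\in\mathbb{R},\delta(\tilde x,\lambda))$.

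To apply Lemma \ref{L:scalarrh} to the $k$th entry I must verify (i) $\delta_{kk}(\cdot,\lambda)$ vanishes nowhere on $\mathbb{R}$, (ii) $\delta_{kk}-1,\ \partial_{\tilde x}\delta_{kk}\in L_2(d\tilde x)$, and (iii) $\int_{\mathbb{R}}d\arg\delta_{kk}(\tilde x,\lambda)=0$. Item (ii) is contained in (\ref{E:dec2}). For (i), the argument in the proof of Lemma \ref{L:trifact} shows that a zero of a minor $d_i^+(F)$ at a point $(\tilde x_0,\lambda_0)$ forces $F$ to have a pole there, hence $\lambda_0\in[Z^+\cup Z^-]$; so for $\lambda\in\mathbb{C}^+\setminus[Z^+\cup Z^-]$ all minors $d_k^+(F(\cdot,\lambda))$, and therefore $\delta_{kk}(\cdot,\lambda)$, are nonvanishing on $\mathbb{R}$. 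The hard part is (iii). By (\ref{E:bdry}) for $\Psi^\pm$ one has $F(\tilde x,\lambda)\to 1$ as $|\tilde x|\to\infty$, so $d_k^+(F(\cdot,\lambda))\to 1$ at $\pm\infty$ and $N_k(\lambda):=\frac1{2\pi}\int_{\mathbb{R}}d\arg d_k^+(F(\tilde x,\lambda))$ is a well-defined integer. By (\ref{E:dec4}) (equivalently by (\ref{E:bdry''}) for $\Psi^\pm$, cf.\ Theorem \ref{T:lambda2}), $F(\cdot,\lambda)-1\to 0$ in $H^2(d\tilde x)\hookrightarrow C_0$ as $|\lambda|\to\infty$, whence $|d_k^+(F)-1|<\frac12$ uniformly in $\tilde x$ for $|\lambda|>M$, so $N_k(\lambda)=0$ there. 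Now $\mathbb{C}^+\setminus[Z^+\cup Z^-]$ is open and connected (an open half-plane minus a set discrete in $\mathbb{C}^+$); on it $d_k^+(F(\cdot,\lambda))$ is nonvanishing on $\mathbb{R}$, has limit $1$ at $\pm\infty$, and — by (\ref{E:ind1}) and (\ref{E:ind3}) — depends holomorphically, hence continuously in the $H^2(d\tilde x)\hookrightarrow C^1$ topology, on $\lambda$. Therefore the integer $N_k(\lambda)$ is locally constant, hence $\equiv 0$, so $\int d\arg\delta_{kk}=\int d\arg d_k^+(F)-\int d\arg d_{k-1}^+(F)=0$, and Lemma \ref{L:scalarrh} furnishes a unique $\Delta_{kk}(z,\lambda)$.

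It remains to record the two bulleted properties. For the $\lambda$-dependence, the scalar solution is given explicitly by $\Delta_{kk}(z,\lambda)=\exp\big(\mathcal{C}[\log\delta_{kk}(\cdot,\lambda)](z)\big)$, with $\mathcal{C}$ the Cauchy operator in the $\tilde x$-variable (Appendix of \cite{BC84}); since $\delta_{kk}(\cdot,\lambda)$ is $\lambda$-meromorphic in $\mathbb{C}^+$ with poles in $[Z^+\cup Z^-]$ by (\ref{E:dec5}) and nowhere zero, $\log\delta_{kk}$, and hence $\Delta_{kk}$, are $\lambda$-meromorphic in $\mathbb{C}^+$ with poles at $[Z^+\cup Z^-]\cap\mathbb{C}^+$, giving the first bullet for $\Delta=\mathrm{diag}(\Delta_{11},\dots,\Delta_{nn})$. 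For the second, Lemma \ref{L:scalarrh} yields $|\Delta_{kk,\pm,z}-1|_{H^2(d\tilde x)}\le C_k\,|\delta_{kk}-1|_{H^2(d\tilde x)}$ with $C_k$ depending only on $|\delta_{kk}|_{L_\infty}$ and $|1/\delta_{kk}|_{L_\infty}$; combining this with the uniform $L_2(d\tilde x)$-bounds on $\delta-1$ and its $\tilde x$-derivatives and their $\lambda\to\infty$ decay from (\ref{E:dec2}), (\ref{E:dec4}), together with the fact that $|\delta_{kk}|_{L_\infty}$ and $|1/\delta_{kk}|_{L_\infty}$ remain bounded for $\lambda$ away from $Z^+\cup Z^-$ and tend to $1$ as $|\lambda|\to\infty$, transfers (\ref{E:bdry'}) and (\ref{E:bdry'''}) to $\Delta_{\pm,z}$. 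The only remaining bookkeeping is the growth $C_j\epsilon_j^{-h_j}$ of $C_k$ (entering through $|1/\delta_{kk}|_{L_\infty}$) as $\epsilon_j\to0$, which is controlled by the finite order of vanishing of the minors $d_k^+(F)$ at the points of $Z^+\cup Z^-$.
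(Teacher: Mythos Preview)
Your proof is correct and follows essentially the same route as the paper's: reduce to scalar Riemann--Hilbert problems via the diagonal structure, verify the hypotheses of Lemma~\ref{L:scalarrh} (in particular the vanishing of the winding number by continuity in $\lambda$ together with the large-$|\lambda|$ decay (\ref{E:dec4})), and then read off the $\lambda$-meromorphy from the explicit formula $\Delta=\exp\big(\mathcal{C}[\log\delta]\big)$. The only cosmetic difference is that you track the winding numbers of the minors $d_k^+(F)$ individually and then difference them, whereas the paper argues directly with the entries of $\delta$; and you spell out the transfer of (\ref{E:bdry'}), (\ref{E:bdry'''}) through the $H^2$-estimate of Lemma~\ref{L:scalarrh}, which the paper leaves implicit.
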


\begin{proof}
For $\lambda\in \overline{\mathbb{C}^+} \backslash \left[Z^+\cup Z^-\right]$, the matix $\delta$ is a diagonal matrix with nonvanishing entries. So the winding number of $\delta(\tilde x,\lambda)$ is well-defined by $N(\lambda)=-\frac 1{2\pi i}\int\frac d{dt}\arg \delta(t,\lambda) dt$. By (\ref{E:dec5}) and (\ref{E:dec2}), $N(\lambda)$ is a continuous integer-valued function for $x\in\overline{\mathbb{C}^+}\backslash \left[Z^+\cup Z^-\right]$.  Thus $N(\lambda)\equiv 0$ by (\ref{E:dec4}).

Combining with  (\ref{E:dec2}), and (\ref{E:dec4}), Lemma \ref{L:scalarrh} implies the existence of $\Delta$ which satisfies the Riemann-Hilbert problem $(\tilde x\in\mathbb{R}, \delta(\tilde x,\lambda))$,  (\ref{E:bdry'}), and (\ref{E:bdry'''}). %Moreover, (\ref{E:dec4}) and Lemma \ref{L:scalarrh} yield (\ref{E:bdry'''}) as well.

The meromorphic property of $\Psi(x,y,\cdot)$ is proved by (\ref{E:dec5}), and
\[\Delta(z,\lambda)=\exp\left\{\frac 1{2\pi i}\int_\mathbb{R}\frac {\log \delta(t, \lambda)}{t-z}\,dt\right\}.
\]  
\end{proof}

\begin{lemma}\label{L:approx}
For  $\lambda\in \overline{\mathbb{C}^+} \backslash \cup_{\lambda_j\in \left[Z^+\cup Z^-\right]} D_\epsilon(\lambda_j)$, there exists
\[R_\epsilon=\begin{cases} R_{\epsilon,u}(\tilde x,\tilde y,\lambda), &\textit{for $\tilde y\ge 0$, }%$\lambda\in\mathbb{C}^+\backslash \cup D_\epsilon(\lambda_j)$,}
\\ R_{\epsilon,l}(\tilde x,\tilde y,\lambda),& \textit{for $\tilde y\le 0$%, $\lambda\in\mathbb{C}^+\backslash \cup D_\epsilon(\lambda_j)$
}, \end{cases}
\]
such that 
\begin{eqnarray}
&&\textit{$|\Delta_{-,z}(1+(R_\epsilon)_{-,z})F(1+(R_\epsilon)_{+,z})^{-1}\Delta_{+,z}^{-1}(\tilde x,\lambda)-1|_{H^2(\mathbb{R}, d\tilde x)}<<1$ . }\label{E:ap1}\\
&&\textit{$|\Delta_{-,z}(1+(R_\epsilon)_{-,z})F(1+(R_\epsilon)_{+,z})^{-1}\Delta_{+,z}^{-1}(\tilde x,\lambda)-1|_{L_\infty}\Vert C_\pm\Vert<1$. }\label{E:ap2}
\\
%&&\textit{$\lambda\in \mathbb{C}^+ \backslash \left[Z^+\cup Z^-\right]_+$ and uniformly as $|\lambda|\to\infty$.}\nonumber\\
&&\textit{$(R_\epsilon)_u$ ($(R_\epsilon)_l$) is strictly upper (lower) triangular.}\label{E:ap6}\\
&&\textit{$R_\epsilon$ can be meromorphically extended in $\lambda\in\overline{\mathbb{C}^+}$ with poles at $Z^+\cup Z^-$.}\label{E:ap4}
\\
&&\textit{$R_\epsilon\in H^2(\mathbb R, d\tilde x)$ and is rational in $z\in\mathbb{C}^\pm$, with  finite simple poles } \label{E:ap5}\\ 
&&\textit{(independent of $\lambda$) and each corresponding residue is an off diagonal}\nonumber\\
&&\textit{matrix with only one nonzero entry. Moreover, the non-zero entry}\nonumber\\
&&\textit{tends to $0$ as $|\lambda|\to\infty$.}\nonumber
\end{eqnarray}
%Here $f_{\pm,z}(x)=\lim_{|y|\to 0^\pm}f$, for $x+\lambda y=\tilde x+i\tilde y=z$.
\end{lemma}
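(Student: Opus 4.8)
The plan is to carry out, in the present matrix‑triangular setting, the piecewise‑rational approximation device of Section~10 of \cite{BC84}. The starting point is the algebraic identity behind Lemma~\ref{L:trifact}: since $F=(1+g_l)^{-1}\delta(1+g_u)$ we have $(1+g_l)F(1+g_u)^{-1}=\delta$, and since $\Delta$ solves the diagonal problem, $\Delta_{+,z}=\Delta_{-,z}\delta$, i.e. $\Delta_{-,z}\,\delta\,\Delta_{+,z}^{-1}=1$. Hence \emph{if} one could take $(R_\epsilon)_{-,z}=g_l$ and $(R_\epsilon)_{+,z}=g_u$ on $\mathbb R$, the matrix in (\ref{E:ap1}) would be identically $1$. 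The whole content of the lemma is therefore to replace the (generally non‑small) triangular factors $g_u,g_l$ by rational‑in‑$z$ functions $R_{\epsilon,u},R_{\epsilon,l}$ that are $H^2(\mathbb R,d\tilde x)$‑close to them, uniformly in $\lambda$, while retaining the structural constraints needed to make the subsequent correction a \emph{finite} linear system. So I would first reduce (\ref{E:ap1})--(\ref{E:ap5}) to one statement: for every $\eta>0$ there are finitely many points off $\mathbb R$, independent of $\lambda$, and a rational function $R_{\epsilon,u}(z,\lambda)$ (resp. $R_{\epsilon,l}$) with simple poles at those points such that $\|g_u(\cdot,\lambda)-(R_{\epsilon,u})_{+,z}\|_{H^2}<\eta$ and $\|g_l(\cdot,\lambda)-(R_{\epsilon,l})_{-,z}\|_{H^2}<\eta$ on the whole region $\overline{\mathbb C^+}\setminus\cup_j D_\epsilon(\lambda_j)$, with the structural properties below.

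Next I would fix the shape of the approximant. Choose, for a mesh that may depend on $\epsilon$ but not on $\lambda$, finitely many points $\mu_1,\dots,\mu_N$ lying on the side of $\mathbb R$ opposite to the half‑plane in which $z=x+\lambda y$ moves for $y\ge0$ (so that $R_{\epsilon,u}$ is holomorphic there), and set
\[
R_{\epsilon,u}(z,\lambda)=\sum_{k}\sum_{a<b}\frac{c^{ab}_k(\lambda)}{z-\mu_k}\,E_{ab},\qquad
R_{\epsilon,l}(z,\lambda)=\sum_{k}\sum_{a>b}\frac{c^{ab}_k(\lambda)}{z-\nu_k}\,E_{ab},
\]
where $E_{ab}$ is the matrix unit with a single nonzero entry in position $(a,b)$, and $\nu_k$ is chosen symmetrically for the $y\le0$ half. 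The entrywise expansion forces $(R_\epsilon)_u$ strictly upper and $(R_\epsilon)_l$ strictly lower triangular (\ref{E:ap6}) and makes each residue off‑diagonal with exactly one nonzero entry (\ref{E:ap5}); the pole set $\{\mu_k\}\cup\{\nu_k\}$ is $\lambda$‑independent. The crucial choice is to take the scalar coefficients $c^{ab}_k(\lambda)$ to be \emph{linear} functionals of the $(a,b)$‑entry of $g_u(\cdot,\lambda)$ (resp. $g_l$) — for instance the coordinates of its orthogonal projection, in $H^2$, onto $\mathrm{span}\{(z-\mu_k)^{-1}\}_k$, or a fixed Cauchy‑type quadrature of its boundary values. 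Because $g_u,g_l$ are meromorphic in $\lambda\in\mathbb C^+$ with poles in $Z^+\cup Z^-$ by (\ref{E:dec5}), and bounded linear functionals preserve this, the $c^{ab}_k$ are meromorphic in $\lambda$ with poles contained in $Z^+\cup Z^-$, giving (\ref{E:ap4}); and since $g_u,g_l\to0$ in $L_2$ as $|\lambda|\to\infty$ by (\ref{E:dec4}), the residues $c^{ab}_k(\lambda)$ tend to $0$, as required at the end of (\ref{E:ap5}).

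With the shape fixed, the estimates come from Lemma~\ref{L:trifact}. Density of rational functions with simple poles in a fixed finite set is used to make $\|g_u(\cdot,\lambda)-(R_{\epsilon,u})_{+,z}\|_{H^2}$ and the analogue for $g_l$ smaller than any prescribed $\eta$ uniformly in $\lambda$; the uniform input is precisely the uniform‑in‑$\lambda$ $H^2$ bounds of (\ref{E:dec2}) together with the $L_2$‑decay (\ref{E:dec4}), which confine the families $\{g_u(\cdot,\lambda)\}$, $\{g_l(\cdot,\lambda)\}$ to a set that can be covered by finitely many of these rationals up to $\eta$; the $C_j\epsilon_j^{-h_j}$ growth of these $H^2$ norms as $\epsilon_j\to0$ is inherited verbatim by the $c^{ab}_k$ and hence by $R_\epsilon$, which yields the growth clause in (\ref{E:ap5}). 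Feeding $(R_{\epsilon,l})_{-,z}=g_l+O_{H^2}(\eta)$ and $(R_{\epsilon,u})_{+,z}=g_u+O_{H^2}(\eta)$ into the identity of the first paragraph, using that $H^2(\mathbb R)$ is a Banach algebra and the uniform $H^2$ control of $\Delta$, $\Delta^{-1}$, and $F$ (from Lemma~\ref{L:dRH} and the induction hypotheses (\ref{E:ind1})--(\ref{E:ind3})), gives $|\Delta_{-,z}(1+(R_\epsilon)_{-,z})F(1+(R_\epsilon)_{+,z})^{-1}\Delta_{+,z}^{-1}-1|_{H^2}\le C\eta$, hence (\ref{E:ap1}) for $\eta$ small; the Sobolev embedding $H^1(\mathbb R)\hookrightarrow L_\infty(\mathbb R)$ upgrades this to $|\,\cdot\,|_{L_\infty}<C\eta$, and choosing $\eta$ with $C\eta\,\|\mathcal C_\pm\|<1$ gives (\ref{E:ap2}). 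Meromorphy in $\lambda$ on all of $\overline{\mathbb C^+}$ (\ref{E:ap4}) is then automatic because the \emph{same} formula is used throughout the region, with no $\lambda$‑cutoff.

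The main obstacle I expect is exactly the tension just glossed over: one needs the rational approximation to be \emph{uniform} over the non‑compact set $\overline{\mathbb C^+}\setminus\cup_j D_\epsilon(\lambda_j)$ — in particular as $|\lambda|\to\infty$, where Lemma~\ref{L:trifact} furnishes only $H^1$‑decay of $g_u,g_l$ and mere $H^2$‑boundedness, and near $\partial D_\epsilon(\lambda_j)$, where the $H^2$ norms grow like $\epsilon_j^{-h_j}$ — while simultaneously depending \emph{meromorphically} on $\lambda$ with no poles outside $Z^+\cup Z^-$. Best $H^2$‑approximation for each fixed $\lambda$ would break meromorphy, so one is forced into a fixed linear recipe for the $c^{ab}_k(\lambda)$ and must then prove a genuinely uniform (equicontinuity/precompactness‑type) approximation statement for the families $\{g_u(\cdot,\lambda)\}$, $\{g_l(\cdot,\lambda)\}$, rather than a pointwise‑in‑$\lambda$ one; reconciling this with the fixed finite pole set and the rank‑one off‑diagonal residue structure is the technical heart of the lemma, and is where the adaptation of Section~10 of \cite{BC84} must be carried out with care, choosing the mesh fine enough (in an $\epsilon$‑dependent but $\lambda$‑independent way) to defeat the $\epsilon_j^{-h_j}$ growth and the $|\lambda|\to\infty$ end.
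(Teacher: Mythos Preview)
Your approach is correct and follows the same strategy as the paper: reduce to an $H^2$-approximation of $g_u,g_l$ by $z$-rational functions whose coefficients are \emph{linear} functionals of $g_u(\cdot,\lambda)$, $g_l(\cdot,\lambda)$, so that meromorphy in $\lambda$ and the decay as $|\lambda|\to\infty$ are inherited from (\ref{E:dec5}), (\ref{E:dec4}). The paper's implementation is more concrete than your orthogonal-projection recipe: it simply samples, taking
\[
p_{\epsilon,u}(z,\lambda)=\sum_{j=-N^2}^{N^2} g_u\!\left(\tfrac jN,\lambda\right)P_\epsilon\!\left(z-\tfrac jN\right),\qquad P_\epsilon(t)=\frac{1}{t-i\epsilon}-\frac{1}{t+i\epsilon},
\]
i.e.\ the coefficients are point evaluations (which are continuous on $H^2$ by Sobolev). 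This is exactly the Appendix~A.2 device of \cite{BC84} and avoids the Gram-matrix analysis implicit in your projection idea. For uniformity in $\lambda$ the paper does precisely what you anticipate: first use (\ref{E:dec4}) to find $\delta_\epsilon$ with $|g_u|_{H^2}<\epsilon$ for $|\lambda|>\delta_\epsilon$ (so \emph{any} $N$ works there, in particular the same one), then cover the compact region $\{|\lambda|\le\delta_\epsilon\}\setminus\cup_jD_\epsilon(\lambda_j)$ by finitely many neighborhoods on each of which a suitable $N$ works, and take the maximum.

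Two small corrections. First, your worry about the $C_j\epsilon_j^{-h_j}$ blow-up near $\partial D_{\epsilon_j}(\lambda_j)$ is misplaced here: for \emph{fixed} $\epsilon$ the region excludes the full disks $D_\epsilon(\lambda_j)$, so the $H^2$ norms of $g_u,g_l$ are uniformly bounded there by (\ref{E:dec2}); the growth clause enters only later, in (\ref{E:bdry'}). Second, there is no ``growth clause'' in (\ref{E:ap5}); that item asserts \emph{decay} of the residues as $|\lambda|\to\infty$, which your linear-functional construction already gives via (\ref{E:dec4}). Also note that the paper's Poisson kernel has poles on \emph{both} sides of $\mathbb R$, so your restriction of the $\mu_k$ to one half-plane is unnecessary: the poles of $R_\epsilon$ in $z$ are precisely the points later cancelled by the rational correction $r_\epsilon$ in Lemma~\ref{L:FORHP}, and there is no requirement that $R_{\epsilon,u}$ be holomorphic for $\tilde y>0$.
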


\begin{proof} By the condition (\ref{E:dec4}), there exists $\delta_\epsilon$ such that  $|g_u\chi_{|\lambda|>\delta_\epsilon}|_{H^2(d\tilde x)}<\epsilon$. Moreover, by (\ref{E:dec2}), for each $\lambda_0\in \overline{\mathbb{C}^+} \backslash \cup_{\lambda_j\in \left[Z^+\cup Z^-\right]} D_\epsilon(\lambda_j)$, $|\lambda_0|\le \delta_\epsilon$, there exists $N=N(\epsilon,\lambda_0)$  such that
 \[|g_u-p_{\epsilon,u}|_{H^2(d\tilde x)}<\epsilon\qquad\textsl{for $\lambda$ in a small neighborhood of $\lambda_0$. }\]
Where
\begin{eqnarray*}
&&p_{\epsilon,u}(z,\lambda)=\sum_{j=-N^2}^{N^2}g_u(\frac j{N},\lambda)P_\epsilon(z-\frac j{N}),\\
&&P_\epsilon(t)=\frac 1{t-i\epsilon}-\frac 1{t+i\epsilon}\textit{ is the Poisson kernel \cite{BC84} (Appendix A.2). }
\end{eqnarray*}
 One can check that  $p_{\epsilon,u}\in H^2(\mathbb R, d\tilde x)$ satisfies 
 (\ref{E:ap6}), (\ref{E:ap4}). Hence choosing a bigger $N$ or $\delta_\epsilon$, there exists a $z$-rational function, denoted as $\tilde p_{\epsilon,u} $,
  \[|g_u-\tilde p_{\epsilon,u}|_{H^2(d\tilde x)}<\epsilon\qquad\textsl{for $\forall\lambda\in \overline{\mathbb{C}^+} \backslash \cup_{\lambda_j\in \left[Z^+\cup Z^-\right]} D_\epsilon(\lambda_j)$},\]
 and $\tilde p_{\epsilon,u}$ satisfies 
 (\ref{E:ap6}), (\ref{E:ap4}).

Consequently, using (\ref{E:ind1}), (\ref{E:ind3}), Lemma \ref{L:trifact}, \ref{L:dRH}, and the off-diagonal form of $g_u$, one can find a $z$-rational function $R_u(z,\lambda)$ which is an approximation of $g_u$ on $z\in\mathbb{R}$ and satisfies (\ref{E:ap1})-(\ref{E:ap5}). 

The case of $g_l$ can be done in analogy.
\end{proof}
 
With Lemma \ref{L:approx}, one can find a solution to the small-data Riemann-Hilbert problem $(\tilde x\in\mathbb{R}, \Delta_{-,z}(1+( R_\epsilon)_{-,z})F(1+( R_\epsilon)_{+,z})^{-1}\Delta_{+,z}^{-1})$. However, it is difficult to analyze the meromorphic property of the solution in a neighborhood of points in $\left[Z^+\cup Z^-\right]$. Hence we need to improve Lemma \ref{L:approx}. First of all, 
let us denote $ \mathbb{C}^+ _\epsilon=\left\{\lambda\in \mathbb C^+\,|\,\lambda_I \ge\epsilon\right\}$,  and 
$ \left[Z^+\cup Z^-\right]_\epsilon^+=\left\{\lambda\in \left[Z^+\cup Z^-\right]\,|\,\lambda_I\ge\epsilon\right\}$ for simplicity.

\begin{lemma}\label{L:approx1}
For  $\lambda\in \mathbb{C}^+_ \epsilon$, there exist
\[\tilde R_\epsilon=\begin{cases}\tilde  R_{\epsilon,u}(\tilde x,\tilde y,\lambda), &\textit{for $\tilde y\ge 0$, }%$\lambda\in\mathbb{C}^+\backslash \cup D_\epsilon(\lambda_j)$,}
\\ \tilde R_{\epsilon,l}(\tilde x,\tilde y,\lambda),& \textit{for $\tilde y\le 0$%, $\lambda\in\mathbb{C}^+\backslash \cup D_\epsilon(\lambda_j)$
} \end{cases}
\]
%and  
%\begin{equation}
%\mathcal U_\epsilon(\lambda)=\prod_{\lambda_j\in \left[Z^+\cup Z^-\right]_\epsilon^+}\left(\frac {\lambda-\lambda_j}{\lambda+i}\right)^{k_j},\label{E:uepsilon}
%\end{equation} 
such that 
\begin{eqnarray}
&&\textit{$|\Delta_{-,z}(1+(\tilde R_\epsilon)_{-,z})F(1+(\tilde R_\epsilon)_{+,z})^{-1}\Delta_{+,z}^{-1}-1|_{H^2(\mathbb{R}, d\tilde x)}<<1$ . }\label{E:ap1'}\\
&&\textit{$|\Delta_{-,z}(1+(\tilde R_\epsilon)_{-,z})F(1+(\tilde R_\epsilon)_{+,z})^{-1}\Delta_{+,z}^{-1}-1|_{L_\infty}\Vert C_\pm\Vert<1$. }\label{E:ap2'}
\\
%&&\textit{$\lambda\in \mathbb{C}^+ \backslash \left[Z^+\cup Z^-\right]_+$ and uniformly as $|\lambda|\to\infty$.}\nonumber\\
&&\textit{$(\tilde R_\epsilon)_u$ ($(\tilde R_\epsilon)_l$) is strictly upper (lower) triangular.}\label{E:ap6'}
\\
&&\textit{$\tilde R_\epsilon$ is meromorphic in $\lambda\in\mathbb C^+_\epsilon$ with poles at $\left[Z^+\cup Z^-\right]_\epsilon^+$.}\label{E:ap4'}\\
&&\textit{$\tilde R_\epsilon\in H^2(\mathbb R, d\tilde x)$ and is rational in $z\in\mathbb{C}^\pm$, with  finite simple poles } \label{E:ap5'}\\ 
&&\textit{(independent of $\lambda$) and each corresponding residue is an off diagonal}\nonumber\\
&&\textit{matrix with only one nonzero entry. Moreover, the non-zero entry}\nonumber\\
&&\textit{tends to $0$ as $|\lambda|\to\infty$.}\nonumber
\end{eqnarray}
%Here $f_{\pm,z}(x)=\lim_{|y|\to 0^\pm}f$, for $x+\lambda y=\tilde x+i\tilde y=z$.
\end{lemma}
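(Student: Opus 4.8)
The plan is to upgrade Lemma \ref{L:approx} from the punctured closed half-plane $\overline{\mathbb{C}^+}\backslash\cup_j D_\epsilon(\lambda_j)$ to the genuine domain $\mathbb{C}^+_\epsilon$, where the issue is not the small-data estimates near infinity but rather the meromorphic continuation of the approximating rational function into the whole set $\mathbb{C}^+_\epsilon$ with poles only at $\left[Z^+\cup Z^-\right]_\epsilon^+$. First I would revisit the construction of $\tilde p_{\epsilon,u}$ in the proof of Lemma \ref{L:approx}: there the approximation $g_u(\tfrac j N,\lambda)P_\epsilon(z-\tfrac j N)$ produces a $z$-rational function whose $\lambda$-dependence is inherited from the coefficients $g_u(\tfrac j N,\lambda)$, and by Lemma \ref{L:trifact} (property (\ref{E:dec5})) these coefficients are $\lambda$-meromorphic in $\mathbb{C}^+$ with poles exactly at $\left[Z^+\cup Z^-\right]$. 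Restricting $\lambda$ to $\mathbb{C}^+_\epsilon$ automatically excludes the poles $\lambda_j$ with $(\lambda_j)_I<\epsilon$, so the only surviving poles lie in $\left[Z^+\cup Z^-\right]_\epsilon^+$, which is what (\ref{E:ap4'}) demands.

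The key steps, in order, are: (i) Cover $\mathbb{C}^+_\epsilon\cap\{|\lambda|\le\delta_\epsilon\}\backslash\cup_{\lambda_j\in\left[Z^+\cup Z^-\right]_\epsilon^+}D_{\epsilon'}(\lambda_j)$ by finitely many small neighborhoods on which, by (\ref{E:dec2}) and the continuity in $\lambda$ of the $H^2(d\tilde x)$-valued map $g_u$, a single Poisson-kernel interpolant $p_{\epsilon,u}$ approximates $g_u$ within $\epsilon$; patch these together (taking $N$ and $\delta_\epsilon$ larger if necessary) into one $z$-rational function $\tilde p_{\epsilon,u}$ whose coefficients are the sampled values $g_u(\tfrac j N,\lambda)$, hence $\lambda$-meromorphic in $\mathbb{C}^+_\epsilon$ with poles in $\left[Z^+\cup Z^-\right]_\epsilon^+$ only. (ii) Use (\ref{E:dec4}) for the region $|\lambda|>\delta_\epsilon$, where $|g_u\chi_{|\lambda|>\delta_\epsilon}|_{H^2(d\tilde x)}<\epsilon$, so one may take the interpolant to vanish there; this also yields the decay of the non-zero residue entries as $|\lambda|\to\infty$ in (\ref{E:ap5'}). (iii) Form $\tilde R_{\epsilon,u}$ (and symmetrically $\tilde R_{\epsilon,l}$ from $g_l$) from $\tilde p_{\epsilon,u}$, keeping the strictly-triangular off-diagonal structure inherited from $g_u$, which gives (\ref{E:ap6'}) and the residue structure in (\ref{E:ap5'}). (iv) Finally, substitute into $\Delta_{-,z}(1+(\tilde R_\epsilon)_{-,z})F(1+(\tilde R_\epsilon)_{+,z})^{-1}\Delta_{+,z}^{-1}$ and verify (\ref{E:ap1'}), (\ref{E:ap2'}): since $F=(1+g_l)^{-1}\delta(1+g_u)$ and $\Delta$ solves the diagonal Riemann--Hilbert problem for $\delta$, the conjugated expression is, up to controlled error, $(1+(\tilde R_\epsilon)_{-,z})(1+g_l)^{-1}(1+g_u)(1+(\tilde R_\epsilon)_{+,z})^{-1}$ times boundary values of $\Delta$; the $H^2$-closeness of $\tilde R_\epsilon$ to $-g_l$ (lower part) and $-g_u$ (upper part) on $z\in\mathbb{R}$, combined with the algebra/multiplier properties of $H^2(\mathbb{R},d\tilde x)$ and the uniform bounds on $\Delta_{\pm,z}$ from Lemma \ref{L:dRH}, forces the bracket to be $1+O(\epsilon)$ in $H^2$, hence $<<1$, and likewise small in $L_\infty$ so that (\ref{E:ap2'}) holds by boundedness of $\mathcal C_\pm$.

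I expect the main obstacle to be step (i): making the local Poisson-kernel interpolants patch into a single function that is simultaneously $z$-rational with $\lambda$-independent poles, strictly triangular, $\lambda$-meromorphic on all of $\mathbb{C}^+_\epsilon$ with no spurious poles, and uniformly $H^2(d\tilde x)$-close to $g_u$. The compactness of $\mathbb{C}^+_\epsilon\cap\{|\lambda|\le\delta_\epsilon\}\backslash\cup D_{\epsilon'}(\lambda_j)$ makes the covering argument work, but one must be careful that enlarging $N$ across different coordinate patches does not destroy the approximation already achieved on earlier patches — this is handled by choosing one common $N$ that works on the whole (finite) cover, exactly as in the proof of Lemma \ref{L:approx}, and then noting that the $\epsilon$-neighborhoods $D_{\epsilon'}(\lambda_j)$ can be absorbed because we only require the estimate on $\mathbb{C}^+_\epsilon$ and the bound (\ref{E:dec2}) degenerates only at rate $C_j\epsilon_j^{-h_j}$, which is finite for fixed $\epsilon$. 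The verification in step (iv) is then routine given Lemma \ref{L:trifact} and Lemma \ref{L:dRH}.
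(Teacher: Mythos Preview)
Your outline misses the central device the paper uses, and the hand-wave at the end of step (i) conceals a real failure. You cover $\mathbb{C}^+_\epsilon\cap\{|\lambda|\le\delta_\epsilon\}\backslash\cup_{\lambda_j}D_{\epsilon'}(\lambda_j)$ and then assert that the excised disks ``can be absorbed because \dots\ the bound (\ref{E:dec2}) degenerates only at rate $C_j\epsilon_j^{-h_j}$, which is finite for fixed $\epsilon$.'' This confuses two different parameters: the $\epsilon$ in $\mathbb{C}^+_\epsilon$ is a lower bound on $\lambda_I$, whereas the $\epsilon_j$ in (\ref{E:dec2}) is the radius of the excluded disk around a pole $\lambda_j$. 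For $\lambda_j\in\left[Z^+\cup Z^-\right]_\epsilon^+$ the set $\mathbb{C}^+_\epsilon$ contains points with $|\lambda-\lambda_j|$ arbitrarily small, so (\ref{E:dec2}) gives no uniform control there. Concretely, near such a $\lambda_j$ one has $g_u(\tilde x,\lambda)=(\lambda-\lambda_j)^{-h_j}h(\tilde x)+\text{regular}$, hence your interpolant $\tilde p_{\epsilon,u}=\sum_k g_u(\tfrac kN,\lambda)P_\epsilon(z-\tfrac kN)$ satisfies
\[
g_u-\tilde p_{\epsilon,u}=(\lambda-\lambda_j)^{-h_j}\Bigl[h-\textstyle\sum_k h(\tfrac kN)P_\epsilon(\cdot-\tfrac kN)\Bigr]+\text{regular},
\]
and the bracket, while small, is multiplied by an unbounded factor. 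The smallness in (\ref{E:ap1'}), (\ref{E:ap2'}) is then lost exactly in the neighborhoods you need for Lemma \ref{L:RHsmdata} to produce a solution that extends meromorphically through $\left[Z^+\cup Z^-\right]_\epsilon^+$, which was the entire point of upgrading Lemma \ref{L:approx}.

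The paper's remedy is to clear the poles \emph{before} approximating: multiply $g_u$ by the finite Blaschke-type product
\[
\mathcal P_{\epsilon,u}(\lambda)=\prod_{\lambda_j\in\left[Z^+\cup Z^-\right]_\epsilon^+}\Bigl(\tfrac{\lambda-\lambda_j}{\lambda+i}\Bigr)^{h_j},
\]
so that $\mathcal G_{\epsilon,u}=\mathcal P_{\epsilon,u}g_u$ is holomorphic (hence uniformly $H^2(d\tilde x)$-bounded) on $\mathbb{C}^+_\epsilon$; run the Poisson-kernel approximation of Lemma \ref{L:approx} on $\mathcal G_{\epsilon,u}$ to get $R'_{\epsilon,u}$; and set $\tilde R_{\epsilon,u}=\mathcal P_{\epsilon,u}^{-1}R'_{\epsilon,u}$. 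An analogous factor $\mathcal U_\epsilon$ clears the poles of $\delta$ and $\Delta$. Because $\mathcal P_{\epsilon,u}$, $\mathcal P_{\epsilon,l}$, $\mathcal U_\epsilon$ are scalar in $\tilde x$ they cancel in the combination $\Delta_{-,z}(1+(\tilde R_\epsilon)_{-,z})F(1+(\tilde R_\epsilon)_{+,z})^{-1}\Delta_{+,z}^{-1}$, leaving a quantity whose smallness is governed by the uniform approximation of the holomorphic objects. Your steps (ii)--(iv) are fine once this pole-clearing is in place, but step (i) as written does not go through.
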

\begin{proof}  One can multiply $g_u$ ($g_l$ respectively) by product 
\[\mathcal P_{\epsilon,u}=\prod_{\lambda_j\in \left[Z^+\cup Z^-\right]_\epsilon^+}\left(\frac {\lambda-\lambda_j}{\lambda+i}\right)^{h_j}\]
so that $\mathcal G_{\epsilon,u}=\mathcal P_{\epsilon,u} g_u$ is holomorphic in $\lambda\in\mathbb C^+_\epsilon$. Then using  (\ref{E:dec2}) and the same argument as the proof of Lemma \ref{L:approx}, one can approximate $\mathcal G_{\epsilon,u}$ by a piecewise $z$-rational function $R'_{\epsilon,u}$. Let $\tilde R_{\epsilon,u}=\mathcal P_{\epsilon,u}^{-1}R'_{\epsilon,u}$.

Next, choose $k_j$ sufficiently large in $\mathcal U_\epsilon(\lambda)=\prod_{\lambda_j\in \left[Z^+\cup Z^-\right]_\epsilon^+}\left(\frac {\lambda-\lambda_j}{\lambda+i}\right)^{k_j}$ to make $\mathcal U_\epsilon\delta$, $\mathcal U_\epsilon\Delta$ holomorphic in $\lambda\in\mathbb C^+_\epsilon$. Hence the lemma  can be proved by  an adaptation of the proof of  Lemma \ref{L:approx}. (Note the factors $\mathcal U_{\epsilon}$, $\mathcal P_{\epsilon,u}$, $\mathcal P_{\epsilon,l}$ are cancelled out.)
\end{proof}

\begin{lemma}\label{L:RHsmdata}{\emph{ (A Riemann-Hilbert problem with small data)}} 
The Riemann-Hilbert problem $(\tilde x\in\mathbb{R}, \Delta_{-,z}(1+(\tilde R_{\epsilon,u})_{-,z})F(1+(\tilde R_{\epsilon,u})_{+,z})^{-1}\Delta_{+,z}^{-1})$ admits a solution $f_{\epsilon,s}(z,\lambda)$  for $\lambda\in \mathbb{C}^+ _\epsilon\backslash \left[Z^+\cup Z^-\right]_\epsilon^+$. 
Moreover, 
\begin{itemize}
		\item $f_{\epsilon,s}$  is meromorphic in $\lambda\in\mathbb C^+_\epsilon$ with poles at $\left[Z^+\cup Z^-\right]_\epsilon^+$.
    \item $\left(f_{\epsilon,s}\right)_{\pm,z}$  satisfies (\ref{E:bdry'}), (\ref{E:bdry'''}).
\end{itemize}
\end{lemma}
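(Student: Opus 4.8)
The plan is to recognize the stated Riemann--Hilbert problem as a \emph{small-data} problem to which Lemma~\ref{L:SMrm} applies directly, and then to read off the meromorphic dependence on $\lambda$ and the boundary estimates by transporting the corresponding properties of the three factors $\Delta$, $F$, $\tilde R_{\epsilon,u}$ — all already controlled — through the linear estimate supplied by Lemma~\ref{L:SMrm}. Abbreviate the data of the problem by
\[
v_\epsilon(\tilde x,\lambda)=\Delta_{-,z}(1+(\tilde R_{\epsilon,u})_{-,z})\,F\,(1+(\tilde R_{\epsilon,u})_{+,z})^{-1}\Delta_{+,z}^{-1}(\tilde x,\lambda).
\]
By (\ref{E:ap1'}) and (\ref{E:ap2'}) of Lemma~\ref{L:approx1}, for $\lambda\in\mathbb C^+_\epsilon\backslash[Z^+\cup Z^-]_\epsilon^+$ one has $v_\epsilon-1\in H^2(\mathbb R,d\tilde x)$ with $|v_\epsilon-1|_{H^2(d\tilde x)}<<1$ and $|v_\epsilon-1|_{L_\infty}\,\Vert\mathcal C_\pm\Vert_2<1$, which are precisely the hypotheses of Lemma~\ref{L:SMrm}. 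Hence for each such $\lambda$ there is a unique $f_{\epsilon,s}(z,\lambda)$ solving $(\tilde x\in\mathbb R,\,v_\epsilon)$ with $f_{\epsilon,s}-1\in L_\infty\cap L_2(d\tilde x)$, and, since the data is small in $H^2$, the estimate $|(f_{\epsilon,s})_{\pm,z}-1|_{H^2(d\tilde x)}\le C\,|v_\epsilon-1|_{H^2(d\tilde x)}$ of Lemma~\ref{L:SMrm} holds.

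The two listed properties of $f_{\epsilon,s}$ follow from this inequality. For (\ref{E:bdry'}): by Lemma~\ref{L:dRH}, Lemma~\ref{L:trifact}, Lemma~\ref{L:approx1} and the induction hypothesis (\ref{E:ind3}), the functions $\Delta_{\pm,z}-1$, $(\tilde R_{\epsilon,u})_{\pm,z}$, $F-1$ are uniformly bounded in $H^2(d\tilde x)$ for $\lambda$ outside the $\epsilon_j$-disks and grow at most like $C_j\epsilon_j^{-h_j}$ as $\epsilon_j\to0$; multiplying these factors preserves this behaviour for $v_\epsilon-1$, and the linear $H^2$ estimate then transfers the same bounds and growth rates to $(f_{\epsilon,s})_{\pm,z}-1$, which is (\ref{E:bdry'}). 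For (\ref{E:bdry'''}): as $|\lambda|\to\infty$, $\Delta-1$, $g_u$, $g_l\to0$ in $L_2(d\tilde x)$ by (\ref{E:dec4}), the non-zero entry of $\tilde R_{\epsilon,u}$ tends to $0$ by (\ref{E:ap5'}), and $F-1\to0$ by (\ref{E:bdry'''}) from the induction hypothesis, so $v_\epsilon-1\to0$ in $H^2(d\tilde x)$; the linear estimate then forces $f_{\epsilon,s}-1$ and $\partial_x f_{\epsilon,s}\to0$ in $L_2(d\tilde x)$.

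For the meromorphic dependence on $\lambda$, note first that $\lambda\mapsto v_\epsilon(\cdot,\lambda)$ is meromorphic on $\mathbb C^+_\epsilon$ with values in $1+H^2(\mathbb R,d\tilde x)$ and poles contained in $[Z^+\cup Z^-]_\epsilon^+$, by the meromorphy of $\Delta$ (Lemma~\ref{L:dRH}), of $F$ ((\ref{E:ind1})), and of $\tilde R_{\epsilon,u}$ ((\ref{E:ap4'})). On $\mathbb C^+_\epsilon$ minus this pole set, $v_\epsilon$ is holomorphic in $\lambda$ and small, so the Cauchy operator $C_{v_\epsilon}h=\mathcal C_-\big(h(v_\epsilon-1)\big)$ of \cite{BC84} has norm $<1$ and depends holomorphically on $\lambda$; hence $(1-C_{v_\epsilon})^{-1}$ is holomorphic there and $f_{\epsilon,s}=1+\mathcal C\big((1-C_{v_\epsilon})^{-1}(1)\cdot(v_\epsilon-1)\big)$ is holomorphic in $\lambda$ on $\mathbb C^+_\epsilon\backslash[Z^+\cup Z^-]_\epsilon^+$. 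To extend across a point $\lambda_j\in[Z^+\cup Z^-]_\epsilon^+$, multiply $v_\epsilon-1$ by a scalar Blaschke-type factor $((\lambda-\lambda_j)/(\lambda+i))^{m_j}$ with $m_j$ large enough that the product is holomorphic and still small near $\lambda_j$; solving the modified small-data problem produces a solution holomorphic near $\lambda_j$, and undoing the scalar factor shows $f_{\epsilon,s}$ has at worst a pole at $\lambda_j$. Thus $f_{\epsilon,s}$ is meromorphic on $\mathbb C^+_\epsilon$ with poles in $[Z^+\cup Z^-]_\epsilon^+$.

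I expect the meromorphic-continuation step to be the only genuine difficulty: the small-data machinery of Lemma~\ref{L:SMrm} yields $f_{\epsilon,s}$ only on the complement of $[Z^+\cup Z^-]_\epsilon^+$, and one must check both that no extra singularities appear where $\Vert C_{v_\epsilon}\Vert$ stays below $1$ and that the genuine poles of the data produce only poles — not essential singularities — of the solution; the Blaschke-factor reduction above is designed exactly for this. The remaining points (the $H^2$ bounds, their $\epsilon_j^{-h_j}$ growth, and the decay as $|\lambda|\to\infty$) are routine once the linear estimate of Lemma~\ref{L:SMrm} is in hand.
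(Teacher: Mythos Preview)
Your overall strategy coincides with the paper's: invoke Lemma~\ref{L:SMrm} via the smallness (\ref{E:ap1'})--(\ref{E:ap2'}) to obtain existence and the $H^2$ estimate, then read off (\ref{E:bdry'}) and (\ref{E:bdry'''}) from the corresponding properties of $F$, $\Delta$, $\tilde R_\epsilon$; the paper's proof cites exactly (\ref{E:ind3}), Lemma~\ref{L:dRH}, (\ref{E:ap5'}) for (\ref{E:bdry'''}) and (\ref{E:ind1}), Lemma~\ref{L:dRH}, (\ref{E:ap4'}) for meromorphy, with no further elaboration.

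The one step that does not work as you have written it is the Blaschke-factor argument for meromorphic continuation across $\lambda_j$. Replacing the jump data by $\tilde v_\epsilon=1+p(\lambda)(v_\epsilon-1)$ with $p(\lambda)=\big((\lambda-\lambda_j)/(\lambda+i)\big)^{m_j}$ gives a \emph{different} Riemann--Hilbert problem whose solution has no simple algebraic relation to $f_{\epsilon,s}$: rescaling $v-1$ by a scalar is neither a conjugation of $v$ nor a left multiplication of $\Psi$, so there is no way to ``undo the scalar factor'' and recover $f_{\epsilon,s}$ from the modified solution. The paper does not spell out this step either, but the intended mechanism is that $\lambda\mapsto v_\epsilon(\cdot,\lambda)-1$ is meromorphic into $H^2(d\tilde x)$ with poles confined to $[Z^+\cup Z^-]_\epsilon^+$ (this is precisely why Lemma~\ref{L:approx1}, with its regularizing factors $\mathcal P_{\epsilon,u},\mathcal U_\epsilon$, was introduced to replace Lemma~\ref{L:approx}); hence $\lambda\mapsto T_\lambda h=\mathcal C_-\big(h(v_\epsilon-1)\big)$ is an operator-valued meromorphic function, the Neumann series $(1-T_\lambda)^{-1}=\sum_n T_\lambda^n$ is holomorphic wherever $\|T_\lambda\|<1$, and the solution $f_{\epsilon,s,-}=(1-T_\lambda)^{-1}1$ inherits meromorphy in $\lambda$ from that of $T_\lambda$. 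Your holomorphy argument on the complement of the pole set is fine; it is only the extension across the poles that needs a different justification.
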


\begin{proof} By the assumption (\ref{E:ap1'}), (\ref{E:ap2'}), %for $\lambda\in \mathbb{C}^+ \backslash \cup_{\lambda_j\in \left[Z^+\cup Z^-\right]} D_\epsilon(\lambda_j)$, 
one can apply Lemma \ref{L:SMrm} to find $ f_{\epsilon,s}$ which satisfies (\ref{E:bdry'}) and the Riemann-Hilbert problem 
$(\tilde x\in\mathbb{R}, \Delta_{-,z}(1+(\tilde R_{\epsilon,u})_{-,z})F(1+(\tilde R_{\epsilon,u})_{+,z})^{-1}\Delta_{+,z}^{-1})$

Moreover, $ f_{\epsilon,s}$ satisfies (\ref{E:bdry'''})  by Lemma \ref{L:SMrm}, (\ref{E:ind3}), Lemma \ref{L:dRH},  and (\ref{E:ap5'}). Finally, $ f_{\epsilon,s}$ is is meromorphic in $\lambda\in\mathbb C^+_\epsilon$ with poles at $\left[Z^+\cup Z^-\right]_\epsilon^+$ by (\ref{E:ind1}), Lemma \ref{L:dRH}, and (\ref{E:ap4'}).  
\end{proof}

We conclude this step by a characterization of Lemma \ref{L:tRH}.

\begin{lemma}\label{L:FORHP}{\emph{ (Factorization of the Riemann-Hilbert problem)}}   
Suppose $f(z,\lambda)$ fulfills the statement in Lemma \ref{L:tRH}.
Then  there exist a unique function $r_\epsilon(z,\lambda)$ and a set $Z_\epsilon$,  such that %for $\lambda\in \mathbb{C}^+_\epsilon\backslash Z_\epsilon$,
\begin{equation}
\textit{$r_\epsilon(z,\lambda)=1+\sum_{k=1}^{N_\epsilon} (z-z_k)^{-1}c_{k,\epsilon}(\lambda)$ ,  }\label{E:rational1}
\end{equation}
for some integer $N_\epsilon$, $ Z_\epsilon \subset Z$, and for $\lambda\in\mathbb{C}^+_\epsilon\backslash Z$,
\begin{eqnarray}
&&\textit{$c_{k,\epsilon}$ is meromorphic in $\lambda\in\mathbb C^+_\epsilon$ with poles at $Z_\epsilon$,}\label{E:rational2}\\% and tends to $1$ uniformly 
%&&\textit{as $|\lambda|\to\infty$,}\nonumber\\
&&\textit{$c_{k,\epsilon}(\lambda)\to 0$ as $|\lambda|\to\infty$,}\label{E:rational3}\\
&&f=r_\epsilon f_{\epsilon,s}\Delta (1+\tilde R_\epsilon).\label{rational}
\end{eqnarray}

Conversely, suppose there are uniformly bounded  sets $Z_\epsilon$, %$\left[Z^+\cup Z^-\right]_\epsilon^+\subset Z_\epsilon$, %$\epsilon=1,\frac 12, \frac 13,\cdots$, 
and functions $\left\{r_\epsilon\right\}$  which are $\lambda$-meromorphic in $\mathbb C^+_\epsilon$ with poles at $Z_\epsilon$, satisfy (\ref{E:rational1})- (\ref{E:rational3}), and  
\begin{equation}
\textit{$r_\epsilon f_{\epsilon,s}\Delta (1+\tilde R_\epsilon)$ is holomorphic in $ z\in \mathbb{C}^\pm$}\label{E:rational4}
\end{equation}
for $\lambda\in\mathbb{C}^+_\epsilon\backslash \left(Z_\epsilon \cup \left[Z^+\cup Z^-\right]_\epsilon^+\right)$. Define $f_\epsilon=r_\epsilon f_{\epsilon,s}\Delta (1+\tilde R_\epsilon)$ for $\lambda\in\mathbb{C}^+_\epsilon$. Then we have
\begin{eqnarray}
&&f_\epsilon \textit{ is meromorphic in $\lambda\in\mathbb C^+_\epsilon$ with poles at $Z_\epsilon\cup\left[Z^+\cup Z^-\right]_\epsilon^+$,} \label{E:meroext}\\
&&f_{\epsilon_1}=f_{\epsilon_2}\qquad\textit{for $\lambda\in\mathbb{C}^+_{\epsilon_1}$ if $\epsilon_1>\epsilon_2$}.\label{E:liou}
\end{eqnarray}
Hence  $f=f_\epsilon$ is well-defined, %$Z_\epsilon$ are discrete in $\mathbb C^+$,  
and $f$ satisfies the statements in Lemma \ref{L:tRH} with $Z={\cup_\epsilon Z(f_\epsilon)}\cup \{\lambda_j\in\mathbb R|\limsup_{\epsilon\to 0}{|f_\epsilon(D_{2\epsilon}(\lambda_j)\cap C^+_\epsilon)|}=\infty\}$. Here $Z(f_\epsilon)$ denotes the poles of $f_\epsilon$.
\end{lemma}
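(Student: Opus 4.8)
The plan is to set up the factorization $f = r_\epsilon f_{\epsilon,s}\Delta(1+\tilde R_\epsilon)$ as an identity forced by matching singularities, then invert the implications. For the forward direction, I would start from the $f$ supplied by Lemma \ref{L:tRH}: it solves the Riemann–Hilbert problem $(\tilde x\in\mathbb{R}, F(\tilde x,\lambda))$ and, via the factorization $F=(1+g_l)^{-1}\delta(1+g_u)$ of Lemma \ref{L:trifact} together with the diagonal solution $\Delta$ of Lemma \ref{L:dRH} and the rational approximant $\tilde R_\epsilon$ of Lemma \ref{L:approx1}, the combination $f_{\epsilon,s}$ of Lemma \ref{L:RHsmdata} solves a \emph{small-data} Riemann–Hilbert problem with the same jump as $f\,[\Delta(1+\tilde R_\epsilon)]^{-1}$ up to the residue corrections coming from the simple poles of $\tilde R_\epsilon$ at the fixed points $z_k$. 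Therefore $g := f\,(f_{\epsilon,s}\Delta(1+\tilde R_\epsilon))^{-1}$ has no jump across $\mathbb{R}$ in the $z$-variable, is rational in $z$ with possible poles only at the $z_k$ (and at the zeros of $\det\Delta$, which by Lemma \ref{L:SMdet}-type reasoning lie in $[Z^+\cup Z^-]$), and tends to $1$ as $z\to\infty$ by the boundary estimate (\ref{E:bdry'''}) shared by $f$, $f_{\epsilon,s}$, $\Delta$, $\tilde R_\epsilon$. Hence $g$ has the form (\ref{E:rational1}); set $r_\epsilon=g$. The coefficients $c_{k,\epsilon}(\lambda)$ are then expressed by residue formulas in terms of $f$, $f_{\epsilon,s}$, $\Delta$, $(1+\tilde R_\epsilon)$ evaluated near $z_k$, from which (\ref{E:rational2}) (meromorphy in $\lambda$ with poles only where the ingredients have poles, i.e.\ in $Z_\epsilon\subset Z$) and (\ref{E:rational3}) ($c_{k,\epsilon}\to0$ as $|\lambda|\to\infty$, since every factor does by (\ref{E:bdry'''})) follow. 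Uniqueness of $r_\epsilon$ is immediate: if $r_\epsilon$ and $r_\epsilon'$ both work, their quotient is entire in $z$, rational, and $\to1$ at $\infty$, hence $\equiv1$.

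For the converse, given $\{r_\epsilon\}$, $\{Z_\epsilon\}$ satisfying (\ref{E:rational1})–(\ref{E:rational3}) and the holomorphy requirement (\ref{E:rational4}), define $f_\epsilon = r_\epsilon f_{\epsilon,s}\Delta(1+\tilde R_\epsilon)$ on $\mathbb{C}^+_\epsilon$. First I would read off (\ref{E:meroext}): each factor is $\lambda$-meromorphic on $\mathbb{C}^+_\epsilon$ with poles contained in $Z_\epsilon\cup[Z^+\cup Z^-]_\epsilon^+$ (by (\ref{E:rational2}), Lemma \ref{L:RHsmdata}, Lemma \ref{L:dRH}, and (\ref{E:ap4'})), so the product is too. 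Next, the jump of $f_\epsilon$ across $\mathbb{R}$ in $z$: since $r_\epsilon$ is holomorphic in $z$ away from the fixed $z_k$ and continuous up to $\mathbb{R}$ there, and the jump of $f_{\epsilon,s}\Delta(1+\tilde R_\epsilon)$ is exactly $F$ by construction of the small-data problem (Lemma \ref{L:RHsmdata}), one gets $(f_\epsilon)_+ = (f_\epsilon)_- F$ on $\mathbb{R}$; combined with (\ref{E:rational4}) which kills the would-be poles at $z_k$, $f_\epsilon$ is the solution of the Riemann–Hilbert problem $(\tilde x\in\mathbb{R}, F(\tilde x,\lambda))$, and uniqueness of that problem (Lemma \ref{L:SMrm}, applied after the small-data reduction) gives the consistency (\ref{E:liou}): for $\epsilon_1>\epsilon_2$ both $f_{\epsilon_1}$ and $f_{\epsilon_2}$ solve the same RHP on the common domain $\mathbb{C}^+_{\epsilon_1}$. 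Thus $f:=f_\epsilon$ is well-defined on $\mathbb{C}^+\setminus(\text{poles})$.

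It remains to check $f$ satisfies the statements of Lemma \ref{L:tRH}. Solvability of the RHP and the boundary estimates (\ref{E:bdry'}), (\ref{E:bdry'''}) for $f_{\pm,z}$ are inherited from the corresponding properties of $r_\epsilon$ (rational, coefficients in $H^2(d\tilde x)$ and $\to0$ as $|\lambda|\to\infty$), $f_{\epsilon,s}$ (Lemma \ref{L:RHsmdata}), $\Delta$ (Lemma \ref{L:dRH}), and $(1+\tilde R_\epsilon)$ (Lemma \ref{L:approx1}), multiplied out using the algebra property of $H^2$ in one variable; the blow-up rate $C_j\epsilon_j^{-h_j}$ comes from the factors $\mathcal P_{\epsilon,u}^{-1}$, $\mathcal U_\epsilon^{-1}$ in $\tilde R_\epsilon$, $\Delta$ as $\epsilon_j\to0$. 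Finally the pole set: away from $\mathbb{R}$, the poles of $f$ are exactly the union of the $Z(f_\epsilon)$ as $\epsilon\to0$; on $\mathbb{R}$, one must add the accumulation points where $f_\epsilon$ fails to stay bounded, which is precisely the set $\{\lambda_j\in\mathbb{R}\mid \limsup_{\epsilon\to0}|f_\epsilon(D_{2\epsilon}(\lambda_j)\cap\mathbb{C}^+_\epsilon)|=\infty\}$; checking that $Z$ so defined is bounded uses that $Z^\pm$ are bounded (induction hypothesis) and that all the rational/approximation data have coefficients vanishing at $|\lambda|=\infty$.

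\textbf{Main obstacle.} The delicate point is the converse direction's interface argument: one must verify that $r_\epsilon$, which is holomorphic in $z$ only \emph{away} from the fixed poles $z_k$, does not spoil the jump relation on $\mathbb{R}$ and that condition (\ref{E:rational4}) genuinely removes the spurious $z$-poles at the $z_k$ contributed by $\tilde R_\epsilon$ and by $r_\epsilon$ itself — i.e.\ that the residue cancellations are consistent for every $\lambda\in\mathbb{C}^+_\epsilon\setminus(Z_\epsilon\cup[Z^+\cup Z^-]_\epsilon^+)$. Equivalently, the heart of the matter is to show that the finite linear system determining $c_{k,\epsilon}(\lambda)$ (from demanding holomorphy in $z$) is solvable with the claimed meromorphic dependence on $\lambda$; tracking how the poles $Z_\epsilon$ of that system in the $\lambda$-plane assemble, as $\epsilon\to0$, into the bounded set $Z$ with $Z\cap(\mathbb{C}\setminus\mathbb{R})$ discrete is where the real work lies, and it is exactly the analogue of the finite-linear-system step in Section 10 of \cite{BC84}.
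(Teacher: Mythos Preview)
Your proposal is correct and follows essentially the same route as the paper: form $g=f\,[f_{\epsilon,s}\Delta(1+\tilde R_\epsilon)]^{-1}$, observe it has no jump across $\tilde x\in\mathbb R$, is $z$-rational with poles only at the fixed $z_k$, and tends to $1$ at infinity (the paper first notes $\det f_{\epsilon,s}=\det\Delta=\det(1+\tilde R_\epsilon)=1$, so your aside about ``zeros of $\det\Delta$'' is unnecessary); for the converse, read off (\ref{E:meroext}) from the factors and get (\ref{E:liou}) from Liouville/uniqueness for the RHP, exactly as you do.

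One remark on scope: your ``Main obstacle'' paragraph is really about Lemma~\ref{L:linearsystem}, not this lemma. In the converse direction of Lemma~\ref{L:FORHP} the functions $r_\epsilon$ satisfying (\ref{E:rational1})--(\ref{E:rational3}) and (\ref{E:rational4}) are \emph{assumed} to exist; the actual solvability of the finite linear system for the $c_{k,\epsilon}(\lambda)$ and the control of its $\lambda$-poles is deferred to the next lemma, so no residue-cancellation argument is needed here.
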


\begin{proof} First of all, by Lemma \ref{L:SMdet},  $\det\,f_{\epsilon,s}(z,\lambda)=\det\,(1+\tilde R_\epsilon(z,\lambda))=\det\,\Delta (z,\lambda)=1$. So they are invertible at regular $\lambda$. Besides, $f(z,\lambda)$ and $f_{\epsilon,s}(z,\lambda)\Delta (z,\lambda)(1+\tilde R_\epsilon(z,\lambda)) $ are $z$-meromorphic, possess the same jump singularity across $z\in\mathbb{R}$, and tend to $1$ at infinity. Therefore 
\[f\left[f_{\epsilon,s}(z,\lambda)\Delta (z,\lambda)(1+\tilde R_\epsilon(z,\lambda))\right]^{-1}
\]
is $z$-rational and (\ref{E:rational1})-(\ref{E:rational3}) are satisfied by Lemma \ref{L:dRH}-\ref{L:RHsmdata} and the assumption on $f$. %Indeed the poles of $r_\epsilon$ are identical with those of $R_\epsilon$, the residue $c_k$ are determined by $f$, $f_{\epsilon,s}$, $\Delta$.

For the converse part, (\ref{E:meroext}) comes immediately from the definition of $f_\epsilon$ and the meromorphic  properties of $r_\epsilon$, $\Delta $, $\tilde R_\epsilon$, $ f_{\epsilon,s}$ implied by assumption and Lemma  \ref{L:dRH}-\ref{L:RHsmdata}.

Besides, by assumption, $f_{\epsilon_1}$, $f_{\epsilon_2}$ satisfy the same Riemann-Hilbert problem in Lemma \ref{L:tRH} for  $\lambda\in\mathbb{C}^+_{\epsilon_1}\backslash Z_{\epsilon_1}$. Thus (\ref{E:liou}) follows from the Liouville's theorem and the meromorphic properties. As a result, the well-defined property follows from (\ref{E:meroext}) and  (\ref{E:liou}).

%Moreover, by construction $r$ is $\lambda$-holomorphic  outside the discrete set $Z$ by the meromorphic properties of $F$, $\Delta$, $\delta$, $g_u$, $g_l$. So the singularities $Z$ are of pole type. 

The conditions (\ref{E:bdry'}), (\ref{E:bdry'''}) can be proved by Lemma  \ref{L:dRH}-\ref{L:RHsmdata},  and (\ref{E:rational1})- (\ref{E:rational3}), $f=f_\epsilon$ (i.e., (\ref{rational})), and $Z={\cup Z_\epsilon}\cup \{\lambda_j\in\mathbb R|\limsup_{\epsilon\to 0}{|f_\epsilon(D_{2\epsilon}(\lambda_j)\cap C^+_\epsilon)|}=\infty\}$.
\end{proof}

\underline{\emph{Step 4: (Solving the Riemann-Hilbert problem)}}   

\vskip.1in
We complete the proof of Theorem \ref{T:LNexistence} by finding a rational function $r_\epsilon$ in Lemma \ref{L:FORHP}.

\begin{lemma}\label{L:linearsystem}
{\emph{ (Existence of the rational function $r_\epsilon$)}}   
There exist a   function $r_\epsilon$ and a uniformly bounded set $Z_\epsilon$ such that  $r_\epsilon$ is $\lambda$-meromorphic in $\mathbb C^+_\epsilon$ with poles at the points of $Z_\epsilon$ and satisfies (\ref{E:rational1})-(\ref{E:rational3}), (\ref{E:rational4}) for $\lambda\in\mathbb{C}^+_\epsilon\backslash \left(Z_\epsilon\cup\left[Z^+\cup Z^-\right]_\epsilon^+\right)$.
\end{lemma}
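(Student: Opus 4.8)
The plan is to construct $r_\epsilon$ by adapting the argument of Section~10 of \cite{BC84}. Set $\Phi_\epsilon(z,\lambda):=f_{\epsilon,s}(z,\lambda)\,\Delta(z,\lambda)\,(1+\tilde R_\epsilon(z,\lambda))$; by construction $\Phi_\epsilon$ has the same $z$-jump across $\mathbb R$ as $F$ and tends to $1$ as $z\to\infty$, so to obtain the solution $f$ of Lemma~\ref{L:tRH} from $\Phi_\epsilon$ it suffices to multiply on the left by a $z$-rational $r_\epsilon$, with $r_\epsilon\to1$ at $\infty$, holomorphic across $\mathbb R$, $\det r_\epsilon\equiv1$, and chosen so that $r_\epsilon\Phi_\epsilon$ is holomorphic in $z\in\mathbb C^\pm$. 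First I would locate the $z$-poles of $\Phi_\epsilon$: by Lemma~\ref{L:dRH}, $\Delta$ is holomorphic and non-vanishing in $z\in\mathbb C^\pm$; by Lemma~\ref{L:RHsmdata}, $f_{\epsilon,s}$ is holomorphic off $\mathbb R$; and $\det f_{\epsilon,s}=\det\Delta=\det(1+\tilde R_\epsilon)\equiv1$ as in Lemma~\ref{L:SMdet}, so all three factors are invertible off $\mathbb R$. Hence, for $\lambda\in\mathbb C^+_\epsilon\setminus[Z^+\cup Z^-]^+_\epsilon$, the only $z$-singularities of $\Phi_\epsilon$ in $\mathbb C^\pm$ are the finitely many, $\lambda$-independent, simple poles $z_1,\dots,z_{N_\epsilon}$ of $1+\tilde R_\epsilon$, whose residues are, by (\ref{E:ap5'}), off-diagonal matrices with a single nonzero entry that tends to $0$ as $|\lambda|\to\infty$.

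I would then make the ansatz $r_\epsilon(z,\lambda)=1+\sum_{k=1}^{N_\epsilon}(z-z_k)^{-1}c_{k,\epsilon}(\lambda)$ (allowing repeated points $z_k$, i.e.\ higher-order terms, if the parameter count requires it). Requiring (\ref{E:rational4}), i.e.\ that the principal part of $r_\epsilon\Phi_\epsilon$ vanish at each $z_k$, is an affine-linear condition on the matrices $c_{k,\epsilon}$; writing it out and using the elementary structure of the residues of $\tilde R_\epsilon$ (which also makes $\det r_\epsilon\equiv1$ compatible), these conditions reduce to a finite linear system
\[
M_\epsilon(\lambda)\,\mathbf c_\epsilon(\lambda)=\mathbf b_\epsilon(\lambda),
\]
whose coefficients are finite algebraic combinations of the regular parts and residues of $f_{\epsilon,s},\Delta,\tilde R_\epsilon$ at the points $z_k$; by Lemmas~\ref{L:dRH}, \ref{L:RHsmdata}, \ref{L:approx1} these coefficients are meromorphic in $\lambda\in\mathbb C^+_\epsilon$ with poles in the bounded set $[Z^+\cup Z^-]^+_\epsilon$.

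Solvability would follow from the behaviour at $|\lambda|\to\infty$: by (\ref{E:dec4}) $\Delta\to1$, by Lemma~\ref{L:RHsmdata} (and the Cauchy representation) $f_{\epsilon,s}(z,\lambda)\to1$ locally uniformly in $z\in\mathbb C\setminus\mathbb R$, and by (\ref{E:ap5'}) $\mathrm{Res}_{z_k}\Phi_\epsilon\to0$; hence the system degenerates to $\mathbf c_\epsilon=0$, i.e.\ $M_\epsilon(\lambda)\to I$ and $\mathbf b_\epsilon(\lambda)\to0$. Therefore $\det M_\epsilon$ is meromorphic on $\mathbb C^+_\epsilon$, tends to $1$ at infinity, hence is not identically zero, so its zeros form a discrete set; and, since the convergence $\det M_\epsilon\to1$ is uniform in $\epsilon$ by the uniform estimates of Lemmas~\ref{L:trifact}--\ref{L:approx1}, there is an $M_0$ independent of $\epsilon$ beyond which $\det M_\epsilon$ has no zeros. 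I would take $Z_\epsilon$ to be the union of this zero set with $[Z^+\cup Z^-]^+_\epsilon$; it is then uniformly bounded, and for $\lambda\in\mathbb C^+_\epsilon\setminus Z_\epsilon$ one sets $\mathbf c_\epsilon(\lambda)=M_\epsilon(\lambda)^{-1}\mathbf b_\epsilon(\lambda)$, which is meromorphic with poles in $Z_\epsilon$ (giving (\ref{E:rational2})) and tends to $0$ as $|\lambda|\to\infty$ (giving (\ref{E:rational3})). The resulting $r_\epsilon$ has the form (\ref{E:rational1}) and satisfies (\ref{E:rational4}) by construction.

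The main obstacle will be the second step: turning the matrix residue conditions into a \emph{square} linear system whose invertibility genuinely governs solvability. This is precisely the delicate point of \cite{BC84}, Section~10 --- one must exploit that each residue of $\tilde R_\epsilon$ is off-diagonal with only one nonzero entry so as to match the number of scalar equations with the number of free scalar parameters of the $c_{k,\epsilon}$ (introducing repeated poles $z_k$ if needed), while keeping $\det r_\epsilon\equiv1$ and $r_\epsilon$ holomorphic across $\mathbb R$. A secondary technical point, needed for the uniform boundedness of $Z_\epsilon$ and for (\ref{E:rational3}), is to propagate the asymptotics (\ref{E:dec4}), (\ref{E:ap5'}) and the $|\lambda|\to\infty$ decay of $f_{\epsilon,s}$ --- uniformly in $\epsilon$ --- through the algebraic expressions defining $M_\epsilon$ and $\mathbf b_\epsilon$.
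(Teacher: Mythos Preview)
Your approach is essentially the same as the paper's: the same ansatz $r_\epsilon=1+\sum_k(z-z_k)^{-1}c_{k,\epsilon}$, the same reduction to a finite linear system in the $c_{k,\epsilon}$ with coefficients built from the Laurent data of $f_{\epsilon,s}\Delta$ and $1+\tilde R_\epsilon$ at the $z_k$, and the same solvability argument via the asymptotics $d_j\to0$, $n_j\to1$, $\alpha_j\to1$, $\beta_j\to0$ as $|\lambda|\to\infty$, which forces the coefficient matrix to the identity and gives a uniformly bounded $Z_\epsilon$.

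The point you flag as the ``main obstacle'' is resolved in the paper by a concrete observation you should incorporate rather than leave open. Writing $1+\tilde R_\epsilon=(z-z_j)^{-1}d_j+n_j+O(|z-z_j|)$ and $f_{\epsilon,s}\Delta=\alpha_j+\beta_j(z-z_j)+\cdots$, holomorphy of $r_\epsilon\Phi_\epsilon$ at $z_j$ gives \emph{two} matrix conditions: $c_j\alpha_jd_j=0$ (order $-2$) and $b_j\alpha_jd_j+c_j(\beta_jd_j+\alpha_jn_j)=0$ (order $-1$). Because $d_j$ is off-diagonal with a single nonzero entry and $n_j$ is unipotent triangular, one has $(d_jn_j^{-1})^2=0$; from this the order~$-2$ equations are automatic consequences of the order~$-1$ equations. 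Hence the system is genuinely square, $N_\epsilon n^2$ equations in $N_\epsilon n^2$ unknowns, with no need for your ``repeated poles'' device. The side condition $\det r_\epsilon\equiv1$ is likewise not imposed here; it is not part of (\ref{E:rational1})--(\ref{E:rational4}), and the determinant of the full eigenfunction is handled separately.
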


\begin{proof} For simplicity, we drop $\epsilon$ in the notation $r_\epsilon$, $f_{\epsilon,s}$, $R_\epsilon, \cdots$ in the following proof.

{\emph{ (a) A linear system for $r(z,\lambda)$:}}  

Let $\left\{z_k=\tilde x_k+i\tilde y_k\right\}$, $k=1,\cdots,N$ be the simple poles of $R$ in $\mathbb{C}^\pm$ by (\ref{E:ap5}). Denote
\begin{eqnarray}
&&1+R(z,\lambda)=(z-z_j)^{-1}d_j+n_j+O(|z-z_j|),\label{E:1+R}\\
&&f_{s}\Delta (z,\lambda)=\alpha_j+\beta_j(z-z_j)+O(|z-z_j|^2).\label{E:fd}
\end{eqnarray}
at $z_j$. Thus
\[f_{s}\Delta (1+R)(z,\lambda)=(z-z_j)^{-1}\alpha_jd_j+(\beta_jd_j+\alpha_jn_j)+O(|z-z_j|).\]

Now let
\begin{equation}
r(z,\lambda)=1+\sum_{k=1}^N (z-z_k)^{-1}c_k.\label{E:form}
\end{equation}
Hence at $z_j$, 
\[r(z,\lambda)=(z-z_j)^{-1}c_j+b_j+O(|z-z_j|),
\]
where
\begin{equation}
 \begin{array}{l}
 b_j=1+\sum_{k\ne j}(z_j-z_k)^{-1}c_k.\\
  %\hat b_j=1+\sum_{k}(\hat z_j-z_k)^{-1}c_k+\sum_{k\ne j}(\hat z_j-\hat z_k)^{-1}\hat c_k.
  \end{array}\label{bj}
\end{equation}
We then try to find $c_j$, such that $r(z,\lambda)f_{s}(z,\lambda)\Delta(z, \lambda)(1+R(z,\lambda)) $  
is holomorphic at $z_j$. This yields the linear system for $c_j$:
\begin{eqnarray}
&&c_j\alpha_jd_j=0,\qquad\qquad\qquad\qquad1\le j\le N,\label{bcd1}\\
&&b_j\alpha_jd_j+c_j(\beta_jd_j+\alpha_jn_j)=0, \,1\le j\le N.\label{bcd2}
\end{eqnarray}

{\emph{ (b) Solving the linear system (\ref{bcd1}),(\ref{bcd2}):}}  

The properties (\ref{E:ap6}), (\ref{E:ap5}) imply that $n_j$ are invertible and  $(d_jn_j^{-1})^2=0$. Therefore, 
it can be justified that (\ref{bcd1}) are consequences of (\ref{bcd2}). %Note the off-diagonal form of $g_l$ ($g_u$) in Lemma \ref{L:trifact} is crucial here.

Inserting (\ref{bj}) into (\ref{bcd2}), we obtain a system of $Nn^2$ linear equations in $Nn^2$ unknowns (the entries of $c_k$) with coefficients in entries of $d_j(\lambda)$, $n_j(\lambda)$, $\alpha_j(\lambda)$, $\beta_j(\lambda)$. Observing that as $|\lambda|\to\infty$, 
\[ d_j\to 0,\,\,n_j\to 1,\,\, \alpha_j\to 1,\,\, \beta_j\to 0 \]
by Lemma \ref{L:dRH}-\ref{L:RHsmdata}. Therefore, (\ref{bcd2}) are solvable as $|\lambda|\to\infty$. Precisely, $c_k$ can be written in  rational forms of $ d_j$, $n_j$, $\alpha_j$, $\beta_j$ which are all holomorphic in $\lambda\in \mathbb{C}^+_\epsilon\backslash \left[Z^+\cup Z^-\right]$. Therefore, (\ref{bcd2}) are solvable for $\lambda \in \mathbb{C}^+\backslash  Z_\epsilon$ where $Z_\epsilon$ are uniformly bounded  sets. Consequently, (\ref{E:rational1}), (\ref{E:rational2}), (\ref{E:rational3}), and (\ref{E:rational4}) are fulfilled.  
%Finally, the property  is implied by %the $\lambda$-boundary behavior of $ d_j$, $n_j$, $\alpha_j$, $\beta_j$, $\hat d_j$, $\hat  n_j$, $ \hat \alpha_j$, $ \hat \beta_j$ and (\ref{E:form})
%. 
\end{proof}

By the same argument as  the proof of Theorem \ref{T:LNexistence}, we have
%\begin{corollary}\label
%Suppose that $Q\in \mathbb P_{\infty,3}$ and $\Psi(x,y,\lambda)$ is the associated eigenfunction. 
% We have
%\begin{gather*}
%\Psi(\cdot,0,\lambda)-1,\,\partial_x\Psi(\cdot,0,\lambda)\,\,\textit{are uniformly bounded in $L_2(dx)$}\\
%\textit{for $\forall\lambda\in \mathbb{C}\backslash \left(\mathbb{R} \cup_{\lambda_j\in Z} D_\epsilon(\lambda_j)\right)$.}
%\end{gather*}  
%In particular, if $\lambda_0$ is a removable singularty of $\Psi(x,y,\lambda)$, then 
%\begin{gather*}
%\Psi(\cdot,0,\lambda_0)-1,\,\partial_x\Psi(\cdot,0,\lambda_0)\,\,\textit{are uniformly bounded in $L_2(dx)$}\\
%\textit{in a neighborhood of $\lambda_0$.}
%\end{gather*}  
%\end{corollary}

\begin{corollary}\label{C:boundedness}
Suppose that $Q\in \mathbb P_{\infty,k,0}$, $k\ge 2$ and $\Psi(x,y,\lambda)$ is the associated eigenfunction. 
 Then
\begin{gather*}
\Psi-1\textit{ are uniformly bounded in }\mathbb {DH}^{k} 
\textit{ for $\lambda\in \mathbb{C}\backslash \left(\mathbb{R} \cup_{\lambda_j\in Z} D_\epsilon(\lambda_j)\right)$.}
\end{gather*}  
In particular, if $\lambda_0$ is a removable singularty of $\Psi(x,y,\lambda)$, then 
\begin{gather*}
\Psi-1\textit{ are uniformly bounded in }\mathbb {DH}^{k}
\textit{ in a neighborhood of $\lambda_0$.}
\end{gather*}  
\end{corollary}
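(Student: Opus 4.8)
The plan is to rerun, with only cosmetic changes, the four–step induction on $|\widehat{\partial_xQ}|_{L_1(d\xi dy)}$ used to prove Theorem \ref{T:LNexistence}, but carrying the $L_2(dx)$-estimate for $\partial_x^i(\Psi-1)$ through all $0\le i\le k$ rather than only $i=0,1,2$. This is almost automatic because no analytic ingredient of that proof is tied to $H^2$: Lemma \ref{L:SMrm} and Lemma \ref{L:scalarrh} already deliver $H^k$-estimates for the small-data and scalar Riemann--Hilbert problems, the Poisson-kernel approximations of Lemma \ref{L:approx} and Lemma \ref{L:approx1} are built verbatim in $H^k$, and the rational corrector produced by Lemma \ref{L:linearsystem} is $z$-rational with poles confined to $\mathbb{C}^\pm\setminus\mathbb{R}$. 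So it suffices to (i) upgrade the base case, (ii) observe that the factorization step propagates $\mathbb{DH}^k$-bounds with the same growth near $Z$, and (iii) deduce the removable-singularity assertion from the first part by a Cauchy-integral argument.

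For the base case $|\widehat{\partial_xQ}|_{L_1(d\xi dy)}<1$ I would strengthen part (1) of Lemma \ref{L:psil2} to: $\partial_x^i(\Psi-1)$, $0\le i\le k$, are uniformly bounded in $L_2(dx)$. Multiplying the integral equation (\ref{E:W1}) by $\xi^i$ shows that $\xi^i\widehat W$ solves $\xi^i\widehat W=\mathcal K_\lambda(\xi^i\widehat W)+g_i$, where $g_i$ is a finite sum of terms $\int e^{i\lambda\xi(y-y')}(\xi^a\widehat{\partial_xQ})\ast(\xi^b\widehat W)\,dy'$ with $a+b=i$, $b<i$, together with $\int e^{i\lambda\xi(y-y')}\xi^i\widehat{\partial_xQ}\,dy'$. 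Since $Q\in\mathbb{P}_{\infty,k,0}$ controls $|\xi^a\widehat{\partial_xQ}|_{L_1(d\xi dy)}$ and $||\xi^a\widehat{\partial_xQ}(\xi,\cdot)|_{L_1(dy)}|_{L_2(d\xi)}$ in the required range, one gets $g_i\in\widehat{\mathbb X}\cap\widehat{\mathbb X}_2$, and then induction on $i$ with the contraction bound (\ref{E:K}) on $\widehat{\mathbb X}\cap\widehat{\mathbb X}_2$ gives $\xi^i\widehat W\in\widehat{\mathbb X}\cap\widehat{\mathbb X}_2$ with norms uniform in $y$ and $\lambda$. By Plancherel this says $\Psi-1$ is uniformly bounded in $\mathbb{DH}^k$ on $\mathbb{C}^\pm$, which settles Step 1; the removable-singularity clause is vacuous here since $Z=\emptyset$.

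For the induction step I would verify that Lemmas \ref{L:tRH}--\ref{L:linearsystem} carry over with every $H^2(d\tilde x)$-statement replaced by its $H^k(d\tilde x)$-counterpart. The induction hypothesis now gives $\Psi^\pm-1\in\mathbb{DH}^k$ uniformly off the excised disks, with the stated $C_j\epsilon_j^{-h_j}$ blow-up, hence $\partial_{\tilde x}^i(F-1)\in L_2(d\tilde x)$, $0\le i\le k$, for $F$ as in (\ref{E:Fequation}); Lemma \ref{L:tridecom} and the Leibniz rule then yield $\partial_{\tilde x}^i(\delta-1)$, $\partial_{\tilde x}^i g_u$, $\partial_{\tilde x}^i g_l$ in $L_2(d\tilde x)$, $0\le i\le k$, with the same growth near $Z^+\cup Z^-$. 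The diagonal problem (Lemma \ref{L:dRH}) inherits an $H^k$-bound from Lemma \ref{L:scalarrh}; the approximants $\tilde R_\epsilon$ of Lemma \ref{L:approx1} are obtained in $H^k$ by the same Poisson-kernel sums; the small-data problem (Lemma \ref{L:RHsmdata}) inherits an $H^k$-bound from Lemma \ref{L:SMrm}; and since $r_\epsilon-1$ and its $z$-derivatives are rational with poles only in $\mathbb{C}^\pm\setminus\mathbb{R}$, they are bounded on $\mathbb{R}$ with $\lambda$-dependence controlled by the $c_{k,\epsilon}$. Because the class of $f$ whose first $k$ $\tilde x$-derivatives lie in $L_2(d\tilde x)$ is closed under multiplication once two of the factors are in addition bounded, the product $r_\epsilon f_{\epsilon,s}\Delta(1+\tilde R_\epsilon)-1$ lies in $\mathbb{DH}^k$ uniformly on $\mathbb{C}^+_\epsilon\setminus(Z_\epsilon\cup[Z^+\cup Z^-]_\epsilon^+)$, and likewise for $\tilde f$; since $\partial_x=\partial_{\tilde x}$ and $L_2(dx)=L_2(d\tilde x)$ under (\ref{E:CV1}), assembling $\Psi$ via (\ref{E:BP}) closes the induction with the advertised $\epsilon_j^{-h_j}$ rate. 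The main obstacle here is not any isolated estimate but keeping the $H^k$-bounds uniform in $\lambda$ through the factorization, in particular confirming that tracking $k$ rather than two $\tilde x$-derivatives does not worsen the blow-up orders $h_j$ near $\lambda_j\in Z$; this is handled exactly as in Lemmas \ref{L:trifact} and \ref{L:approx1}, where the meromorphic factors $\mathcal{P}_{\epsilon,u}$, $\mathcal{P}_{\epsilon,l}$, $\mathcal{U}_\epsilon$ and the induction hypothesis (\ref{E:dec2}) already carry the correct exponents.

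Finally, suppose $\lambda_0\in Z\cap(\mathbb{C}\setminus\mathbb{R})$ is a removable singularity of $\Psi(x,y,\cdot)$. Fix $\delta>0$ so small that $\overline{D_\delta(\lambda_0)}$ meets neither $\mathbb{R}$ nor any other point of $Z$; taking the excised radius at $\lambda_0$ less than $\delta$, the first part gives $\sup_{|\zeta-\lambda_0|=\delta}\sup_y\sum_{i\le k}|\partial_x^i(\Psi(\cdot,y,\zeta)-1)|_{L_2(dx)}<\infty$. Since $\Psi(x,y,\cdot)$ extends holomorphically across $\lambda_0$ and is locally $L_2(dx)$-bounded in $\lambda$ there, $\partial_x^i(\Psi(\cdot,y,\cdot)-1)$ is an $L_2(dx)$-valued holomorphic function on $D_\delta(\lambda_0)$, so the Cauchy integral formula over $|\zeta-\lambda_0|=\delta$ together with Minkowski's integral inequality transfers the boundary bound to every $\lambda\in D_{\delta/2}(\lambda_0)$, which is the assertion.
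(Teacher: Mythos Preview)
Your proposal is correct and matches the paper's approach exactly: the paper's entire proof of this corollary is the single line ``By the same argument as the proof of Theorem~\ref{T:LNexistence}'', and you have spelled out precisely that rerun, upgrading every $H^2(d\tilde x)$ estimate in Lemmas~\ref{L:psil2} and \ref{L:tRH}--\ref{L:linearsystem} to $H^k(d\tilde x)$ via Lemmas~\ref{L:SMrm} and \ref{L:scalarrh}. Your Cauchy--integral treatment of the removable-singularity clause is more explicit than anything the paper offers for that part, but it is sound and consistent with the intended argument.
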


By a similar argument as that in Lemma \ref{L:SMdet}  
 and \ref{L:SMreality} and using the uniqueness property in Theorem \ref{T:LNexistence}, we can derive the same algebraic characterization of the eigenfunctions:

\begin{lemma}\label{L:symmetrydp}
Suppose that $Q\in \mathbb P_{\infty,k,0}$, $k\ge 2$. Then the eigenfunction $\Psi$ satisfies 
\begin{eqnarray}
&&\det\Psi(x,y,\lambda)\equiv 1,\label{E:det2}\\
&&\Psi(x,y,\lambda)\Psi(x,y,\bar\lambda)^*=I.\label{E:reality2}
\end{eqnarray}
for $\lambda\in\mathbb{C}\backslash \mathbb{R}$.
\end{lemma}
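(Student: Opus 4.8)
The plan is to reproduce, almost verbatim, the arguments of Lemma~\ref{L:SMdet} and Lemma~\ref{L:SMreality}, replacing the small-data existence/uniqueness statement (Theorem~\ref{T:SMexistence}) by its non-small-data counterpart (Theorem~\ref{T:LNexistence}) and the space $\mathbb{X}$ by $\mathbb{DH}^2$.

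For (\ref{E:det2}): fix $\lambda\notin\mathbb{R}$ at a regular point of $\Psi(x,y,\cdot)$ (the exceptional set $Z\cap(\mathbb{C}\backslash\mathbb{R})$ being discrete in $\mathbb{C}\backslash\mathbb{R}$). Exactly as in Lemma~\ref{L:SMdet}, writing $\psi_1\wedge\cdots\wedge\psi_n=(\det\Psi)\,e_1\wedge\cdots\wedge e_n$ and using $\partial_xQ\in su(n)$ gives $(\partial_y-\lambda\partial_x)\det\Psi=(\mathrm{trace}\,\partial_xQ)\det\Psi=0$, which under the change of variables (\ref{E:CV}) becomes $\partial_{\bar z}\det\Psi=0$. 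Since $\Psi-1\in\mathbb{DH}^2$ forces, via the one-dimensional Sobolev embedding $H^1(dx)\hookrightarrow L_\infty(dx)$, that $\Psi$ and hence $\det\Psi$ is bounded, and since $\det\Psi\to1$ as $|x|$ or $|y|\to\infty$ by (\ref{E:bdry}), Liouville's theorem yields $\det\Psi\equiv1$ at this $\lambda$. As $\det\Psi-1$ is meromorphic in $\lambda$ on $\mathbb{C}\backslash\mathbb{R}$ and vanishes off the discrete set $Z\cap(\mathbb{C}\backslash\mathbb{R})$, (\ref{E:det2}) follows for all $\lambda\notin\mathbb{R}$.

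For (\ref{E:reality2}): granted (\ref{E:det2}), $\Psi^{-1}=\mathrm{adj}\,\Psi$ is a degree-$(n-1)$ polynomial in the entries of $\Psi$, so $\Psi^{-1}-1$ is a polynomial with no constant term in the entries of $\Psi-1$; combining the uniform $L_2(dx)$-bounds on $\partial_x^i(\Psi-1)$, $i\le2$, from Corollary~\ref{C:boundedness} with the $L_\infty$-bounds just noted (distributing each $x$-derivative by Leibniz so that exactly one factor is still controlled in $L_2(dx)$) shows $\Psi^{-1}-1\in\mathbb{DH}^2$, hence $(\Psi(x,y,\bar\lambda)^*)^{-1}-1=\bigl(\Psi(x,y,\bar\lambda)^{-1}-1\bigr)^*\in\mathbb{DH}^2$ as well. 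Then, as in Lemma~\ref{L:SMreality}, a direct computation using (\ref{E:Lax1}), (\ref{E:det2}) and $\partial_xQ\in su(n)$ shows that $\Phi:=(\Psi(x,y,\bar\lambda)^*)^{-1}$ satisfies $(\partial_y-\lambda\partial_x)\Phi=(\partial_xQ)\Phi$, while $\Phi\to1$ as $y\to-\infty$, as $|x|\to\infty$, and as $|\lambda|\to\infty$ all follow from (\ref{E:bdry}), (\ref{E:bdry''}) applied at $\bar\lambda$. Thus, for each $\lambda\notin\mathbb{R}\cup Z\cup\overline{Z}$, $\Phi$ is a solution of (\ref{E:Lax01}), (\ref{y-bdry}) with $\Phi-1\in\mathbb{DH}^2$, so the uniqueness clause of Theorem~\ref{T:LNexistence} forces $\Phi=\Psi$, i.e.\ $\Psi(x,y,\lambda)\Psi(x,y,\bar\lambda)^*=I$; meromorphy in $\lambda$ then extends (\ref{E:reality2}) to all $\lambda\notin\mathbb{R}$.

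The only step that is more than a transcription of Section~\ref{E:DPSMexistence} is the verification $\Psi^{-1}-1\in\mathbb{DH}^2$: in the small-data case this was immediate from the Neumann-series bound $|\widehat{f^n}|_{L_1(d\xi)}\le|\widehat{f}|^n_{L_1(d\xi)}$, whereas here, lacking smallness, one must instead use $\det\Psi\equiv1$ to express $\Psi^{-1}$ through the adjugate polynomial and propagate the $L_2(dx)$ and $L_\infty$ estimates term by term. I expect this mild bookkeeping, together with the harmless point that the identities are first obtained off $\mathbb{R}\cup Z\cup\overline{Z}$ and then extended by meromorphy, to be the only obstacle.
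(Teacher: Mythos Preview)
Your proposal is correct and follows essentially the same route the paper indicates: reproduce the computations of Lemma~\ref{L:SMdet} and Lemma~\ref{L:SMreality} and replace the small-data uniqueness of Theorem~\ref{T:SMexistence} by that of Theorem~\ref{T:LNexistence}. Your explicit verification that $\Psi^{-1}-1\in\mathbb{DH}^2$ via the adjugate (using $\det\Psi\equiv1$) is exactly the bookkeeping the paper suppresses when it writes ``by a similar argument.''
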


%%%%%%%%%%%%%%%%%%%%%%%%%%%%%%%%%%%%%%%%%%%%%%%%%%%%%%%%%%%%%%%%%%%%%%%%%%%%%%%%%%%%%%%%%%%%%%%%%%%%%%%%%%%%%%%%%%%%%%%%%%%%%%
\section{Direct problem IV: Asymptotic analysis with non-small data} \label{S:CSD}
%%%%%%%%%%%%%%%%%%%%%%%%%%%%%%%%%%%%%%%%%%%%%%%%%%%%%%%%%%%%%%%%%%%%%%%%%%%%%%%%%%%%%%%%%%%%%%%%%%%%%%%%%%%%%%%%%%%%%%%%%%%%%%%
We define the continuous scattering data and study its algebraic and analytic characteristics in this section. We first show that the existence of continuous scattering data for $Q\in\mathbb P_1$ is automatic.

\begin{lemma}\label{L:pmexistence}
If $Q\in \mathbb P_1$, then the eigenfunction $\Psi(x,y,\cdot)$ obtained by Theorem \ref{T:SMexistence}  has limits $\Psi_\pm$ on $\mathbb{R}$.
\end{lemma}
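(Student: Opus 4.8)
The plan is to read off the existence of the boundary values $\Psi_\pm$ directly from the explicit representation formula (\ref{E:eigen+}) for the eigenfunction, treating the cases $\xi\ge 0$ and $\xi\le 0$ separately. Recall that for $Q\in\mathbb P_1$ we have $\widehat W\in\widehat{\mathbb X}$ with $\sup_y|\widehat W(\xi,y)|_{L_1(d\xi)}$ controlled by $|\widehat{\partial_xQ}|_{L_1(d\xi dy)}$ via (\ref{E:K}), and hence the integrand $\widehat{\partial_xQ}\ast\widehat W+\widehat{\partial_xQ}$ in (\ref{E:eigen+}) lies in $L_1(d\xi\,dy)$. First I would fix $\lambda\in\overline{\mathbb C^+}$ (approached from $\mathbb C^+$) and examine, say, the $\xi\ge 0$ piece
\[
\frac1{2\pi}\int_0^\infty d\xi\int_{-\infty}^y dy'\;e^{i\xi(x+\lambda(y-y'))}\bigl(\widehat{\partial_xQ}\ast\widehat W(\xi,y',\lambda)+\widehat{\partial_xQ}(\xi,y')\bigr).
\]
On this region $y-y'\ge 0$, so $|e^{i\xi(x+\lambda(y-y'))}|=e^{-\xi\lambda_I(y-y')}\le 1$ for $\lambda_I\ge 0$; the integrand is therefore dominated, uniformly in $\lambda$ in a neighborhood of the real axis, by the $L_1(d\xi\,dy')$-function $|\widehat{\partial_xQ}\ast\widehat W|+|\widehat{\partial_xQ}|$. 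The analogous bound holds for the $\xi\le 0$ piece, where the domain of $y'$-integration is $[y,\infty)$ so again $\xi(y-y')\ge 0$ gives a bounded exponential.

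The second step is to take the limit $\lambda_I\to 0^+$ inside the integral. For each fixed $(\xi,y')$ the exponential factor is continuous in $\lambda$ up to the real axis, and we have just produced a $\lambda$-independent integrable majorant; hence the Lebesgue dominated convergence theorem applies and
\[
\Psi_+(x,y,\lambda_R)=\lim_{\lambda_I\to 0^+}\Psi(x,y,\lambda_R+i\lambda_I)
\]
exists, with the limit given by the same formula (\ref{E:eigen+}) evaluated at real $\lambda=\lambda_R$. Strictly speaking one must also check that $\widehat W(\xi,y',\lambda)$ itself converges as $\lambda_I\to 0^+$: this follows from the Neumann series $\widehat W=(1-\mathcal K_\lambda)^{-1}(\cdots)$, since $\mathcal K_\lambda$ depends on $\lambda$ only through the bounded exponential kernel $e^{i\lambda\xi(y-y')}$ on a domain where $\xi(y-y')\ge 0$, so each term of the series converges and the series converges uniformly by (\ref{E:K}) with a bound $<1$ independent of $\lambda_I\ge 0$. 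The boundary value $\Psi_-$ on approach from $\mathbb C^-$ is obtained identically using the $\lambda\in\mathbb C^-$ branch of (\ref{E:eigen+}), where the sign configuration of the $\xi$- and $y'$-integration domains is exactly the one that again makes $\xi\lambda_I(y-y')\ge 0$, so the same majorant argument goes through.

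The main obstacle is the uniform integrable domination as $\lambda_I\to 0^+$, and in particular making sure the Neumann series for $\widehat W$ behaves well in this limit; the point to be careful about is that in every one of the four cases of (\ref{E:W1})/(\ref{E:WK}) the sign of $\xi$, the orientation of the $y'$-integral, and the sign of $\lambda_I$ conspire so that the exponent $i\lambda\xi(y-y')$ has nonpositive real part, which is precisely what guarantees $|e^{i\lambda\xi(y-y')}|\le 1$ uniformly down to $\lambda_I=0$. Once this is in hand the convergence of $\Psi$ to $\Psi_\pm$ is a routine application of dominated convergence. (Note that this argument gives existence of $\Psi_\pm$ as $L_\infty$-limits pointwise in $(x,y)$; combined with Lemma \ref{L:psil2} it also yields convergence in $L_2(dx)$, which is what is used subsequently.)
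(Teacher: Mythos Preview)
Your argument is correct and is essentially the paper's proof in different packaging. The paper shows $\widehat W(\cdot,\cdot,\lambda_k)$ is Cauchy in $\widehat{\mathbb X}$ via the resolvent identity
\[
\widehat W_k-\widehat W_h=(1-\mathcal K_{\lambda_k})^{-1}\bigl[(\mathcal K_{\lambda_k}-\mathcal K_{\lambda_h})\widehat W_h+(f_k-f_h)\bigr],
\]
then splits $(1-\mathcal K_{\lambda_k})^{-1}$ into a finite partial sum plus a uniformly small tail; you do the same thing directly on the Neumann series $\widehat W=\sum_n\mathcal K_\lambda^n f_\lambda$. Both reductions rest on exactly the observation you isolate: in every branch of (\ref{E:W1}) the sign pattern forces $|e^{i\lambda\xi(y-y')}|\le 1$ uniformly down to $\lambda_I=0$, so $\|\mathcal K_\lambda\|\le|\widehat{\partial_xQ}|_{L_1(d\xi dy)}<1$ independently of $\lambda$, and pointwise convergence of the kernel plus dominated convergence handle each fixed term.

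One small presentational point: your ``$\lambda$-independent majorant'' $|\widehat{\partial_xQ}\ast\widehat W|+|\widehat{\partial_xQ}|$ for the outer integral in (\ref{E:eigen+}) is not literally $\lambda$-independent, since $\widehat W$ depends on $\lambda$. This does not matter, because once you have established $L_1(d\xi)$-convergence of $\widehat W$ (which is the real content of your Neumann series paragraph and of the paper's Cauchy argument), convergence of $\Psi$ follows immediately by integrating against the bounded factor $e^{i\xi x}$, and no separate dominated convergence on the outer integral is needed.
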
 

\begin{proof} Suppose $\left\{\lambda_k\right\}\subset \mathbb{C}^+$.  Write $\widehat W_k$ instead of $\widehat W(\xi,y,\lambda_k)$ and 
\[f_k=\begin{cases}
\int_{-\infty}^y e^{i\lambda_k\xi(y-y')}\widehat {\partial_xQ}(\xi,y')dy',\qquad \textsl{when $\xi\ge 0$} \\
-\int^{\infty}_y e^{i\lambda_k\xi(y-y')}\widehat {\partial_xQ}(\xi,y')dy',\qquad \textsl{when $\xi\le 0$.}  
\end{cases}
\]%To study the existence of the limits, note that for $\forall x$, $y$
%\begin{eqnarray*}
%&&W(x,y,\lambda_{k})-W(x,y,\lambda_h)\\
%=&&
%\frac 1{2\pi}\int_\mathbb{R}e^{i\xi x}\left(\widehat W(\xi,y,\lambda_k)-\widehat W(\xi,y,\lambda_h)\right)d\xi\\
%\le && \frac 1{2\pi}|\widehat W(\xi,y,\lambda_k)-\widehat W(\xi,y,\lambda_h)|_{L_1(d\xi)}.
%\end{eqnarray*}
%Thus it reduces to showing that $|\widehat W(\xi,y,\lambda_k)-\widehat W(\xi,y,\lambda_h)|_{L_1(d\xi)}\to 0$. The following proof only deals with the case of $\left\{\lambda_k\right\}\subset \mathbb{C}^+$. The other case can be proved similarly. 
% and $\xi\ge 0$. 
Then (\ref{E:W1}) and (\ref{E:WK}) imply
\begin{eqnarray}
&&\widehat W_k-\widehat W_h\nonumber\\
=&&\left(1-K_{\lambda_k}\right)^{-1}\left(K_{\lambda_k}-K_{\lambda_h}\right)\widehat W_h+\left(1-K_{\lambda_k}\right)^{-1}\left(f_k-f_h\right)\nonumber\\
=&&I_1+I_2.\label{E:fk}
\end{eqnarray}
 Now observing
\begin{eqnarray}
I_1=&&\left(1-K_{\lambda_k}\right)^{-1}\left(K_{\lambda_k}-K_{\lambda_h}\right)\widehat W_h\nonumber\\
=&&\sum_{i=0}^N K_{\lambda_k}^i\left(K_{\lambda_k}-K_{\lambda_h}\right)\widehat W_h+K_{\lambda_k}^{N+1}\sum_{i=0}^\infty K_{\lambda_k}^i\left(K_{\lambda_k}-K_{\lambda_h}\right)\widehat W_h\nonumber\\
=&&I_1'+I_1''.\label{E:kh1}
\end{eqnarray}
Note (\ref{E:K}) and $\sup_y|\widehat W_h|_{L_1(d\xi)}\le (1-|\widehat {\partial_xQ}(\xi,y)|_{L_1(d\xi dy)})^{-1}$ imply \[\sup_y|\sum_{i=0}^\infty K_{\lambda_k}^i\left(K_{\lambda_k}-K_{\lambda_h}\right)\widehat W_h|_{L_1(d\xi)}<C'\]
and
\begin{equation}
|I_1''|_{L_1(d\xi)}=\sup_y|K_{\lambda_k}^{N+1}\sum_{i=0}^\infty K_{\lambda_k}^i\left(K_{\lambda_k}-K_{\lambda_h}\right)\widehat W_h|_{L_1(d\xi)}\to 0,\,\,\textsl{ as } N\to \infty\label{E:term2}
\end{equation}
On the other hand, 
\begin{eqnarray*}
&&|\left(K_{\lambda_k}-K_{\lambda_h}\right)\widehat W_h|_{L_1(d\xi)}\\
%\le && \int_0^\infty\int_{-\infty}^y\,|e^{i\lambda_k\xi(y-y')}-e^{i\lambda_h\xi(y-y')}|\,|\left(\widehat {Q_x}\ast \widehat W_h\right)(\xi,y',\lambda)|\,dy'd\xi\\
%&&+\int^0_{-\infty}\int_y^{\infty}\,|e^{i\lambda_k\xi(y-y')}-e^{i\lambda_h\xi(y-y')}|\,|\left(\widehat {Q_x}\ast \widehat W_h\right)(\xi,y',\lambda)|\,dy'd\xi\\
\le && \int_{-\infty}^y\,|(e^{i\lambda_k\xi(y-y')}-e^{i\lambda_h\xi(y-y')}|\,|\widehat {\partial_xQ}|_{L_1(d\xi)}  |\widehat W_h|_{L_1(d\xi)}\,dy'\\
&&+\int_y^{\infty}\,|(e^{i\lambda_k\xi(y-y')}-e^{i\lambda_h\xi(y-y')}|\,|\widehat {\partial_xQ}|_{L_1(d\xi)}|\widehat W_h|_{L_1(d\xi)}\,dy'\\
\to &&0,\qquad\textsl{ as }k,\,h\to\infty.
\end{eqnarray*}
by the Lebesque Convergence Theorem and $Q\in \mathbb P_1$. 
So 
\begin{equation}
|I_1'|_{L_1(d\xi)}=|\sum_{i=0}^N K_{\lambda_k}^i\left(K_{\lambda_k}-K_{\lambda_h}\right)\widehat W_h|_{L_1(d\xi)}\to 0,\qquad\textsl{ as $k,\,h\to\infty$}.\label{E:t1}
\end{equation}
Hence 
$|I_1|_{L_1(d\xi)}\to 0$ as $k,\,h\to\infty$ by (\ref{E:kh1})-(\ref{E:t1}). 
A similar argument will induce  $|I_2|_{L_1(d\xi)}=|\left(1-K_{\lambda_k}\right)^{-1}\left(f_k-f_h\right)|_{L_1(d\xi)}\to 0$ as well.  Therefore, we have $|\widehat W_k-\widehat W_h|_{L_1(d\xi)}\to 0$ as $k,\,h\to\infty$ by (\ref{E:fk}). Taking the Fourier transform, we prove the lemma when $\lambda \in\mathbb{C}^+$. 

The case of $\lambda \in\mathbb{C}^-$ can be proved by analogy.
\end{proof}

\begin{lemma}\label{L:qxx}
Suppose that $Q\in \mathbb P_1$ and
\begin{equation}
|\xi^2\widehat {Q}|_{L_1(d\xi dy)}<\infty.\label{E:qxx}
\end{equation}
Then $\Psi_+$ and $\Psi_-$ are continuously differentiable with respect to $x$ and $y$. 
\end{lemma}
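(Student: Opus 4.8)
The strategy is to differentiate, once more in $\xi$, the Volterra integral equation (\ref{E:W1}) for the $x$-Fourier transform $\widehat W$ of $\Psi-1$, exactly in the spirit of Lemma~\ref{L:asym-1} and of the $\xi\widehat W$-computation in the proof of Lemma~\ref{L:psil2}, and then to transport the resulting $L_1(d\xi)$-bounds to the boundary values via Lemma~\ref{L:pmexistence}. Recall first that, by Lemma~\ref{L:pmexistence}, as $\widetilde\lambda\to\lambda\in\mathbb R$ through $\mathbb C^\pm$ the functions $\widehat W(\cdot,y,\widetilde\lambda)$ converge in $\widehat{\mathbb X}$, uniformly in $y$, to limits $\widehat W_\pm(\cdot,y,\lambda)\in\widehat{\mathbb X}$; passing to the limit in (\ref{E:W1}) (the operator $\mathcal K_\lambda$ and the forcing terms converge, and $\mathcal K_\lambda$ remains a contraction on $\widehat{\mathbb X}$ for real $\lambda$ since $|e^{i\lambda\xi(y-y')}|\le 1$), one sees that $\widehat W_\pm$ still solves (\ref{E:W1}) with the real value $\lambda$, and $\Psi_\pm$ is the inverse $x$-Fourier transform of $1+\widehat W_\pm$ in the sense of (\ref{E:eigen+}).

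For the $x$-derivative I would multiply (\ref{E:W1}) for $\widehat W_\pm$ by $\xi$ and use $\xi(\widehat{\partial_xQ}\ast\widehat W_\pm)=(\xi\widehat{\partial_xQ})\ast\widehat W_\pm+\widehat{\partial_xQ}\ast(\xi\widehat W_\pm)$, obtaining $\xi\widehat W_\pm=\mathcal K_\lambda(\xi\widehat W_\pm)+h_\pm$, where $h_\pm$ is the Volterra integral of $(\xi\widehat{\partial_xQ})\ast\widehat W_\pm+\xi\widehat{\partial_xQ}$. Since $|\xi\widehat{\partial_xQ}|_{L_1(d\xi dy)}=|\xi^2\widehat Q|_{L_1(d\xi dy)}<\infty$ by (\ref{E:qxx}) and $\widehat W_\pm\in\widehat{\mathbb X}$, the $L_1(d\xi)$-norm of $h_\pm$ is bounded uniformly in $y$; as $\Vert\mathcal K_\lambda\Vert<1$ this forces $\xi\widehat W_\pm\in\widehat{\mathbb X}$. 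Thus $\xi\widehat W_\pm(\cdot,y,\lambda)\in L_1(d\xi)$ with an $L_1$-bound uniform in $y$, so one may differentiate (\ref{E:eigen+}) under the integral sign: $\partial_x\Psi_\pm$ exists, equals the inverse Fourier transform of $i\xi\widehat W_\pm$, and is continuous in $x$ by Riemann--Lebesgue. Continuity in $y$ follows because $y\mapsto\xi\widehat W_\pm(\cdot,y,\lambda)$ is continuous into $L_1(d\xi)$ (dominated convergence in the Volterra equation), which gives joint continuity of $\partial_x\Psi_\pm$; the same, easier, argument shows $\Psi_\pm$ is jointly continuous.

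For the $y$-derivative I would differentiate the integral equation for $\widehat W_\pm$ in $y$, recovering $\partial_y\widehat W_\pm-i\lambda\xi\widehat W_\pm=\widehat{\partial_xQ}\ast\widehat W_\pm+\widehat{\partial_xQ}$, and invert the Fourier transform — equivalently, pass (\ref{E:Lax1}) to the boundary, using that $\partial_x\Psi(\cdot,\cdot,\widetilde\lambda)\to\partial_x\Psi_\pm$, which is the $\xi\widehat W$-analogue of Lemma~\ref{L:pmexistence}. This yields $\partial_y\Psi_\pm=\lambda\,\partial_x\Psi_\pm+(\partial_xQ)\Psi_\pm$; the right-hand side is continuous in $(x,y)$ since $\partial_x\Psi_\pm$, $\Psi_\pm$ and $\partial_xQ$ are, so $\partial_y\Psi_\pm$ exists and is continuous, which finishes the proof.

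The main obstacle is the step $\xi\widehat W_\pm\in\widehat{\mathbb X}$ and the justification that the boundary limit $\widetilde\lambda\to\lambda$ commutes with $x$-differentiation: one must verify that the $\xi$-multiplied integral equation is genuinely solvable with the real-$\lambda$ Volterra kernel and that its unique $\widehat{\mathbb X}$-solution is $\xi\widehat W_\pm$, not merely a formal manipulation. This is exactly where hypothesis (\ref{E:qxx}) enters — it keeps the forcing term $h_\pm$ in $\widehat{\mathbb X}$ so that the contraction estimate $\Vert\mathcal K_\lambda\Vert<1$ applies; without it the Neumann series for $(1-\mathcal K_\lambda)^{-1}h_\pm$ need not converge in $\widehat{\mathbb X}$. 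Everything downstream — Riemann--Lebesgue, the $y$-continuity via dominated convergence, and the limiting form of the equation — is routine once this uniform $L_1(d\xi)$-control of $\xi\widehat W_\pm$ is established.
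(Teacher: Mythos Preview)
Your proposal is correct and close in spirit to the paper's argument; both rest on the single estimate $\xi\widehat W\in\widehat{\mathbb X}$, which hypothesis~(\ref{E:qxx}) supplies through the same contraction $\Vert\mathcal K_\lambda\Vert<1$. The organization differs slightly: you first pass to the boundary, observe that $\widehat W_\pm$ solves the real-$\lambda$ version of~(\ref{E:W1}), multiply by $\xi$ and solve again to get $\xi\widehat W_\pm\in\widehat{\mathbb X}$, and then differentiate~(\ref{E:eigen+}) under the integral sign; the paper instead stays with sequences $\lambda_k\to\lambda$ in $\mathbb C^\pm$ and shows directly that $\xi\widehat W(\cdot,y,\lambda_k)$ is Cauchy in $L_1(d\xi)$ (the ``$\xi\widehat W$-analogue of Lemma~\ref{L:pmexistence}'' that you yourself invoke for the $y$-derivative), whence $\partial_x\Psi(\cdot,\cdot,\lambda_k)$ converges uniformly and $\partial_x\Psi_\pm=(\partial_x\Psi)_\pm$. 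Your route is perhaps a shade more economical for $\partial_x$, since it avoids the explicit Cauchy estimate, but it then has to appeal to that estimate anyway to justify passing~(\ref{E:Lax1}) to the boundary for $\partial_y$; the paper's route handles both derivatives at once via the uniform-Cauchy criterion. Either way the substantive step is identical.
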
 

\begin{proof} If $\lambda_k\to\lambda_\pm$ and $I_1$, $I_2$ are closed intervals on $\mathbb R$, 
\begin{itemize}
		\item $\partial_x\Psi(x,y,\lambda_k)$, and  $\partial_y\Psi(x,y,\lambda_k)$ are Cauchy for each $(x,y)\in I_1\times I_2$;
\item $\partial_x\Psi(x,y,\lambda_k)$, and  $\partial_y\Psi(x,y,\lambda_k)$ are uniformly bounded on $ I_1\times I_2$,
\end{itemize}
then $\Psi_\pm$ is differentiable and $\partial_x\Psi_\pm=\left(\partial_x\Psi\right)_\pm$, and  $\partial_y\Psi_\pm=\left(\partial_y\Psi\right)_\pm$ by the Lebesque Convergence theorem. Therefore, the continuous differentiability will be implied by proving the uniform Cauchy property of $\partial_x\Psi(x,y,\lambda_k)$, and  $\partial_y\Psi(x,y,\lambda_k)$ with respect to $x$, $y$ in compact subsets.  

Lemma \ref{L:pmexistence} and (\ref{E:Lax1}) imply that the uniform convergence of $\partial_y\Psi(x,y,\lambda_k)$ comes from that of $\partial_x\Psi(x,y,\lambda_k)$. 
So it is sufficient to show 
\[
|\xi\widehat W(\xi,y,\lambda_k)-\xi\widehat W(\xi,y,\lambda_h)|_{L_1(d\xi)}\to 0.
\]
By replacing $\widehat {\partial_x Q}(\xi,y')$ with $\xi\widehat {\partial_x Q}(\xi,y')$ in the representation of  $f_k$ in (\ref{E:fk}), it can be shown by adopting a similar argument as that in the proof of Lemma \ref{L:pmexistence}. 
\end{proof}

\begin{lemma}\label{L:lambda}
For $Q\in\mathbb P_1$ and $Q$ satisfies (\ref{E:qxx}), the eigenfunction $\Psi(x,y,\cdot)$ is holomorphic in $\mathbb{C}^\pm$ and has limits $\Psi_\pm$ on $\mathbb{R}$. Moreover, there exists a continuously differentiable function $v(x+\lambda y, \lambda)$ such that 
\[
\Psi_{+}(x,y,\lambda)=\Psi_-(x,y,\lambda)v(x+\lambda y, \lambda),\qquad \mathcal L_\lambda v=0, \qquad\lambda\in R,\label{E:CSD}
\]
where $\mathcal L_\lambda=\partial_y-\lambda\partial_x$.
\end{lemma}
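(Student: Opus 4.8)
\emph{Proof strategy.} The plan is to produce the boundary values $\Psi_\pm$, show that they satisfy the Lax equation with a \emph{real} spectral parameter, then set $v=\Psi_-^{-1}\Psi_+$ and verify directly that $\mathcal L_\lambda v=0$. Holomorphy of $\Psi(x,y,\cdot)$ in $\mathbb C^\pm$ is already built into the construction of Theorem \ref{T:SMexistence}: in each of the four $(\lambda,\xi)$-regions the kernel $\mathcal K_\lambda$ in (\ref{E:WK}) depends analytically on $\lambda$ and, together with the inhomogeneous term, is holomorphic and uniformly bounded on compact subsets of $\mathbb C^\pm$, so the Neumann series $(1-\mathcal K_\lambda)^{-1}=\sum_i\mathcal K_\lambda^i$ converges uniformly there; substituting into (\ref{E:eigen+}) and applying Morera's theorem (or differentiating under the integral sign) gives holomorphy of $\Psi$ in $\mathbb C^\pm$. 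The existence of $\Psi_\pm$ on $\mathbb R$ is exactly Lemma \ref{L:pmexistence}, and under the additional hypothesis (\ref{E:qxx}), Lemma \ref{L:qxx} gives that $\Psi_\pm$ are continuously differentiable in $x$ and $y$ with $\partial_x\Psi_\pm=(\partial_x\Psi)_\pm$ and $\partial_y\Psi_\pm=(\partial_y\Psi)_\pm$.

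Next I would let $\lambda_k\to\lambda_0\in\mathbb R$ through $\mathbb C^\pm$ in (\ref{E:Lax1}); using Lemma \ref{L:qxx} to pass the limit through the $x$- and $y$-derivatives, I obtain $(\partial_y-\lambda_0\partial_x)\Psi_\pm=(\partial_xQ)\Psi_\pm$ on $\mathbb R$. Since $\det\Psi\equiv1$ off $\mathbb R$ by Lemma \ref{L:SMdet}, continuity yields $\det\Psi_\pm\equiv1$, so $\Psi_\pm$ is invertible and, by Cramer's rule together with Lemma \ref{L:qxx}, $\Psi_\pm^{-1}$ is continuously differentiable in $x,y$. One then defines $v(x,y,\lambda)=\Psi_-(x,y,\lambda)^{-1}\Psi_+(x,y,\lambda)$ for $\lambda\in\mathbb R$.

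The computation of $\mathcal L_\lambda v$ is then short: differentiating $\Psi_-^{-1}\Psi_-=1$ gives $\mathcal L_\lambda(\Psi_-^{-1})=-\Psi_-^{-1}(\mathcal L_\lambda\Psi_-)\Psi_-^{-1}=-\Psi_-^{-1}(\partial_xQ)$, whence
\[
\mathcal L_\lambda v=\bigl(\mathcal L_\lambda\Psi_-^{-1}\bigr)\Psi_++\Psi_-^{-1}\bigl(\mathcal L_\lambda\Psi_+\bigr)=-\Psi_-^{-1}(\partial_xQ)\Psi_++\Psi_-^{-1}(\partial_xQ)\Psi_+=0 .
\]
For real $\lambda$ the substitution $(\tilde x,\tilde y)=(x+\lambda y,y)$ converts $\mathcal L_\lambda=\partial_y-\lambda\partial_x$ into $\partial_{\tilde y}$, so $\mathcal L_\lambda v=0$ forces $v$ to be independent of $\tilde y$; hence $v=v(x+\lambda y,\lambda)$ with $v$ continuously differentiable in its arguments, which is the assertion.

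The only genuinely delicate point — already dispatched by Lemma \ref{L:qxx} — is the interchange of the boundary limit $\lambda_k\to\lambda_0\in\mathbb R$ with the $x$- and $y$-derivatives of $\Psi$; this is precisely why the hypothesis (\ref{E:qxx}) (one extra $x$-derivative on the potential) is imposed. Granting it, the remainder reduces to the algebraic identity above and to the elementary characteristic-coordinate argument.
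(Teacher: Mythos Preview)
Your proof is correct and follows essentially the same route as the paper: invoke Lemmas \ref{L:pmexistence}, \ref{L:SMdet}, and \ref{L:qxx} to get invertible, continuously differentiable boundary values $\Psi_\pm$ satisfying the Lax equation at real $\lambda$, then carry out the identical algebraic computation $\mathcal L_\lambda(\Psi_-^{-1}\Psi_+)=-\Psi_-^{-1}(\partial_xQ)\Psi_++\Psi_-^{-1}(\partial_xQ)\Psi_+=0$. You give slightly more detail than the paper in two places --- you spell out the holomorphy argument via the Neumann series and (\ref{E:eigen+}) (the paper just cites Theorem \ref{T:LNexistence}, whose base case says the same thing), and you make explicit the characteristic-coordinate change showing $v$ depends only on $x+\lambda y$ --- but these are elaborations, not a different approach.
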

\begin{proof} 
The holomorphicy has been proved in Theorem \ref{T:LNexistence}. By assumption, Lemma \ref{L:pmexistence} and \ref{L:SMdet}, $\Psi_\pm$ is invertible. Hence Lemma \ref{L:qxx} implies 
\begin{eqnarray*}
&&(\partial_y-\lambda\partial_x)\left\{\Psi_-^{-1}\Psi_+\right\}\\
=&&\left\{(\partial_y-\lambda\partial_x)\Psi_-^{-1}\right\}\Psi_++\Psi_-^{-1}(\partial_y-\lambda\partial_x)\Psi_+\\
=&&-\Psi_-^{-1}\left({\partial_x Q}\right)\Psi_++\Psi_-^{-1}\left({\partial_x Q}\right)\Psi_+\\
=&& 0
\end{eqnarray*} 
\end{proof}

%the eigenfunction $\Psi(x,y,\cdot)$ has limits $\Psi_\pm$ on $\mathbb{R}$
We denote  $Z=Z(\Psi)=\phi$ if there are no poles of $\Psi(x,y,\lambda)$.

\begin{lemma}\label{L:qxx'}
For $Q\in {\mathbb P}_{\infty,k,0}$, $k\ge 2$, if $Z=\phi$, then there exists a continuously differentiable function $v(x+\lambda y, \lambda)$ such that 
\[
\Psi_{+}(x,y,\lambda)=\Psi_-(x,y,\lambda)v(x+\lambda y, \lambda),\qquad \mathcal L_\lambda v=0, \qquad \lambda\in R.
\]
\end{lemma}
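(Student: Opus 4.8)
\noindent\textit{Proof proposal.} The plan is to imitate the proof of Lemma \ref{L:lambda}, replacing the contraction estimates for $\mathcal K_\lambda$ --- which are no longer available for non-small data --- by the factorization machinery from the proof of Theorem \ref{T:LNexistence}. Since $Z=\phi$, Theorem \ref{T:LNexistence} and Corollary \ref{C:boundedness} already give that $\Psi(x,y,\cdot)$ is holomorphic on $\mathbb{C}\setminus\mathbb{R}$, that $\Psi\to1$ uniformly as $|\lambda|\to\infty$, and that $\Psi-1\in\mathbb{DH}^k$ uniformly in $\lambda$; moreover $Q\in\mathbb P_{\infty,k,0}$ with $k\ge2$ forces $|\xi^2\widehat Q|_{L_1(d\xi dy)}<\infty$ by Definition \ref{D:L11}. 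Hence the only genuinely new task is to produce the boundary values $\Psi_\pm$ on $\mathbb{R}$ and to show that they are continuously differentiable in $x,y$, with $\partial_x\Psi_\pm=(\partial_x\Psi)_\pm$ and $\partial_y\Psi_\pm=(\partial_y\Psi)_\pm$.

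This boundary regularity I would establish by induction on $|\widehat{\partial_xQ}|_{L_1(d\xi dy)}$, exactly along the scheme of the proof of Theorem \ref{T:LNexistence}. In the base case $Q\in\mathbb P_1$, Lemmas \ref{L:pmexistence} and \ref{L:qxx} (the latter using $|\xi^2\widehat Q|_{L_1}<\infty$) furnish $\Psi_\pm$ together with their $C^1$-dependence on $x,y$. For the inductive step, write $\Psi=\Psi^\pm a^\pm$ as in (\ref{E:BP}), where $\Psi^\pm$ are the eigenfunctions of the truncated potentials $Q^\pm=Q\chi^\pm$ (still in the same class, with strictly smaller $L_1$-norm); by the induction hypothesis $\Psi^\pm$ have $C^1$ boundary limits in $(x,y)$, so the data $F=(\Psi^-)^{-1}\Psi^+$ of the Riemann--Hilbert problem of Lemma \ref{L:tRH} is $C^1$ in $\tilde x$ and has continuous limits as $\lambda_I\to0^\pm$. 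The solution $f$ (hence $a^\pm$, hence $\Psi$) is assembled in Steps 3--4 from the scalar Cauchy exponential $\Delta$ of $\log\delta$, the $z$-rational factors $1+\tilde R_\epsilon$, the small-data Riemann--Hilbert solution $f_{\epsilon,s}$ --- which depends continuously on its $H^2(d\tilde x)$-data by Lemma \ref{L:SMrm} --- and the rational function $r_\epsilon$, whose coefficients are rational expressions in the $z$-boundary data of $\Delta$, $\tilde R_\epsilon$, $f_{\epsilon,s}$ (Lemma \ref{L:linearsystem}). Each of these blocks has boundary values $f_{\pm,z}$ as $\tilde y\to0^\pm$, and the $H^2(\mathbb{R},d\tilde x)$-bounds of Lemmas \ref{L:trifact}--\ref{L:RHsmdata}, combined with the Sobolev embedding $H^2(\mathbb R)\hookrightarrow C^{1,1/2}$, make these limits continuously differentiable in $\tilde x$. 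Undoing the change of variables $z=\tilde x+i\tilde y=x+\lambda y$ (so $\tilde x=x+\lambda_R y$, $\tilde y=\lambda_I y$), the limit $\lambda_I\to0^\pm$ is the limit $\tilde y\to0^\pm$ in the half-planes $\pm y>0$ (the slice $y=0$ being handled directly by the $\tilde x$-Riemann--Hilbert problem); since $Z=\phi$ there is no blow-up, and differentiation in $x,y$ commutes with the limit, yielding the desired $C^1$ boundary values $\Psi_\pm$.

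With $\Psi_\pm$ in hand the conclusion follows as in Lemma \ref{L:lambda}. By Lemma \ref{L:symmetrydp}, $\det\Psi\equiv1$ on $\mathbb{C}\setminus\mathbb{R}$, so $\det\Psi_\pm\equiv1$ and $\Psi_\pm$ are invertible; set $v:=\Psi_-^{-1}\Psi_+$, a $C^1$ function of $(x,y)$ for each $\lambda\in\mathbb{R}$. Passing to the boundary in (\ref{E:Lax1}) (legitimate by the $C^1$ convergence just established) gives $(\partial_y-\lambda\partial_x)\Psi_\pm=(\partial_xQ)\Psi_\pm$, whence
\[
\mathcal L_\lambda v=\bigl(\mathcal L_\lambda\Psi_-^{-1}\bigr)\Psi_++\Psi_-^{-1}\bigl(\mathcal L_\lambda\Psi_+\bigr)=-\Psi_-^{-1}(\partial_xQ)\Psi_++\Psi_-^{-1}(\partial_xQ)\Psi_+=0.
\]
Since $\mathcal L_\lambda$ annihilates precisely the functions of $x+\lambda y$, this means $v=v(x+\lambda y,\lambda)$, and $\Psi_+=\Psi_- v$, as claimed.

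The main obstacle is the second paragraph. In the small-data case boundary regularity is immediate from the explicit resolvent $(1-\mathcal K_\lambda)^{-1}$, whereas here it must be propagated through the $\epsilon$-regularized rational approximations of Lemma \ref{L:approx1}, the diagonal problem of Lemma \ref{L:dRH}, the small-data correction of Lemma \ref{L:RHsmdata}, and the finite linear system of Lemma \ref{L:linearsystem}; one must also verify carefully that the $z$-plane boundary data $f_{\pm,z}$ really correspond to genuine $\lambda_I\to0^\pm$ limits and that $\partial_x,\partial_y$ commute with these limits. The hypothesis $Z=\phi$ is exactly what excludes accumulation of poles on $\mathbb{R}$ and the attendant blow-up of the boundary values, so it is used in an essential way.
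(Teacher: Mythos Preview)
Your approach matches the paper's: both establish that $\Psi_\pm$ exist with $C^1$-regularity in $(x,y)$ via Corollary~\ref{C:boundedness} and the Sobolev embedding, then invoke the computation of Lemma~\ref{L:lambda} verbatim. The paper's proof is only three sentences, so your expansion of the first step through the inductive factorization of Theorem~\ref{T:LNexistence} is a reasonable fleshing-out of what the paper leaves implicit.

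One point needs care. Your induction hypothesis ``$\Psi^\pm$ have $C^1$ boundary limits in $(x,y)$'' presupposes that the pole sets $Z^\pm$ of the truncated eigenfunctions are empty, but $Z=\phi$ for $Q$ does not force $Z^\pm=\phi$ for $Q^\pm$: the factors $\Psi^\pm$ may well carry poles in $\mathbb{C}\setminus\mathbb{R}$ that are cancelled by $a^\pm$ in the product $\Psi=\Psi^\pm a^\pm$. The easy fix is to carry through the induction the weaker statement that boundary limits exist (with the stated regularity) for $\lambda\in\mathbb{R}\setminus Z$, i.e.\ away from the poles and their real accumulation points; then the hypothesis $Z=\phi$ is invoked only once, at the end, for $\Psi$ itself. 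With that adjustment your argument goes through and is essentially what the paper is pointing to.
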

\begin{proof} Since $Z=\phi$, the eigenfunction $\Psi(x,y,\cdot)$ has limits $\Psi_\pm$ by Corollary \ref{C:boundedness} and the Sobolev's theorem. Moreover, we have the uniform convergence of $\partial_x\Psi(x,y,\lambda_k)$, $\lambda_k\to\lambda_0$. Hence the lemma can be proved by using the same argument as  the proof of Lemma \ref{L:lambda}.
\end{proof}

Since we are going to solve the inverse problem by the Riemann-Hilbert problem $(\lambda\in\mathbb R, v)$. By the scheme of Section \ref{S:ENS}, we need to investigate $L_2(\mathbb R, d\lambda)$ condition on $v$ and $\partial_\lambda v$. Hence the $\lambda$-asymptote of $v$ and $\partial_\lambda v$ will be investigated in the remaining part of this section.

We extend Theorem \ref{T:lambda2}, and Corollary \ref{R:lambda1} by:
\begin{lemma} \label{T:CSDana}
If $Q\in {\mathbb P}_{\infty,k,0}$, $k\ge 5$ and $Z=\phi$, then for $i+j\le k-4$, 
\begin{eqnarray*}
%&&|\Psi_\pm-\left(1-\frac Q\lambda \right)|\le \frac { C}{|\lambda|^2},\label{E:ana0}\\
&&|\partial_x^i\partial_y^j\left(\Psi_\pm-(1-\frac {\partial_x Q}\lambda)\right) |\le  \frac { C}{|\lambda|^2},\,\label{E:ana5}
\end{eqnarray*}
as $|\lambda|\to\infty$. Where $C$ is a constant depending on $Q$.
\end{lemma}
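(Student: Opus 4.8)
The plan is to recognize the claimed bound as the exact analogue, for the non-small eigenfunction of Theorem~\ref{T:LNexistence} under the hypothesis $Z=\phi$, of the second-order asymptotic expansion proved for small data in Theorem~\ref{T:lambda2} and Corollary~\ref{R:lambda1}, and to obtain it by running the same bootstrap. The only structural change is in the source of the a priori bounds. In the small-data argument the uniform $L_\infty$ control of $\Psi$, $\Psi^{-1}$ and their $x$-derivatives came from the Fourier representation (Lemma~\ref{L:asym-1}); here it will be supplied by Corollary~\ref{C:boundedness}: since $Z=\phi$, $\Psi-1$ is uniformly bounded in $\mathbb{DH}^k$ on all of $\mathbb{C}\backslash\mathbb{R}$, and the Sobolev embedding of $\mathbb{DH}^k$ into $L_\infty(dx)$ (uniformly in $y$) turns this into an $L_\infty(dxdy)$ bound on $\partial_x^i(\Psi-1)$ for $i\le k-1$. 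The $y$-derivatives are then recovered from (\ref{E:Lax1}) exactly as in Section~\ref{S:DPSMasymp} (writing $\partial_y\Psi=\lambda\partial_x\Psi+(\partial_xQ)\Psi$, $\partial_y^2\Psi=\lambda^2\partial_x^2\Psi+\cdots$, etc.), and $\Psi^{-1}$ is controlled through the reality identity (\ref{E:reality2}) of Lemma~\ref{L:symmetrydp}. All constants depend only on $Q$, so by Lemma~\ref{L:qxx'} the resulting estimates pass to the boundary values $\Psi_\pm$.

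The enabling step, which must be carried out first, is to establish for the non-small eigenfunction (with $Z=\phi$) the integral representation $\Psi=1+G_\lambda\!\left((\partial_xQ)\Psi\right)$ of (\ref{E:2d}), and, more importantly, the inverted form of the differentiated identity (\ref{E:psix1}), namely $\Psi^{-1}(1-\tfrac Q\lambda)-1=-\tfrac1\lambda G_\lambda\!\left(\Psi^{-1}(Q_y-Q_xQ)\right)$. This is a $\bar\partial$/Liouville argument in the variable $z=x+\lambda y$: $\Psi$ solves (\ref{E:Lax1}), (\ref{y-bdry}); by Corollary~\ref{C:boundedness} and Sobolev, $\Psi-1\in L_\infty$, and by (\ref{E:bdry}), $\Psi\to1$ as $|x|$ or $|y|\to\infty$, hence as $|z|\to\infty$; since $\partial_xQ\in L_1\cap L_\infty$ and $\Psi\in L_\infty$, the product $(\partial_xQ)\Psi\in L_1\cap L_\infty$, so $G_\lambda\!\left((\partial_xQ)\Psi\right)$ is well defined, lies in $C_0$, and shares the $\bar\partial$ of $\Psi-1$ in $z$; thus $\Psi-1-G_\lambda\!\left((\partial_xQ)\Psi\right)$ is a bounded entire function of $z$ vanishing at infinity, so it is $0$. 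The same reasoning applied to $\Psi^{-1}(1-\tfrac Q\lambda)$ — using $\Psi^{-1}-1\in L_\infty$ with limit $1$ at $z$-infinity (Lemma~\ref{L:symmetrydp}) and that the right side of (\ref{E:psix1}) lies in $L_1\cap L_\infty$ — yields the inverted identity above. Here $G_\lambda$ and the pointwise bound of Lemma~\ref{L:FSestimate}, stated for Schwartz functions, are first extended by density to the $L_1\cap L_\infty$ arguments that occur.

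With these ingredients the remainder is the mechanical repetition of the small-data analysis. Starting from (\ref{E:basic}) and the inverted (\ref{E:psix1}), Lemma~\ref{L:FSestimate} gives $|\partial_x^i(\Psi-(1-\tfrac Q\lambda))|\le C|\lambda|^{-2}$ for small $i$ exactly as in the proofs of Lemmas~\ref{L:asym-2}, \ref{L:asym-3} and Theorem~\ref{T:lambda2}; differentiating the identities in $y$ and iterating brings in the $\partial_y$-derivatives, each step costing a bounded number of further derivatives of $Q$ (controlled by $Q\in\mathbb{P}_{\infty,k,0}$) and of $\Psi$, $\Psi^{-1}$ (controlled by Corollary~\ref{C:boundedness} and the equation). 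The induction on $i+j$ is the one proving Corollary~\ref{R:lambda1} from Theorem~\ref{T:lambda2}, and it closes in the range $i+j\le k-4$, the four lost derivatives being precisely those of the small-data estimate. Letting $\lambda\to\lambda_0\in\mathbb{R}$ within $\mathbb{C}^\pm$ and invoking Lemma~\ref{L:qxx'} together with the $\lambda$-uniformity of the constants finally transfers every bound to $\Psi_\pm$. The main obstacle is the representation step of the second paragraph — ensuring the homogeneous part of the $G_\lambda$-inversion vanishes — and this is exactly where the hypotheses enter: $Z=\phi$ so that $\Psi$ has no poles and Corollary~\ref{C:boundedness} applies on all of $\mathbb{C}\backslash\mathbb{R}$ with no excised disks (hence a genuine $L_\infty$ bound and $\Psi\to1$ at infinity), and $Q\in\mathbb{P}_{\infty,k,0}$ so that $(\partial_xQ)\Psi$ and the analogous products, with their derivatives, lie in $L_1\cap L_\infty$. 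Once (\ref{E:2d}) and the inverted (\ref{E:psix1}) are available, no idea beyond Section~\ref{S:DPSMasymp} is needed.
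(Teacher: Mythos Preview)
Your proposal is correct and follows essentially the same route as the paper: repeat the bootstrap of Section~\ref{S:DPSMasymp} verbatim, replacing the one ingredient that used the small-data condition (the uniform $L_\infty$ bounds on $\partial_x^N\Psi$ from Lemma~\ref{L:asym-1}) by Corollary~\ref{C:boundedness} together with Sobolev embedding. The paper's proof says exactly this and nothing more; your second paragraph, supplying the Liouville argument in $z=x+\lambda y$ to justify the representations (\ref{E:2d}) and the inverted (\ref{E:psix1}) outside the Schwartz/small-data setting, is a point the paper leaves implicit but which indeed must be checked, so you are being more careful rather than different.
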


\begin{proof}  We follow the scheme in Section \ref{S:DPSMasymp} to prove this lemma. Note that all of the arguments there can be repeated except the proof of Lemma \ref{L:asym-1}. Where 
 the small data condition has been used to assure the uniform boundedness of $\partial_x^N\Psi$, $0\le N\le k-1$. Hence to prove this lemma, one needs only to show  
\begin{equation}
\textit{The uniform boundedness of $\partial_x^N\Psi_\pm$, $0\le N\le k-1$ as $|\lambda|\to\infty$.
}\label{E:basic1}
\end{equation}
However, since $Q\in {\mathbb P}_{\infty,k,0}$, $k\ge 5$, $\Psi_\pm$ exists, the property (\ref{E:basic1})  can be justified by  Corollary \ref{C:boundedness} and the Sobolev's theorem. 
\end{proof}

We improve the boundary properties (\ref{E:bdry}), (\ref{E:bdry''}) of Theorem \ref{T:LNexistence} by:
\begin{lemma}\label{L:boundedness} 
If $Q\in {\mathbb P}_{\infty,k,0}$, $k\ge 5$, and $Z=\phi$, then for $i+j\le k-4$,
\[
\partial_x^i\partial_y^j\left(\Psi_\pm-1\right) \to 0\,\textit{ uniformly in $L_\infty$ as $|x|$ or $|y|\to\infty$}.
\]
\end{lemma}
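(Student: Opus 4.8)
The plan is to deduce the claimed decay of $\partial_x^i\partial_y^j(\Psi_\pm-1)$ from two ingredients already available in the small‑data analysis and from the two‑dimensional integral representation, using the splitting that the paper itself employs elsewhere. First I would recall that under the hypothesis $Q\in\mathbb P_{\infty,k,0}$ with $Z=\phi$, the eigenfunction $\Psi(x,y,\lambda)$ has boundary values $\Psi_\pm$ on $\mathbb R$ (Lemma \ref{L:qxx'}), and by Corollary \ref{C:boundedness} together with the Sobolev embedding one has that $\partial_x^i\Psi_\pm$, $i\le k$, are uniformly bounded (indeed $\Psi_\pm-1$ lies in $\mathbb{DH}^k$, so each $x$-slice is uniformly bounded in $L_2(dx)$ together with $k$ derivatives, hence in $L_\infty(dx)$). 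This gives the needed control of $x$-derivatives; for the mixed $x$,$y$-derivatives one uses the equation (\ref{E:Lax1}), which expresses $\partial_y\Psi$ algebraically through $\lambda\partial_x\Psi$ and $(\partial_xQ)\Psi$, so that every $\partial_x^i\partial_y^j(\Psi_\pm-1)$ is a finite sum of terms of the form $(\text{derivative of }Q)\cdot(\partial_x^l\Psi_\pm)$ together with pure $x$-derivatives; since the coefficients coming from $Q$ decay as $|x|$ or $|y|\to\infty$ (again $Q\in\mathbb P_{\infty,k,0}$ and Sobolev), it suffices to prove the statement for $j=0$, i.e. for $\partial_x^i(\Psi_\pm-1)$.

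Next I would exploit the $x$-Fourier representation. Passing to the limit $\lambda\to\lambda_0\in\mathbb R$ in the representation (\ref{E:eigen+}) (legitimate in the $\lambda\in\mathbb R\setminus Z$, hence here for all real $\lambda$, regime by Lemma \ref{L:pmexistence}/Lemma \ref{L:qxx'}), one has $\partial_x^i(\Psi_\pm-1)$ expressed as the inverse $x$-Fourier transform of $\xi^i\widehat W_\pm(\xi,y,\lambda)$, and the iteration argument of Theorem \ref{T:SMexistence}/Lemma \ref{L:psil2} shows $\xi^i\widehat W_\pm(\cdot,y,\lambda)\in L_1(d\xi)\cap L_2(d\xi)$ (uniformly) provided $|\xi^{i+1}\widehat Q|_{L_1(d\xi dy)}<\infty$, which holds for $i+1\le\max\{5,k\}$. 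Decay in $|x|\to\infty$ then follows from the Riemann–Lebesgue lemma applied to $\xi^i\widehat W_\pm$; more precisely I would mimic the approximation step in Lemma \ref{L:psil2}(2): approximate $\xi^i\widehat W_\pm$ in $L_1\cap L_2$ by step functions, whose inverse transforms decay, and control the tail uniformly. Decay in $|y|\to\infty$ follows from the explicit $y$-integral in (\ref{E:eigen+}): the integrand $\widehat{\partial_xQ}\ast\widehat W+\widehat{\partial_xQ}$ is in $L_1(d\xi\,dy)$, so by dominated convergence the integral over $(-\infty,y']$ (respectively $[y,\infty)$) tends to $0$ as $y\to-\infty$ (respectively $y\to+\infty$); differentiating in $x$ brings down factors of $\xi$ already absorbed by the higher integrability of $Q$. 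For the uniform (in $\lambda$) character of all these limits one re-runs the estimates with the uniform bounds $\|\mathcal K_\lambda\|\le|\widehat{\partial_xQ}|_{L_1}<1$ and $\sup_{y,\lambda}|\xi^i\widehat W_\pm|_{L_2(d\xi)}<\infty$ from Lemma \ref{L:psil2}.

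Finally, to treat the general case $|\widehat{\partial_xQ}|_{L_1(d\xi dy)}$ not small I would observe that, since $Z=\phi$, the factorization set-up of Section \ref{S:ENS} (Lemma \ref{L:tRH}–Lemma \ref{L:FORHP}) produces $\Psi$ as a product $\Psi^\pm a^\pm$ with $\Psi^\pm$ eigenfunctions of the truncated potentials $Q^\pm\in\mathbb P_{\infty,k,0}$ of strictly smaller norm, and $a^\pm$ built from $f,\tilde f$ solving Riemann–Hilbert problems whose data $F$ inherits, by the induction hypothesis and (\ref{E:bdry'})–(\ref{E:bdry'''}), the requisite $H^k(d\tilde x)$-boundedness and $L_\infty$-decay; Lemma \ref{L:SMrm} (the $H^k$ estimate) and Lemma \ref{L:xy-uniform} then transfer these to $a^\pm$, and hence via the leading-order identity in Lemma \ref{T:CSDana} to $\Psi_\pm$. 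So the argument is an induction on $\lfloor|\widehat{\partial_xQ}|_{L_1(d\xi dy)}\rfloor$ exactly parallel to the proof of Theorem \ref{T:LNexistence}, with the base case handled by the Fourier argument above. The main obstacle is the base case: extracting genuine decay (not merely $L_2(dx)$-boundedness) of $\partial_x^i(\Psi_\pm-1)$ as $|x|\to\infty$ uniformly in $\lambda$ from the Riemann–Lebesgue mechanism, since $\widehat W$ is not itself smooth in $\xi$; this is precisely the "lose some regularity in deriving strong asymptote" phenomenon flagged in the introduction, and is why the hypothesis requires $k\ge 5$ rather than $k\ge 2$.
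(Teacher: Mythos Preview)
Your route diverges from the paper's and has a real gap. The paper's proof is organized around a single splitting that you never make explicit: Lemma~\ref{T:CSDana} gives $|\partial_x^i\partial_y^j(\Psi_\pm-(1-Q/\lambda))|\le C/|\lambda|^2$, which (since the derivatives of $Q$ decay) immediately handles all large $|\lambda|$ and reduces the problem to a fixed compact range $|\lambda|\le c$. On that range the paper uses neither the Fourier representation nor an induction on $|\widehat{\partial_xQ}|_{L_1}$; instead it reads off decay on the slice $y=0$ directly from Corollary~\ref{C:boundedness} and the Sobolev embedding $H^1(dx)\hookrightarrow C_0(dx)$, and then passes to general $y$ via the Cauchy integral
\[
\Psi(x,y,\lambda)=1+\frac{1}{2\pi i}\int_{-\infty}^\infty\frac{\Psi_+(t,0,\lambda)-\Psi_-(t,0,\lambda)}{t-(x+\lambda y)}\,dt
\]
together with a H\"older estimate and the uniform-convergence mechanism of Lemma~\ref{L:xy-uniform}. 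This sidesteps both the Riemann--Lebesgue uniformity problem you identify as the ``main obstacle'' and the entire induction apparatus.

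The concrete gap in your argument is the reduction to $j=0$. From $\partial_y\Psi=\lambda\partial_x\Psi+(\partial_xQ)\Psi$ one obtains, after iterating, a term $\lambda^j\partial_x^{i+j}(\Psi_\pm-1)$ among the ``pure $x$-derivatives''; the coefficient $\lambda^j$ is unbounded on $\mathbb R$, so decay of $\partial_x^{i+j}(\Psi_\pm-1)$ as $|x|\to\infty$ does \emph{not} imply uniform-in-$\lambda$ decay of $\partial_x^i\partial_y^j(\Psi_\pm-1)$. This step only becomes valid after you have already restricted to bounded $\lambda$, and the natural tool for that restriction is precisely Lemma~\ref{T:CSDana}, which you invoke only obliquely in the induction paragraph. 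Once the restriction is in place your Fourier/induction program could in principle be carried through, but it is substantially heavier than the paper's two-line Cauchy-integral argument and still leaves you facing the uniform Riemann--Lebesgue issue, which the paper simply avoids.
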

\begin{proof} 
By the results of Lemma \ref{T:CSDana}, it is sufficient to prove this lemma for $|\lambda|<c$ where $c$ is any fixed constant. However, for $|\lambda|<c$, $i+j\le k-4$,
\[
\partial_x^i\partial_y^j\left(\Psi_\pm(x,0,\lambda)-1\right)\to 0\,\textit{ uniformly in $L_\infty$ as $|x|\to\infty$}
\]follow from  (\ref{E:Lax1}), Corollary \ref{C:boundedness}, and the Sobolev's theorem.  For $y\ne 0$, one can follow the argument of Lemma \ref{L:xy-uniform} to show the uniform convergence of $\partial_x^i\partial_y^j\Psi\to \partial_x^i\partial_y^j\Psi_{\pm,z}$. Then  the lemma is proved by the uniform convergence and applying Holder inequality to 
\[
\Psi(x,y,\lambda)=1+\frac 1{2\pi i}\int^\infty_{-\infty}\frac {\Psi_+(t,0,\lambda)-\Psi_-(t,0,\lambda)}{t-(x+\lambda y )}dt.\]
\end{proof}

\begin{lemma}\label{L:lambdal2'}
For $Q\in {\mathbb P}_{\infty,k,1}\cap \mathbb P_1$, $k\ge 7$, we have 
\[
|\partial_\lambda\Psi_\pm|,\,\,|\partial_\lambda\partial_x\Psi_\pm|<\frac C{|\lambda|},\,\textit{ as $|\lambda|\to\infty$.}\]and $C$ depends continuously on $x$, $y$.
\end{lemma}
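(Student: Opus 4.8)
The plan is to run the asymptotic machinery of Section~\ref{S:DPSMasymp} on the function $V:=\partial_\lambda\Psi$ in place of $\Psi$ itself. Since $Q\in\mathbb P_1$, the eigenfunction is holomorphic in $\mathbb C^\pm$ with boundary values $\Psi_\pm$ (Theorem~\ref{T:SMexistence}, Lemma~\ref{L:pmexistence}, Lemma~\ref{L:lambda}), so for $|\lambda|$ large $V$ and $\partial_xV$ are genuine derivatives and $\partial_\lambda\Psi_\pm$, $\partial_\lambda\partial_x\Psi_\pm$ are their boundary limits. First I would differentiate (\ref{E:Lax1}) in $\lambda$, obtaining
\[
(\partial_y-\lambda\partial_x)V-(\partial_xQ)V=\partial_x\Psi,\qquad V\to0\ \text{as}\ y\to-\infty,
\]
and, after one more $x$-derivative,
\[
(\partial_y-\lambda\partial_x)\partial_xV-(\partial_xQ)\partial_xV=(\partial_x^2Q)V+\partial_x^2\Psi .
\]
Thus $V$ and $\partial_xV$ solve the same kind of linear problem as $\Psi-1$, but with the already-controlled inhomogeneities $\partial_x\Psi$, $\partial_x^2\Psi$ ($|\partial_x\Psi|,|\partial_x^2\Psi|\le C/|\lambda|$ by Lemma~\ref{L:asym-2} and Theorem~\ref{T:lambda2}). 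The legitimacy of the $\lambda$-differentiation is read off from the integral equations (\ref{E:W1})--(\ref{E:WK}): since $\partial_\lambda$ of the kernel $e^{i\lambda\xi(y-y')}$ produces the factor $i\xi(y-y')$, one gets a convergent integral equation for $\widehat V$ exactly because the $y$-weighted norms $|\xi^iy^s\widehat q|_{L_1(d\xi dy)}$, $0\le s\le1$, are finite — this is the point of the hypothesis $\mathbb P_{\infty,k,1}$ — and the same computation at the boundary (letting $\lambda_I\to0^\pm$ as in Lemma~\ref{L:pmexistence}, Lemma~\ref{L:qxx}) produces $\partial_\lambda\Psi_\pm$ and $\partial_\lambda\partial_x\Psi_\pm$.

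Next I would record crude bounds, uniform in $\lambda$ for $|\lambda|$ large. Taking the $x$-Fourier transform of the $V$-equation one obtains, e.g. for $\lambda\in\mathbb C^+$, $\xi\ge0$,
\[
\widehat V=\mathcal K_\lambda\widehat V+\int_{-\infty}^y e^{i\lambda\xi(y-y')}\,i\xi\,\widehat W(\xi,y',\lambda)\,dy',
\]
and since $\Vert\mathcal K_\lambda\Vert<1$ by (\ref{E:K}) one inverts $1-\mathcal K_\lambda$ as in Theorem~\ref{T:SMexistence}. Combined with the bounds on $\xi^j\widehat W$ from Lemma~\ref{L:psil2}, this shows that $V$, $\partial_xV$ (and their boundary limits) are uniformly bounded for $|\lambda|$ large, using the $x$-derivatives of $Q$ up to the order that $k\ge7$ supplies.

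Finally — the heart of the argument — I would extract the $1/|\lambda|$ decay using the two-dimensional representation (\ref{E:2d}):
\[
V=G_\lambda\big((\partial_xQ)V+\partial_x\Psi\big),\qquad
\partial_xV=G_\lambda\big((\partial_xQ)\partial_xV+(\partial_x^2Q)V+\partial_x^2\Psi\big).
\]
Applying Lemma~\ref{L:FSestimate} to each right-hand side, the contributions of $\partial_x\Psi$, $\partial_x^2\Psi$ are $O(1/|\lambda|)$ (indeed better) by Lemma~\ref{L:asym-2}--\ref{L:asym-3} and Theorem~\ref{T:lambda2}, while the contributions of $V,\partial_xV$ are bounded by $\frac{C}{|\lambda|}$ times $L_1$-norms of derivatives of $Q$ multiplied by the $L_\infty$-norms of $V,\partial_xV$ that are finite by the previous step. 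The $y$-derivatives demanded by Lemma~\ref{L:FSestimate} are removed exactly as in the proof of Lemma~\ref{L:asym-3}, by substituting $\partial_yV=\lambda\partial_xV+(\partial_xQ)V+\partial_x\Psi$ and closing a small linear system in $V,\partial_xV,\partial_x\partial_yV$. This yields $|V|,|\partial_xV|\le C/|\lambda|$; the constant is not uniform but depends continuously (polynomially in $y$) on $x,y$, because the factor $y-y'$ created by $\partial_\lambda$ splits, via $|y-y'|\le|y|+|y'|$, into a part absorbed by the $s=1$ weight of $Q$ and a part carried out as a $y$-dependent constant.

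The main obstacle is the weight/regularity bookkeeping underlying the first two steps: one must verify that after $\partial_\lambda$ has consumed one $y$-weight there is still enough $x$-regularity of $Q$ to run the contraction for $\widehat V,\xi\widehat V$ in $\widehat{\mathbb X}$ and to pass to the boundary limits $\lambda_I\to0^\pm$ — this is the ``loss of some regularities'' already visible in Theorem~\ref{T:lambda2}, and $k\ge7$ is the threshold it forces. The remaining work — checking that $V\to0$ as $y\to-\infty$ holds uniformly on compact $\lambda$-sets away from $\mathbb R$, and that the bootstrap in the last step closes with a locally bounded constant — is routine.
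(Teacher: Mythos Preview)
Your overall architecture is right, and the observation in your last paragraph (split the factor $y-y'$ coming from $\partial_\lambda e^{i\lambda\xi(y-y')}$ into a $y$-part and a $y'$-part, the latter absorbed by the $s=1$ weight in $\mathbb P_{\infty,k,1}$) is exactly the engine of the argument. But neither your step~2 nor your step~3 closes as written, for the same reason: the inhomogeneity you produce is $\partial_x\Psi$ (equivalently $i\xi\widehat W$), and this function does \emph{not} lie in the spaces your estimates require.

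In step~2, the forcing term $\int_{-\infty}^y e^{i\lambda\xi(y-y')}\,i\xi\,\widehat W(\xi,y',\lambda)\,dy'$ is not in $\widehat{\mathbb X}$ uniformly as $\lambda_I\to0$, because $\widehat W$ carries no $y'$-integrability---only the uniform bound $\sup_{y'}|\widehat W(\cdot,y',\lambda)|_{L_1(d\xi)}<\infty$. Lemma~\ref{L:psil2} adds $L_2(d\xi)$ control, not $L_1(dy')$ control, so the appeal to it does not help. In step~3, Lemma~\ref{L:FSestimate} asks for $\sup_y|\varphi|_{L_1(dx)}$ and $|\varphi|_{L_1(dxdy)}$, and $\partial_x\Psi$ is only known to be in $L_\infty\cap L_2(dx)$; the pointwise expansion $\partial_x\Psi=-\partial_xQ/\lambda+O(|\lambda|^{-2})$ from Theorem~\ref{T:lambda2} does not put the remainder into $L_1(dxdy)$.

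The repair---and this is what the paper does---is to differentiate the integral equation~(\ref{E:WK}) itself rather than the PDE~(\ref{E:Lax1}). Then the right-hand side of $(1-\mathcal K_\lambda)\partial_\lambda\widehat W$ automatically carries a factor $\widehat{\partial_xQ}*\widehat W+\widehat{\partial_xQ}$, which \emph{is} in $L_1(d\xi\,dy')$; the factor $i\xi(y-y')$ from $\partial_\lambda$ then splits exactly as you say, and the $1/|\lambda|$ is read off by writing $\widehat W=A/\lambda$ with $A$ uniformly bounded in $\widehat{\mathbb X}$ (and similarly for the analogous quantities). No passage through $G_\lambda$ on $\partial_x\Psi$ is needed. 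In fact your forcing term and the paper's are equal: substitute (\ref{E:W1}) for $\widehat W$ inside your $y'$-integral and interchange the two $y'$-integrations to recover the paper's form. So the fix is not a new idea but simply using the other (equivalent) representation of the inhomogeneity---the one in which the $y'$-integrability is visible.
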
  
\begin{proof} 
By formula (\ref{E:eigen+}),  we have 
\begin{equation}
\Psi(x,y,\lambda)=1+\frac 1{2\pi }\int_{-\infty}^\infty e^{i\xi x}\widehat W(\xi,y,\lambda)d\xi.\label{E:csdbd2}
\end{equation} Write 
\[\widehat W(\xi,y,\lambda)=\frac 1\lambda A(\xi,y,\lambda).
\] Note $\widehat W\in \widehat{\mathbb X}$ with $\widehat{\mathbb X}$ defined by Definition \ref{D:SMfunction}. Therefore Theorem \ref{T:lambda2} implies 
\begin{equation}\textit{$A$ is uniformly bounded in $\widehat{\mathbb X}$.}\label{E:csdbd}
\end{equation}
Now we define
\begin{equation}
\begin{array}{ll}\frac {B_1(\xi,y,\lambda)}\lambda=\int_{-\infty}^y e^{i\lambda\xi(y-y')}\widehat {\partial_xQ}(\xi,y')dy',&\textsl{ if $\lambda\in\mathbb{C}^+$, $\xi\ge 0$;}\\
\frac {B_2(\xi,y,\lambda)}\lambda=-\int_{y}^\infty e^{i\lambda\xi(y-y')}\widehat {\partial_xQ}(\xi,y')dy',&\textsl{ if $\lambda\in\mathbb{C}^+$, $\xi\le 0$;}\\
\frac {B_3(\xi,y,\lambda)}\lambda=-\int_{y}^\infty e^{i\lambda\xi(y-y')}\widehat {\partial_xQ}(\xi,y')dy',&\textsl{ if $\lambda\in\mathbb{C}^-$, $\xi\ge 0$;}\\
\frac {B_4(\xi,y,\lambda)}\lambda=\int_{-\infty}^y e^{i\lambda\xi(y-y')}\widehat {\partial_xQ}(\xi,y')dy',&\textsl{ if $\lambda\in\mathbb{C}^-$, $\xi\le 0$.}\end{array}
\label{E:csdWK}
\end{equation}
By (\ref{E:WK}),  (\ref{E:K}), (\ref{E:csdbd}), (\ref{E:csdWK}), and Theorem \ref{T:lambda2}, we obtain
\begin{equation}
\textit{$B_1$, $B_2$, $B_3$, $B_4$ are uniformly bounded in $\widehat{\mathbb X}$}.\label{E:csdbd1}
\end{equation}

Differentiating both sides of (\ref{E:WK}), we obtain
\begin{eqnarray}
&&(1-\mathcal K_\lambda)\partial_\lambda\widehat W\label{E:csdbd4}\\
=&&iy\left[\int^{y}_{-\infty} e^{i\lambda\xi(y-y')}\xi\left(\widehat {\partial_xQ}\ast\widehat W\right)dy'
+\int^{y}_{-\infty} e^{i\lambda\xi(y-y')}\xi \widehat {\partial_xQ}dy'\right]\nonumber\\
-&&i\left[\int^{y}_{-\infty} e^{i\lambda\xi(y-y')}y'\xi\left(\widehat {\partial_xQ}\ast\widehat W\right)dy'
+\int^{y}_{-\infty} e^{i\lambda\xi(y-y')}y'\xi \widehat {\partial_xQ}dy'\right]\nonumber\\
=&&iy\left[\int^{y}_{-\infty} e^{i\lambda\xi(y-y')}\left(\widehat {\partial_x^2Q}\ast\widehat W\right)dy'
+\int^{y}_{-\infty} e^{i\lambda\xi(y-y')} \widehat {\partial_x^2Q}dy'\right]\nonumber\\
-&&i\left[\int^{y}_{-\infty} e^{i\lambda\xi(y-y')}y'\left(\widehat {\partial_x^2Q}\ast\widehat W\right)dy'
+\int^{y}_{-\infty} e^{i\lambda\xi(y-y')}y' \widehat {\partial_x^2Q}dy'\right]\nonumber\\
+&&iy\int^{y}_{-\infty} e^{i\lambda\xi(y-y')}\left(\widehat {\partial_xQ}\ast\xi\widehat W\right)dy'
-i\int^{y}_{-\infty} e^{i\lambda\xi(y-y')}y'\left(\widehat {\partial_xQ}\ast\xi\widehat W\right)dy'\nonumber
\end{eqnarray}for $\lambda\in \mathbb C^+$, $\xi\ge 0$ (Other cases can be done similarly). Define
\begin{eqnarray*}
&&\frac {C_1(\xi,y,\lambda)}\lambda=\int^{y}_{-\infty} e^{i\lambda\xi(y-y')}\widehat {\partial_x^2Q}(\xi,y')dy',\\
&&\frac {C_2(\xi,y,\lambda)}\lambda=\int^{y}_{-\infty} e^{i\lambda\xi(y-y')}\widehat {y'\partial_x^2Q}(\xi,y')dy',\\
&&\frac {C_3(\xi,y,\lambda)}\lambda=\xi\widehat W(\xi,y,\lambda).
\end{eqnarray*}
Using the definition of ${\mathbb P}_{\infty,k,1}$,  and following the way to prove (\ref{E:csdbd1}), then
one can show that
\begin{equation}
\textit{$C_1$, $C_2$, $C_3$ are uniformly bounded in $\widehat{\mathbb X}$ }
\label{E:csdWK1}
\end{equation}
if $Q\in {\mathbb P}_{\infty,k,1}$ and $k\ge 6$. Combining (\ref{E:csdbd2}), (\ref{E:csdbd}), (\ref{E:csdbd4}), (\ref{E:csdWK1}),  and (\ref{E:K}),  we prove $|\partial_\lambda\Psi_\pm|<\frac C{|\lambda|}$ as $|\lambda|\to\infty$ and $C$ depends continuously on $x$, $y$.

Since $\partial_x\Psi(x,y,\lambda)=\frac i{2\pi }\int_{-\infty}^\infty e^{i\xi x}\xi\widehat W(\xi,y,\lambda)d\xi
$. Modifying the above argument and letting $k\ge 7$ in ${\mathbb P}_{\infty,k,1}$, one can obtain the estimate for $|\partial_\lambda\partial_x\Psi_\pm|$ as well. 
\end{proof}

\begin{lemma}\label{L:lambdal2}
If $Q\in {\mathbb P}_{\infty,k,1}$, $k\ge 7$, and $Z=\phi$, then
\[
|\partial_\lambda\Psi_\pm|<\frac C{|\lambda|},\,\textit{ as $|\lambda|\to\infty$,}\]and $C$ depends continuously on $x$, $y$.
\end{lemma}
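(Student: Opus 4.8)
The plan is to mirror the proof of Lemma \ref{L:lambdal2'}, the only genuinely new point being that in the non-small regime with $Z=\phi$ the explicit Neumann--series representation (\ref{E:csdbd2})--(\ref{E:csdWK1}) is no longer available. I would replace it by differentiating the Lax equation (\ref{E:Lax1}) in $\lambda$. Since $\mathcal L_\lambda\Psi=(\partial_xQ)\Psi$, a direct computation shows that $g:=\Psi^{-1}\partial_\lambda\Psi$ satisfies $\mathcal L_\lambda g=\Psi^{-1}\partial_x\Psi$ with $g\to 0$ as $y\to-\infty$; because $Z=\phi$ every object in sight is regular for $\lambda\notin\mathbb R$ and the source has enough decay, so $\mathcal L_\lambda$ may be inverted exactly as in (\ref{E:2d}), yielding $\partial_\lambda\Psi=\Psi\,G_\lambda\!\left(\Psi^{-1}\partial_x\Psi\right)$. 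Passing to the boundary values ($\lambda$ real, from $\mathbb C^\pm$), which exist by Lemma \ref{L:qxx} and Lemma \ref{L:qxx'}, gives the corresponding identity for $\partial_\lambda\Psi_\pm$.

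Next I would estimate $G_\lambda(\Psi^{-1}\partial_x\Psi)$ by Lemma \ref{L:FSestimate}. To expose the $|\lambda|^{-1}$ gain I would rewrite $\partial_x\Psi=\lambda^{-1}\!\left(\partial_y\Psi-(\partial_xQ)\Psi\right)$ using (\ref{E:Lax1}), so the source becomes $\lambda^{-1}\Psi^{-1}\!\left(\partial_y\Psi-(\partial_xQ)\Psi\right)$. The remaining ingredients are already in hand: Corollary \ref{C:boundedness} together with the Sobolev embedding gives uniform $L_\infty$ bounds for $\Psi$, $\Psi^{-1}$ and their $x$-derivatives; Lemma \ref{T:CSDana} gives $|\partial_x^i\partial_y^j(\Psi_\pm-(1-\partial_xQ/\lambda))|\le C|\lambda|^{-2}$, hence in particular $\partial_y\Psi$ and $\partial_x\partial_y\Psi$ are $O(|\lambda|^{-1})$ and bounded; Lemma \ref{L:boundedness} supplies the decay as $x$ or $y\to\infty$; and the $L_1$-conditions packaged into $\mathbb P_{\infty,k,1}$ (the $\sup_y|\partial_x^j\partial_y^l q|_{L_1(dx)}$ and $|\partial_x^j\partial_y^l q|_{L_1(dxdy)}$ norms, and — for the remainder after one further inversion of $\mathcal L_\lambda$ — the $y$-weighted norms $|\xi^i y^s\widehat q|_{L_1(d\xi dy)}$, which are exactly what the factor produced by $\partial_\lambda e^{i\lambda\xi(y-y')}$ asks for) make finite the $L_1(dx)$-uniformly-in-$y$ and $L_1(dxdy)$ norms appearing on the right of Lemma \ref{L:FSestimate}. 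Collecting these bounds produces $|\partial_\lambda\Psi_\pm|\le C|\lambda|^{-1}$; the constant is only continuous in $(x,y)$, not uniform, since those $L_1$-norms enter through the convolution in $G_\lambda$ evaluated at the point $(x,y)$, precisely as in Lemma \ref{L:lambdal2'}.

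The main obstacle is exactly this integrability bookkeeping: the non-small eigenfunction is controlled, via Corollary \ref{C:boundedness}, only in $L_2(dx)$-type norms, whereas Lemma \ref{L:FSestimate} demands $L_1(dx)$-in-$x$ and $L_1(dxdy)$ control of $\Psi^{-1}\partial_x\Psi$ and its $y$-derivative. To get around it I would peel off the leading term $\partial_x\Psi\sim-\partial_x^2Q/\lambda$ — where $\partial_x^2Q$, $\partial_x\partial_yQ,\dots$ do lie in $L_1(dx)$ uniformly in $y$ and in $L_1(dxdy)$ by $\mathbb P_{\infty,k,1}$ — and treat the $O(|\lambda|^{-2})$ remainder by a second application of the inverted Lax equation, or, if cleaner, by transporting the estimate through the factorization $\Psi=\Psi^{\pm}a^{\pm}$ of Theorem \ref{T:LNexistence}: since $Z=\phi$ the poles of the auxiliary factors $\Psi^{\pm}$ and $a^{\pm}$ are removable and never contribute, so the $\epsilon$-regularised rational pieces of Section \ref{S:ENS} can be differentiated in $\lambda$ term by term. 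It is this double use of Lemma \ref{T:CSDana} (each instance costing four derivatives, $i+j\le k-4$) stacked on top of Corollary \ref{C:boundedness} that forces the hypothesis $k\ge 7$, in contrast with the slightly cheaper small-data bound of Lemma \ref{L:lambdal2'}.
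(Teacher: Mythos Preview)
Your primary route---writing $\partial_\lambda\Psi=\Psi\,G_\lambda(\Psi^{-1}\partial_x\Psi)$ and invoking Lemma~\ref{L:FSestimate}---has a genuine gap that you identify but do not close. Lemma~\ref{L:FSestimate} needs $\sup_y|\varphi|_{L_1(dx)}$, $\sup_y|\partial_y\varphi|_{L_1(dx)}$ and $|\varphi|_{L_1(dxdy)}$ for $\varphi=\Psi^{-1}\partial_x\Psi$, and in the non-small regime you only have $L_2(dx)$ control on $\partial_x\Psi$ (Corollary~\ref{C:boundedness}). Peeling off the leading $-\partial_xQ/\lambda$ does not help: the remainder is $O(|\lambda|^{-2})$ only in $L_\infty$, and Remark~\ref{R:limit} explains exactly why the next coefficient $\Psi_2$ fails to be $L_1(dx)$, so ``a second application of the inverted Lax equation'' runs into the same wall. (Incidentally, the proof of Lemma~\ref{L:lambdal2'} that you wish to mirror does not use $G_\lambda$ at all---it works on the Fourier side via the Neumann series, which is precisely what is unavailable here.)

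The paper takes the route you mention only as a fallback, and this is not a cosmetic choice but the point of the argument. One localizes with a cutoff $\chi(x,y)$, uses the inductive factorization $\Psi=\Psi^\pm a^\pm$ from the proof of Theorem~\ref{T:LNexistence}, and observes that $\partial_\lambda\Psi=(\partial_\lambda\Psi^\pm)a^\pm+\Psi^\pm\partial_\lambda a^\pm$. The factors $\Psi^\pm$ fall under the small-data Lemma~\ref{L:lambdal2'} (this is the induction). For $a^\pm$ one derives an \emph{inhomogeneous Riemann--Hilbert problem} in the $z=x+\lambda y$ variable,
\[
(\chi\partial_\lambda a)_{+,z}=g+(\Psi^+)^{-1}\Psi^-\,(\chi\partial_\lambda a)_{-,z},\qquad g=\bigl[\partial_\lambda((\Psi^+)^{-1}\Psi^-)\bigr]\chi a_{-,z},
\]
and solves it by a Cauchy integral. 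The crucial gain is that the Cauchy operator is bounded on $L_2$, so only $L_2(dx)$ estimates (which you do have from Corollary~\ref{C:boundedness}) are needed; the $|\lambda|^{-1}$ comes from $g$ via Lemma~\ref{L:lambdal2'}. One then passes from $L_2$ to pointwise bounds by also estimating $\partial_x(\chi\partial_\lambda a)$ and applying Sobolev. This completely sidesteps the $L_1(dx)$ obstruction that blocks your direct approach.
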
  
\begin{proof} Since the property we wish to justify is a local property. Without loss of generality, we need only to show
\begin{equation}
|\chi(x,y)\partial_\lambda\Psi_\pm|<\frac C{|\lambda|},\,\textit{ as $|\lambda|\to\infty$.}\label{E:reduce}
\end{equation}
Where $C$ depends continuously on $x$, $y$, and $\chi(x,y)$ is any fixed smooth function with compact support. Now by the induction scheme as the proof of Theorem \ref{T:LNexistence}, we have
\begin{equation}
\begin{array}{l}\Psi(x,y,\lambda)=\begin{cases}\Psi^-(x,y,\lambda)a^-(x,y,\lambda),& y\le 0,\\
\Psi^+(x,y,\lambda)a^+(x,y,\lambda),& y\ge 0,
\end{cases}
\end{array}\label{E:BP3}
\end{equation}
and
\[
\partial_\lambda\Psi=\left(\partial_\lambda\Psi^\pm\right)a^\pm+\Psi^\pm\partial_\lambda a^\pm.
\]
By induction and applying Lemma \ref{L:boundedness}, and \ref{L:lambdal2'}, it reduces to showing
\[|\chi(x,y)\partial_\lambda a
|<\frac C{|\lambda|} \,\textit{ as }|\lambda|\to\infty.\]Where 
\[
\begin{array}{l}a(x,y,\lambda)=\begin{cases}a^-(x,y,\lambda),& y\le 0,\\
a^+(x,y,\lambda),& y\ge 0,
\end{cases}
\end{array}
\]

By (\ref{E:BP3}), one can derive  the inhomogeneous Riemann-Hilbert problem
\[
\left(\chi\partial_\lambda a\right)_{+,z}(x,0,\lambda)=g (x,\lambda)+\left(\Psi^+\right)^{-1}\Psi^-\left(\chi\partial_\lambda a\right)_{-,z}(x,0,\lambda),\]
with
\[
g=\left[\partial_\lambda\left((\Psi^+)^{-1}\Psi^-\right)\right]\chi a_{-,z}.\]
Hence  \cite{AF97}
\begin{equation}
\chi\partial_\lambda a(x,y,\lambda)=\tilde\Psi(x,y,\lambda)^{-1}\left[\frac 1{2\pi i}\int_{\mathbb R}\frac {\left(\Psi^+\right)(t,0,\lambda)g(t,\lambda)}{t-z}dt\right]\label{E:chi}
\end{equation}
with $x+\lambda y=z$, and
\[
\begin{array}{l}\tilde\Psi(x,y,\lambda)=\begin{cases}\Psi^-(x,y,\lambda),& y\le 0,\\
\Psi^+(x,y,\lambda),& y\ge 0.
\end{cases}
\end{array}\] 
Therefore by Lemma \ref{L:lambdal2'} and (\ref{E:chi}),
\begin{eqnarray}
|\chi\partial_\lambda a|_{L_2(\mathbb R, dx)}<&&C|\tilde\Psi^{-1}||\int_{\mathbb R}\frac {\left(\Psi^+\right)(t,0,\lambda)g(t,\lambda)}{t-z}dt|\label{E:for1}\\
<&&C|\tilde\Psi^{-1}||\int_{\mathbb R}\frac {\chi(t,0)\left(\Psi^+\right)(t,0,\lambda)g(t,\lambda)}{t-z}dt|\nonumber\\
<&&C|g||\int_{\mathbb R}\frac {\chi(t,0)}{t-z}dt|_{L_2(\mathbb R, dx)}\nonumber\\
<&&\frac C{|\lambda|}\nonumber
\end{eqnarray} 
as $|\lambda|\to\infty$. Furthermore, differentiating both sides of (\ref{E:chi}) and using Corollary \ref{C:boundedness}, Lemma \ref{L:lambdal2'}, we obtain
\begin{eqnarray}
&&|\partial_x\left(\chi\partial_\lambda a\right)|_{L_2(\mathbb R, dx)}\label{E:for2}\\
<&&|\partial_x \tilde\Psi^{-1}||\left[\frac 1{2\pi i}\int_{\mathbb R}\frac {\left(\Psi^+\right)(t,0,\lambda)g(t,\lambda)}{t-z}dt\right]|_{L_2(\mathbb R, dx)}\nonumber\\
&&+|\tilde\Psi^{-1}||\partial_x\left[\frac 1{2\pi i}\int_{\mathbb R}\frac {\left(\Psi^+\right)(t,0,\lambda)g(t,\lambda)}{t-z}dt\right]|_{L_2(\mathbb R, dx)}\nonumber\\
<&&\frac C{|\lambda|}\nonumber
\end{eqnarray}
Hence the lemma follows from (\ref{E:for1}), (\ref{E:for2}), and Sobolev's theorem.
\end{proof}

We conclude this section by \textbf{the proof of Theorem \ref{T:CSDsum}} and the definition of continuous scattering transformation.

\begin{proof} The condition (\ref{E:0ana15'}) follows from Lemma \ref{L:qxx'}. 
The identity (\ref{E:real1'}) comes from (\ref{E:det2}) and Lemma \ref{L:qxx'}. Besides, (\ref{E:reality2}) and Lemma \ref{L:qxx'} imply that for $\lambda\in \mathbb{R}$
\[v(x+\lambda y,\lambda)=\Psi_-(x+\lambda y,\lambda)^{-1}\Psi_+(x+\lambda y,\lambda)=\Psi_+(x+\lambda y,\lambda)^*\Psi_+(x+\lambda y,\lambda).\]
Therefore (\ref{E:real2'}) follows. 

Next note  that Lemma \ref{T:CSDana} implies that
\begin{equation}
\partial_x^i\partial_y^j\left(\Psi_\pm-1\right)\,
\textit{ are uniformly bounded in }   L_\infty\cap L_2(\mathbb{R},d\lambda)\cap L_1(\mathbb{R},d\lambda).\label{E:0ana1'}
\end{equation}  So (\ref{E:0anal2'}) follows. Combining  Lemma \ref{L:boundedness}, (\ref{E:0ana1'}), one obtains $\partial_x^i\partial_y^j\left(v-1\right)\to 0$ uniformly in  $L_\infty$. So condition (\ref{E:0ana14'}) follows from (\ref{E:0anal2'}), and the Lebesque convergence theorem. Finally, condition (\ref{E:0anal3'}) is derived by applying Lemma \ref{L:lambdal2}.
\end{proof}

\begin{definition}\label{D:SMcd}
For $Q\in {\mathbb P}_{\infty,k,1}$, $k\ge 7$, if the eigenfunction $\Psi(x,y,\cdot)$ has limits $\Psi_\pm$ on $\mathbb{R}$, then we define the continuous scattering data of $Q$ to be $v\in \mathfrak{S}_{c,k}$ obtained by Theorem \ref{T:CSDsum}. Moreover, the continuous scattering transformation $\mathcal S_c$ on $Q$ is defined by $\mathcal S_c(Q)=v$.
\end{definition}

%%%%%%%%%%%%%%%%%%%%%%%%%%%%%%%%%%%%%%%%%%%%%%%%%%%%%%%%%%%%%%%%%%%%%%%%%%%%%%%%%%%%%%%%%%%%%%%%%%%%%%%%%%%%%%%%%%%%%%%%%%%%%%
\section{Inverse problem: Continuous scattering data} \label{S:CSDinv}
%%%%%%%%%%%%%%%%%%%%%%%%%%%%%%%%%%%%%%%%%%%%%%%%%%%%%%%%%%%%%%%%%%%%%%%%%%%%%%%%%%%%%%%%%%%%%%%%%%%%%%%%%%%%%%%%%%%%%%%%%%%%%%%

We first \textbf{prove Theorem \ref{T:invexistence}} by solving the Riemann-Hilbert problem via a modified scheme of Section \ref{S:ENS}. 
\begin{proof} First of all, (\ref{E:0anal2'}), (\ref{E:0ana14'}) and Lemma \ref{L:SMrm} imply  
that there exists a constant $M>0$ such that, as $|x|$ or $|y|>M-1$,  the Riemann-Hilbert problem $(\lambda\in\mathbb{R}, v(x, y, \lambda))$ can be solved and 
\begin{equation}
|\partial_x^i\partial_y^j\left(\Psi_\pm-1\right)|_{L_2(d\lambda)}\le C|v-1|_{L_2(d\lambda)}
\label{E:invana1'''}
\end{equation}
for a constant $C$. Hence (\ref{E:invbdry''}) holds as $|x|$ or $|y|>M-1$. 
Applying Holder inequality, (\ref{E:0anal2'}), (\ref{E:0ana14'}), and (\ref{E:invana1'''}), we then derive: 
\begin{gather*}
\textit{For each fixed $\lambda\notin\mathbb R$, $\forall |x|$ or $|y|>M-1$, }\\
\partial_x^i\partial_y^j\left(\Psi-1\right)  \in L_\infty(dxdy),\\
\partial_x^i\partial_y^j\left(\Psi-1\right)\to 0  \textit{ in $L_\infty(dxdy)$, as $x$ or $y\to \infty$}.
\end{gather*}
%Besides,  follows from Lemma \ref{L:SMrm}, (\ref{E:invana5'}), and the Sobolev's theorem 

Hence, to prove Theorem \ref{T:invexistence}, it is sufficient to solve the Riemann-Hilbert problem $(\lambda\in\mathbb{R}, v(x, y, \lambda))$ and establish (\ref{E:invbdry''}), (\ref{E:invbdry'})  for $\max(|x|, |y|)< M$. The scheme in Section \ref{S:ENS}, in particular Lemma \ref{L:trifact}-\ref{L:linearsystem}, can be adapted to the solving of this problem. More precisely, 

\begin{lemma}\label{L:invtrifact}
For $\lambda,\,x,\,y\in \mathbb{R}$, we have a factorization
\[
v( x, y,\lambda)=\left(1+h_l\right)^{-1}\chi\left(1+h_u\right) (x,y, \lambda), \]
and for $i+j\le k-4$,
\begin{eqnarray}
	&& \textsl{$\chi$ is diagonal and $h_u$ ($h_l$) is strictly upper (lower) triangular.}\label{E:invdec1}\\
	%&&\textsl{$\chi-1$, $h_u$, $h_l$, $\partial_{ x}\chi$,  $\partial_{ x}h_u$, $\partial_{ x} h_l$, $\partial_{ y}\chi$,  $\partial_{ y}h_u$,    $\partial_{ y} h_l$ are uniformly}\label{E:invdec2}\\
	%&& \textsl{bounded in $  L_\infty\cap L_2(\mathbb R,d\lambda)$.}\nonumber\\
		 && \textsl{$\partial_{ x}^i\partial_{ y}^j\left(\chi-1\right)$, $\partial_{ x}^i\partial_{ y}^jh_u$, $\partial_{ x}^i\partial_{ y}^jh_l$, $\partial_\lambda\chi$, $\partial_\lambda h_u$, $\partial_\lambda h_l$  are in $L_\infty\cap  L_2(\mathbb R, d\lambda)$ }\label{E:invdec4}\\
		%&&\textsl{  }\nonumber\\
		 &&\textit{and the norms depend continuously on $x$, $y$.}\nonumber\\
		&& \textsl{$\chi-1$, $h_u$, $h_l\to 0$ uniformly in  $ L_\infty\cap L_2(\mathbb R,d\lambda)$ as $|x|$ or $|y|\to\infty$.
	 }\label{E:invdec3}%\\
\end{eqnarray}
\end{lemma}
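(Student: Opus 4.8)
The plan is to invoke the algebraic factorization of Lemma 3.5 (the $CSB$-decomposition via principal minors) and then verify the analytic estimates term by term, exactly as was done in Lemma 3.9 for $F$, but now keeping track of the $\lambda$-derivative as well, which is the new feature here. First I would check that the hypotheses of Lemma 3.5 hold pointwise: by the reality condition \eqref{E:real2'}, $v = v^* > 0$, so every upper-left principal minor $d_k^+(v)(x,y,\lambda)$ is strictly positive (it is the determinant of a positive-definite $k\times k$ block), hence never vanishes for any $(x,y,\lambda)\in\mathbb R^3$. Therefore Lemma 3.5 applies and gives
\[
v = C S B = (1+h_l)^{-1}\chi(1+h_u),
\]
where $\chi = S$ is diagonal with entries $d_k^+(v)/d_{k-1}^+(v)$, and $1+h_u$, $(1+h_l)^{-1}$ are the triangular factors $B$ and $C$; since $C$ is lower-triangular with unit diagonal, $C^{-1} = 1+h_l$ is again lower-triangular with unit diagonal, so $h_l$ is strictly lower-triangular, and likewise $h_u$ is strictly upper-triangular. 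This gives \eqref{E:invdec1}.

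For the analytic constraints \eqref{E:invdec4}–\eqref{E:invdec3}, the key point is that every entry of $\chi-1$, $h_u$, $h_l$ is a rational function of the entries of $v$ whose denominator is a product of the minors $d_k^+(v)$, all of which are bounded below by a positive constant (uniformly on compact $(x,y)$-sets, using $v>0$ and $\det v\equiv 1$ from \eqref{E:real1'}, \eqref{E:real2'}; as $|x|$ or $|y|\to\infty$ the bound improves since $v\to 1$). Thus the map $v\mapsto(\chi,h_u,h_l)$ is, locally in $(x,y)$, a smooth (indeed rational) map sending $1\mapsto(1,0,0)$. Applying the chain rule to $\partial_x^i\partial_y^j$ with $i+j\le k-4$ and to $\partial_\lambda$, and using \eqref{E:0anal2'} (uniform boundedness of $\partial_x^i\partial_y^j(v-1)$ in $L_\infty\cap L_2\cap L_1(d\lambda)$ for $i+j\le k-4$) together with \eqref{E:0anal3'} (that $\partial_\lambda v\in L_2(d\lambda)$ with norms depending continuously on $x,y$), one gets that $\partial_x^i\partial_y^j(\chi-1)$, $\partial_x^i\partial_y^j h_u$, $\partial_x^i\partial_y^j h_l$, and the single $\lambda$-derivatives $\partial_\lambda\chi$, $\partial_\lambda h_u$, $\partial_\lambda h_l$ all lie in $L_\infty\cap L_2(\mathbb R,d\lambda)$, with norms depending continuously on $x,y$; this is \eqref{E:invdec4}. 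The decay \eqref{E:invdec3} follows the same way: \eqref{E:0ana14'} gives $\partial_x^i\partial_y^j(v-1)\to 0$ uniformly in $L_\infty\cap L_2\cap L_1(d\lambda)$ as $|x|$ or $|y|\to\infty$, and since the rational map has no constant term at $v=1$ and the minors stay bounded away from $0$ in that regime, the corresponding quantities $\chi-1$, $h_u$, $h_l$ tend to $0$ uniformly in $L_\infty\cap L_2(\mathbb R,d\lambda)$.

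The main obstacle is the bookkeeping in the chain rule: a high mixed derivative $\partial_x^i\partial_y^j$ (and the extra $\partial_\lambda$) of a rational expression produces many terms, each a product of lower-order derivatives of the entries of $v$ divided by a power of the minors. To control these in $L_2(d\lambda)$ one must repeatedly use that $L_\infty\cdot L_2\subset L_2$ and that all factors in play are in $L_\infty\cap L_2$, so products of them stay in $L_2$; the $L_1$-information in \eqref{E:0anal2'}, \eqref{E:0ana14'} is what lets one also keep the estimate under products when needed. None of this is deep, but it is exactly why the hypothesis $i+j\le k-4$ appears, matching the regularity already available from Definition 4 of $\mathfrak S_{c,k}$; no regularity is lost here beyond what \eqref{E:0anal2'}–\eqref{E:0anal3'} already encode, so the statement as written is reached directly. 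This is the analogue for $v$ of Lemma 3.9, the only genuinely new input being the uniform control of $\partial_\lambda$, which \eqref{E:0anal3'} supplies.
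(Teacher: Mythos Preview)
Your proposal is correct and follows essentially the same approach as the paper: use the positivity condition \eqref{E:real2'} to guarantee that the principal minors $d_k^+(v)$ never vanish, apply the $CSB$-decomposition (Lemma~\ref{L:tridecom}), and then carry over the analytic estimates as in Lemma~\ref{L:trifact}. The paper's own proof is a two-line sketch pointing to exactly this strategy; your write-up is a faithful and more detailed expansion of it, and you correctly flag the one new ingredient here (the $\partial_\lambda$ control supplied by \eqref{E:0anal3'}).
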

\begin{proof} We use the positivity condition (\ref{E:real2'}) to prove that $d^+_i$, $1\le i\le n$ vanishes nowhere for $\lambda\in\mathbb R$. Hence the statements can be proved by the same method as that in the proof of Lemma \ref{L:trifact}.
\end{proof}

\begin{lemma}\label{L:invdRH}
{\emph{(A diagonal Riemann-Hilbert problem)}}
For $\max(|x|, |y|)< M$,   there  exists uniquely a solution $\Xi(x,y,\lambda)$ to the Riemann-Hilbert problem $(\lambda\in\mathbb{R}, \chi)$ such that
\begin{eqnarray}
&&\textit{$\Xi-1$  are uniformly bounded in $ H^1(\mathbb{R},d\lambda)$; }\label{E:invdrh0}\\
&&\Xi-1,\, \partial_x\Xi,\,\partial_y\Xi \textit{ are uniformly bounded in $L_2(\mathbb R, d\lambda)$,}
\label{E:invdrh1}
	\end{eqnarray}
and for  each fixed $\lambda\notin\mathbb R$,
\begin{equation}
\partial_{ x}^i\partial_{ y}^j\Xi \in L_\infty(dxdy)\textit{  for $\max(|x|, |y|)< M$}.\label{E:invdrh5}%\\
\end{equation}
 \end{lemma}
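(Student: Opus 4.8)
The plan is to diagonalize the problem and reduce it to $n$ scalar Riemann--Hilbert problems of the type covered by Lemma~\ref{L:scalarrh}. By Lemma~\ref{L:invtrifact} the matrix $\chi$ is diagonal, and its $k$-th diagonal entry is the ratio $d^+_k(v)/d^+_{k-1}(v)$ of successive leading principal minors of $v$. Since $v=v^*>0$ by \eqref{E:real2'}, all these minors are strictly positive, so each $\chi_{kk}(x,y,\cdot)$ is a positive real-valued function on $\mathbb R$; in particular $\chi_{kk}\ne 0$ and $\int_{-\infty}^{\infty}d\,\arg\chi_{kk}=0$. Combined with $\chi_{kk}-1,\ \partial_\lambda\chi_{kk}\in L_2(d\lambda)$ (from \eqref{E:invdec4}), the hypotheses of Lemma~\ref{L:scalarrh} are met, so the scalar problem $(\lambda\in\mathbb R,\chi_{kk})$ has a unique solution $\Xi_{kk}$, and $\Xi:=\mathrm{diag}(\Xi_{11},\dots,\Xi_{nn})$ solves $(\lambda\in\mathbb R,\chi)$. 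For uniqueness I would observe that for a diagonal jump each entry $G_{jk}$ of a solution satisfies the scalar problem $G_+=G_-\chi_{kk}$: the diagonal entries are pinned down by Lemma~\ref{L:scalarrh}, while each off-diagonal entry solves the homogeneous scalar problem with limit $0$ at $\infty$ and winding number $0$, hence vanishes by Liouville's theorem, so $G=\Xi$.

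For \eqref{E:invdrh0}, \eqref{E:invdec4} gives $\chi-1\in H^1(\mathbb R,d\lambda)$, so the quantitative part of Lemma~\ref{L:scalarrh} yields $|\Xi_{kk,\pm}-1|_{H^1(d\lambda)}\le C_{kk}\,|\chi_{kk}-1|_{H^1(d\lambda)}$ with $C_{kk}$ depending only on $|\chi_{kk}|_{L_\infty}$ and $|1/\chi_{kk}|_{L_\infty}$. On $\max(|x|,|y|)\le M$ these sup-norms and $|\chi_{kk}-1|_{H^1(d\lambda)}$ are uniformly bounded: they depend continuously on $(x,y)$ by \eqref{E:invdec4}, and the positivity of the principal minors prevents $|1/\chi_{kk}|_{L_\infty}$ from degenerating on this compact parameter set. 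Hence $\Xi-1$ is uniformly bounded in $H^1(\mathbb R,d\lambda)$.

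For \eqref{E:invdrh1} I would differentiate the explicit representation
\[
\Xi(x,y,\lambda)=\exp\left\{\frac{1}{2\pi i}\int_{\mathbb R}\frac{\log\chi(x,y,t)}{t-\lambda}\,dt\right\},\qquad\lambda\notin\mathbb R ,
\]
used already in Lemma~\ref{L:dRH}, which (all factors being diagonal, hence commuting) gives $\partial_x\Xi=\Xi\,\mathcal C\!\left(\chi^{-1}\partial_x\chi\right)$ and likewise for $\partial_y$. By \eqref{E:invdec4} and the uniform lower bound on the diagonal of $\chi$, the functions $\chi^{-1}\partial_x\chi$ and $\chi^{-1}\partial_y\chi$ lie in $L_2(d\lambda)$ with norms continuous in $(x,y)$; since $\mathcal C_\pm$ is bounded on $L_2$ \cite{SW} and $\Xi_\pm-1\in L_\infty(d\lambda)$ by \eqref{E:invdrh0} and Sobolev's theorem, $\partial_x\Xi_\pm$ and $\partial_y\Xi_\pm$ are uniformly bounded in $L_2(\mathbb R,d\lambda)$ for $\max(|x|,|y|)<M$, which together with \eqref{E:invdrh0} gives \eqref{E:invdrh1}.

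Finally, for \eqref{E:invdrh5}: when $\lambda\notin\mathbb R$ the function $t\mapsto 1/(t-\lambda)$ lies in $L_2(dt)$, so differentiating under the integral sign in the representation above and using that $\partial_x^i\partial_y^j\log\chi(x,y,\cdot)\in L_2(d\lambda)$ (from \eqref{E:invdec4} and the lower bound on the diagonal of $\chi$ via the chain rule) shows that $\partial_x^i\partial_y^j\Xi(\cdot,\cdot,\lambda)$ is continuous in $(x,y)$ with a bound controlled by the corresponding norms of $\chi$, which are bounded on the compact set $\{\max(|x|,|y|)\le M\}$; hence $\partial_x^i\partial_y^j\Xi\in L_\infty(dxdy)$ on $\max(|x|,|y|)<M$. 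The step I expect to demand the most care is keeping every constant uniform in the parameters $(x,y)$: this is exactly where the positivity \eqref{E:real2'} (forcing a non-degenerate lower bound on the diagonal of $\chi$, hence uniform control of $\log\chi$, of $\chi^{-1}$, and of the constants in Lemma~\ref{L:scalarrh}) together with the continuous dependence of the norms in \eqref{E:invdec4} are indispensable.
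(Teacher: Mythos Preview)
Your proof is correct and follows essentially the same route as the paper: reduce to scalar problems via Lemma~\ref{L:scalarrh}, then differentiate the explicit representation $\Xi=\exp\{\mathcal C(\log\chi)\}$ to control the $L_2(d\lambda)$ norms of $\partial_x\Xi,\partial_y\Xi$ and (via H\"older) the $L_\infty(dxdy)$ bounds for fixed $\lambda\notin\mathbb R$. The one minor difference is in showing the winding number of $\chi_{kk}$ vanishes: you use $v=v^*>0$ to force $\chi_{kk}>0$ directly, while the paper argues by continuity of the integer $N(x,y)$ together with \eqref{E:invdec3}; your argument is the more direct of the two.
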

\begin{proof}
Applying (\ref{E:invdec4}), and (\ref{E:invdec3}),  one obtains that 
\[
\textit{$\partial_\lambda^i\left(\chi-1\right)$ are uniformly bounded in $  L_\infty\cap L_2(\mathbb R,d\lambda)$, $i=0,1$.}
\]
Hence the winding number $N(x,y)=-\frac 1{2\pi i}\int \frac{d\arg \chi}{d\zeta}(x,y,\zeta)d\zeta$ is integer-valued. Moreover, the condition (\ref{E:invdec3}) implies that $N(x,y)\equiv 0$.

Thus for $\max(|x|, |y|)< M$, the existence of $\Xi$, and (\ref{E:invdrh0}) can be implied by  (\ref{E:invdec4}), %the compactness of $\max(|x|, |y|)\le M$, 
the  Sobolev's theorem,  and Lemma \ref{L:scalarrh}. 
By (\ref{E:invdec4}), (\ref{E:invdrh0}), and the formula 
\begin{eqnarray*}
&&\Xi(x,y,\lambda)=\exp\left\{\frac 1{2\pi i}\int_{\mathbb R}\frac{\log\chi(x,y,\zeta)}{\zeta-\lambda}d\zeta\right\},\\
&&\partial_{ x}\Xi(x,y,\lambda)=\exp\left\{\frac 1{2\pi i}\int_{\mathbb R}\frac{\log\chi(x,y,\zeta)}{\zeta-\lambda}d\zeta\right\}\left(\frac 1{2\pi i}\int_{\mathbb R}\frac{\partial_{ x}\chi(x,y,\zeta)}{\chi(x,y,\zeta)(\zeta-\lambda)}d\zeta\right),\\
&&\partial_{ y}\Xi(x,y,\lambda)=\exp\left\{\frac 1{2\pi i}\int_{\mathbb R}\frac{\log\chi(x,y,\zeta)}{\zeta-\lambda}d\zeta\right\}\left(\frac 1{2\pi i}\int_{\mathbb R}\frac{\partial_{ y}\chi(x,y,\zeta)}{\chi(x,y,\zeta)(\zeta-\lambda)}d\zeta\right),\\
&&\partial_{ x}^2\Xi(x,y,\lambda)=\exp\left\{\frac 1{2\pi i}\int_{\mathbb R}\frac{\log\chi(x,y,\zeta)}{\zeta-\lambda}d\zeta\right\}\left(\frac 1{2\pi i}\int_{\mathbb R}\frac{\partial_{ x}\chi(x,y,\zeta)}{\chi(x,y,\zeta)(\zeta-\lambda)}d\zeta\right)^2\\
&& \hskip.7in-\exp\left\{\frac 1{2\pi i}\int_{\mathbb R}\frac{\log\chi(x,y,\zeta)}{\zeta-\lambda}d\zeta\right\}\left(\frac 1{2\pi i}\int_{\mathbb R}\frac{\left(\partial_{ x}\chi(x,y,\zeta)\right)^2}{\chi^2(x,y,\zeta)(\zeta-\lambda)}d\zeta\right)\\
&&\hskip.7in+\exp\left\{\frac 1{2\pi i}\int_{\mathbb R}\frac{\log\chi(x,y,\zeta)}{\zeta-\lambda}d\zeta\right\}\left(\frac 1{2\pi i}\int_{\mathbb R}\frac{\partial_{ xx}\chi(x,y,\zeta)}{\chi(x,y,\zeta)(\zeta-\lambda)}d\zeta \right)
\\
&&\\
&&\cdots\cdots\cdots\cdots\cdots\cdots\cdots\cdots\cdots\cdots,
\end{eqnarray*}
we derive (\ref{E:invdrh1}). 
Finally, we obtain (\ref{E:invdrh5}) %and (\ref{E:invdrh3}) 
by  Holder inequality. 
\end{proof}

\begin{lemma}\label{L:invapprox}
For $\max(|x|, |y|)< M$, there exists a function $H(x,y,\lambda)$ satisfying
\[H=\begin{cases} H_{u}(x,y,\lambda), &\textit{for $\lambda\in \mathbb{C}^+$,}\\
H_{l}(x, y,\lambda),& \textit{for $\lambda\in\mathbb{C}^-$}, \end{cases}
\]
and
\begin{itemize}
\item $H(x,y,\lambda)\in L_\infty\cap H^1(\mathbb R, d\lambda)$, and $\partial_x^i\partial_y^j H(x,y,\lambda)\in L_\infty\cap L_2(\mathbb R, d\lambda)$
	\item $|\Xi_{-}(1+H_{-})v(1+H_{+})^{-1}\Xi_{+}^{-1}(x,y,\lambda)-1|_{H^1(\mathbb{R}, d\lambda)}<\infty$. 
	\item $|\Xi_{-}(1+H_{-})v(1+H_{+})^{-1}\Xi_{+}^{-1}(x,y,\lambda)-1|_{L_\infty}\Vert C_\pm\Vert<1$.
	\item $H_{u}$ ($H_{l}$) is strictly upper (lower) triangular.
	\item $H$ is rational in $\lambda\in\mathbb{C}^\pm$, with only simple poles and each corresponding residue is off diagonal,  with only one nonzero entry $\kappa$ and $\partial_{ x}^i\partial_{ y}^j\kappa\in L_\infty(dxdy)$. %Moreover, the non-zero entry tends to $0$ as $|x|, \,|y|\to M$ and $M\to\infty$.
\end{itemize}
\end{lemma}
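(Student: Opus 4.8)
The plan is to adapt the approximation step of Lemma \ref{L:approx} (and of \cite{BC84}, Section~10) to the spectral variable $\lambda$, on the fixed compact parameter region $\max(|x|,|y|)\le M$. Begin from the factorization $v=(1+h_l)^{-1}\chi(1+h_u)$ of Lemma \ref{L:invtrifact} and the diagonal solution $\Xi$ of Lemma \ref{L:invdRH}; since $\Xi$ solves $(\lambda\in\mathbb R,\chi)$ we have $\Xi_+=\Xi_-\chi$, hence $\Xi_-\chi\,\Xi_+^{-1}=1$ on $\mathbb R$. Writing $H_+=H_u|_{\mathbb R}$ and $H_-=H_l|_{\mathbb R}$ for the boundary values of $H$, substitution gives
\[
\Xi_-(1+H_-)\,v\,(1+H_+)^{-1}\Xi_+^{-1}=\Xi_-(1+e_1)\,\chi\,(1+e_2)\,\Xi_+^{-1},
\]
with $1+e_1=(1+H_l)(1+h_l)^{-1}$ and $1+e_2=(1+h_u)(1+H_u)^{-1}$, so this product equals $1+\Xi_-(e_1\chi+\chi e_2+e_1\chi e_2)\Xi_+^{-1}$. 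As $h_u,h_l,H_u,H_l$ are nilpotent, the inverses above are polynomials in $h_l$, resp. $H_u$, and $e_1=(H_l-h_l)(1+h_l)^{-1}$, $e_2=(h_u-H_u)(1+H_u)^{-1}$. Because $H^1(\mathbb R,d\lambda)$ is a Banach algebra embedding into $L_\infty$, and $|h_u|,|h_l|,|\Xi-1|,|\chi-1|$ are uniformly bounded in $H^1\cap L_\infty$ over $\max(|x|,|y|)\le M$ by Lemmas \ref{L:invtrifact}--\ref{L:invdRH}, one gets
\[
\bigl|\Xi_-(1+H_-)v(1+H_+)^{-1}\Xi_+^{-1}-1\bigr|_{L_\infty}\le C\bigl(|h_u-H_u|_{L_\infty}+|h_l-H_l|_{L_\infty}\bigr),
\]
with $C$ uniform over the region, while the left side automatically lies in $H^1(\mathbb R,d\lambda)$ once $H_u,H_l$ do. Thus it suffices to produce rational $H_u,H_l$, strictly upper (resp.\ lower) triangular, lying in $L_\infty\cap H^1(\mathbb R,d\lambda)$ with controlled $(x,y)$-derivatives, approximating $h_u,h_l$ in $L_\infty(\mathbb R,d\lambda)$ uniformly over the region, with the prescribed pole/residue structure.

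For the approximation I would first decompose $h_u=\sum_{r<s}(h_u)_{rs}E_{rs}$ into scalar entries ($E_{rs}$ the matrix units), so the requirement that every residue be off-diagonal with exactly one nonzero entry can be met term by term; likewise $h_l=\sum_{r>s}(h_l)_{rs}E_{rs}$. Each scalar entry $(x,y)\mapsto(h_u)_{rs}(x,y,\cdot)$ is bounded in $L_\infty\cap H^1(\mathbb R,d\lambda)$ uniformly over the compact region and is jointly smooth there, since $v\in\mathfrak S_{c,k}$ is $C^{k-4}$ in $(x,y)$ and, by the positivity (\ref{E:real2'}), the principal minors feeding the factorization of Lemma \ref{L:invtrifact} do not vanish for $\lambda\in\mathbb R$. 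Applying the Poisson-kernel discretization $P_\epsilon(\lambda-\lambda_j)=(\lambda-\lambda_j-i\epsilon)^{-1}-(\lambda-\lambda_j+i\epsilon)^{-1}$ exactly as in the proof of Lemma \ref{L:approx} — using a finite cover of the compact $(x,y)$-region, the continuity in $(x,y)$ of the relevant norms, and a subordinate partition of unity $\{\phi_m\}$ to make the construction uniform — produces an
\[
H_u(x,y,\lambda)=\sum_{r<s}\sum_m\phi_m(x,y)\Bigl(\sum_j c^{(m)}_{rs,j}(x,y)\,P_\epsilon(\lambda-\lambda_j)\Bigr)E_{rs}
\]
which, after an arbitrarily small displacement of the finitely many pole locations $\lambda_j\pm i\epsilon$ to make them mutually distinct, is $\lambda$-rational, strictly upper triangular, has simple poles off $\mathbb R$, and has each residue of the form $\kappa E_{rs}$ — off-diagonal, one nonzero entry $\kappa=\pm\phi_m(x,y)c^{(m)}_{rs,j}(x,y)$ smooth and compactly supported, so $\partial_x^i\partial_y^j\kappa\in L_\infty(dxdy)$ — and satisfies $|h_u-H_u|_{L_\infty(\mathbb R,d\lambda)}$ as small as we wish, uniformly over $\max(|x|,|y|)\le M$. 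The same construction yields a strictly lower triangular $H_l$ with poles off $\mathbb R$.

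Choosing the approximation errors small relative to the constant $C$ above and to $\Vert C_\pm\Vert$ then gives the two inequalities in the statement. The remaining bullets are immediate: on $\mathbb R$, $H$ and each $\partial_x^i\partial_y^j H$ are finite sums of $(\partial_x^i\partial_y^j\phi_m)(x,y)$ times fixed rational functions of $\lambda$ with poles off $\mathbb R$, hence lie in $L_\infty\cap H^1(\mathbb R,d\lambda)$, resp.\ $L_\infty\cap L_2(\mathbb R,d\lambda)$, while the triangular and pole structure is built into the construction. The main obstacle is exactly this construction: to obtain a single function $H$ of $(x,y,\lambda)$ that approximates $h_u,h_l$ uniformly over the compact $(x,y)$-region while at once depending smoothly and with compact support on $(x,y)$ — so that $\partial_x^i\partial_y^j H$ is controlled in $L_2(d\lambda)$ — and retaining the rigid algebraic form (strictly triangular, simple poles at fixed locations, single-entry off-diagonal residues) required by the ensuing linear-system argument (the analogue of Lemmas \ref{L:FORHP}--\ref{L:linearsystem}). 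The entrywise decomposition reconciles the Poisson-kernel approximation with the residue constraint, and the cover/partition of unity reconciles smooth $(x,y)$-dependence with uniformity; everything else is the Banach-algebra bookkeeping indicated above, and should present no difficulty beyond what was handled in Section \ref{S:ENS}.
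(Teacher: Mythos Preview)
Your proposal is correct and follows essentially the same route as the paper: the paper's own proof simply says to combine the $H^1$ bound (\ref{E:invdrh0}) on $\Xi$ with Lemmas \ref{L:invtrifact}--\ref{L:invdRH} and repeat the Poisson-kernel approximation of Lemma \ref{L:approx}, now in the $\lambda$-variable with $(x,y)$ as parameter; your write-up is a careful unpacking of exactly that.

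One minor simplification: the partition of unity in $(x,y)$ is not really needed. The direct analogue of the construction in Lemma \ref{L:approx} is to take the coefficients to be the evaluations $(h_u)_{rs}(x,y,\lambda_j)$ themselves, i.e.\ $H_u(x,y,\lambda)=\sum_{r<s}\sum_j (h_u)_{rs}(x,y,\lambda_j)\,P_\epsilon(\lambda-\lambda_j)\,E_{rs}$; these already depend smoothly on $(x,y)$ (by Lemma \ref{L:invtrifact}), so $\partial_x^i\partial_y^j\kappa\in L_\infty$ is automatic, and uniformity over the compact region $\max(|x|,|y|)\le M$ follows from the continuity of $(x,y)\mapsto h_u(x,y,\cdot)$ in $H^1(d\lambda)\hookrightarrow L_\infty(d\lambda)$ together with compactness --- exactly parallel to how a single $N$ was chosen for all $\lambda$ in the proof of Lemma \ref{L:approx}. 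Your partition-of-unity version is also valid, just slightly heavier; the entrywise decomposition and the pole-displacement step you describe are the right way to secure the single-entry residue structure in either variant.
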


\begin{proof} 
Combining (\ref{E:invdrh0}) with the results of Lemma \ref{L:invtrifact}, \ref{L:invdRH}, and the same method as in the proof of Lemma \ref{L:approx}, the lemma can be proved. %We only remark that by using (\ref{E:invana1'}), the non-zero entry tends to $0$ as $|x|, \,|y|\to\infty$.
\end{proof}  

\begin{lemma}\label{L:invRHsmdata}{\emph{ (A Riemann-Hilbert problem with small data)}} 
For $\max(|x|, |y|)< M$, the Riemann-Hilbert problem $(\lambda\in\mathbb{R}, \Xi_{-}(1+H_{-})v(1+H_{+})^{-1}\Xi_{+}^{-1})$ admits a solution $\varphi_{s}(x,y,\lambda)$. 
Moreover, 
\[\varphi_{s}-1,\, \partial_x\varphi_{s},\,\partial_y\varphi_{s} \textit{ are uniformly bounded in $L_2(\mathbb R, d\lambda)$,}\]
and for each fixed $\lambda\notin\mathbb R$,
\begin{gather*}
\partial_{ x}^i\partial_{ y}^j\left(\varphi_{s}-1\right)  \in L_\infty(dxdy).\label{E:invdrh5'}%\\
\end{gather*}
\end{lemma}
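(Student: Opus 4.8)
The plan is to follow the proof of Lemma \ref{L:RHsmdata} and the direct-problem scheme: first invoke Lemma \ref{L:SMrm} to solve the conjugated Riemann--Hilbert problem, then obtain the $x$- and $y$-estimates by differentiating the singular integral equation. Write $\tilde v(x,y,\lambda)=\Xi_{-}(1+H_{-})\,v\,(1+H_{+})^{-1}\Xi_{+}^{-1}(x,y,\lambda)$ for the conjugated data. By Lemma \ref{L:invapprox}, $\tilde v-1\in H^1(\mathbb R,d\lambda)$ (so in particular $\tilde v-1\in L_2(d\lambda)$) and $|\tilde v-1|_{L_\infty}\Vert\mathcal C_\pm\Vert<1$; hence Lemma \ref{L:SMrm} furnishes a unique $\varphi_s$ solving the Riemann--Hilbert problem $(\lambda\in\mathbb R,\tilde v)$ with $\varphi_s-1\in L_\infty(d\lambda)\cap L_2(d\lambda)$, the size of $\varphi_s-1$ being bounded in terms of $|\tilde v-1|_{L_2(d\lambda)}$ and of the inverse of the Beals--Coifman operator $w\mapsto w-\mathcal C_-\bigl(w(\tilde v-1)\bigr)$ on $L_2(d\lambda)$. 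Since every norm of $\tilde v-1$ used here depends continuously on $(x,y)$ by Lemmas \ref{L:invtrifact}, \ref{L:invdRH} and \ref{L:invapprox}, these bounds are uniform on the bounded region $\max(|x|,|y|)<M$ (whose closure is compact).

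For the first-order estimates, differentiate the identity $\varphi_s=1+\mathcal C\bigl((\varphi_s)_-(\tilde v-1)\bigr)$ with respect to $x$ (and symmetrically $y$), so that
\[
(\partial_x\varphi_s)_-=\mathcal C_-\bigl((\partial_x\varphi_s)_-(\tilde v-1)\bigr)+\mathcal C_-\bigl((\varphi_s)_-\,\partial_x\tilde v\bigr),
\]
i.e. the Beals--Coifman operator above, applied to $(\partial_x\varphi_s)_-$, equals $\mathcal C_-((\varphi_s)_-\partial_x\tilde v)$. That operator is invertible on $L_2(d\lambda)$ by the small-data bound, and the inhomogeneous term lies in $L_2(d\lambda)$: $(\varphi_s)_-\in L_\infty(d\lambda)$ by Lemma \ref{L:SMrm}, while $\partial_x\tilde v\in L_2(d\lambda)$ because the factor carrying the $x$-derivative is one of $\partial_x\Xi_\pm$, $\partial_xH_\pm$, $\partial_x v$, each in $L_2(d\lambda)$ by (\ref{E:invdrh1}), Lemma \ref{L:invapprox} and (\ref{E:0anal2'}) respectively, multiplied by the remaining factors, which are bounded in $L_\infty(d\lambda)$. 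Hence $\partial_x\varphi_s\in L_2(d\lambda)$ with norm again continuous in $(x,y)$; the same holds for $\partial_y\varphi_s$, and compactness yields the uniform bound (\ref{E:invbdry''}) on $\max(|x|,|y|)<M$.

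For the pointwise statement at fixed $\lambda\notin\mathbb R$, use
\[
\varphi_s(x,y,\lambda)-1=\frac1{2\pi i}\int_{\mathbb R}\frac{(\varphi_s)_-(\tilde v-1)(x,y,\zeta)}{\zeta-\lambda}\,d\zeta .
\]
Since $\lambda\notin\mathbb R$, the kernel $(\zeta-\lambda)^{-1}$ is smooth and lies in $L_2(d\zeta)\cap L_\infty(d\zeta)$, so differentiating under the integral in $x,y$ and applying Hölder's inequality reduces matters to $L_1(d\zeta)$-control of $\partial_x^i\partial_y^j\bigl[(\varphi_s)_-(\tilde v-1)\bigr]$; by Leibniz this in turn requires $L_2(d\zeta)$-control of $\partial_x^a\partial_y^b(\varphi_s)_-$ for $a+b\le i+j$. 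The latter is obtained by iterating the argument of the previous paragraph: one differentiates the integral equation $a+b$ times, inverting the Beals--Coifman operator at each stage, the new inhomogeneity being a finite sum of products of strictly lower-order $x,y$-derivatives of $(\varphi_s)_-$ and of $\tilde v$. The resulting bounds are finite and continuous in $(x,y)$, hence locally bounded, giving $\partial_x^i\partial_y^j(\varphi_s-1)\in L_\infty(dxdy)$ for $\max(|x|,|y|)<M$ and $i+j\le k-4$, as required for Theorem \ref{T:invexistence}.

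The main obstacle is this higher-order bootstrap. Each additional $x$- or $y$-differentiation of the integral equation produces terms such as $\mathcal C_-\bigl(\partial_x^a(\varphi_s)_-\,\partial_x^b\tilde v\bigr)$ with $a,b\ge1$, in which the two factors are individually controlled only in $L_2(d\lambda)$, so their product lies merely in $L_1(d\lambda)$ and $\mathcal C_\pm$ is not bounded $L_1\to L_2$. The resolution is that the $L_\infty(dxdy)$ conclusion is never evaluated on $\mathbb R$: at fixed $\lambda\notin\mathbb R$ the Cauchy kernel is smooth, so $L_1(d\zeta)$-bounds on the integrand suffice and the iteration closes without any $L_2$-to-$L_2$ gain. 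One must also track carefully which factors of $\tilde v$, in particular $\Xi_\pm$, which is controlled only in $L_2(d\lambda)$ on the real axis, carry only $L_2$ rather than $L_\infty$ bounds, and check that every norm used depends continuously on $(x,y)$, so that the pointwise-in-$(x,y)$ estimates upgrade to the uniform estimates on $\max(|x|,|y|)<M$ claimed in the lemma.
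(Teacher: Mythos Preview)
Your proposal is correct and follows essentially the same approach as the paper: the paper's own proof is a single sentence invoking Lemma~\ref{L:SMrm}, Lemmas~\ref{L:invtrifact}--\ref{L:invapprox}, boundedness of the Cauchy operator, and H\"older's inequality, and you have expanded precisely those ingredients --- existence from Lemma~\ref{L:SMrm} via the small-$L_\infty$ bound of Lemma~\ref{L:invapprox}, first-order $L_2(d\lambda)$-estimates by differentiating the singular integral equation and inverting the Beals--Coifman operator, and the off-axis $L_\infty(dxdy)$-bounds via H\"older against the smooth kernel $(\zeta-\lambda)^{-1}$. Your explicit discussion of the higher-order bootstrap (in particular the $L_1$-vs-$L_2$ issue for products like $\partial_x^a(\varphi_s)_-\,\partial_x^b\tilde v$ and its resolution by working only at $\lambda\notin\mathbb R$) goes beyond what the paper records but is consistent with the cited tools.
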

\begin{proof} The existence of the solution and its properties can be proved by Lemma \ref{L:SMrm}, \ref{L:invtrifact}, \ref{L:invdRH}, \ref{L:invapprox}, the property of the Cauchy operator $\mathcal C$ and Holder inequality.
\end{proof}

\begin{lemma}\label{L:invFORHP}{\emph{ (Factorization of the Riemann-Hilbert problem)}}   
 Suppose $\Psi(x,y,\lambda)$ satisfies Theorem \ref{T:invexistence}.
Then for $\max(|x|, |y|)< M$, there exists a unique function $u$,
\begin{equation}
u(x,y,\lambda)=1+\sum_{k=1}^{N} (\lambda-\lambda_k)^{-1}a_k(x,y),\label{E:invrational1}
\end{equation}
and
\begin{eqnarray}
&&\textit{$\partial_{ x}^i\partial_{ y}^ja_k\in L_\infty(dxdy)$,}
 %$a_k(x,y)\to 0$ as $|x|,\,|y|\to \infty$,}
 \label{E:invrational3}\\
&&\Psi(x,y,\lambda)=u\varphi_{s}\Xi (1+H).\label{invrational}
\end{eqnarray}

Conversely, if for $\max(|x|, |y|)< M$, $\exists u(x,y,\lambda)$ satisfying (\ref{E:invrational1}), (\ref{E:invrational3}) and
\begin{equation}
\textit{$u\varphi_{s}\Xi (1+H)$ is holomorphic for $ \lambda\in \mathbb{C}^\pm$,}\label{E:invrational4}
\end{equation}
Define $\Psi=u\varphi_{s}\Xi (1+H)$ for $\max(|x|, |y|)< M$. 
Hence $\Psi$ satisfies Theorem \ref{T:invexistence}.
\end{lemma}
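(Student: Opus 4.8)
The plan is to mirror, step by step, the factorization argument of Lemma \ref{L:FORHP} from the direct problem, but now with the Cauchy-operator estimates in the $\lambda$-variable replaced by the analogous estimates in the $x,y$-variables that were established in Lemmas \ref{L:invtrifact}--\ref{L:invRHsmdata}. For the forward direction, suppose $\Psi$ satisfies Theorem \ref{T:invexistence}. By Lemma \ref{L:SMdet} (applied in the $\lambda$-variable, using that $\det v\equiv 1$ from (\ref{E:real1'})) one has $\det\,\Xi=\det(1+H)=\det\,\varphi_s=1$ wherever these are regular, so each factor is invertible. Both $\Psi(x,y,\cdot)$ and $\varphi_s\Xi(1+H)(x,y,\cdot)$ are sectionally holomorphic in $\lambda\in\mathbb{C}^\pm$, share the same multiplicative jump $v$ across $\lambda\in\mathbb{R}$ (by construction of $\Xi$, $H$, $\varphi_s$ via Lemma \ref{L:invtrifact} and the small-data Riemann--Hilbert problem of Lemma \ref{L:invRHsmdata}), and both tend to $1$ as $|\lambda|\to\infty$. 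Hence the quotient
\[
u(x,y,\lambda)=\Psi\left[\varphi_s\Xi(1+H)\right]^{-1}(x,y,\lambda)
\]
is $\lambda$-rational with poles only at the finite simple poles of $H$ (the points $\lambda_k$ from Lemma \ref{L:invapprox}), which gives (\ref{E:invrational1}) and (\ref{invrational}); uniqueness of $u$ follows from uniqueness of the solution to the Riemann--Hilbert problem with data $v$ (Theorem \ref{T:invexistence} as hypothesis here) together with the explicit rational form. The bound (\ref{E:invrational3}) is then read off: $a_k$ is built from the residues of $H$ and the Taylor coefficients of $\varphi_s\Xi$ at $\lambda_k$, all of which lie in $L_\infty(dxdy)$ with $x,y$-derivatives in $L_\infty(dxdy)$ by the last bullet of Lemma \ref{L:invapprox}, Lemma \ref{L:invdRH} (estimate (\ref{E:invdrh5})), and Lemma \ref{L:invRHsmdata}; differentiability in $x,y$ is propagated through the linear system determining $a_k$ exactly as in part (b) of the proof of Lemma \ref{L:linearsystem}.

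For the converse, suppose $u$ of the form (\ref{E:invrational1}) satisfying (\ref{E:invrational3}) and (\ref{E:invrational4}) is given, and set $\Psi=u\varphi_s\Xi(1+H)$. Holomorphy of $\Psi$ in $\lambda\in\mathbb{C}^\pm$ is precisely (\ref{E:invrational4}); the multiplicative jump of $\Psi$ across $\mathbb{R}$ equals the jump of $\varphi_s\Xi(1+H)$ (since $u$ is continuous across $\mathbb{R}$ away from its real poles, but (\ref{E:invrational4}) forces those poles to be cancelled), which by the factorization $v=(1+h_l)^{-1}\chi(1+h_u)$ of Lemma \ref{L:invtrifact} and the definitions of $\Xi,H,\varphi_s$ is exactly $v$; and $\Psi\to 1$ as $|\lambda|\to\infty$ since each factor does. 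Thus $\Psi$ solves the Riemann--Hilbert problem $(\lambda\in\mathbb{R},v(x,y,\lambda))$ for $\max(|x|,|y|)<M$. Estimate (\ref{E:invbdry''}) on $\Psi-1,\partial_x\Psi,\partial_y\Psi$ in $L_2(\mathbb{R},d\lambda)$ follows by expanding the product $u\varphi_s\Xi(1+H)-1$ and applying the Leibniz rule, using (\ref{E:invrational3}) together with (\ref{E:invdrh1}) from Lemma \ref{L:invdRH}, the $L_2$-estimates on $\varphi_s$ from Lemma \ref{L:invRHsmdata}, and the $H^1$/$L_2$ bounds on $H$ from Lemma \ref{L:invapprox}; the rational factor $u$ contributes bounded multipliers in $\lambda$ away from $\mathbb{R}$ and the real poles have been removed by (\ref{E:invrational4}). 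The pointwise bound (\ref{E:invbdry'}), $\partial_x^i\partial_y^j\Psi\in L_\infty(dxdy)$ for fixed $\lambda\notin\mathbb{R}$, likewise follows from the $L_\infty(dxdy)$ control of the $x,y$-derivatives of each factor ((\ref{E:invrational3}), (\ref{E:invdrh5}), Lemma \ref{L:invapprox}, Lemma \ref{L:invRHsmdata}) via Leibniz, and the decay to $0$ as $x$ or $y\to\infty$ from the decay statements in Lemmas \ref{L:invtrifact}--\ref{L:invRHsmdata} combined with (\ref{E:invana1'''}) for $|x|$ or $|y|>M-1$.

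The main obstacle is the forward direction's claim that the quotient $u$ is genuinely \emph{rational} in $\lambda$ with exactly the simple poles of $H$ and no others: one must verify that the only possible singularities of $\Psi\left[\varphi_s\Xi(1+H)\right]^{-1}$ in $\mathbb{C}^\pm$ come from the (explicit, $\lambda$-independent-location) poles of $H$ and the zeros of $\det(1+H)$, and that at each such $\lambda_k$ the singularity is at worst simple — this rests on the structure (off-diagonal residue with a single nonzero entry, hence nilpotent of order two, so $(d_kn_k^{-1})^2=0$ as in the proof of Lemma \ref{L:linearsystem}) guaranteed by the last bullet of Lemma \ref{L:invapprox}, and on the fact that $\varphi_s$ and $\Xi$ are holomorphic and invertible in $\mathbb{C}^\pm$ for $\max(|x|,|y|)<M$. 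Once this local analysis at the $\lambda_k$ is in hand, together with the vanishing winding number established in Lemma \ref{L:invdRH} (which rules out spurious zeros of the scalar factors), the argument closes. A secondary technical point is ensuring that the $x,y$-dependence of the residues $a_k(x,y)$ inherits the required $L_\infty$ and decay properties uniformly; this is handled exactly as in Lemma \ref{L:linearsystem} by writing $a_k$ as rational expressions in quantities already controlled.
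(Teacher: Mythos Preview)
Your proposal is correct and follows essentially the same route as the paper: the paper does not spell out a proof of Lemma \ref{L:invFORHP} but treats it as the direct $\lambda$-variable analogue of Lemma \ref{L:FORHP}, whose proof proceeds exactly by forming the quotient $\Psi[\varphi_s\Xi(1+H)]^{-1}$, observing it is sectionally holomorphic with no jump across $\mathbb{R}$ and tends to $1$ at infinity, and reading off the rational form from the pole structure of $H$ (with simplicity coming from the nilpotency $(h_jn_j^{-1})^2=0$, as you correctly identify). Two small cosmetic points: the invocation of Lemma \ref{L:SMdet} should really be a direct Liouville argument in $\lambda$ (as in Lemma \ref{L:invsym}) since Lemma \ref{L:SMdet} concerns the Lax equation rather than a Riemann--Hilbert problem, and the poles $\lambda_k$ of $H$ lie in $\mathbb{C}^\pm$, not on $\mathbb{R}$, so your parenthetical about ``real poles'' is a slip.
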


We then use Lemma \ref{L:invFORHP} to prove Theorem \ref{T:invexistence}:

{\emph{ (a) A linear system for $u(x,y,\lambda)$:}}  

Let 
\begin{equation}
u(x,y,\lambda)=1+\sum_{k=1}^p (\lambda-\lambda_k)^{-1}a_k.\label{E:invform}
\end{equation}
Then at $\lambda_j$
\begin{equation}
u(x,y,\lambda)=(\lambda-\lambda_j)^{-1}a_j+b_j+O(|\lambda-\lambda_j|),\label{E:invform'}
\end{equation}
with
\begin{equation}
 b_j=1+\sum_{k\ne j}(\lambda_j-\lambda_k)^{-1}a_k. \label{invbj}
\end{equation}

Since $\lambda_j$ is a simple pole of $H$ and $\varphi_s\Xi$ is regular at $\lambda_j$. We can write
\begin{eqnarray}
&&1+H(x,y,\lambda)=(\lambda-\lambda_j)^{-1}h_j+n_j+O(|\lambda-\lambda_j|),\label{E:inv1+R}\\
&&\varphi_{s}\Xi (x,y,\lambda)=\alpha_j+\beta_j(\lambda-\lambda_j)+O(|\lambda-\lambda_j|^2).\label{E:invfd}
\end{eqnarray}

We then try to find $a_k$, such that $u(x,y,\lambda)\varphi_{s}(x,y,\lambda)\Xi(x,y, \lambda)(1+H(x,y,\lambda)) $  
is holomorphic at $\lambda_j$. This yields the linear system for $a_k$:
\begin{eqnarray}
&&a_j\alpha_jh_j=0,\qquad\qquad\qquad\qquad\,\,1\le j\le p,\label{invbcd1}\\
&&b_j\alpha_jh_j+a_j(\beta_jh_j+\alpha_jn_j)=0, \,1\le j\le p.\label{invbcd2}
\end{eqnarray}

{\emph{ (b) Solving the linear system (\ref{invbcd1})-(\ref{invbcd2}):}}  

Note by Lemma \ref{L:invapprox}, one can conclude 
\begin{equation}
(h_jn_j^{-1})^2=0. \label{E:n2}
\end{equation}
Therefore, 
it can be justified that (\ref{invbcd1}) is a consequence of (\ref{invbcd2}). Note the off-diagonal form of $h_l$ ($h_u$) in Lemma \ref{L:invtrifact} is crucial here.

Inserting (\ref{invbj}) into (\ref{invbcd2}), we obtain a system of $pn^2$ linear equations in $pn^2$ unknowns (the entries of $a_k$ with coefficients in entries of $h_j(x,y)$, $n_j(x,y)$, $\alpha_j(x,y)$, $\beta_j(x,y)$. Therefore, we conclude the existence problem of $\Psi$ is Fredholm.

{\emph{ (c) Solving the Riemann-Hilbert problem:}} 

Using the Fredholm alternality, we need only to show that: for any fixed $x,\,y$ the homogeneous problem (with limit $0$ rather than $1$ as $\lambda\to \infty$) has only the trivial solution. Suppose $f(x,y,\lambda)$ solves this homogeneous problem. Consider $g(x,y,\lambda)=f(x,y,\lambda)f(x,y,\bar\lambda)^*$. Since $f(x,y,\cdot)\in L_2(\mathbb R, d\lambda)$, we have $g(\lambda)\in L_1(\mathbb{R},d\lambda)$ and is holomorphic in $\mathbb C^\pm$. Thus the Cauchy's theorem implies
\[0=\int_\mathbb{R}g_+(s)ds=\int_\mathbb{R}f_+(s)f_-(s)^*ds=\int_\mathbb{R}f_-(s)v(s)f_-(s)^*ds.\]
Because of (\ref{E:real2'}) we conclude $f_-\equiv 0$ on $\mathbb{R}$, so also $f_+\equiv 0$ and $f\equiv 0$. 

Hence we prove the solvability of the Riemann-Hilber problem in  Theorem \ref{T:invexistence}. 
%Note the condition  (\ref{E:invbdry'}) follows from Lemma \ref{L:invtrifact}-\ref{L:invRHsmdata}, the rational form (\ref{E:invform}) and (\ref{invrational}). Hence (\ref{E:invbdry}) can be proved as well.
\end{proof}

\begin{lemma}\label{L:invsym}
For the solution $\Psi$ of the Riemann-Hilbert problem obtained in Theorem \ref{T:invexistence}, we have
\begin{eqnarray}
&&\det \Psi(x,y,\lambda)\equiv 1,\label{E:invsym0}\\
&&\Psi(x,y,t,\lambda)\Psi(x,y,t,\bar\lambda)^*\equiv 1.\label{E:invsym1}
\end{eqnarray}
\end{lemma}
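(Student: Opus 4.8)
The plan is to run, in the present Riemann--Hilbert setting, the arguments of Lemmas \ref{L:SMdet} and \ref{L:SMreality}: the Lax equation used there is now replaced by the jump relation $\Psi_+=\Psi_-v$ of Definition \ref{D:SMrph} together with the algebraic constraints (\ref{E:real1'}), (\ref{E:real2'}) on $v$ and the uniqueness statement of Theorem \ref{T:invexistence}. Throughout, fix $(x,y)$; recall that $\Psi(x,y,\cdot)$ is holomorphic in $\mathbb{C}^{\pm}$, that $\Psi_+=\Psi_-v$ on $\mathbb{R}$, that $\Psi\to 1$ as $|\lambda|\to\infty$, and that $\Psi-1=\mathcal{C}\bigl(\Psi_-(v-1)\bigr)$ with $\Psi_-(v-1)\in L_1\cap L_2(d\lambda)$ --- indeed $\Psi_-(v-1)=(\Psi_--1)(v-1)+(v-1)$, the first summand being in $L_1$ by Cauchy--Schwarz and $\Psi_--1\in L_2$, the second in $L_1\cap L_2$ by (\ref{E:0anal2'}). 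In particular $\Psi-1$ tends to $0$ as $|\lambda|\to\infty$, uniformly on each region $\{\,|\mathrm{Im}\,\lambda|\ge\delta\,\}$, $\delta>0$.

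For (\ref{E:invsym0}): taking determinants in $\Psi_+=\Psi_-v$ and using $\det v\equiv1$ from (\ref{E:real1'}) gives $\det\Psi_+=\det\Psi_-$ on $\mathbb{R}$, so the scalar function $\det\Psi(x,y,\cdot)$, holomorphic off $\mathbb{R}$ and with no jump across $\mathbb{R}$, extends to an entire function of $\lambda$ that tends to $1$ at infinity; Liouville's theorem then forces $\det\Psi\equiv1$, exactly as at the end of the proof of Lemma \ref{L:SMdet}.

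For (\ref{E:invsym1}): using (\ref{E:invsym0}) set $\tilde\Psi(x,y,\lambda):=\bigl(\Psi(x,y,\bar\lambda)^{*}\bigr)^{-1}=\bigl(\mathrm{adj}\,\Psi(x,y,\bar\lambda)\bigr)^{*}$. Since $\lambda\mapsto\overline{\Psi(x,y,\bar\lambda)}$ is holomorphic in $\mathbb{C}^{\pm}$ (Schwarz reflection) and the entries of the adjugate are polynomials in those of $\Psi$, the matrix $\tilde\Psi(x,y,\cdot)$ is holomorphic in $\mathbb{C}^{\pm}$ with non--tangential boundary values $\tilde\Psi_{\pm}=\bigl((\Psi_{\mp})^{*}\bigr)^{-1}$, so across $\mathbb{R}$
\[
\tilde\Psi_-^{-1}\tilde\Psi_+=(\Psi_+)^{*}\bigl((\Psi_-)^{-1}\bigr)^{*}=\bigl(\Psi_-^{-1}\Psi_+\bigr)^{*}=v^{*}=v,
\]
the last equality being (\ref{E:real2'}). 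Also $\tilde\Psi\to1$ as $|\lambda|\to\infty$ since $\Psi\to1$; and, writing $\tilde\Psi-1$, $\partial_x\tilde\Psi$, $\partial_y\tilde\Psi$ via the cofactor expansion of $\Psi^{-1}$ (each term carrying a factor taken from $\Psi-1$, $\partial_x\Psi$, $\partial_y\Psi$, the other factors being entries of $\Psi$ and $\Psi^{-1}$, bounded for $\lambda$ off $\mathbb{R}$), the bounds (\ref{E:invbdry''}) show that $\tilde\Psi-1,\partial_x\tilde\Psi,\partial_y\tilde\Psi$ are uniformly bounded in $L_2(\mathbb{R},d\lambda)$. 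Hence $\tilde\Psi$ is a solution of the Riemann--Hilbert problem $(\lambda\in\mathbb{R},v)$ in the class of Theorem \ref{T:invexistence}, and uniqueness there gives $\tilde\Psi=\Psi$, i.e.\ $\Psi(x,y,\lambda)\Psi(x,y,\bar\lambda)^{*}\equiv1$.

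The point needing genuine care --- everything else being a transcription of Lemmas \ref{L:SMdet}, \ref{L:SMreality}, \ref{L:symmetrydp} --- is the verification that $\tilde\Psi$ (and, for (\ref{E:invsym0}), $\det\Psi$) actually lies in the class in which Theorem \ref{T:invexistence} (resp.\ the scalar Riemann--Hilbert problem) is uniquely solvable: one must check both that $\tilde\Psi$ solves the integral equation of Definition \ref{D:SMrph} and that its $x$-- and $y$--derivatives stay uniformly bounded in $L_2(d\lambda)$. Both follow from $\det\Psi\equiv1$ through the adjugate representation of $\Psi^{-1}$, combined with (\ref{E:invbdry''}) and with the pointwise bounds on $\Psi$ away from $\mathbb{R}$ recorded in the proof of Theorem \ref{T:invexistence}.
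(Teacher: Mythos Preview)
Your proof is correct and follows essentially the same route as the paper: for (\ref{E:invsym0}) both you and the paper observe that $\det v\equiv1$ removes the jump of $\det\Psi$ and invoke Liouville, and for (\ref{E:invsym1}) both set $\tilde\Psi(\lambda)=(\Psi(\bar\lambda)^*)^{-1}$, compute that $\tilde\Psi$ has the same jump $v$ (using $v=v^*$), and conclude $\tilde\Psi=\Psi$ by uniqueness. The only difference is one of bookkeeping: you take extra care to place $\tilde\Psi$ in the function class of Theorem \ref{T:invexistence} (checking the $L_2(d\lambda)$ bounds on $\tilde\Psi-1$, $\partial_x\tilde\Psi$, $\partial_y\tilde\Psi$ via the adjugate), whereas the paper simply appeals to Liouville's theorem directly for the uniqueness step, which bypasses the need to verify those derivative bounds.
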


\begin{proof} 
By (\ref{E:real1'}), $\det\Psi(x,y,\cdot)$ has no jump across the real line. So applying the Liouville's theorem, (\ref{E:invsym0}) follows from the the holomorphic property in $\mathbb C^\pm$ and $\Psi\to 1$ as $|\lambda|\to\infty$. Hence 
 $\Psi(x,y,\lambda)$ is invertible for all $\lambda\in\mathbb C$, limits $(\Psi(x,y,z,\bar\lambda)^*)^{-1}_\pm$ for $\lambda\in \mathbb R$ exist, and $(\Psi(x,y,z,\bar\lambda)^*)^{-1}$ fulfills the boundary condition as $|\lambda|\to \infty$.

Secondly, by (\ref{E:real2'}) and $\Psi_+=\Psi_-v$, we obtain
\begin{equation}
\begin{array}{lll}
\left(\Psi(x,y,\bar\lambda)^*\right)_+ &=\Psi_-(x,y,\bar\lambda)^*
&=\left(\Psi_+(x,y,\bar\lambda)v^{-1}\right)^*\\
{}&=v^{-1}\Psi_+(x,y,\bar\lambda)^*&=v^{-1}\left(\Psi(x,y,\bar\lambda)^*\right)_-.
\end{array}\label{E:invsym3'}
\end{equation}
So 
 \[\left(\left(\Psi(x,y,\bar\lambda)^*\right)_+\right)^{-1}=\left(\left(\Psi(x,y,\bar\lambda)^*\right)_-\right)^{-1}v.\]  
Therefore $\left(\Psi(x,y,\bar\lambda)^*\right)^{-1}$ satisfies the same Riemann-Hilbert problem in Theorem \ref{T:invexistence}. Consequently $\Psi(x,y,\lambda)=\left(\Psi(x,y,\bar\lambda)^*\right)^{-1}$  by the uniqueness property of Theorem \ref{T:invexistence} (the Liouville's theorem) and (\ref{E:invsym1}) is established.
\end{proof}

We conclude this section by \textbf{the proof of Theorem \ref{T:invcd}} and the definition of inverse scattering transformation.

\begin{proof}
By (\ref{E:invbdry}), the boundary condition  (\ref{y-bdry}) is satisfied. Besides, the Cauchy integral formula, and Theorem \ref{T:invexistence} imply
\begin{equation}
\Psi(x,y,\lambda)= I+\mathcal {C}\Psi_-(v-1).\label{E:invcauchy}
\end{equation}
For fixed $x,\,y\in\mathbb R$, applying $\mathcal L_\lambda=\partial_y-\lambda\partial_x$ to (\ref{E:invcauchy}) and using  (\ref{E:invbdry''}), (\ref{E:0anal2'}), we obtain
\begin{eqnarray}
\mathcal L_\lambda\Psi
 &=& \mathcal L_\lambda \mathcal{C}\Psi_-(v-1)\nonumber\\
&=& \mathcal{C}(\mathcal L_\zeta\Psi_-)(v-1)+\left[\mathcal L_\lambda,\mathcal{C}\right]\Psi_-(v-1)\nonumber\\
&=&\partial_x\left(\frac 1{2\pi i}\int_\mathbb{R}\Psi_-(x,y,\zeta)(v(x+\zeta y,\zeta)-1)d\zeta\right)\nonumber\\
&& +\mathcal{C}(\mathcal L_\zeta\Psi_-)(v-1).\nonumber\\
& =&\partial_xQ(x,y)+ \mathcal{C}(\left[\mathcal L_\zeta\Psi\right]_-)(v-1)\label{E:invcauchy'}
\end{eqnarray}
with 
$Q(x,y)$ given by (\ref{E:invq}). 
Hence comparing (\ref{E:invcauchy}) and (\ref{E:invcauchy'}) and using the uniqueness result of Theorem \ref{T:invexistence}, we obtain (\ref{E:Lax1}).

Besides,  (\ref{E:0anal2'}), (\ref{E:invbdry''}), %(\ref{E:invbdry'}),  
(\ref{E:invq}), and Holder inequality show that $Q$, $\partial_x Q$, and $\partial_y Q\in L_\infty $. Furthermore, by (\ref{E:Lax1}),  (\ref{E:invbdry'}), (\ref{E:invsym0}), and the $\lambda$-independence of $Q$, we derive  $\partial_{ x}^i\partial_{ y}^j Q\in L_\infty$ and  $\partial_{ x}^i\partial_{ y}^j Q$, $\partial_{ y} Q$, $Q\to 0$ as $x$ or $y\to\infty$, for $i+j\le k-4$, $i>0$.

Finally, by (\ref{E:invsym1}) and (\ref{E:Lax1}), we have
\begin{eqnarray*}
&&\left(\partial_x Q\right){\Psi(x,y,t,\overline\lambda)^*}^{-1}\\
=&&(\partial_y-\lambda\partial_x){\Psi(x,y,t,\overline\lambda)^*}^{-1}\\
=&&-{\Psi(x,y,t,\overline\lambda)^*}^{-1}\left((\partial_y-\lambda\partial_x)\Psi(x,y,t,\overline\lambda)^*\right){\Psi(x,y,t,\overline\lambda)^*}^{-1}\\
=&&-{\Psi(x,y,t,\overline\lambda)^*}^{-1}\left(\overline{(\partial_y-\overline\lambda\partial_x)\Psi(x,y,t,\overline\lambda)}^T\right){\Psi(x,y,t,\overline\lambda)^*}^{-1}\\
=&&-{\Psi(x,y,t,\overline\lambda)^*}^{-1}(\left(\partial_x Q\right)\Psi(x,y,t,\overline\lambda))^*{\Psi(x,y,t,\overline\lambda)^*}^{-1}\\
=&&-\left(\partial_x Q\right)^*{\Psi(x,y,t,\overline\lambda)^*}^{-1}.
\end{eqnarray*}Thus $\partial_x Q(x,y)\in su(n)$.
\end{proof}

\begin{definition}\label{D:IST}
For a function $v\in \mathfrak{S}_c$, we define the inverse scattering transformation $\mathcal S_c^{-1}$ on $v$ by $S_c^{-1}(v)=Q$, where $Q$ is obtained by Theorem \ref{T:invexistence}, and \ref{T:invcd}.
\end{definition}

%%%%%%%%%%%%%%%%%%%%%%%%%%%%%%%%%%%%%%%%%%%%%%%%%%%%%%%%%%%%%%%%%%%%%%%%%%%%%%%%%%%%%%%%%%%%%%%%%%%%%%%%%%%%%%%%%%%%%%%%%%%%%%
\section{The Cauchy problem: Continuous scattering data} \label{S:cauchy}
%%%%%%%%%%%%%%%%%%%%%%%%%%%%%%%%%%%%%%%%%%%%%%%%%%%%%%%%%%%%%%%%%%%%%%%%%%%%%%%%%%%%%%%%%%%%%%%%%%%%%%%%%%%%%%%%%%%%%%%%%%%%%%%
\textbf{We prove Theorem \ref{T:cauchy} in this section.}
\begin{proof} 
We can apply Theorem \ref{T:LNexistence} to find the eigenfunction $\Psi(x,y,0,\lambda)$. By assumption, and Theorem \ref
{T:CSDsum},  $\mathcal S_c(Q_0)\in \mathfrak{S}_{c,k}$.  

Now let us define $\textbf{v(t)}$ by
\begin{equation}\begin{array}{l}
\textbf{v}(t)=\{v(x,y,t,\lambda)=v(x+\lambda y+\lambda^2 t, \lambda)\}.
\end{array}\label{E:cauchySD}
\end{equation}
For each $t\in\mathbb R$, rewriting $x+\lambda y+\lambda^2 t=x+\lambda (y+\lambda t)=x+\lambda^2(t+\frac 1\lambda y)$ and modifying the approach in proving lemmas in Section \ref{S:DPSMasymp}-\ref{S:CSD}, one can justify that $\textbf{v}(t)\in \mathfrak{S}_{c,k}$ (see Definition \ref
{D:pcd}). So 
$v$ satisfies the algebraic constraints: 
\begin{itemize}
	\item $\det \,(v)\equiv 1$,
	\item $v= v^*>0$,
	\end{itemize}
and the analytic  constraints: for $i+j+h\le k-4$, 
\begin{itemize}
	\item $\mathcal L_\lambda v=0,\,\mathcal M_\lambda v=0$;
	\item $\partial_x^i\partial_y^j\partial_t^h\left(v-1\right)$ are uniformly bounded in $ L_\infty\cap L_2(\mathbb{R},d\lambda)\cap L_1(\mathbb{R},d\lambda)$;
	\item $\partial_x^i\partial_y^j\partial_t^h\left(v-1\right)\to 0$  uniformly in  $L_\infty\cap L_2(\mathbb{R},d\lambda)\cap L_1(\mathbb{R},d\lambda)$   as $|x|$ or $|y|$ or $t\to\infty$;
	\item $\partial_\lambda v\in  L_2(\mathbb{R},d\lambda)$ and the norms depend continuously on $x$, $y$.
\end{itemize}
Where 
$
\mathcal L_\lambda=\partial_y-\lambda\partial_x$, and $
\mathcal M_\lambda=\partial_t-\lambda\partial_y$.

Now we apply Theorem \ref{T:invexistence}, and \ref{T:invcd} to show the existence of $\Psi(x,y,t,\lambda)$ and $Q(x,y,t)$ satisfying (\ref{E:Lax1}), and (\ref{y-bdry}). More precisely, 
\begin{gather}
{\begin{array}{rl}\Psi(x,y,t,\lambda)=&I+\mathcal {C}\Psi_-(v-1)\\
=&I+\frac 1{2\pi i}\int_{\mathbb{R}}\frac{\Psi_-(x,y,\zeta)\left(v(x+\zeta y+\zeta^2 t, \zeta)-1\right)}{\lambda-\zeta}d\zeta
\end{array}}\label{E:chcauchy}\\
\Psi_\pm-1,\,\partial_x\Psi_\pm, \,\partial_y\Psi_\pm, \,\partial_t\Psi_\pm \textit{ are uniformly bounded in } L_2(d\lambda),\nonumber
\end{gather}
and for each fixed $\lambda\notin \mathbb R$, $i+j+h\le k-4$
\begin{equation}
\textit{$\partial_x^i\partial_y^j\partial_t^h\Psi \in L_\infty(dxdydt)$.}\label{E:psireg}
\end{equation}
In addition,
\begin{eqnarray*}
Q(x,y,t)=&& \frac 1{2\pi i}\int_\mathbb{R}\Psi_-(x,y,t,\zeta)(v(x+\zeta y+\zeta^2 t, \zeta)-1)d\zeta,
\end{eqnarray*} 
and for $i+j+h\le k-4$, $i^2+j^2>0$,
\begin{gather}
\textit{$\partial_x^i\partial_y^j \partial_t^hQ$, $\partial_tQ$, $Q\in L_\infty$,}\label{E:qreg1}\\
\textit{$\partial_x^i\partial_y^j\partial_t^hQ$, $\partial_tQ$, $Q \to 0$ in $ L_\infty$.}\label{E:qreg2}
\end{gather}

%Note for $\lambda\notin \mathbb R\cup Z$, by (\ref{E:chana2'})-(\ref{E:chana1''}), (\ref{E:chv'}), we have
%\begin{eqnarray*}
%&&\Psi-1,\,\, \Psi_x,\,\,\Psi_y ,\,\Psi_t \textit{ are uniformly bounded in $L_\infty(dxdy)$},\\
%&&Q,\,Q_x,\, Q_y\in L_\infty, \textit{ and $Q_x(x,y),\,Q_y(x,y)\in su(n)$.}
%\end{eqnarray*}

To prove (\ref{E:Lax02}), we note it is equivalent to prove
\begin{equation}
\mathcal M_\lambda\Psi=\left(\partial_yQ\right)(x,y,t)\Psi(x,y,t,\lambda).\label{E:equation}
\end{equation}
Applying $\mathcal M_\lambda$ to both sides of (\ref{E:chcauchy}) and using similar approach as that in the proof of Theorem \ref{T:invcd}, we obtain
\begin{equation}
\mathcal M_\lambda\Psi =\left(\partial_yQ\right)(x,y,t)+ \mathcal{C}(M_\zeta\Psi)_-(v-1)\label{E:chcauchy'}.
\end{equation}
Comparing (\ref{E:chcauchy}) and (\ref{E:chcauchy'}) and using the uniqueness result of Theorem \ref{T:invexistence}, we obtain (\ref{E:equation}). The smooth and decay properties of $Q$ can be derived by an argument similar to  the proof of Theorem \ref{T:invcd} and conditions (\ref{E:psireg})-(\ref{E:qreg2}).

Since we have obtain the differentiability of $\Psi(x,y,t,\lambda)$ and $Q(x,y,t)$. The compatibility condition of (\ref{E:Lax1}) and (\ref{E:equation}) yield (\ref{E:chiral}). 
\end{proof}

We conclude this report by a brief remark on examples of  $Q_0\in \mathbb P_{\infty,k,1}$, $k\ge 7$, and  the corresponding eigenfunction $\Psi_0$ has no poles. The first class of examples is $\mathbb P_1\cap \mathcal{S}$ ($\mathcal{S}$ is the set of Schwartz functions and $\mathbb P_1$ is defined by Definition \ref{D:SMfunction}). To construct an example with large norm, we let
$v(x,y,\lambda)=v(x+\lambda y,\lambda)$ satisfy
\[\det (v)=1,\quad
v=v^*>0,\quad v-1\in \mathcal{S},
\]
and for $\forall i$, $j,\,h\ge 0$, 
\begin{eqnarray*}
 &&\partial_x^i\partial_y^j\partial_\lambda^h\left(v-1\right)\in L_2({\rm R},\,d\lambda)\cap L_1({\rm R},\,d\lambda)\textit{ uniformly}, \\
 &&\partial_x^i\partial_y^j\partial_\lambda^h\left(v-1\right)\to 0 \,\,\textit{ in $L_2({\rm R},\,d\lambda)$ uniformly, as}\,\,|x|,\,|y|\to\infty.
\end{eqnarray*}
We can solve the inverse problem and obtain $\Psi_0\in \mathcal{S}$ by the argument in proving Theorem \ref{T:LNexistence}. Note here we need to use the reality condition $v=v^*>0$ to show the global solvability. 
Moreover, by using the fomula $Q_0(x,y)=\frac 1{2\pi i}\int_{\rm R}\psi_{0,-}(v-1)d\xi$, one obtains that $Q_0$ is Schwartz and possesses purely continuous scattering data.


\begin{thebibliography}{99}
%%%%%%%%%%%%%%%%%%%%%%%%%%%%%%%%%%%%%%%%%%%%%%%%%%%%%%%%%%%%%%%%%%%%%%%
%%%%%%%%%%%%%%%%%%%%%%%%%%%%%%%%%%%%%%%%%%%%%%%%%%%%%%%%%%%%%%%%%%%%%%%
\bibitem{AF97}
M. J. Ablowitz and A. S. Fokas: Complex variables: Introduction and applications, {\it Cambridge texts in applied mathematics} Cambridge University Press, (1997).

\bibitem{An97} 
C. K. Anand: Ward's solitons, {\it Geom. Topol.}, {\bf 1} (1997) 9-20.

\bibitem{An98} 
C. K. Anand: Ward's solitons II, exact solutions, {\it Canad. J. Math.}, {\bf 50 }(1998) 1119-1137.

\bibitem{BC84} 
R. Beals and R. R. Coifman: Scattering and inverse scattering for first order systems. \textit{Comm. Pure Appl. Math.} \textbf{37} (1984), no. 1, 39--90.

\bibitem{BC85} 
R. Beals and R. R. Coifman: Inverse scattering and evolution equations. \textit{Comm. Pure Appl. Math.} \textbf{88} (1985), 29--42.

\bibitem{BDT88}
R. Beals, P. Deift and C. Tomei: Direct and inverse scattering on the line.\textit{ Mathematical Surveys and Monographs}, \textbf{28} (1988). American Mathematical Society, Providence, RI. 

\bibitem{DT07}
B. Dai, C. L. Terng: 
Backlund transformations Ward solitons, and unitons. \textit{J. Differential Geom.} \textbf{75} (2007), no. 1, 57--108.

\bibitem{DTU}
B. Dai and C. L. Terng and K. Uhlenbeck: On the space-time Monopole equation
,  arXiv : math.DG/0602607.

\bibitem{DZ91}
P. Deift and X. Zhou: Direct and inverse scattering on the line with arbitrary singularities. \textit{Comm. Pure Appl. Math.}, \textbf{44} (1991), no. 5, 485--533. 

\bibitem{FF63}
D. K. Faddeev and V. N. Faddeeva: Computational methods of linear algebra, {\it Freeman}, (1963).

\bibitem{FI01}
A. S. Fokas and T. A. Ioannidou: The inverse spectral theory for the Ward equation and for the $2+1$ chiral model, {\it Comm. Appl. Analysis}, {\bf 5} (2001), 235-246.

\bibitem{G66}
F. D. Gakhov: Boundary value problems, {\it Addison-Wesley Publishing Company, INC.}, (1966).

\bibitem{GT83}
D. Gilbarg, N. S. Trudinger: Elliptic partial differential equations of second order. Second edition. {\it Grundlehren der Mathematischen Wissenschaften }, 224. Springer-Verlag, Berlin, (1983). 

\bibitem{I96}
T. Ioannidou: Soliton solutions and nontrivial scattering in an integrable chiral model in 2+1 dimensions, {\it J. Math. Phys.}, {\bf 37} (1996) 3422-3441.

\bibitem{IZ98}
T. Ioannidou and W. Zakrzewski: Solutions of the modified chiral model in  2+1 dimensions, {\it J. Math. Phys.} {\bf 39} (1998) 2693-2701.

\bibitem{P76}
K. Pohlmeyer:: Integrable Hamiltonian systems and interactions through quadratic constraints {\it Commun. Math. Phys.}, {\bf 46} (1976), 207-221.

\bibitem{S90}
D. H. Sattinger: Flat connection and scattering theory on the line {\it SIAM J. Math. Anal.}, {\bf 21} (1990), 729-756.


\bibitem{SW}
E. M. Stein and G. Weiss: Introduction to Fourier analysis on Euclidean spaces, {\it Princeton, New Jersey} Princeton University Press, (1971).

\bibitem{T06}
C. L. Terng: Applications of Loop Group Factorization to Geometric Soliton Equations, {\it Proceedings of the International Congress of Mathematicians}, Madrid 2006, vol II, pp. 927-950 (2006).

\bibitem{TU98} 
C. L. Terng, K. Uhlenbeck:
Poisson actions and scattering theory for integrable systems. Surveys in differential geometry: integral systems [integrable systems], Surv. Differ. Geom., IV, Int. Press, Boston, MA, (1998), 315--402. 

\bibitem{U89} 
K. Uhlenbeck:
Harmonic maps into Lie groups: classical solutions of the chiral model. \textit{J. Differential Geom.} \textbf{30} (1989), no. 1, 1--50. 


\bibitem{V90}
J. Villarroel: The inverse problem for Ward's system, {\it Stud. Appl. Math.}, {\bf 83} (1990), 211-222.

\bibitem{W88}
R. S. Ward: Soliton solutions in an integrable chiral model in 2+1 dimensions, {\it J. Math. Phys.}, {\bf 29} (1988), 386-389.

\bibitem{W95}
R. S. Ward: Nontrivial scattering of localized solutions in a  2+1-dimensional integrable system, {\it Phys. Lett. A}, {\bf 208}(1995) 203-208.

\bibitem{ZM78}
V. E. Zakharov and A. V. Mikhailov: Relativistically invariant two-dimensional models of field theory which are integrable by means of the inverse scattering problem method, {\it  Sov. Phys JETP\/}, {\bf 47\/}, no. 6 (1978), 1017-1027.

%\bibitem{Z89'}
%X. Zhou: Direct and inverse scattering transforms with arbitrary spectral singularities, {\it  Commu. on Pure and Appl. Math.\/}, {\bf XLII\/},  (1989), 895-938.

%\bibitem{Z89}
%X. Zhou: The Riemann-Hilbert problem and inverse scattering, {\it  SIAM J. Math. Anal.\/}, {\bf 20\/},  (1989), 966-986.

\bibitem{Z93} 
Z. Zhou: Constructions of explicit solutions of modified principal chiral field in   dimensions via Darboux transformations. Differential Geometry, edited by C. H. Gu et al. {\it World Scientific, Singapore }(1993) 325-332.
\end{thebibliography}
\end{document}